\crefname{equation}{}{}
\newtheorem{lemma}{Lemma}[section]
\newtheorem{prop}[lemma]{Proposition}
\newtheorem{theorem}[lemma]{Theorem}
\newtheorem{cor}[lemma]{Corollary}
\newtheorem{rem}[lemma]{Remark}
\newcommand{\cz}{{\mathbb C}}
\newcommand{\qd}{\end{proof}\vspace{0.5ex}}
\newcommand\tnorm[1]{\vert\xspace\vert\xspace\vert\mskip2mu
#1\mskip2mu \vert\xspace\vert\xspace\vert}
\newcommand\Bigtnorm[1]{\Big\vert\xspace\Big\vert\xspace\Big\vert\mskip2mu
#1\mskip2mu \Big\vert\xspace\Big\vert\xspace\Big\vert}
\newcommand{\norm}[1]{\Vert#1\Vert}
\newcommand{\Bignorm}[1]{\Big\Vert#1\Big\Vert}
\newcommand{\transpose}{\top}
\newcommand{\conv}{\circledast}
\newcommand{\opconj}{\bm{J}}
\newcommand{\E}{\operatorname{E}}
\renewcommand{\d}[1]{d#1}
\newcommand{\vct}[1]{\boldsymbol{#1}}
\newcommand{\mtx}[1]{\boldsymbol{#1}}
\newcommand{\set}[1]{\mathcal{#1}}
\DeclareMathOperator*{\minimize}{\text{minimize}}
\newcommand{\va}{\vct{a}}
\newcommand{\vb}{\vct{b}}
\newcommand{\vc}{\vct{c}}
\newcommand{\vd}{\vct{d}}
\newcommand{\ve}{\vct{e}}
\newcommand{\vf}{\vct{f}}
\newcommand{\vg}{\vct{g}}
\newcommand{\vh}{\vct{h}}
\newcommand{\vq}{\vct{q}}
\newcommand{\vu}{\vct{u}}
\newcommand{\vv}{\vct{v}}
\newcommand{\vw}{\vct{w}}
\newcommand{\vx}{\vct{x}}
\newcommand{\vy}{\vct{y}}
\newcommand{\vz}{\vct{z}}
\newcommand{\vxi}{\vct{\xi}}
\newcommand{\vphi}{\vct{\phi}}
\newcommand{\vzero}{\vct{0}}
\newcommand{\mA}{\mtx{A}}
\newcommand{\mB}{\mtx{B}}
\newcommand{\mC}{\mtx{C}}
\newcommand{\mD}{\mtx{D}}
\newcommand{\mE}{\mtx{E}}
\newcommand{\mF}{\mtx{F}}
\newcommand{\mH}{\mtx{H}}
\newcommand{\mM}{\mtx{M}}
\newcommand{\mN}{\mtx{N}}
\newcommand{\mP}{\mtx{P}}
\newcommand{\mQ}{\mtx{Q}}
\newcommand{\mS}{\mtx{S}}
\newcommand{\mT}{\mtx{T}}
\newcommand{\mU}{\mtx{U}}
\newcommand{\mV}{\mtx{V}}
\newcommand{\mX}{\mtx{X}}
\newcommand{\mY}{\mtx{Y}}
\newcommand{\mZ}{\mtx{Z}}
\newcommand{\mGamma}{\mtx{\Gamma}}
\newcommand{\mUpsilon}{\mtx{\Upsilon}}
\newcommand{\mPhi}{\mtx{\Phi}}
\newcommand{\mPsi}{\mtx{\Psi}}
\newcommand{\mId}{{\bf I}}
\newcommand{\setR}{\set{R}}
\title{Fast and guaranteed blind multichannel deconvolution under a bilinear system model}
\author{Kiryung Lee, Ning Tian, and Justin Romberg
\thanks{This work was supported in part by NSF grants CCF 14-22540, IIS 14-47879, and by ONR grant N00014-13-1-0632.}
\thanks{An earlier version of this work was presented in part at the 2016 IEEE Information Theory Workshop \cite{lee2016fast} and the 22nd International Conference on Digital Signal Processing \cite{lee2007dsp}.}\thanks{The authors are with the School of Electrical and Computer Engineering,
Georgia Institute of Technology, Atlanta, GA 30332, USA (e-mail:
\{kiryung,jrom\}@ece.gatech.edu, ningtian@gatech.edu).}
}
\begin{document}

\maketitle


\begin{abstract}

We consider the multichannel blind deconvolution problem where we observe the output of multiple channels that are all excited with the same unknown input.  From these observations, we wish to estimate the impulse responses of each of the channels.  We show that this problem is well-posed if the channels follow a bilinear model where the ensemble of channel responses is modeled as lying in a low-dimensional subspace but with each channel modulated by an independent gain.  Under this model, we show how the channel estimates can be found by minimizing a quadratic functional over a non-convex set.

We analyze two methods for solving this non-convex program, and provide performance guarantees for each.  The first is a method of alternating eigenvectors that breaks the program down into a series of eigenvalue problems.  The second is a truncated power iteration, which can roughly be interpreted as a method for finding the largest eigenvector of a symmetric matrix with the additional constraint that it adheres to our bilinear model.  As with most non-convex optimization algorithms, the performance of both of these algorithms is highly dependent on having a good starting point.  We show how such a starting point can be constructed from the channel measurements.

Our performance guarantees are non-asymptotic, and provide a sufficient condition on the number of samples observed per channel in order to guarantee channel estimates of a certain accuracy.  Our analysis uses a model with a ``generic'' subspace that is drawn at random, and we show the performance bounds hold with high probability.  Mathematically, the key estimates are derived by quantifying how well the eigenvectors of certain random matrices approximate the eigenvectors of their mean.

We also present a series of numerical results demonstrating that the empirical performance is consistent with the presented theory.

\end{abstract}

\begin{IEEEkeywords}
Blind deconvolution, non-convex optimization, eigenvalue decomposition, sensitivity analysis.
\end{IEEEkeywords}

\section{Introduction}
\label{sec:intro}

Blind deconvolution, where we estimate two unknown signals from an observation of their convolution, is a classical problem in signal processing.  It is ubiquitous, appearing in applications including channel estimation in communications, image deblurring and restoration, seismic data analysis, speech dereverberation, medical imaging, and convolutive dictionary learning.  While algorithms based on heuristics for particular applications have existed for decades, it is not until recently that a rich mathematical theory has developed around this problem.
The fundamental identifiability of solutions to this problem has been studied from an information theoretic perspective \cite{choudhary2014sparse,li2016identifiability,riegler2015information,li2016optimal,kech2017optimal,li2017identifiability,li2017identifiability_minimal}. Practical algorithms with provable performance guarantees that make the problem well-posed by imposing structural constraints on the signals have arisen based on ideas from compressed sensing and low-rank matrix recovery.  These include methods based on convex programming \cite{ahmed2014blind,tang14co,bahmani15li}, alternating minimization \cite{lee2017blind}, and gradient descent \cite{li2016rapid}. More recent works studied the more challenging problem of blind deconvolution with off-the-grid sparsity models \cite{chi2016guaranteed,yang2016super}.

In this paper, we consider the multichannel blind deconvolution problem: we observe a single unknown signal (the ``source'') convolved with a number of different ``channels''.  The fact that the input is shared makes this problem better-posed than in the single channel case.  Mathematical theory for the multichannel problem under various constraints has existed since the 1990s (see \cite{abed1997blind,tong1998multichannel} for surveys).  One particular strand of this research detailed in \cite{xu1995least,moulines1995subspace,gurelli1995evam} gives concrete results under the very loose assumption that the channel responses are time-limited.  These works show how with this model in place, the channel responses can be estimated by forming a cross-correlation matrix from the channel outputs and then computing its smallest eigenvector.  This estimate is consistent in that it is guaranteed to converge to the true channel responses as the number of observations gets infinitely large.  However, no performance guarantees were given for a finite number of samples, and the method tends to be unstable for moderate sample values in even modest noise.
Recent work \cite{lee2017spectral} has shown that this spectral method can indeed be stabilized by introducing a more restrictive linear (subspace) model on the channel responses.  

Our main contributions in this paper are methods for estimating the channel responses when the ensemble has a certain kind of bilinear structure.  In particular, we model the ensemble of channel responses as lying in a low-dimensional subspace, but with each channel modulated by an independent constant; we will discuss in the next section an application in which this model is relevant.  Our estimation framework again centers on constructing a cross-correlation matrix and minimizing a quadratic functional involving this matrix over the unit sphere, but with the additional constraint that the solution can be written as the Kronecker product of two shorter vectors.  This optimization program, which might be interpreted as a kind of structured eigenvalue problem, is inherently non-convex.  We propose two iterative methods for solving it, each with very simple, computationally efficient iterations.  The first is a method of alternating eigenvectors, where we alternate between fixing a subset of the unknowns and estimating the other by solving a standard eigenvalue problem.  The second method is a truncated power iteration, where we repeatedly apply the cross-correlation matrix to an initial point, but project the result after each application to enforce the structural constraints.  We derive performance guarantees for both of these algorithms when the low-dimensional subspace is generic (i.e.\ generated at random).

\subsection*{Related work}

Closely related to the problem of multichannel blind deconvolution is the problem of blind calibration.  Here we observe the product of an unknown weighting vector applied to a series of other unknown vectors.  Non-convex optimization algorithms for blind calibration  have been studied and analyzed in \cite{ling2015self,cambareri2016through}.

Multichannel blind deconvolution can also be approached by linearizing the problem in the Fourier domain.  This has been proposed for various applications, including the calibration of a sensor network \cite{balzano2007blind}, computational relighting in inverse rendering \cite{nguyen2013subspace}, and auto-focus in synthetic aperture radar \cite{morrison2009mca}.  Under a generic condition that the unknown impulse responses belong to random subspaces, necessary and sufficient conditions for the unique identification of the solution have been put forth in \cite{li2016optimal}, and a rigorous analysis of a least-squares method has been studied \cite{ling2016self}.

More recently, performance guarantees for spectral methods for for both subspace and sparsity models have been developed in \cite{li2017sampta}.  As in this paper, these methods are estimating the channel by solving a structured eigenvalue problem.  The structural model, however, is very different than the one considered here.
%
%

Algorithms for solving non-convex quadratic and bilinear problems have recently been introduced for solving problems closely related to blind deconvolution.
A non-convex optimization over matrix manifolds provides a guaranteed solution for matrix completion \cite{keshavan2010matrix}.
Alternating minimization is another non-convex optimization algorithm for matrix completion that provides a provable performance guarantee \cite{keshavan2012efficient,jain2013low,hardt2014understanding}.  Yet for another example, a suite of gradient-based algorithms with a specially designed regularizer within the conventional Euclidean geometry have been studied recently \cite{sun2016guaranteed}.  Phase retrieval is cast as a non-convex optimization due to the nonlinearity in generating the observation.  Wirtinger flow algorithms \cite{candes2015phase,chen2015solving,wang2016solving,cai2016optimal} and alternating minimization \cite{netrapalli2013phase,waldspurger2016phase} are non-convex optimization algorithms for the phase retrieval problem.  Dictionary learning is another bilinear problem arising in numerous applications. Convergence of a Riemannian trust-region method for dictionary learning has been studied with a thorough geometric analysis \cite{sun2017completeI,sun2017completeII}.  On the other hand, although it provides the convergence analysis of different nature, convergence of an alternating minimization for blind Ptychographic diffraction imaging to a local minima regardless of the initial point has been shown under a mild condition \cite{hesse2015proximal}.


%

\subsection*{Organization}
The rest of this paper is organized as follows.  The multichannel blind deconvolution problem is formulated under a bilinear channel model in Section~\ref{sec:setup}.  After we review relevant previous methods for multichannel blind deconvolution in Section~\ref{sec:spectral}, we present two iterative algorithms for multichannel blind deconvolution under the bilinear channel model in Section~\ref{sec:nonconvex}, which are obtained by modifying the classical cross-convolution method.  Our main results on non-asymptotic stable recovery are presented in Section~\ref{sec:main_results} with an outline of the proofs. Detailed analysis of the spectral initialization and the two iterative algorithms are derived in Sections~\ref{sec:proof:prop:init}, \ref{sec:proof:prop:alteig}, and \ref{sec:conv_tpm}. We demonstrate numerical results that support our theory in Section~\ref{sec:num_res}, and summarize our conclusions in Section~\ref{sec:conclusion}.

\section{Problem Statement}
\label{sec:setup}


In the classic multichannel blind deconvolution problem, we observe an unknown signal $\vx \in \mathbb{C}^L$ that has been convolved with $M$ different unknown channel responses $\vh_1,\ldots,\vh_m\in\mathbb{C}^L$:
\begin{equation}
	\label{eq:mcmdl}
	\vy_m = \vh_m \circledast \vx + \vw_m, \quad m = 1,\dots,M,
\end{equation}
where $\circledast$ denotes circular convolution\footnote{We are using circular convolution in our model problem for the ease of analysis. } modulo $L$ and $\vw_m \in \mathbb{C}^L$ denotes additive noise.
Given the outputs $\{\vy_m\}_{m=1}^M$, and working without knowledge of the common input $\vx$, we want to recover the unknown channel impulse responses $\{\vh_m\}_{m=1}^M$.

We will show how we can solve this problem when the channels are time-limited, and obey a {\em bilinear model}.  By time-limited, we mean that only the first $K$ entries in the $\vh_m$ can be non-zero; we can write
\begin{equation}
	\label{eq:def_FIR}
	\vh_m = \mS^\transpose \underline{\vh}_m,
	\quad\text{where}\quad
	\mS := \begin{bmatrix} \mId_K &\vzero_{K,L-K}\end{bmatrix}.
\end{equation}
In addition, the $\vh_m$ are {\em jointly} modeled as lying in a $D$-dimensional subspace of $\mathbb{C}^K$, but are modulated by unknown channel gains $a_1,\ldots,a_M > 0$.  This means that
\begin{equation}
	\label{eq:bilinear_cir_mdl}
	\underline{\vh}_m = a_m \mPhi_m \vb, \quad \forall m = 1,\dots,M,
\end{equation}
where the $\mPhi_m$ are complex $K\times D$ matrices, whose columns span are the parts of the basis vectors corresponding to channel $m$, and $\vb\in\mathbb{C}^D$ is the common set of basis coefficients.  Stacking up the channel responses into a single vector $\underline{\vh} \in \mathbb{C}^{MK}$ and the gains into $\va\in\mathbb{C}^M$, an equivalent way to write \eqref{eq:bilinear_cir_mdl} is
\begin{equation}
	\label{eq:bilinear_cir_mdl2}
	\underline{\vh} = \mPhi (\va \otimes \vb),
	\quad\text{where}\quad
	\mPhi :=
	\setstackgap{L}{1.1\baselineskip}
	\fixTABwidth{T}
	\bracketMatrixstack{
	\mPhi_1 & \vzero & \dots & \vzero \\
	\vzero & \mPhi_2 & \dots & \vzero \\
	\vdots & \vdots & \ddots & \vdots \\
	\vzero & \vzero & \dots & \mPhi_M}
	~~\text{and}\quad
	\va \otimes \vb =
	\begin{bmatrix} a_1\vb \\ a_2\vb \\ \vdots \\ a_M\vb \end{bmatrix}.
\end{equation}
This alternative expression can be interpreted as a linear subspace model with respect to the basis $\mPhi \in \mathbb{C}^{MK \times MD}$ with a separability (rank-1) prior on the coefficient vector.

\begin{figure}
  \centering
  \subfloat[]{\includegraphics[height=1.75in]{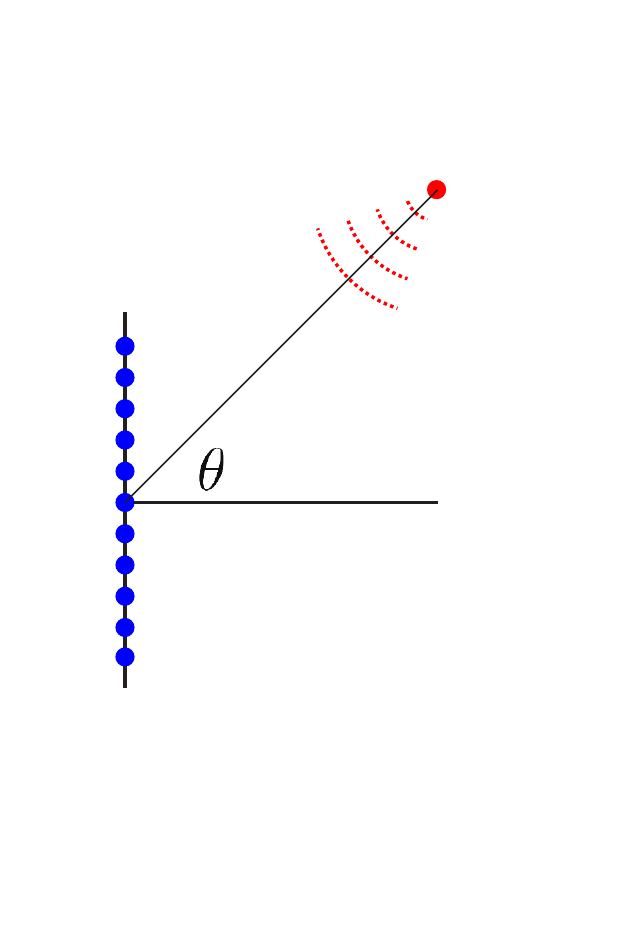}\label{fig:array}}
  \subfloat[]{\includegraphics[height=1.75in]{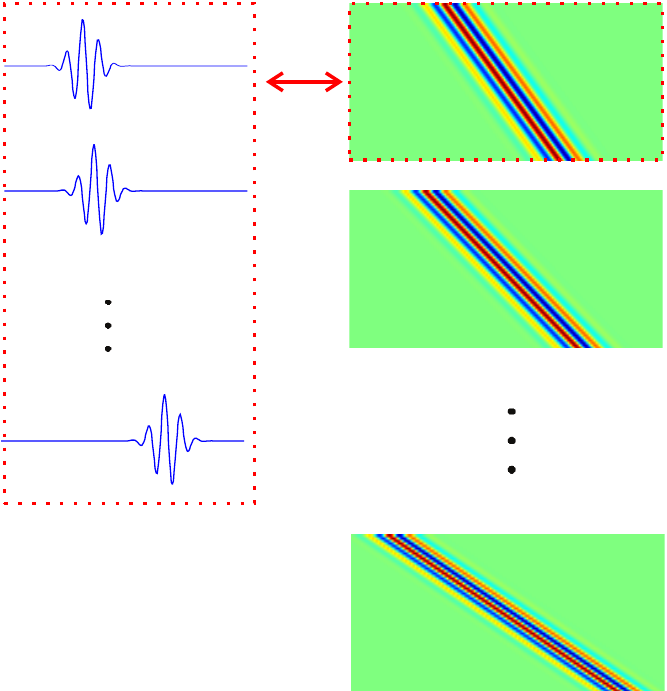}\label{fig:CIR_blocks}}
  \hfil
  \subfloat[]{\includegraphics[height=1.75in]{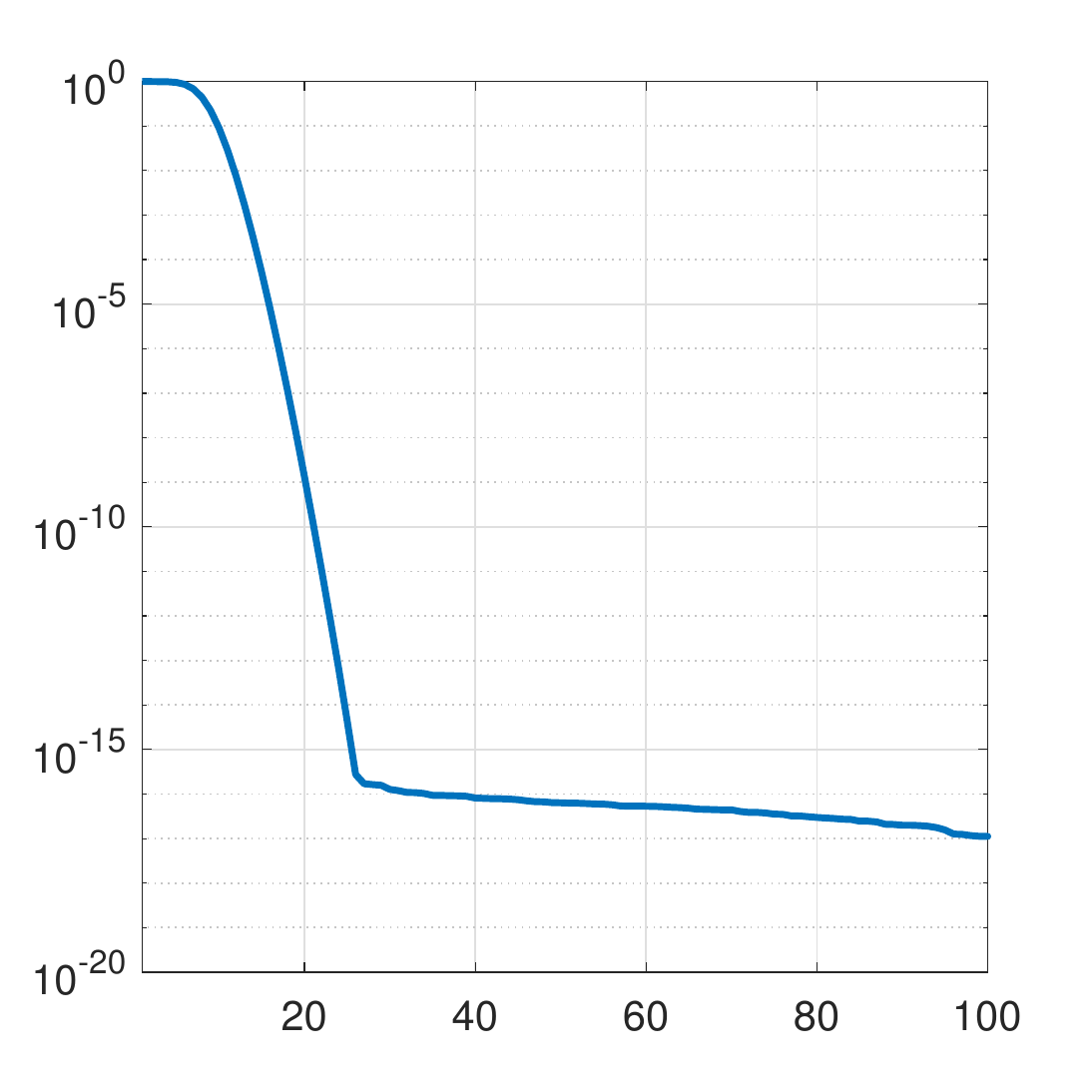}\label{fig:PCA}}
  \hfil
  \subfloat[]{\includegraphics[height=1.75in]{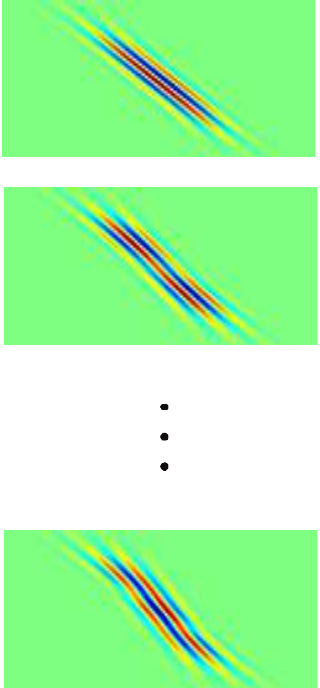}\label{fig:eigen_blocks}}
  \caption{Illustration of construction of a joint linear subspace model from a parametric model.
  \protect\subref{fig:array} A uniform array of $M$ sensors.
  \protect\subref{fig:CIR_blocks} Examples of $\{\vg_{\vec{r}}\}$ rearranged as $M$-by-$K$ matrices.
  \protect\subref{fig:PCA} Sorted eigenvalues of $\mH_{\mathcal{R}}$ in a logarithmic scale.
  \protect\subref{fig:eigen_blocks} First few dominant eigenvectors of $\mH_{\mathcal{R}}$ rearranged as $M$-by-$K$ matrices.
  }
  \label{fig:example_array_processing}%
\end{figure}

For an example of how a model like this might arise, we consider the following stylized problem for array processing illustrated in Figure~\ref{fig:example_array_processing}.  Figure~\protect\subref*{fig:array} shows a linear array.  Suppose we know that if a source is at location $\vec{r}$ then the concatenation of the channel responses between that source location and the array elements is $\vg_{\vec{r}}\in\mathbb{C}^{MK}$.  In simple environments, these channel responses might look very similar to one another in that they are all (sampled) versions of the same shifted function (See Figure~\protect\subref*{fig:CIR_blocks}).  The delays are induced by the differences in sensor locations relative to the source, while the shape of the response might be determined by the instrumentation used to take the measurements (e.g.\ the frequency response of the sensors) --- there could even be small differences in this shape from element to element.

Suppose now that there is uncertainty in the source location that we model as $\vec{r}\in\setR$, where $\setR$ is some region in space.  As we vary $\vec{r}$ over the set $\setR$, the responses $\vg_{\vec{r}}$ trace out a portion of a manifold in $\mathbb{C}^{MK}$.  We can (approximately) embed this manifold in a linear subspace of dimension $D$ by looking at the $D$ principal eigenvectors of the matrix
\[
	\mH_{\setR} = \int_{\vec{r}\in\setR} \vg_{\vec{r}}\,\vg_{\vec{r}}^*~\d{\vec{r}}.
\]
The dimension $D$ that allows an accurate embedding will depend on the size of $\setR$ and smoothness properties of the mapping from $\vec{r}$ to $\vg_{\vec{r}}$.  In this case, we are building $\mPhi$ above by taking the $MK\times D$ matrix that has the principal eigenvectors as columns and apportioning the first $K$ rows to $\mPhi_1$, the next $K$ rows to $\mPhi_2$, etc.

This technique of embedding a parametric model into a linear space has been explored for source localization and channel estimation in underwater acoustics in \cite{mantzel14ro,tian2017multichannel}, and some analysis in the context of compressed sensing is provided in \cite{mantzel2015compressed}.  However, it is not robust in one important way.  In practice, the gains (the amplitude of the channel response) can vary between elements in the array, and this variation is enough to compromise the subspace embedding described above.  The bilinear model \eqref{eq:bilinear_cir_mdl2} explicitly accounts for these channel-to-channel variations.

In this paper, we are interested in when equations of the form \eqref{eq:mcmdl} can be solved for $\vh_m$ with the structural constraint \eqref{eq:bilinear_cir_mdl2}; we present two different algorithms for doing so in the sections below.  These effective of these algorithms  will of course affected by properties of $\mPhi$ (including the number of channels $M$ and embedding dimension $D$) as well as the number of samples $L$.  While empirical models like the one described above are used in practice (see in particular \cite{tian2017multichannel}), we will analyze generic instances of this problem, where the linear model is drawn at random.

\section{Spectral Methods for Multichannel Blind Deconvolution}
\label{sec:spectral}

A classical method for treating the multichannel blind deconvolution problem is to recast it as an eigenvalue problem: we create a correlation matrix using the measured data $\{\vy_m\}$, and estimate the channels from the smallest eigenvector\footnote{By which mean the eigenvector corresponding to the smallest eigenvalue.} of this matrix.  These methods were pioneered in the mid-1990s in \cite{xu1995least,moulines1995subspace,gurelli1995evam}, and we briefly review the central ideas in this section. The methods we present in the next section operate on the same basic principles, but explicitly enforce structural constraints on the solution.



The cross-convolution method for multichannel blind deconvolution \cite{xu1995least} follows directly from the commutivity of the convolution operator.  If there is no noise in the observations \eqref{eq:mcmdl}, then it must be the case that
\[
	\vy_{m_1}\circledast\vh_{m_2} - \vy_{m_2}\circledast\vh_{m_1} = \vzero,
	\quad\text{for all}\quad
	m_1,m_2=1,\ldots,M.
\]
Using $\mT_{\vy_m}$ as the matrix whose action is convolution with $\vy_m$ with a signal of length $K$, we see that the channel responses $\vh_{m_1}$ and $\vh_{m_2}$ must obey the linear constraints $\mT_{\vy_{m_1}}\underline{\vh}_{m_2}-\mT_{\vy_{m_2}}\underline{\vh}_{m_1}=\vzero$.   We can collect all pairs of these linear constraints into a large system, expressed as
\begin{equation}
	\label{eq:mcsys2}
	\mY \underline{\vh} = \vzero_{M(M-1)L/2,1},
\end{equation}
where $\mY \in \mathbb{C}^{M(M-1)L/2 \times MK}$ is defined by
\begin{equation}
	\label{eq:Ymat}
	\mY =
	\begin{bmatrix}
		\mY^{(1)} \\ \mY^{(2)} \\ \vdots \\ \mY^{(M-1)}
	\end{bmatrix},
	\quad
	\mY^{(i)} =
	\begin{bmatrix}
	\underbrace{
	\begin{matrix}
	\bm{0}_{L,K} & \dots & \bm{0}_{L,K} \\
	\vdots & & \vdots \\
	\bm{0}_{L,K} & \dots & \bm{0}_{L,K} \end{matrix}}_{\text{$(i-1)$ blocks}}
	&
	\underbrace{
	\begin{matrix}
	\bm{T}_{\bm{y}_{i+1}} & -\bm{T}_{\bm{y}_i} & & \\
	\vdots & & \ddots & \\
	\bm{T}_{\bm{y}_M} & & & -\bm{T}_{\bm{y}_i}
	\end{matrix}}_{\text{$(M-i+1)$ blocks}}
	\end{bmatrix}.
\end{equation}

It is shown in \cite{xu1995least,gurelli1995evam} that $\underline{\vh}$ is uniquely determined up to a scaling by \eqref{eq:mcsys2} (i.e.\ $\mY$ has a null space that is exactly $1$ dimensional) under the mild algebraic condition that the polynomials generated by the $(\underline{\vh}_m)_{m=1}^M$ have no common zero.  In the presence of noise, $\underline{\vh}$ is estimated as the minimum eigenvector of $\mY^* \mY$:
\begin{equation}
	\label{eq:kailathmin}
	\hat{\underline{\vh}} = \arg\min_{\|\vg\|_2=1} ~\vg^*\mY^*\mY\vg.
\end{equation}
Note that $\mY^* \mY$ is computed cross-correlating the outputs.  Therefore, $\mY^* \mY$ is computed at a low computational cost using the fast Fourier transform.  Furthermore, the size of $\mY^* \mY$, which is $MK \times MK$, does not grow with as the length of observation increases.  When there is white additive noise, this cross-correlation matrix will in expectation be the noise-free version plus a scaled identity.  These means that as the sample size gets large, the noise and noise-free cross-correlations will have the same eigenvectors, and the estimate \eqref{eq:kailathmin} is consistent.

A similar technique can be used if we have a linear model for the channel responses, $\underline{h}=\mPhi\vu$.  We can estimate the expansion coefficients $\vu$ by solving
\begin{equation}
	\label{eq:leemin}
	\minimize_{\vv}~\vv^*\mPhi^*\left(\mY^*\mY - \varrho\mId\right)\mPhi\vv \quad\text{subject to}\quad \|\vv\|_2=1,
\end{equation}
where $\varrho$ is a scalar that depends on the variance of the additive noise (this correction is made so that eigenstructure more closely matches that of $\mPhi^*\mY^*\mY\mPhi$ for noise-free $\mY$).  In \cite{lee2017spectral}, it was shown that a linear model can significantly improve the stability of the estimate of $\underline{\vh}$ in the presence of noise, and gave a rigorous non-asymptotic analysis of the estimation error for generic bases $\mPhi$.

\section{Non-convex Optimization Algorithms}
\label{sec:nonconvex}

Our proposed framework is to solve an optimization program similar to \eqref{eq:kailathmin} and \eqref{eq:leemin} above, but with the additional constraint that $\underline{\vh}$ obey the bilinear form \eqref{eq:bilinear_cir_mdl2}.

Given the noisy measurements $\{\vy_m\}$ in \eqref{eq:mcmdl}, we create the matrix
\[
	\mA = \mPhi^*(\mY^* \mY - \hat{\sigma}_w^2 (M-1)L \mId_{MK}) \mPhi,
\]
where $\hat{\sigma}_w^2$ is an estimate of the noise variance $\sigma_w^2$ (we will briefly discuss how to estimate the noise variance at the end of this section), and $\mY$ is formed as in \eqref{eq:Ymat} in the previous section.  We then solve a program that is similar to the eigenvalue problems above, but with a Kronecker product constraint on the expansion coefficients:
\begin{equation}
	\label{eq:const_evd}
	\minimize_{\vv,\vc,\vd}~
	\vv^*\mA\vv
	\quad\text{subject to}\quad
	\norm{\vv}_2 = 1, \quad \vv = \vc\otimes\vd.
\end{equation}
The norm and bilinear constraints make this a non-convex optimization program, and unlike the spectral methods discussed in the last section, there is no (known) computationally efficient algorithm to compute its solution.

We propose and analyze two non-convex optimization algorithms below for solving \eqref{eq:const_evd}.  The first is an alternating eigenvalue method, which iterates between minimizing for $\vc$ in \eqref{eq:const_evd} with $\vd$ fixed, then $\vd$ with $\vc$ fixed.  The second is a variation on the truncated power method \cite{yuan2013truncated}, whose iterations consist of applications of the matrix $\mA$ (just like the standard power method) followed by a projection to enforce the structural constraints.

The performance of both of these methods relies critically on constructing a suitable starting point.  We discuss one method for doing so below, then establish its efficacy in Proposition~\ref{prop:init} in Section~\ref{sec:proof_main_res} below.

\vspace{.2in}
\noindent
{\bf Alternating eigenvectors.}  While program \eqref{eq:const_evd} is non-convex, it becomes tractable if either of the terms in the tensor constraint are held constant.  If we have an estimate $\widehat{\vb}$ for $\vb$, and fix $\vd=\widehat{\vb}$, then we can solve for the optimal $\vc$ with
\[
	\minimize_{\vc} ~\vc^*\mA_{\widehat{\vb}}\vc\quad\text{subject to}\quad\|\vc\|_2=1,
\]
where
\[
	\mA_{\widehat{\vb}} = (\mId_M\otimes\hat{\vb})^*\mA(\mId_M\otimes\hat{\vb}),
	\quad
	\mId_M\otimes\hat{\vb} =
	\setstackgap{L}{1.1\baselineskip}
	\fixTABwidth{T}
	\bracketMatrixstack{
	\widehat{\vb} & \vzero & \dots & \vzero \\
	\vzero & \widehat{\vb} & \dots & \vzero \\
	\vdots & \vdots & \ddots & \vdots \\
	\vzero & \vzero & \dots & \widehat{\vb}}.
\]
The solution is the eigenvector corresponding to the smallest eigenvalue of $\mA_{\widehat{\vb}}$.  Similarly, with an estimate $\widehat{\va} = [\hat{a}_1,\dots,\hat{a}_M]^\transpose$ plugged in for $\vc$, we solve
\[
	\minimize_{\vd}~\vd^*\mA_{\widehat{\va}}\vd \quad\text{subject to}\quad\|\vd\|_2=1,
\]
where
\[
	\mA_{\widehat{\va}} = (\widehat{\va}\otimes\mId_D)^*\mA(\widehat{\va}\otimes\mId_D),
	\quad
	\widehat{\va}\otimes\mId_D =
	\begin{bmatrix}
		\hat a_1\mId \\ \hat a_2\mId \\ \vdots \\ \hat{a}_M\mId
	\end{bmatrix},
\]
which is again given by the smallest eigenvector of $\mA_{\widehat{\va}}$.

We summarize this method of ``alternating eigenvectors'' in Algorithm~\ref{alg:alteig}.  The function $\operatorname{MinEigVector}$ returns the eigenvector of the input matrix corresponding to its smallest eigenvalue.

\begin{algorithm}
\SetKwData{Left}{left}\SetKwData{This}{this}\SetKwData{Up}{up}
\SetKwFunction{Union}{Union}\SetKwFunction{MinEigVector}{MinEigVector}
\SetKwInOut{Input}{input}\SetKwInOut{Output}{output}
\LinesNumbered
\SetAlgoNoLine
\caption{Alternating Eigenvectors}
\label{alg:alteig}
\Input{$\mA$, $\vb_0$}
\Output{$\widehat{\vh}$}
$\widehat{\vb}$ $\leftarrow$ $\vb_0$\;
\While{stop condition not satisfied}{
    $\widehat{\va}$ $\leftarrow$ MinEigVector($(\mId_M \otimes \widehat{\vb}^*) \mA (\mId_M \otimes \widehat{\vb})$)\;\label{alg:update_a}
    $\widehat{\vb}$ $\leftarrow$ MinEigVector($(\widehat{\va}^* \otimes \mId_D) \mA (\widehat{\va} \otimes \mId_D)$)\;\label{alg:update_b}
}
$\widehat{\vh}$ $\leftarrow$ $\mPhi (\widehat{\va} \otimes \widehat{\vb})$\;
\end{algorithm}

\vspace{.2in}
\noindent
{\bf Rank-1 truncated power method.} A standard tool from numerical linear algebra to compute the largest eigenvector of a symmetric matrix is the {\em power method}, where the matrix is iteratively applied to a starting vector, with renormalization at each step.  (The same method can be used to compute the smallest eigenvector simply by subtracting the matrix from an appropriate scalar multiple of the identity.)  In \cite{yuan2013truncated}, a variation on this algorithm was introduced to force the iterates to be sparse.  This was done simply by hard thresholding after each application of the matrix.

Our Rank-1 truncated power method follows the same template.  We create a matrix $\mB$ by subtracting $\mA$ above from a multiple of the identity,
\[
	\mB = \gamma \mId_{MD} - \mA,
\]
then iteratively apply $\mB$ starting with an initial vector $\vu_0$.  After each application of $\mB$, we project the result onto the set of rank-$1$ matrices by computing the singular vector corresponding to the largest singular value, and then renormalize.

We summarize the {\em rank-1 truncated power method} in Algorithm~\ref{alg:tpm}.  Some care must be taken in choosing the value of $\gamma$.  We want to ensure that the smallest eigenvalue of $\mA$ gets mapped to the largest (in magnitude) eigenvalue of $\mB$, but we also want the relative gap between the largest and second largest eigenvalues of $\mB$ to be as large as possible.  In our analysis below, we use the conservative value of $\gamma=\E[\|\mA\|]$.  We also used this in the numerical results in Section~\ref{sec:num_res}.
Alternatively, one could estimate the largest rank-1 constrained ``eigenvalue'' by applying Algorithm~\ref{alg:tpm} to $\mA$ itself, which may accelerate the convergence.

\begin{algorithm}
\SetKwFunction{Rank1Approximation}{Rank1Approximation}
\SetKwInOut{Input}{input}\SetKwInOut{Output}{output}
\LinesNumbered
\SetAlgoNoLine
\caption{Rank-1 Truncated Power Method}
\label{alg:tpm}
\Input{$\mB$, $\vv_0$}
\Output{$\vv_t$, a vectorized rank-$1$ matrix whose factors are the estimates $\widehat{\va},\widehat{\vb}$}
$t \leftarrow 1$\;
\While{stop condition not satisfied}{
    $\widetilde{\vv}_t \leftarrow \mB \vv_{t-1}$\;
    $\widehat{\mV}_t \leftarrow \operatorname{Rank1Approx}\left(\operatorname{mat}(\widetilde{\vv}_t)\right)$ \;
    $\vv_t \leftarrow \operatorname{vec}(\widehat{\mV}_t)/\norm{\operatorname{vec}(\widehat{\mV}_t)}_2$ \;
    $t \leftarrow t+1$\;
}
\end{algorithm}

\noindent
{\bf Spectral initialization.}
Both the alternating eigenvectors and rank-$1$ truncated power methods require an initial estimate of the channel gains $\va$ and the basis coefficients $\vb$.  Because the program they are trying to solve is non-convex, this starting point must be chosen carefully.

Our spectral initialization is inspired from the lifting reformulation (e.g., see \cite{ahmed2014blind} for the lifting in blind deconvolution).  The observation equations \eqref{eq:mcmdl} can be recast as a linear operator acting on a $L\times D\times M$ tensor formed from the Kronecker products of the unknowns $\vx,\vb,\va$.  Let $\mathcal{A}:\mathbb{C}^{LDM} \to \mathbb{C}^{ML}$ be a linear map such that\footnote{We have defined how $\mathcal{A}$ operates on length $LDM$ vectors that can be arranged as rank-$1$ tensors.  Its action on a general vector in $\mathbb{C}^{LDM}$ can be derived by applying the expression in \eqref{eq:tensormap} to a series of $LDM$ vectors that form a separable basis for tensors in $\mathbb{C}^{L}\times\mathbb{C}^D\times\mathbb{C}^M$.}
\begin{equation}
	\label{eq:tensormap}
	\mathcal{A}(\vx \otimes \vb \otimes \va)
	=
	\begin{bmatrix}
		\vx \conv a_1 \mS^* \mPhi_1 \vb \\
		\vdots \\
		\vx \conv a_M \mS^* \mPhi_M \vb
	\end{bmatrix}.
\end{equation}
Concatenating the $\{\vy_m\}$ and $\{\vw_m\}$ into single vectors of length $ML$, we can rewrite \eqref{eq:mcmdl} as
\[
	\vy = \mathcal{A}(\mathcal{X}) + \vw,
\]
where $\mathcal{X} = \vx \otimes \vb \otimes \va$.

A natural initialization scheme is to apply the adjoint of $\mathcal{A}$ to $\vy$, then project the result onto the feasible set of vectors that can be arranged as rank-$1$ tensors (this technique is often used to initialize non-convex programs for recovering rank-$1$ matrices from linear measurements \cite{lee2010admira,jain2010guaranteed}).  However, there is no known algorithm for computing the projection onto the set of rank-1 tensors that has strong optimality guarantees.

We avoid this by exploiting the structure on the factor $\va$, in particular that the $a_m\geq 0$.  The action of the operator $(\mId_{LD}\otimes\vct{1}_{M,1})$ has the effect of summing down the third mode of the tensor; in particular
\[
	(\mId_{LD} \otimes \vct{1}_{M,1})(\vx\otimes\vb\otimes\va)= \Big(\sum_{m=1}^M a_m\Big) (\vx \otimes \vb).
\]
When the factor $\sum_{m=1}^M a_m$ has a sufficiently large magnitude, we can get an estimate of $\vx\otimes\vb$ by applying this operator to $\mathcal{A}^*\vy$.  This is the cases if the channel gains are positive.  However, without the positivity constraint on $\va$, the factor can be arbitrary small in magnitude, which may turn the initialization vulnerable to noise.  The positivity constraint on $\va$ can be weakened if estimates of the phases of $a_1,\dots,a_M$ are available as prior information.  In this scenario, the known phase information is absorbed into the basis $\mPhi$ and one can focus on estimating only the gains.

The first step of our initialization, then, is to compute
\begin{equation}
	\label{eq:def_Gamma}
	\mGamma = \operatorname{mat}\left((\mId_{LD} \otimes \vct{1}_{M,1})\mathcal{A}^*\vy\right),
\end{equation}
where the operator $\operatorname{mat}(\cdot)$ takes a vector in $\mathbb{C}^{LD}$ and produces a $L\times D$ matrix by column-major ordering.

Once corrected for noise, the leading eigenvector of $\mGamma\mGamma^*$ gives us a rough estimate of the channel coefficients $\vb$.   In Section~\ref{sec:proof:prop:init}, we show that the random matrix $\mGamma \mGamma^* - \sigma_w^2 L\sum_{m=1}^M  \mPhi_m^* \mPhi_m$ concentrates around a multiple of  $\vb \vb^*$.

Finally, we note that there is a closed-form expression for computing $\mGamma$ from the measurements $\{\vy_m\}$.  This is given in the following lemma that is proved in Appendix~\ref{sec:proof:lemma:matZ}.
\begin{lemma}
	\label{lemma:matZ}
	The matrix $\mGamma$ in \eqref{eq:def_Gamma} can be written as
	\begin{equation}
	\label{eq:def_Gamma2}
		\mGamma = \sum_{m=1}^M \mPhi_m^* \mS \mC_{\vy_m} \opconj,
	\end{equation}
    where $\mC_{\vy_m} \in \mathbb{C}^{L \times L}$ is the matrix whose action is the circular convolution with $\vy_m \in \mathbb{C}^L$, $\opconj$ is the ``flip operator'' modulo $L$:
	\begin{equation}
		\label{eq:def_flip}
			\opconj := \begin{bmatrix} \ve_1 & \ve_L & \ve_{L-1} & \cdots & \ve_2\end{bmatrix},
	\end{equation}
	and the $\ve_1,\dots,\ve_L$ are the standard basis vectors for $\mathbb{R}^L$.
\end{lemma}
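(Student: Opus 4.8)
The plan is to verify the identity by a direct computation: I would write down the adjoint $\mathcal{A}^*$ explicitly, apply the channel-summation operator $(\mId_{LD}\otimes\vct{1}_{M,1})$, and then simplify the resulting circular-convolution expressions using circulant algebra, with the flip operator $\opconj$ accounting for the conjugate time-reversal produced by taking the adjoint.

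First I would compute $\mathcal{A}^*$ from its defining relation $\langle \mathcal{A}\mathcal{Z}, \vy\rangle = \langle \mathcal{Z}, \mathcal{A}^*\vy\rangle$. Since $\mathcal{A}$ is specified on rank-$1$ tensors $\mathcal{Z} = \vx\otimes\vb\otimes\va$ and extended linearly, it suffices to evaluate the left-hand side for arbitrary $\vx,\vb,\va$. Writing $\vx\conv\mS^*\mPhi_m\vb = \mC_{\vx}\mS^*\mPhi_m\vb$, where $\mC_{\vx}$ is the circulant matrix implementing circular convolution by $\vx$, the pairing with the stacked data $\vy=[\vy_1;\dots;\vy_M]$ becomes
\[
\langle \mathcal{A}\mathcal{Z}, \vy\rangle = \sum_{m=1}^M \conj{a_m}\,\vb^*\mPhi_m^*\mS\,\mC_{\vx}^*\vy_m .
\]
The crux is then the circulant adjoint identity: the Hermitian adjoint of convolution-by-$\vx$ is convolution by the conjugate-flipped kernel, $\mC_{\vx}^* = \mC_{\opconj\conj{\vx}}$, so by commutativity of circular convolution $\mC_{\vx}^*\vy_m = \mC_{\vy_m}\opconj\conj{\vx}$. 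Substituting gives
\[
\langle \mathcal{A}\mathcal{Z}, \vy\rangle = \sum_{m=1}^M \conj{a_m}\,\vb^*\big(\mPhi_m^*\mS\mC_{\vy_m}\opconj\big)\conj{\vx},
\]
from which I read off $\mathcal{A}^*\vy$ as the $L\times D\times M$ tensor whose $m$-th slice is (the transpose of) $\mPhi_m^*\mS\mC_{\vy_m}\opconj$. Applying $(\mId_{LD}\otimes\vct{1}_{M,1})$ collapses the channel mode by summing these $M$ slices, and reshaping by $\operatorname{mat}$ as in \eqref{eq:def_Gamma} yields $\mGamma = \sum_{m} \mPhi_m^*\mS\mC_{\vy_m}\opconj$.

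The main obstacle is essentially bookkeeping: keeping the conjugations, transposes, and the index reversal hidden in $\opconj$ in their correct places, so that convolution-by-$\vy_m$ (rather than correlation) and $\opconj$ emerge exactly as in the statement. The two facts that make this go through are the commutativity $\mC_{\vx}\vg = \mC_{\vg}\vx$ and the explicit form $\mC_{\vx}^*=\mC_{\opconj\conj{\vx}}$ of the circulant adjoint; I would verify the latter directly from the definition $\opconj=[\ve_1,\ve_L,\dots,\ve_2]$, checking that $\opconj\conj{\vx}$ is precisely the conjugate time-reversal modulo $L$ that realizes the matrix adjoint. Some care is also needed to match the reshaping convention used in \eqref{eq:def_Gamma} so that the resulting $\mGamma$ has the orientation required for the downstream use of $\mGamma\mGamma^*$ as a $D\times D$ matrix concentrating around a multiple of $\vb\vb^*$.
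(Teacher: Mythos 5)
Your proof is correct and follows essentially the same route as the paper's: both compute $\langle\mathcal{A}(\vx\otimes\vb\otimes\cdot),\vy\rangle$ from the defining relation of the adjoint, convert the circulant adjoint into convolution with the (conjugate-)flipped kernel via $\mC_{\vh}^\transpose=\mC_{\opconj\vh}$ (equivalently your $\mC_{\vx}^*=\mC_{\opconj\conj{\vx}}$), and invoke commutativity of circular convolution before reading off $\mGamma$. The only cosmetic difference is that the paper folds the channel summation into the test tensor by pairing against $\vx'\otimes\vb'\otimes\vone_{M,1}$ directly, rather than computing $\mathcal{A}^*\vy$ for a general gain vector $\va$ and then applying the summing operator.
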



We summarize our spectral initialization technique in Algorithm~\ref{alg:spectral_init}.
\begin{algorithm}
\SetKwData{Left}{left}\SetKwData{This}{this}\SetKwData{Up}{up}
\SetKwFunction{Union}{Union}\SetKwFunction{MaxEigVector}{MaxEigVector}
\SetKwInOut{Input}{input}\SetKwInOut{Output}{output}
\LinesNumbered
\SetAlgoNoLine
\caption{Spectral Initialization}
\label{alg:spectral_init}
\Input{$\{\vy_m\}_{m=1}^M$, $\{\mPhi_m\}_{m=1}^M$, $L$, and an estimate of noise variance $\hat\sigma_w^2$}
\Output{$\vb_0$}
$\mGamma \leftarrow \sum_{m=1}^M \mPhi_m^* \mS \mC_{\vy_m} \opconj$\;
$\vb_0$ $\leftarrow$ MaxEigVector($\mGamma \mGamma^* - \hat{\sigma}_w^2 L \sum_{m=1}^M \mPhi_m^* \mPhi_m$)\;
\end{algorithm}

Our analysis of the initialization, we assume that we know the noise variance $\sigma_w^2$.  Having a good estimate can indeed make a difference in terms of numerical performance.  In the numerical experiments in Section~\ref{sec:num_res}, we include simulations where we assume we know the noise variance exactly, and where we take the crude guess $\hat\sigma_w^2=0$.  The latter of course does not perform as well as the former, but it still offers significant gains over disregarding the bilinear structure all together.

It is also possible to get an estimate of the noise variance through the low-rank matrix denoising technique described in   \cite{donoho2014minimax}, where we solve the convex program
\[
	\minimize_{\mX,\alpha}
	\norm{\mGamma \mGamma^* - \alpha\sum_{m=1}^M \mPhi_m^* \mPhi_m - \mX}_{\mathrm{F}}^2 + 	
	\lambda \norm{\mX}_*,
\]
and take $\hat\sigma_w^2 = \hat\alpha/L$.  The theory developed in \cite{donoho2014minimax} for this procedure relies on the perturbation to the low-rank matrix being subgaussian, which unfortunately does not apply here, as the perturbation involves both intra- and inter-channel convolutions of the noise processes $\vw_m$.

\section{Main Results}
\label{sec:main_results}

\subsection{Non-asymptotic analysis}

Our main results give non-asymptotic performance guarantees for both Algorithm~\ref{alg:alteig} and Algorithm~\ref{alg:tpm} when their iterations start from the initial estimate by Algorithm~\ref{alg:spectral_init} under the following two assumptions\footnote{These same assumuptions were used for the analysis of the spectral method \eqref{eq:leemin} in  \cite{lee2017spectral}.}:
\begin{itemize}

  \item[(A1)] {\bf Generic subspaces.} The random matrices $\mPhi_1,\dots,\mPhi_M$ are independent copies of a $K$-by-$D$ complex Gaussian matrix whose entries are independent and identically distributed (iid) as $\mathcal{CN}(0,1)$.  Our theorems below hold with high probability with respect to $(\mPhi_m)_{m=1}^M$.

  \item[(A2)] {\bf Random noise.}  The perturbations to the measurements $\vw_1,\dots,\vw_M \in \mathbb{C}^L$ are independent subgaussian vectors with $\mathbb{E}[\vw_m]=\vzero$ and $\mathbb{E}[\vw_m\vw_m^*] = \sigma_w^2\mId_L$, and are independent of the bases $(\mPhi_m)_{m=1}^M$.

\end{itemize}

We present two main theorems in two different scenarios.  In the first, we assume that the input source is a white subgaussian random process.  In the second scenario, we assume that the input source satisfies a kind of incoherence condition that essentially ensures that it is not too concentrated in the frequency domain (a characteristic a random source has with high probability).  The error bound for the deterministic model is more general but is also slightly weaker than that for the random model.

The theorems provide sufficient conditions on the observation length $L$ that guarantees that the estimation error will fall below a certain threshold.  The number of samples we need will depend on the length of the filter responses $K$, their intrinsic dimensions $D$, the number of channels $M$, and the signal-to-noise-ratio (SNR) defined as
\begin{equation}
    \label{eq:SNRdef}
    \eta
    := \frac{\mathbb{E}_{\vphi}[\sum_{m=1}^M \norm{\vh_m \circledast \vx}_2^2]}{\mathbb{E}_{\vw}[\sum_{m=1}^M \norm{\vw_m}_2^2]}.
\end{equation}
Under (A1) and (A2), it follows from the commutativity of convolution and Lemma~\ref{lemma:expectation1} that $\eta$ simplifies as
\begin{equation}\label{eq:etasimp}
\eta = \frac{K \norm{\vx}_2^2 \norm{\vu}_2^2}{M L \sigma_w^2}.
\end{equation}
In addition, the bounds will depend on the spread of the channel gains.  We measure this disparity using the two flatness parameters
\begin{equation}
  \label{eq:defmua}
  \mu := \max_{1\leq m\leq M} \frac{\sqrt{M} a_m}{\norm{\va}_2}
\end{equation}
and
\begin{equation}
  \label{eq:defnua}
  \nu := \min_{1\leq m\leq M} \frac{\sqrt{M} a_m}{\norm{\va}_2}.
\end{equation}
Our results are most interesting when there are not too many weak channels, meaning $\mu = O(1)$ and $\nu = \Omega(1)$.  To simplify the theorem statements below, we will assume these conditions on $\mu$ and $\nu$.  It is possible, however, to re-work their statements to make the dependence on $\mu,\nu$ explicit.

We now present our first main result.  Theorem~\ref{thm:main_randx} below assumes a {\em random common source} signal $\vx$.  We present guarantees for Algorithms~\ref{alg:alteig} and \ref{alg:tpm} simultaneously, with $\underline{\vh}_t = \mPhi \vv_t$ as the channel estimate after iteration $t$ (for the alternating eigenvectors method, take $\vv_t = \hat{\va}_t\otimes\hat{\vb}_t$).

\begin{theorem}[Random Source]
	\label{thm:main_randx}
	We observe noisy channel outputs $\{\vy_m\}$ as in \eqref{eq:mcmdl}, with SNR $\eta$ as in \eqref{eq:SNRdef}, and form a sequence of estimates $(\underline{\vh}_t)_{t \in \mathbb{N}}$ of the channel responses by either Algorithm~\ref{alg:alteig} or Algorithm~\ref{alg:tpm} from the initial estimate by Algorithm~\ref{alg:spectral_init}.  Suppose assumptions (A1) and (A2) above hold, let $\vx$ be a sequence of zero-mean iid subgaussian random variables with variance $\sigma_x^2$, $\eta \geq 1$, $\mu = O(1)$, and $L \geq 3K$.\footnote{Without the subspace prior, $L > K$ is necessary to claim that $\mY^* \mY$ has nullity 1 in the noiseless case.  We used $L \geq 3K$ in the proof in order to use the identity that the circular convolutions of three vectors of length $K$ modulo $L$ indeed coincide with their linear convolution.}
	Then for any $\beta \in \mathbb{N}$, there exist absolute constants $C > 0, \alpha \in \mathbb{N}$ and constants $C_1(\beta), C_2(\beta)$  such that if there are a sufficient number of channels,
	\begin{equation}
		\label{eq:condM}
        M \geq C_1(\beta) \log^\alpha (MKL),
	\end{equation}
	that are sufficiently long,
	\begin{equation}
		\label{eq:condK}
        K \geq C_1(\beta) D \log^\alpha (MKL),
	\end{equation}
	and we have observed the a sufficient number of samples at the output of each channel,
	\begin{equation}
		\label{eq:condL_randx}
        L \geq
        \frac{C_1(\beta) \log^\alpha (MKL)}{\eta}
        \Big(
        \frac{K}{M^2} + \frac{D}{D \wedge M} \,
        \Big),
	\end{equation}
	then with probability exceeding $1-CK^{-\beta}$, we can bound the approximation error as
	\begin{equation}
		\label{eq:error_randx}
        \sin \angle(\underline{\vh}_t,\underline{\vh})
        \leq
        2^{-t} \angle(\underline{\vh}_0,\underline{\vh})
        + C_2(\beta) \log^{\alpha}(MKL)
        \Big(
        \frac{1}{\sqrt{\eta L}}
        \Big(
        \frac{\sqrt{K}}{M} + \sqrt{\frac{D}{D \wedge M}} \,
        \Big)
        + \frac{\sqrt{D}}{\eta \sqrt{ML}}
        \Big), \quad \forall t \in \mathbb{N}.
	\end{equation}

\end{theorem}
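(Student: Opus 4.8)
The plan is to establish \eqref{eq:error_randx} through the two-part structure typical of non-convex recovery guarantees: a \emph{linear convergence} term $2^{-t}\angle(\underline{\vh}_0,\underline{\vh})$ reflecting the per-iteration contraction of both algorithms toward the true coefficient vector, plus an additive \emph{statistical floor} capturing the irreducible estimation error due to noise and finite sampling. The common backbone for both algorithms is a perturbation analysis comparing the random data matrix $\mA$ with its expectation $\E[\mA]$, so I would begin by pinning down the spectral geometry of $\E[\mA]$.

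Under (A1), (A2), and the white-source model, I expect that Lemma~\ref{lemma:expectation1} together with the commutativity of convolution shows that $\E[\mA]$ has the true coefficient vector $\va \otimes \vb$ as its smallest eigenvector, separated from the rest of the spectrum by a quantifiable gap $\delta$. The noiseless constraint $\mY\underline{\vh} = \vzero$ already forces $\va \otimes \vb$ into the near-nullspace; what must be verified is that no spurious competing near-null directions exist in expectation, so the minimizer of the Kronecker-constrained quadratic in \eqref{eq:const_evd} is unique up to phase and equals $\va \otimes \vb$. I would then control the deviation $\mA - \E[\mA]$ by splitting it into a \emph{signal--noise cross term} and a \emph{noise--noise term}, which scale differently with the SNR $\eta$ and produce, respectively, the summands $\tfrac{1}{\sqrt{\eta L}}\bigl(\tfrac{\sqrt{K}}{M}+\sqrt{\tfrac{D}{D\wedge M}}\bigr)$ and $\tfrac{\sqrt{D}}{\eta\sqrt{ML}}$ of the floor. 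Concentration inequalities for sums of products of the Gaussian bases $\mPhi_m$ with the subgaussian noise $\vw_m$ are what make these terms small relative to $\delta$; the sample-complexity conditions \eqref{eq:condM}--\eqref{eq:condL_randx} are exactly the thresholds at which $\|\mA-\E[\mA]\|$, measured in the appropriate restricted sense, is controlled with probability $1-CK^{-\beta}$.

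With the spectral geometry in hand, the convergence argument branches by algorithm, and in both cases Proposition~\ref{prop:init} supplies the essential fact that the spectral initialization lands inside the basin of attraction, bounding $\angle(\underline{\vh}_0,\underline{\vh})$ by a fixed constant. For Algorithm~\ref{alg:alteig} I would show that when $\widehat{\vb}$ lies in a neighborhood of $\vb$, the reduced matrix $\mA_{\widehat{\vb}} = (\mId_M\otimes\widehat{\vb})^*\mA(\mId_M\otimes\widehat{\vb})$ has expectation whose smallest eigenvector is close to $\va$, and then apply a Davis--Kahan sensitivity bound to the smallest eigenvector of $\mA_{\widehat{\vb}}$; the symmetric statement governs the update of $\widehat{\vb}$. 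Alternating these two steps contracts the angle by a factor of $1/2$ per outer iteration down to the statistical floor. For Algorithm~\ref{alg:tpm} the argument mirrors the classical power-method contraction for $\mB = \gamma\mId - \mA$, with the extra rank-$1$ projection controlled through a perturbation bound on the leading singular vector of $\operatorname{mat}(\widetilde{\vv}_t)$. Composing the contraction across iterations yields the $2^{-t}$ factor, while the residual perturbation at each step caps out at the floor, giving \eqref{eq:error_randx}.

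The hard part will be the eigenvector sensitivity analysis that produces the \emph{sharp} dependence on $M,K,D,L$ in the statistical floor. A naive Davis--Kahan bound applied to $\mA$ in full operator norm would lose factors, because it ignores the Kronecker block structure; the delicate step is to exploit that structure --- in particular the block-diagonal form of $\mPhi$ and the independence across channels guaranteed by (A1) --- so that the perturbation is measured along the constrained tangent space rather than globally. Tracking how the separate scalings $K/M^2$ and $D/(D\wedge M)$ emerge from this restricted perturbation, and how the $D\wedge M$ saturation arises, is the technical crux of the whole analysis.
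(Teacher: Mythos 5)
Your overall architecture is the paper's: an expectation computation showing $\va\otimes\vb$ is the distinguished eigenvector of $\mathbb{E}[\mA]$ with a quantified spectral gap, a Davis--Kahan sensitivity bound applied to the restricted matrices $(\mId_M\otimes\widehat{\vb})^*\mA(\mId_M\otimes\widehat{\vb})$ and $(\widehat{\va}\otimes\mId_D)^*\mA(\widehat{\va}\otimes\mId_D)$, a power-method contraction plus a rank-$1$-truncation perturbation lemma for Algorithm~\ref{alg:tpm}, Proposition~\ref{prop:init} to land in the basin of attraction, and --- crucially --- measuring the perturbation only along separable (Kronecker) directions rather than in the full operator norm. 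The paper formalizes that last point with the matricized norm $\tnorm{\cdot}_{S_1\to S_\infty}$ and the tangent spaces $T(\va,\vb)$, which is exactly the ``restricted tangent space'' device you identify as the crux. One structural difference: the paper first proves an intermediate result for a deterministic source, parameterized by $\rho_{\vx}$ and $\rho_{x,w}$, and then specializes to the random source via tail bounds on those quantities (Lemmas~\ref{lemma:rho_wx} and \ref{lemma:rho_wx_randx}); you go at the random source directly, which is a legitimate reorganization.

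The one concrete gap is your decomposition of $\mA-\mathbb{E}[\mA]$ into only a signal--noise cross term and a noise--noise term. Under (A1) the noise-free data matrix $\mY_{\mathrm{s}}$ is itself random --- it depends on the Gaussian bases $\mPhi_m$ --- so $\mPhi^*\mY_{\mathrm{s}}^*\mY_{\mathrm{s}}\mPhi$ fluctuates about its expectation; this fourth-order Gaussian chaos is the third, and in some respects dominant, piece of the perturbation. Its size relative to the spectral gap is of order $(M^{-1/2}+\sqrt{D/K})\,\mu^2$, independent of $L$ and $\eta$, and it is precisely what forces conditions \eqref{eq:condM} and \eqref{eq:condK}: no amount of additional samples or SNR suppresses it; only more channels and $K\gg D$ do. Your proposal attributes all three sample-complexity conditions to concentration of the $\mPhi_m$--$\vw_m$ products, which cannot explain why \eqref{eq:condM} and \eqref{eq:condK} carry no dependence on $L$ or $\eta$. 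The repair is mechanical once the term is noticed (the paper's Lemma~\ref{lemma:Es} bounds it in spectral norm, which suffices since $\tnorm{\cdot}_{S_1\to S_\infty}\leq\norm{\cdot}$), but as written your plan would not produce two of the three hypotheses of the theorem.
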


\begin{rem}
\rm
The SNR requirement $\eta \geq 1$ was introduced to simplify the expressions in Theorem~\ref{thm:main_randx}.  The conditions in the low SNR regime $\eta <1$ can be easily extracted from the proof of the theorem and Proposition~\ref{prop:conv_both} below.
\end{rem}

We make the following remarks about the assumption \eqref{eq:condM} --\eqref{eq:condL_randx} in Theorem~\ref{thm:main_randx}.  The lower bound on the number of channels in \eqref{eq:condM} is very mild, $M$ has to be only a logarithmic factor of the number of parameters involved in the problem.  The condition \eqref{eq:condK} allows a low-dimensional subspace, the dimension of which scales proportional to the length of filter $K$ up to a logarithmic factor.
For a fixed SNR and a large number of channels ($M = \Omega(\sqrt{K/D})$), the condition in \eqref{eq:condL_randx} says that the length of observation can grow proportional to $\sqrt{KD}$ --- this is suboptimal when compared to the degrees of freedom $D$ per channel (as $K>D$), with the looseness probably being an artifact of how our proof technique handles the fact that the channels are time-limited to legnth $K$.
However, this still marks a significant improvement over an earlier analysis of this problem \cite{lee2016fast_arxiv}, which depended on the concentration of subgaussian polynomial \cite{adamczak2015concentration} and union bound arguments.
The scaling laws of parameters have been sharpened significantly, and as we will see in the next section, its prediction is consistent with the empirical results by Monte Carlo simulations in Section~\ref{sec:num_res}.  Compared to the analysis for the other spectral method under the linear subspace model \cite{lee2017spectral}, Theorem~\ref{thm:main_randx} shows that the estimation error becomes smaller by factor $\sqrt{D}$.

To prove Theorem~\ref{thm:main_randx}, we establish an intermediate result for the case where the input signal $\vx$ is deterministic.  In this case, our bounds depend on the spectral norm $\rho_{\vx}$ of the (appropriately restricted) autocorrelation matrix of $\vx$,
\[
	\rho_{\vx} := \|\widetilde{\mS}\mC_{\vx}^*\mC_{\vx}\widetilde{\mS}^*\|,
\]
where
\begin{equation}
\label{eq:deftildeS}
\widetilde{\mS} =
\begin{bmatrix}
\begin{bmatrix} \bm{0}_{K-1,L-K+1} & \mId_{K-1} \end{bmatrix} \\
\begin{bmatrix} \mId_{2K-1} & \bm{0}_{2K-1,L-2K+1} \end{bmatrix}
\end{bmatrix}.
\end{equation}

Then the deterministic version of our recovery result is:
\begin{theorem}[Deterministic Source]
	\label{thm:main_detx}
	Suppose that the same assumptions hold as in Theorem~\ref{thm:main_randx}, only with $\vx$ as a fixed sequence of numbers obeying
	\begin{equation}
		\label{eq:cond_rho}
		\rho_{\vx} \leq C_3\|\vx\|_2^2.
	\end{equation}
	If  \eqref{eq:condK} and \eqref{eq:condM} hold, and
	\begin{equation}
		\label{eq:condL_detx}
        L \geq
        \frac{C_1(\beta) \log^\alpha (MKL)}{\eta}
        \Big(
        \frac{K^2}{M^2} + \frac{KD}{D \wedge M}
        \Big),
	\end{equation}
	then with probability exceeding $1-CK^{-\beta}$, we can bound the approximation error as
	\begin{equation}
		\label{eq:error_detx}
        \sin \angle(\underline{\vh}_t,\underline{\vh})
        \leq
        2^{-t} \angle(\underline{\vh}_0,\underline{\vh})
        + \frac{C_2(\beta) \log^{\alpha}(MKL)}{\sqrt{\eta L}}
        \Big(
        \frac{K}{M} + \sqrt{\frac{KD}{D \wedge M}} \,
        \Big), \quad \forall t \in \mathbb{N}.
	\end{equation}
\end{theorem}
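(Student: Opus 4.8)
\emph{Proof strategy for Theorem~\ref{thm:main_detx}.}
The plan is to prove Theorem~\ref{thm:main_detx} directly, treating it as the backbone: the random-source Theorem~\ref{thm:main_randx} then follows by specializing the bounds below to a random $\vx$ and replacing the deterministic quantity $\rho_{\vx}$ by its typical (averaged) value. The argument splits cleanly into a deterministic algorithmic part and a probabilistic part. The deterministic part reduces the convergence of both Algorithm~\ref{alg:alteig} and Algorithm~\ref{alg:tpm} to the control of a handful of spectral quantities attached to the random matrix $\mA = \mPhi^*(\mY^*\mY - \hat\sigma_w^2(M-1)L\,\mId_{MK})\mPhi$; the probabilistic part bounds those quantities under (A1)--(A2). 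The unifying observation is that both algorithms extract the smallest eigenvector of $\mA$ subject to the Kronecker constraint $\vv=\vc\otimes\vd$, and that $\va\otimes\vb$ is the minimizer for the noiseless, population version of $\mA$; the estimation error is governed by how far $\mA$ strays from that population object.

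First I would set up the population analysis. Writing $\mY=\mY_0+\mathcal{W}$, where $\mY_0$ is built from the noiseless outputs $\vh_m\circledast\vx$ and $\mathcal{W}$ from the noise $\vw_m$, expand
\[
\mY^*\mY = \mY_0^*\mY_0 + \mY_0^*\mathcal{W} + \mathcal{W}^*\mY_0 + \mathcal{W}^*\mathcal{W}.
\]
By the noiseless cross-convolution identity \eqref{eq:mcsys2} one has $\mY_0\,\underline{\vh}=\vzero$ with $\underline{\vh}=\mPhi(\va\otimes\vb)$, so $\mPhi^*\mY_0^*\mY_0\mPhi$ annihilates $\va\otimes\vb$; the term $\mathbb{E}[\mathcal{W}^*\mathcal{W}]$ is exactly the scaled identity removed by the $\hat\sigma_w^2(M-1)L$ correction, and the remaining cross terms are mean zero. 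I would then compute $\mathbb{E}_{\mPhi}$ of the signal term and show that, on the constraint set, its smallest eigenpair is $\va\otimes\vb$ with a quantifiable population gap $\delta$; this gap is what makes the problem well-conditioned and drives the factor $\tfrac12$ below. The same computation, applied to the initialization matrix $\mGamma\mGamma^* - \sigma_w^2 L\sum_m\mPhi_m^*\mPhi_m$ from Lemma~\ref{lemma:matZ}, shows it concentrates around a positive multiple of $\vb\vb^*$, which is the content of the initialization guarantee (Proposition~\ref{prop:init}) and places $\underline{\vh}_0$ inside the basin of attraction.

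Next I would establish the one-step contraction (Proposition~\ref{prop:conv_both}): for either algorithm, if the current iterate makes angle $\theta_t$ with $\underline{\vh}$, then $\sin\theta_{t+1}\le \tfrac12\sin\theta_t + \varepsilon$, where $\varepsilon$ is the statistical floor. For alternating eigenvectors this follows from a Davis--Kahan sensitivity bound applied to the restricted matrix $\mA_{\widehat\vb}$ (resp.\ $\mA_{\widehat\va}$): its smallest eigenvector differs from $\vb$ (resp.\ $\va$) by an amount controlled by two perturbations divided by the gap $\delta$ --- one proportional to the current factor error, producing the $\tfrac12$ coefficient once $\delta$ is large enough, and one proportional to the restricted operator norm of $\mA-\mathbb{E}\mA$, producing $\varepsilon$. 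For the rank-$1$ truncated power method the contraction is the standard power-method gap argument applied to $\mB=\gamma\mId_{MD}-\mA$, together with a bound on the error incurred by the rank-$1$ projection step. Unrolling the recursion yields the $2^{-t}\angle(\underline{\vh}_0,\underline{\vh})$ transient plus a geometric series summing to a constant multiple of $\varepsilon$, matching the two-term form of \eqref{eq:error_detx}.

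The main obstacle is the probabilistic bound on $\varepsilon$, i.e.\ on the restricted fluctuation $\|\mA-\mathbb{E}\mA\|$ with the sharp dependence on $K,D,M,L$. The entries of the signal part $\mPhi^*\mY_0^*\mY_0\mPhi$ are degree-four polynomials in the iid complex Gaussian $\mPhi$, since each $\vh_m=a_m\mS^\transpose\mPhi_m\vb$ is linear in $\mPhi_m$ while $\mY_0$ is bilinear in the outputs, so controlling them calls for decoupling and concentration of Gaussian chaos, followed by a truncation and union bound over an $\varepsilon$-net of the constraint set that produces the $\log^\alpha(MKL)$ factors; the cross and noise terms are handled by subgaussian concentration from (A2). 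The deterministic source enters only through the circular-convolution matrices $\mC_{\vx}$, and the hypothesis $\rho_{\vx}\le C_3\|\vx\|_2^2$ in \eqref{eq:cond_rho} caps the worst-case contribution of the restricted autocorrelation $\widetilde{\mS}\mC_{\vx}^*\mC_{\vx}\widetilde{\mS}^*$; this is precisely where the deterministic bound loses a factor $\sqrt{K}$ relative to the random case, in which averaging over $\vx$ flattens the autocorrelation spectrum toward $\|\vx\|_2^2$. Combining the population gap $\delta$ with these fluctuation bounds, the sample-complexity conditions \eqref{eq:condM}, \eqref{eq:condK}, and \eqref{eq:condL_detx} are exactly what force $\varepsilon$ below the target level in \eqref{eq:error_detx} and keep $\tfrac12\sin\theta_t+\varepsilon$ a genuine contraction at every step.
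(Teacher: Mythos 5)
Your overall architecture matches the paper's: Theorem~\ref{thm:main_detx} is indeed proved by establishing a deterministic-source master result (Proposition~\ref{prop:conv_both}) built from a spectral-initialization guarantee, a Davis--Kahan one-step contraction for the alternating eigenvector updates, and a gap-plus-truncation analysis of the rank-1 power method, with the signal/cross/noise decomposition of $\mY^*\mY$ and the population computation of $\mathbb{E}[\mA]$ exactly as you describe; the random-source theorem is then obtained by substituting probabilistic bounds on $\rho_{x,w}$. However, there is a genuine gap in the probabilistic part of your plan. You propose to control the fluctuation $\mA-\mathbb{E}[\mA]$ by ``decoupling and concentration of Gaussian chaos, followed by a truncation and union bound over an $\varepsilon$-net of the constraint set.'' That is precisely the route of the earlier analysis \cite{lee2016fast_arxiv} (polynomial concentration \cite{adamczak2015concentration} plus $\varepsilon$-net union bounds), which the present paper explicitly abandons because it yields strictly weaker scaling laws; with that machinery you would not recover the rates in \eqref{eq:condL_detx} and \eqref{eq:error_detx}. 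The paper instead bounds the suprema of the resulting chaos processes by generic-chaining estimates (Theorems~\ref{thm:kmr} and \ref{thm:ip}, Maurey's empirical method for the entropy integrals), and---crucially---measures the perturbation not in the spectral norm but in the matricized norm $\tnorm{\cdot}_{S_1\to S_\infty}$ of Lemma~\ref{lemma:bnd_S1S8norm_E}, i.e.\ it tests $\mE$ only against rank-1 directions $\vq\otimes\vxi$. This restricted norm is smaller in order than $\norm{\mE}$, and the paper states that this is exactly what buys the improvement over the linear-subspace analysis of \cite{lee2017spectral}. Your proposal never isolates this device, and an $\varepsilon$-net over the rank-1 set followed by a union bound does not reproduce its effect at the stated rates.

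A secondary inaccuracy: you attribute the factor-$\sqrt{K}$ loss of the deterministic theorem relative to the random one to the autocorrelation $\rho_{\vx}$. In fact the hypothesis $\rho_{\vx}\leq C_3\norm{\vx}_2^2$ is in force in both regimes (it holds with high probability for random $\vx$ by Lemma~\ref{lemma:rho_wx_randx}); the loss enters through the signal--noise cross-correlation $\rho_{x,w}$, for which the deterministic bound of Lemma~\ref{lemma:rho_wx} gives $\rho_{x,w}\lesssim K\sigma_w\sqrt{\rho_x}$ while the random-source bound gains an extra factor of $L^{-1/2}$. This matters because it tells you which term in $\kappa$ of \eqref{eq:updatea_errbnd_prop} you must track to pass from \eqref{eq:error_detx} to \eqref{eq:error_randx}.
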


The condition \eqref{eq:cond_rho} can be interpreted as a kind of incoherence condition on the input signal $\vx$.  Since
\[
	\rho_{\vx} \leq \|\mC_{\vx}\|^2 = L \norm{\widehat{\vx}}_\infty^2,
\]
where $\widehat{\vx} \in \mathbb{C}^L$ is the normalized discrete Fourier transform of $\vx$, it is sufficient that $\hat\vx$ is approximately flat for \eqref{eq:cond_rho} to hold.  This is a milder assumption than imposing an explicit stochastic model on $\vx$ as in Theorem~\ref{thm:main_randx}.  For the price of this relaxed condition, the requirement on $L$ in \eqref{eq:condL_detx} that activates Theorem~\ref{thm:main_detx} is more stringent compared to the analogous condition \eqref{eq:condL_randx} in Theorem~\ref{thm:main_randx}.

\subsection{Proof of main results}
\label{sec:proof_main_res}

The main results in Theorems~\ref{thm:main_randx} and \ref{thm:main_detx} are obtained by the following proposition, the proof of which is deferred to Section~\ref{sec:proof:prop:conv_both}.

\begin{prop}
\label{prop:conv_both}
Suppose the assumptions in (A1) and (A2) hold, $\rho_x$ satisfies \eqref{eq:cond_rho}, $L \geq 3K$, $\mu = O(1)$, and $\nu = \Omega(1)$.
For any $\beta \in \mathbb{N}$, there exist absolute constants $C > 0, \alpha \in \mathbb{N}$ and constants $C_1, C_2$ that only depend on $\beta$, for which the following holds: If
\begin{equation}
\label{eq:condK_prop_both}
K \geq C_1 D \log^\alpha (MKL),
\end{equation}
\begin{equation}
\label{eq:condM_prop_both}
M \geq C_1 \log^\alpha (MKL),
\end{equation}
and
\begin{equation}
\label{eq:condL_prop_both}
L \geq
C_1 \log^\alpha (MKL)
\Big[
\frac{\rho_{x,w}^2}{\eta K \sigma_w^2 \norm{\vx}_2^2}
\Big(
\frac{D}{K \wedge M} + \frac{K}{M^2} + 1
\Big)
+ \frac{D}{\eta^2}
\Big]
\end{equation}
then
\begin{equation}
\sin \angle(\underline{\vh}_t, \underline{\vh}) \leq 2^{-t} \sin \angle(\underline{\vh}_0, \underline{\vh})
+ \kappa, \quad \forall t \in \mathbb{N}
\label{eq:conv_prop_both}
\end{equation}
with probability $1-CK^{-\beta}$, where $\kappa$ satisfies \eqref{eq:updatea_errbnd_prop}.
\end{prop}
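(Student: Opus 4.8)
The plan is to reduce the two algorithms to a single abstract one-step contraction of the form $\epsilon_{t+1} \le \tfrac12 \epsilon_t + \kappa$, where $\epsilon_t := \sin\angle(\underline{\vh}_t,\underline{\vh})$, and then to unroll this recursion. Since $\underline{\vh}_t = \mPhi\vv_t$ with $\vv_t = \widehat{\va}_t\otimes\widehat{\vb}_t$ a unit-norm Kronecker vector, $\epsilon_t$ is a well-defined error measure for both Algorithm~\ref{alg:alteig} and Algorithm~\ref{alg:tpm}, and once the per-step bound is in place the conclusion \eqref{eq:conv_prop_both} follows by summing the geometric series $\sum_{k\ge 0}2^{-k}\kappa \le 2\kappa$ and absorbing the factor $2$ into $\kappa$. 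The core of the argument is therefore (i) producing the idealized target toward which the iterates contract, (ii) a concentration statement that the data matrix $\mA$ is close to this target, and (iii) a per-iteration sensitivity analysis for each algorithm; steps (ii) and (iii) are carried out in detail in Sections~\ref{sec:proof:prop:alteig} and \ref{sec:conv_tpm}, and here they are assembled.

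First I would fix the idealized matrix $\bar{\mA} := \mathbb{E}[\mA]$, computing the expectation over the noise $(\vw_m)$ and, in the random-source case, over $\vx$, via Lemma~\ref{lemma:expectation1}. The key structural fact to extract is that $\bar{\mA}$ has the true coefficient vector $\va\otimes\vb$ (after normalization) as its \emph{smallest} eigenvector, with a quantifiable spectral gap to the rest of the spectrum; this gap, together with the flatness parameters $\mu=O(1)$, $\nu=\Omega(1)$, is what ultimately yields a contraction factor strictly below $1$. I would then establish the concentration bound $\|\mA-\bar{\mA}\|\le\delta$ (and the analogous bounds for the restricted matrices below) holding with probability $1-CK^{-\beta}$; the deviation $\delta$ is precisely what controls the residual $\kappa$ in \eqref{eq:updatea_errbnd_prop}, and demanding $\delta$ small enough both to preserve the gap and to drive $\kappa$ below the basin radius is what forces the sample-complexity conditions \eqref{eq:condK_prop_both}--\eqref{eq:condL_prop_both}. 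The role of $\rho_{\vx}$ (bounded via \eqref{eq:cond_rho}) is to keep the source autocorrelation from inflating this deviation.

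Next comes the per-iteration analysis, handled separately for the two algorithms but with a common Davis--Kahan ($\sin\theta$) backbone. For alternating eigenvectors, each half-step computes the smallest eigenvector of a restricted matrix ($\mA_{\widehat{\vb}}$ for the $\va$-update, $\mA_{\widehat{\va}}$ for the $\vb$-update). I would compare this to the smallest eigenvector of the corresponding restriction of $\bar{\mA}$ evaluated at the \emph{true} factor, splitting the perturbation into two pieces: the error from plugging in $\widehat{\vb}\ne\vb$ (which, by the gap of $\bar{\mA}$, contracts the previous angle by a factor below $\tfrac12$) and the error from $\mA\ne\bar{\mA}$ (which contributes the fixed floor $\kappa$). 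For the rank-$1$ truncated power method the backbone is instead the standard power-iteration contraction driven by the eigengap of $\mB=\gamma\mId_{MD}-\mA$, with the additional best rank-one approximation $\operatorname{mat}(\widetilde{\vv}_t)\mapsto\widehat{\mV}_t$ analyzed as a projection whose error is again controlled by $\delta$ and the Kronecker structure; this yields the same template $\epsilon_{t+1}\le\tfrac12\epsilon_t+\kappa$.

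Finally I would close the loop with an induction: assuming $\epsilon_t$ lies in the basin guaranteed by the chosen constants, the one-step bound keeps $\epsilon_{t+1}$ in the basin (here the smallness of $\kappa$ forced by \eqref{eq:condL_prop_both} is essential), so the recursion is valid for all $t$ and unrolls to \eqref{eq:conv_prop_both}. The main obstacle, in my view, is step (iii) in a \emph{uniform} form: the restricted matrices $\mA_{\widehat{\va}}$ and $\mA_{\widehat{\vb}}$ depend on the random iterate, so the eigenvector-perturbation bounds must hold simultaneously over a whole neighborhood of factors rather than at a single deterministic point, and one must extract a contraction constant provably below $\tfrac12$ from the mean's eigengap while the intricate dependence structure of $\mA$ (intra- and inter-channel convolutions of the Gaussian bases with the subgaussian source and noise) is controlled sharply enough to avoid spurious logarithmic or dimensional losses beyond those already recorded in \eqref{eq:condL_prop_both}.
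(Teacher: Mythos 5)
Your proposal follows essentially the same route as the paper: the proof is an assembly of the per-iteration contraction guarantees (Davis--Kahan for the alternating eigenvector half-steps, power-iteration contraction plus rank-1 truncation for the truncated power method), combined with a concentration bound on $\mA-\mathbb{E}[\mA]$, an induction keeping the iterates in the basin, and unrolling of $\epsilon_{t+1}\le\tfrac12\epsilon_t+\kappa$. The only details you gloss over are the transfer from the factor angles $\angle(\va_t,\va),\angle(\vb_t,\vb)$ to $\angle(\underline{\vh}_t,\underline{\vh})$, which costs the condition number of $\mPhi$ (bounded by a constant with high probability under \eqref{eq:condK_prop_both}), and the explicit invocation of the spectral-initialization guarantee (Proposition~\ref{prop:init}) to establish the base case of your induction.
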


Then the proofs for Theorems~\ref{thm:main_randx} and \ref{thm:main_detx} are given by combining Proposition~\ref{prop:conv_both} with the following lemmas, taken from \cite{lee2017spectral}, which provide tail estimates on the signal autocorrelation and the signal-noise cross correlation.

\begin{lemma}[{\hspace{1sp}\cite[Lemma~3.9]{lee2017spectral}}]
\label{lemma:rho_wx}
Suppose (A2) holds and let $\vx$ be a fixed sequence of numbers obeying \eqref{eq:cond_rho}. For any $\beta \in \mathbb{N}$, there exists an absolute constant $C$ such that
\[
\rho_{x,w} \leq C K \sigma_w \sqrt{\rho_x} \sqrt{1 + \log M + \beta \log K}
\]
holds with probability $1 - K^{-\beta}$.
\end{lemma}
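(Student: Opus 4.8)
The plan is to recognize $\rho_{x,w}$ as the spectral norm of the signal--noise cross-correlation matrix, maximized over channels, i.e.\ the natural analog of $\rho_x$ with one copy of $\mC_{\vx}$ replaced by $\mC_{\vw_m}$:
\[
\rho_{x,w} = \max_{1\leq m\leq M}\norm{\widetilde{\mS}\mC_{\vx}^*\mC_{\vw_m}\widetilde{\mS}^*}.
\]
The object $\mM_m := \widetilde{\mS}\mC_{\vx}^*\mC_{\vw_m}\widetilde{\mS}^*$ is \emph{linear} in the subgaussian noise $\vw_m$, which is the structural feature I would exploit. Because $\log M$ appears additively under the square root rather than as a multiplicative $\sqrt{M}$, the dependence on $M$ should come entirely from a union bound (a maximum) over the channels, so it suffices to control a single $\mM_m$ at a sharp enough probability level and then union-bound over $m$ at the very end.

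For a fixed $m$ I would deliberately \emph{avoid} an $\epsilon$-net over the sphere in $\mathbb{C}^{3K-2}$: the covering entropy $\sim K$ would leak an extra $K$ \emph{inside} the square root, contaminating the clean logarithmic factor in the statement. Instead I would bound the operator norm by the Frobenius norm,
\[
\norm{\mM_m}\leq\norm{\mM_m}_{\mathrm{F}}\leq (3K-2)\max_{j,k}\bigl|(\mM_m)_{jk}\bigr|,
\]
which is precisely where the explicit prefactor $K$ in the statement originates. Each entry rewrites, using commutativity of circular convolution, as a fixed linear functional of the noise, $(\mM_m)_{jk}=\langle \vw_m,\ \mC_{\widetilde{\mS}^*\ve_k}^*\mC_{\vx}\widetilde{\mS}^*\ve_j\rangle$, so it is a mean-zero subgaussian scalar whose variance proxy is $\sigma_w$ times the $\ell_2$ norm of its coefficient vector.

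The key computation is that coefficient norm. Since each row of $\widetilde{\mS}$ is a single standard basis vector, $\widetilde{\mS}^*\ve_k$ has unit support, so $\mC_{\widetilde{\mS}^*\ve_k}$ is a circular shift and hence unitary; consequently the coefficient vector has norm $\norm{\mC_{\vx}\widetilde{\mS}^*\ve_j}_2=\norm{\vx}_2\leq\sqrt{\rho_x}$, the last inequality holding because every diagonal entry of the PSD matrix $\widetilde{\mS}\mC_{\vx}^*\mC_{\vx}\widetilde{\mS}^*$ equals $\norm{\vx}_2^2$ and is dominated by its spectral norm $\rho_x$. Thus each entry has subgaussian norm at most a constant times $\sigma_w\sqrt{\rho_x}$. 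Applying a scalar Hoeffding-type tail bound to each of the $(3K-2)^2$ entries of each of the $M$ matrices and union-bounding over these $\sim MK^2$ events, with the tail level set proportional to $\sigma_w\sqrt{\rho_x}\sqrt{\log(MK^2)+\beta\log K}$, drives the failure probability below $K^{-\beta}$; since $\log(MK^2)+\beta\log K\lesssim 1+\log M+\beta\log K$, multiplying by the $(3K-2)$ Frobenius prefactor yields the claimed bound.

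I expect the variance-proxy step to be the only genuine obstacle: one must track carefully how the two-block restriction operator $\widetilde{\mS}$ interacts with the \emph{modular} (circular) convolution so that the coefficient vector collapses to a pure shift of $\vx$ with norm exactly $\norm{\vx}_2$, and then relate this to $\sqrt{\rho_x}$ via the diagonal-entry argument. Everything downstream --- the Frobenius reduction, the scalar concentration, and the union bound --- is routine.
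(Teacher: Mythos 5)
Your argument is correct. Note that this paper does not prove Lemma~\ref{lemma:rho_wx} at all---it is imported verbatim from \cite[Lemma~3.9]{lee2017spectral}---so there is no in-paper proof to compare against; your write-up therefore serves as a self-contained substitute. The two points you flag as delicate do go through: every row of $\widetilde{\mS}$ in \eqref{eq:deftildeS} is a distinct standard basis vector of $\mathbb{C}^L$ (using $L\geq 3K$), so $\mC_{\widetilde{\mS}^*\ve_k}$ is a cyclic shift and the coefficient vector of each entry has norm exactly $\norm{\vx}_2$, which is indeed at most $\sqrt{\rho_x}$ by the diagonal-versus-spectral-norm comparison for the PSD matrix $\widetilde{\mS}\mC_{\vx}^*\mC_{\vx}\widetilde{\mS}^*$ (so the hypothesis \eqref{eq:cond_rho} is not even needed for this direction). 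The only bookkeeping worth making explicit is that $\log(MK^2)+\beta\log K\leq 3(1+\log M+\beta\log K)$ because $\beta\geq 1$, which keeps the constant $C$ absolute; with that, the Frobenius reduction, the scalar subgaussian tail, and the union bound over the $O(MK^2)$ entries deliver exactly the stated estimate.
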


\begin{lemma}[{\hspace{1sp}\cite[Lemma~3.10]{lee2017spectral}}]
\label{lemma:rho_wx_randx}
Suppose (A2) holds and let $\vx$ be a sequence of zero-mean iid subgaussian random variables with variance $\sigma_x^2$.  Then
\[
\frac{\rho_x}{\norm{\vx}_2^2}
\leq
\frac{L + C_\beta K \log^{3/2} L \sqrt{\log K}}{L - \sqrt{2L \beta \log K}}
\]
and
\[
\frac{\rho_{x,w}}{\sigma_w \norm{\vx}_2}
\leq
\frac{C_\beta K \log^5 (MKL)}{\sqrt{L - \sqrt{2L \beta \log K}}}
\]
hold with probability $1-3K^{-\beta}$.
\end{lemma}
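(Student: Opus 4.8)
The statement is imported verbatim from the companion paper, so one option is simply to cite it; I will instead sketch a self-contained route, which naturally splits along the two displayed inequalities.

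\textbf{First bound.} I would start from the identity $\rho_x = \norm{\widetilde{\mS}\mC_{\vx}^*\mC_{\vx}\widetilde{\mS}^*} = \norm{\mC_{\vx}\widetilde{\mS}^*}^2$. Since the entries of $\vx$ are iid, zero-mean, with variance $\sigma_x^2$, a direct entrywise computation gives $\mathbb{E}[\mC_{\vx}^*\mC_{\vx}] = L\sigma_x^2\mId_L$; and because the rows of $\widetilde{\mS}$ in \eqref{eq:deftildeS} are distinct standard basis vectors (the two index blocks are disjoint once $L \ge 3K$), we have $\widetilde{\mS}\widetilde{\mS}^* = \mId_{3K-2}$ and hence $\mathbb{E}[\widetilde{\mS}\mC_{\vx}^*\mC_{\vx}\widetilde{\mS}^*] = L\sigma_x^2\mId_{3K-2}$. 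Thus
\[
\rho_x \le L\sigma_x^2 + \norm{\widetilde{\mS}(\mC_{\vx}^*\mC_{\vx} - L\sigma_x^2\mId_L)\widetilde{\mS}^*},
\]
so the problem reduces to controlling the spectral norm of the $(3K-2)\times(3K-2)$ centered, windowed circular-autocorrelation matrix. For the denominator of the claimed ratio, $\norm{\vx}_2^2$ is a sum of $L$ independent subexponential terms with mean $L\sigma_x^2$; a one-sided Bernstein bound at deviation level $\beta\log K$ yields $\norm{\vx}_2^2 \ge \sigma_x^2(L - \sqrt{2L\beta\log K})$ with probability $1 - K^{-\beta}$, which is exactly the denominator (and the common factor $\sigma_x^2$ cancels in the ratio).

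\textbf{Controlling the deviation.} Writing $\vz' = \widetilde{\mS}^*\vz$ for a unit vector $\vz$ supported on the length-$(3K-2)$ window, the quadratic form $\vz^*\widetilde{\mS}(\mC_{\vx}^*\mC_{\vx} - L\sigma_x^2\mId_L)\widetilde{\mS}^*\vz = \norm{\vx \circledast \vz'}_2^2 - L\sigma_x^2$ is a centered order-two chaos in $\vx$, so the deviation above equals the supremum $\sup_{\norm{\vz}_2 = 1}|\norm{\vx\circledast\vz'}_2^2 - \mathbb{E}[\,\cdot\,]|$ of a chaos process indexed by the partial circulant matrices $\{\mC_{\vz'}\}$. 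The obstacle is the sharp scaling: a crude $\varepsilon$-net of the $(3K-2)$-sphere has $e^{O(K)}$ points, and combining this with a per-point Hanson--Wright bound loses an extra factor of $K$ (yielding $K^2$ instead of the target $K$). To recover the linear-in-$K$ dependence I would instead invoke the generic-chaining bound for suprema of chaos processes (Krahmer--Mendelson--Rauhut), whose governing quantities are Talagrand's $\gamma_2$ functional of $\{\mC_{\vz'} : \norm{\vz}_2 = 1\}$ together with its operator- and Frobenius-norm radii. The band-limitation of $\vz'$ to $3K-2$ samples is what controls these: $\norm{\mC_{\vz'}}$ equals the largest modulus of the DFT of $\vz'$ and is at most $\sqrt{3K-2}$ since $\norm{\vz'}_1 \le \sqrt{3K-2}$, while the $\gamma_2$ estimate contributes the polylogarithmic factor $\log^{3/2}L\sqrt{\log K}$, producing the numerator $L + C_\beta K\log^{3/2}L\sqrt{\log K}$. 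This chaining step is the main obstacle of the whole lemma.

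\textbf{Second bound.} For the signal--noise cross-correlation $\rho_{x,w}$ appearing in Lemma~\ref{lemma:rho_wx}, the key point is that one cannot merely substitute the deterministic estimate of that lemma: plugging $\sqrt{\rho_x}\approx\sigma_x\sqrt{L}$ and dividing by $\norm{\vx}_2\approx\sigma_x\sqrt{L}$ only gives $\rho_{x,w}/(\sigma_w\norm{\vx}_2) = O(K\sqrt{\log(MK)})$, which is weaker than the claimed $O(K\log^5(MKL)/\sqrt{L})$ by a factor $\sqrt{L}$. The extra $1/\sqrt{L}$ must be harvested from the randomness of $\vx$ as well: under the present hypotheses $\rho_{x,w}$ is a \emph{mean-zero} bilinear object in the independent pair $(\vx,\vw)$, and I would bound its supremum as a decoupled chaos process, chaining over one factor conditionally on the other and then over the remaining one, using independence and zero mean to kill the leading term. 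Inserting the first-part bound on $\rho_x$ to control the conditional variance proxy, and taking a union over the $M$ channels, collapses the several chaining and tail factors into the single $\log^5(MKL)$. The remaining work is purely bookkeeping: tracking the failure probabilities of the three events (the two spectral-norm tails and the lower tail of $\norm{\vx}_2^2$) so that they sum to $3K^{-\beta}$.
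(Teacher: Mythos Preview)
The paper does not prove this lemma here; it is quoted from \cite{lee2017spectral}, so there is nothing to compare against beyond a citation. That said, your sketch has a quantitative gap that keeps it from reaching the stated bounds.

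In the first inequality you set up the chaos process correctly: with $\Delta=\{\mC_{\widetilde{\mS}^*\vz}:\vz\in B_2^{3K-2}\}$ one has $d_{\mathrm F}(\Delta)=\sqrt{L}$, $d_{\mathrm S}(\Delta)\le\sqrt{3K-2}$, and Maurey gives $\gamma_2(\Delta,\norm{\cdot})\lesssim\sqrt{K}\sqrt{\log K}\,\log^{3/2}L$. But plugging these into Theorem~\ref{thm:kmr} yields
\[
K_1 \;=\; \gamma_2(\gamma_2+d_{\mathrm F})+d_{\mathrm F}d_{\mathrm S}\;\gtrsim\;\sqrt{KL}\,,
\]
so the deviation you obtain is $\sigma_x^2\sqrt{KL}\cdot\text{polylog}$, not $\sigma_x^2 K\cdot\text{polylog}$. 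You appear to have dropped the $d_{\mathrm F}$-dependent cross terms when you wrote ``producing the numerator $L+C_\beta K\log^{3/2}L\sqrt{\log K}$''. For $L\gg K$ the $\sqrt{KL}$ scaling is genuinely weaker than the lemma's $K$ scaling (it gives $\rho_x/\norm{\vx}_2^2\le 1+C\sqrt{K/L}\,\text{polylog}$ rather than $1+CK/L\,\text{polylog}$). The same miscount recurs in the second inequality: the decoupled bilinear chaos in $(\vx,\vw_m)$ has the same $d_{\mathrm F}=\sqrt{L}$, $d_{\mathrm S}\sim\sqrt{K}$, so Theorem~\ref{thm:ip} again produces $\sqrt{KL}$, i.e.\ $\rho_{x,w}\lesssim\sigma_x\sigma_w\sqrt{KL}\,\text{polylog}$, whereas the claim requires $\sigma_x\sigma_w K\,\text{polylog}$.

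Your decomposition, the lower-tail bound on $\norm{\vx}_2^2$, and the observation that the second bound genuinely needs the randomness of $\vx$ are all correct. What is missing is an argument that beats the generic $d_{\mathrm F}d_{\mathrm S}$ term; the proof in \cite{lee2017spectral} must exploit more structure of the windowed autocorrelation than a black-box application of Theorem~\ref{thm:kmr} to the full sphere $B_2^{3K-2}$ captures. If you only need \eqref{eq:cond_rho} (i.e.\ $\rho_x\le C\norm{\vx}_2^2$) for the downstream application, your $\sqrt{KL}$ bound already suffices; but it does not establish the lemma as stated.
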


\subsection{Proof of Proposition~\ref{prop:conv_both}}
\label{sec:proof:prop:conv_both}

The proof of Proposition~\ref{prop:conv_both} is given by a set of propositions, which provide guarantees for Algorithms~\ref{alg:spectral_init}, \ref{alg:alteig} and Algorithm~\ref{alg:tpm}.  The first proposition provides a performance guarantee for the initialization by Algorithm~\ref{alg:spectral_init}. The proof of Proposition~\ref{prop:init} is given in Section~\ref{sec:proof:prop:init}.

\begin{prop}[Initialization]
\label{prop:init}
Suppose the assumptions in (A1) and (A2) hold, $\rho_x$ satisfies \eqref{eq:cond_rho}, and $L \geq 3K$.  Let $\mu,\nu,\eta$ be defined in \eqref{eq:defmua}, \eqref{eq:defnua}, \eqref{eq:etasimp}, respectively.  For any $\beta \in \mathbb{N}$, there exist absolute constants $C > 0, \alpha \in \mathbb{N}$ and constants $C_1, C_2$ that only depend on $\beta$, for which the following holds:
If
\begin{equation}
\label{eq:init_condM}
M \geq C_1 \log^\alpha (MKL) \cdot \Big(\frac{\mu}{\nu}\Big)^2
\end{equation}
and
\begin{equation}
\label{eq:init_condL}
L \geq
C_1 \log^\alpha (MKL)
\cdot
\Big[\frac{\rho_{x,w}^2}{\eta K \sigma_w^2 \norm{\vx}_2^2} \cdot
\Big(\frac{\mu^2 K}{\nu^4 M^2} + \frac{D}{\nu^2 M}\Big)
+ \frac{D}{\eta^2 \nu^4 M}\Big],
\end{equation}
then the estimate $\widehat{\vb}$ by Algorithm~\ref{alg:spectral_init} satisfies
\begin{equation}
\label{eq:init_errbnd}
\sin \angle(\widehat{\vb},\vb)
\leq
C_2 \log^\alpha (MKL)
\Big[
\frac{\mu}{\nu \sqrt{M}}
+ \frac{\rho_{x,w}}{\sqrt{\eta KL} \sigma_w \norm{\vx}_2} \cdot
\Big(\frac{\mu \sqrt{K}}{\nu^2 M} + \frac{\sqrt{D}}{\nu \sqrt{M}}\Big)
+ \frac{\sqrt{D}}{\eta \nu^2 \sqrt{ML}}
\Big]
\end{equation}
holds with probability $1-CK^{-\beta}$.
\end{prop}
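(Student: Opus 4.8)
The plan is to study the corrected matrix
\[
\widehat{\mM} := \mGamma\mGamma^* - \sigma_w^2 L \sum_{m=1}^M \mPhi_m^*\mPhi_m
\]
whose leading eigenvector is $\widehat{\vb}$, by comparing it against its mean and then invoking a Davis--Kahan $\sin\theta$ bound. First I would apply Lemma~\ref{lemma:matZ} together with $\vy_m = \vh_m\circledast\vx + \vw_m$ and the identity $\mC_{\vy_m}=\mC_{\vh_m}\mC_{\vx}+\mC_{\vw_m}$ to split $\mGamma=\mGamma_{\mathrm s}+\mGamma_{\mathrm n}$, where $\mGamma_{\mathrm s}=\sum_m \mPhi_m^*\mS\mC_{\vh_m}\mC_{\vx}\opconj$ is the signal part and $\mGamma_{\mathrm n}=\sum_m \mPhi_m^*\mS\mC_{\vw_m}\opconj$ the noise part. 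Expanding $\mGamma\mGamma^*$ produces the signal--signal term $\mGamma_{\mathrm s}\mGamma_{\mathrm s}^*$, the cross terms $\mGamma_{\mathrm s}\mGamma_{\mathrm n}^*+\mGamma_{\mathrm n}\mGamma_{\mathrm s}^*$, and the noise--noise term $\mGamma_{\mathrm n}\mGamma_{\mathrm n}^*$, the last of which the correction is meant to offset.

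Next I would compute the mean in two stages. Conditioning on the bases $(\mPhi_m)$ and taking $\E_{\vw}$, the cross terms vanish (they are linear and mean-zero in $\vw$), while using $\opconj\opconj^*=\mId_L$, $\mS\mS^\transpose=\mId_K$ and $\E[\mC_{\vw_m}\mC_{\vw_m}^*]=\sigma_w^2 L\mId_L$ (from (A2)) shows that $\E_{\vw}[\mGamma_{\mathrm n}\mGamma_{\mathrm n}^*]=\sigma_w^2 L\sum_m\mPhi_m^*\mPhi_m$, which is \emph{exactly} cancelled by the correction. Hence $\E_{\vw}[\widehat{\mM}]=\mGamma_{\mathrm s}\mGamma_{\mathrm s}^*$. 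Because $\vh_m=\mS^\transpose a_m\mPhi_m\vb$, the factor $\mGamma_{\mathrm s}$ is quadratic in each $\mPhi_m$, so $\E_{\mPhi}[\mGamma_{\mathrm s}\mGamma_{\mathrm s}^*]$ is a fourth Gaussian moment; its dominant contribution is a positive rank-one spike $\theta\,\vb\vb^*$, with $\theta$ proportional to the aggregate signal energy, plus an essentially isotropic floor. The structural fact I must establish is that $\vb$ is the leading eigenvector of this mean with spectral gap of order $\theta$; this is exactly where the gain-flatness parameters $\mu,\nu$ and the channel condition \eqref{eq:init_condM} enter, guaranteeing the spike dominates the floor.

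The technical heart is bounding $\norm{\widehat{\mM}-\E[\widehat{\mM}]}$ in operator norm on $\mathbb{C}^{D\times D}$, and I would treat the three groups separately. The cross terms are controlled through the signal--noise cross-correlation norm $\rho_{x,w}$, which feeds directly into the middle term of \eqref{eq:init_errbnd} once Lemmas~\ref{lemma:rho_wx} and \ref{lemma:rho_wx_randx} are invoked later. The noise--noise fluctuation $\mGamma_{\mathrm n}\mGamma_{\mathrm n}^*-\E_{\vw}[\mGamma_{\mathrm n}\mGamma_{\mathrm n}^*]$ is a centered quadratic form in the subgaussian noise and concentrates at a rate scaling like $\sqrt{D}/\sqrt{M}$, producing the last term. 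The hardest piece, and the main obstacle, is the signal--signal term: since the same $\mPhi_m$ appears both as the outer factor $\mPhi_m^*$ and inside $\mC_{\vh_m}$, the matrix $\mGamma_{\mathrm s}\mGamma_{\mathrm s}^*$ is a fourth-order Gaussian chaos, and I must control its deviation from $\E_{\mPhi}[\mGamma_{\mathrm s}\mGamma_{\mathrm s}^*]$ in operator norm with the sharp $K,D,M$ dependence of \eqref{eq:init_condL}. I would attack this by decoupling the two appearances of $\mPhi_m$, combining Hanson--Wright/moment estimates for Gaussian chaos with a matrix concentration inequality across the $M$ independent channels (supplying the $1/\sqrt{M}$ averaging and hence the leading $\mu/(\nu\sqrt{M})$ term), and using $L\ge 3K$ so the relevant circular convolutions reduce to linear ones and the restricted autocorrelation is captured by $\rho_x$.

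Finally I would combine the mean and the deviation bound via an eigenvector perturbation inequality: since $\E[\widehat{\mM}]$ is, up to the isotropic floor, rank one with top eigenvector $\vb$ and gap of order $\theta$, the Davis--Kahan $\sin\theta$ theorem yields $\sin\angle(\widehat{\vb},\vb)\lesssim \norm{\widehat{\mM}-\E[\widehat{\mM}]}/\theta$. Dividing each of the three deviation contributions by $\theta$ and simplifying under $\mu=O(1)$, $\nu=\Omega(1)$ reproduces the three additive terms of \eqref{eq:init_errbnd}, while the length condition \eqref{eq:init_condL} is precisely what drives each concentration term below a constant so that the $\sin\theta$ estimate is nontrivial.
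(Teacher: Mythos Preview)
Your overall architecture---split $\mGamma=\mGamma_{\mathrm s}+\mGamma_{\mathrm n}$, identify a rank-one target, bound three perturbation groups, and finish with Davis--Kahan---matches the paper. The important divergence is in how you handle the signal--signal term. You propose to take $\E_{\mPhi}[\mGamma_{\mathrm s}\mGamma_{\mathrm s}^*]$ as the reference (a fourth Gaussian moment producing a spike plus an ``isotropic floor'') and then control the fourth-order chaos $\mGamma_{\mathrm s}\mGamma_{\mathrm s}^*-\E_{\mPhi}[\mGamma_{\mathrm s}\mGamma_{\mathrm s}^*]$ directly via decoupling and Hanson--Wright. The paper instead takes as reference the \emph{product of the means}, $\E[\mGamma_{\mathrm s}]\,\E[\mGamma_{\mathrm s}]^*$, which is exactly rank one: $\E[\mGamma_{\mathrm s}]=K\norm{\va}_1\,\vb\,\vx^\transpose$, so $\E[\mGamma_{\mathrm s}]\E[\mGamma_{\mathrm s}]^*=K^2\norm{\vx}_2^2\norm{\va}_1^2\,\vb\vb^*$. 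This has two consequences. First, in their Davis--Kahan variant the residual block $\mD$ is identically zero, so the gap is simply $\lambda=K^2\norm{\vx}_2^2\norm{\va}_1^2\norm{\vb}_2^2$ and there is no floor to quantify or to compare against the spike. Second, and more importantly, the signal perturbation factorizes algebraically,
\[
\mGamma_{\mathrm s}\mGamma_{\mathrm s}^*-\E[\mGamma_{\mathrm s}]\E[\mGamma_{\mathrm s}]^*
=(\mGamma_{\mathrm s}-\E[\mGamma_{\mathrm s}])(\mGamma_{\mathrm s}-\E[\mGamma_{\mathrm s}])^*
+\E[\mGamma_{\mathrm s}](\mGamma_{\mathrm s}-\E[\mGamma_{\mathrm s}])^*
+(\mGamma_{\mathrm s}-\E[\mGamma_{\mathrm s}])\E[\mGamma_{\mathrm s}]^*,
\]
so only $\norm{\mGamma_{\mathrm s}-\E[\mGamma_{\mathrm s}]}$ needs to be estimated. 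That object is \emph{second}-order in the Gaussian bases (each $\mPhi_m$ appears once outside and once inside $\mC_{\mS^*\mPhi_m\vb}$), and the paper bounds it by a single application of a second-order chaos supremum theorem; dividing by $\lambda$ and using $\norm{\va}_1\ge\nu\sqrt{M}\norm{\va}_2$ yields the $\mu/(\nu\sqrt{M})$ term directly. Your route, centering at the full expectation, genuinely faces a fourth-order chaos whose sharp control at these rates is substantially harder---you flag this yourself as ``the main obstacle.'' The paper's trick sidesteps it entirely.

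For the cross and noise groups your plan coincides with the paper's: the cross term is also split through $\mGamma_{\mathrm s}=\E[\mGamma_{\mathrm s}]+(\mGamma_{\mathrm s}-\E[\mGamma_{\mathrm s}])$, leading to the two pieces that produce the $\mu\sqrt{K}/(\nu^2 M)$ and $\sqrt{D}/(\nu\sqrt{M})$ contributions; the noise fluctuation is handled by a quadratic-form concentration in the Gaussians (applied after conditioning on $\vw$), giving the $\sqrt{D}/(\eta\nu^2\sqrt{ML})$ term. So your proposal would go through if you replace the reference $\E_{\mPhi}[\mGamma_{\mathrm s}\mGamma_{\mathrm s}^*]$ by $\E[\mGamma_{\mathrm s}]\E[\mGamma_{\mathrm s}]^*$ and use the factorization above; as written, the signal--signal step is the gap.
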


The second proposition, proved in Section~\ref{subsec:proof_prop_update_a}, provides a performance guarantee for the update of $\widehat{\va}$ by Step~\ref{alg:update_a} of Algorithm~\ref{alg:alteig}.

\begin{prop}[Update of Channel Gains]
\label{prop:update_a}
Suppose the assumptions in (A1) and (A2) hold, $\rho_x$ satisfies \eqref{eq:cond_rho}, $L \geq 3K$, and the previous estimate $\widehat{\vb}$ satisfies
\begin{equation}
\label{eq:starterrb}
\angle(\vb,\widehat{\vb}) \leq \frac{\pi}{4}.
\end{equation}
For any $\beta \in \mathbb{N}$, there exist absolute constants $C > 0, \alpha \in \mathbb{N}$ and constants $C_1, C_2$ that only depend on $\beta$, for which the following holds: If
\begin{equation}
\label{eq:updatea_condK_prop}
K \geq C_1 \mu^4 D \log^\alpha (MKL),
\end{equation}
\begin{equation}
\label{eq:updatea_condM_prop}
M \geq C_1 \mu^4 \log^\alpha (MKL),
\end{equation}
and
\begin{equation}
\label{eq:updatea_condL_prop}
L \geq
C_1 \log^\alpha (MKL)
\Big[
\frac{\rho_{x,w}^2}{\eta K \sigma_w^2 \norm{\vx}_2^2}
\Big(
\mu^2 \Big( \frac{D}{K \wedge M} + \frac{K}{M^2} \Big) + 1
\Big)
+ \frac{D}{\eta^2}
\Big]
\end{equation}
then the updated $\widehat{\va}$ by Step~\ref{alg:update_a} of Algorithm~\ref{alg:alteig} satisfies
\begin{equation}
\label{eq:updatea_recursion}
\sin\angle(\va,\widehat{\va})
\leq \frac{1}{2} \sin\angle(\vb,\widehat{\vb}) + \kappa
\end{equation}
with probability $1-CK^{-\beta}$, where $\kappa$ satisfies
\begin{equation}
\label{eq:updatea_errbnd_prop}
\kappa \leq
C_2 \log^\alpha(MKL) \Big[ \frac{\rho_{x,w}}{\sqrt{\eta KL}\sigma_w \norm{\vx}_2}
\Big(
\mu\Big(\frac{\sqrt{K}}{M} + \sqrt{\frac{D}{M}} + \sqrt{\frac{D}{K}} \, \Big) + 1
\Big)
+ \frac{\sqrt{D}}{\eta \sqrt{ML}}
\Big].
\end{equation}
\end{prop}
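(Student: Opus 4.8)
The plan is to read Step~\ref{alg:update_a} as a perturbed smallest-eigenvector problem and to control it with a Davis--Kahan (sin-theta) argument. Since $\widehat{\va}$ is the eigenvector of $\mA_{\widehat{\vb}} = (\mId_M\otimes\widehat{\vb})^*\mA(\mId_M\otimes\widehat{\vb})$ associated with its smallest eigenvalue, it suffices to (i) exhibit a reference matrix whose smallest eigenvector is exactly (a phase multiple of) $\va$ and which carries a quantifiable spectral gap, and (ii) bound how far $\mA_{\widehat{\vb}}$ moves off that reference when acting in the $\va$ direction. Throughout I condition on the input $\widehat{\vb}$, treating it as a fixed unit vector obeying \eqref{eq:starterrb}; this is exactly the modular structure of the argument, in which each step receives the previous estimate as a deterministic input of guaranteed quality, leaving only the Gaussian subspaces $(\mPhi_m)$ and the noise $(\vw_m)$ random.

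The starting point is the noise-free null-space identity: when $\vw=\vzero$ the cross-convolution construction \eqref{eq:mcsys2} gives $\mY\underline{\vh}=\vzero$ with $\underline{\vh}=\mPhi(\va\otimes\vb)$, so that $\mA(\va\otimes\vb)=\vzero$ exactly. I therefore expand the residual of $\mA_{\widehat{\vb}}$ at $\va$ as
\[
\mA_{\widehat{\vb}}\va = (\mId_M\otimes\widehat{\vb})^*\mA\bigl(\va\otimes\widehat{\vb}\bigr),
\qquad
\va\otimes\widehat{\vb} = \va\otimes\vb + \va\otimes(\widehat{\vb}-\vb).
\]
The first summand is annihilated by the noise-free part of $\mA$, so the residual splits into a \emph{signal} contribution driven by $\widehat{\vb}-\vb$ and a \emph{noise/deviation} contribution driven by the additive noise and by the fluctuation of $\mA$ about its mean. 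Dividing by the spectral gap, the first piece produces the contractive term $\tfrac12\sin\angle(\vb,\widehat{\vb})$ in \eqref{eq:updatea_recursion}: because both this cross-term and the gap scale with the \emph{same} per-channel signal energy ($K\norm{\vx}_2^2\norm{\vu}_2^2/M$, equivalently $\eta\sigma_w^2 L$), their ratio is an absolute constant that the analysis pins below one half, with the angle hypothesis \eqref{eq:starterrb} keeping the expansion in its linear regime. The second piece, which does not scale with the signal, is exactly the additive bound $\kappa$ of \eqref{eq:updatea_errbnd_prop}.

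To make this rigorous I would first compute $\E[\mA_{\widehat{\vb}}]$ over $(\mPhi_m)$ and $(\vw_m)$ using the expectation identities (Lemma~\ref{lemma:expectation1}) together with the variance correction built into $\mA$, and show that the resulting $M\times M$ population matrix has $\va/\norm{\va}_2$ as its smallest eigenvector with a gap proportional to the signal strength. I would then bound three deviations in spectral norm: the fluctuation $\mA_{\widehat{\vb}}-\E[\mA_{\widehat{\vb}}]$ coming from the Gaussian blocks under (A1); the signal--noise cross terms, controlled through $\rho_{x,w}$ via Lemma~\ref{lemma:rho_wx}; and the pure-noise autocorrelation, controlled through $\rho_x$. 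The hypotheses \eqref{eq:updatea_condK_prop} and \eqref{eq:updatea_condM_prop} (with their $\mu^4$ factors) are precisely what force the first two fluctuations below half the gap with probability $1-CK^{-\beta}$, so that Davis--Kahan applies with the stated gap, while \eqref{eq:updatea_condL_prop} drives the sample-length-dependent residual into the form of $\kappa$.

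The main obstacle is this concentration step. Because $\mY$ is assembled from the outputs $\vy_m = a_m\,(\mS^*\mPhi_m\vb)\circledast\vx+\vw_m$, the matrix $\mA=\mPhi^*(\mY^*\mY-\cdots)\mPhi$ is a high-degree polynomial in the Gaussian entries of $(\mPhi_m)$ and in the noise, and its off-diagonal blocks couple independent channels through circular convolutions with $\vx$. Establishing both the spectral-norm concentration of $\mA_{\widehat{\vb}}$ and the lower bound on its gap therefore demands high-order moment and decoupling estimates for these structured products, and this is the source of the polylogarithmic factors $\log^\alpha(MKL)$ and of the $\mu^4$ dependence on the channel-gain spread. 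I expect the argument to follow the template of the spectral-method analysis in \cite{lee2017spectral}, adapted to the restriction $(\mId_M\otimes\widehat{\vb})$ that projects the problem onto the $M$-dimensional gain space; the key new bookkeeping is that projecting with an \emph{inexact} $\widehat{\vb}$ is what injects the extra $\sin\angle(\vb,\widehat{\vb})$ dependence, and this must be tracked so that it contracts with factor $\tfrac12$ rather than accumulating across iterations.
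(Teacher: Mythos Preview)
Your Davis--Kahan framework is exactly what the paper uses, and your decomposition of the residual $\mA_{\widehat{\vb}}\va$ into a piece proportional to $\sin\angle(\vb,\widehat{\vb})$ and an additive remainder is correct in spirit. There is, however, one genuine gap and one technical mechanism you are missing.

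\textbf{Circularity in ``conditioning on $\widehat{\vb}$''.} You propose to treat $\widehat{\vb}$ as a fixed deterministic input and then establish concentration of $\mA_{\widehat{\vb}}$ about its mean. But $\widehat{\vb}$ is a function of the same randomness $(\mPhi_m,\vw_m)$ that drives $\mA$, so a bound of the form $\mathbb{P}(\|\mA_{\widehat{\vb}}-\E[\mA_{\widehat{\vb}}]\|>t\mid\widehat{\vb})\leq\cdots$ cannot be turned into an unconditional statement without a union bound over all admissible $\widehat{\vb}$. The paper handles this by introducing a \emph{uniform} deviation norm: for $\mE=\mA-\underline{\mA}$ it defines
\[
\tnorm{\mE}_{S_1\to S_\infty}=\sup_{\substack{\|\vc\|_2,\|\vc'\|_2\leq 1\\ \|\vd\|_2,\|\vd'\|_2\leq 1}}\bigl|(\vc'\otimes\vd')^*\mE(\vc\otimes\vd)\bigr|,
\]
so that $\|(\mId_M\otimes\widehat{\vb}^*)\mE(\mId_M\otimes\widehat{\vb})\|\leq\tnorm{\mE}_{S_1\to S_\infty}$ for \emph{every} unit $\widehat{\vb}$, random or not. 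The tail bound for $\tnorm{\mE}_{S_1\to S_\infty}$ (Lemma~\ref{lemma:bnd_S1S8norm_E}) is the workhorse of the proof; it is where the rank-1 constraint buys you something over the spectral norm of $\mE$ used in \cite{lee2017spectral}, specifically in the cross term (Lemma~\ref{lemma:Ec}).

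\textbf{The two-norm split that produces $\kappa$.} The paper does not bound $\|(\mM-\underline{\mM})\vq_1\|_2$ by $\|\mM-\underline{\mM}\|$; after decomposing $\widehat{\vb}=\mP_{\vb}\widehat{\vb}+\mP_{\vb^\perp}\widehat{\vb}$ on both sides of the sandwich, it separates out the piece $(\mId_M\otimes\widehat{\vb}^*)\mE(\va\otimes\vb)$ and bounds it by the sharper quantity $\tnorm{\mE\vu}_{S_\infty}/\|\vu\|_2$ (Lemma~\ref{lemma:bnd_2norm_Eu}). The resulting estimate is
\[
\frac{4\|(\mM-\underline{\mM})\vq_1\|_2}{\lambda}\leq\Bigl(\frac{8\mu^2}{M}+\frac{24\,\tnorm{\mE}_{S_1\to S_\infty}}{K^2\|\vx\|_2^2\|\vu\|_2^2}\Bigr)\sin\theta+\frac{4\,\tnorm{\mE\vu}_{S_\infty}}{K^2\|\vx\|_2^2\|\vu\|_2^3},
\]
and the conditions \eqref{eq:updatea_condK_prop}--\eqref{eq:updatea_condL_prop} are what drive the bracketed coefficient below $1/2$. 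Note that the $8\mu^2/M$ term comes not from a random fluctuation but from the deterministic piece $K^2\|\vx\|_2^2\|\vb\|_2^2\sin^2\theta\,\mathrm{diag}(|\va|^2)$ in the restricted expectation \eqref{eq:restrictedmA:lemma:update_a}; your description of the contractive term as an ``absolute constant'' ratio is not quite right---it is small only because of \eqref{eq:updatea_condM_prop}. Minor slip: the pure-noise term is controlled through $\rho_w$ (Lemma~\ref{lemma:rho_w}), not $\rho_x$.
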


We have a similar result for the update of $\widehat{\vb}$ by Step~\ref{alg:update_b} of Algorithm~\ref{alg:alteig}, which is stated in the following proposition. The proof of Proposition~\ref{prop:update_b} is provided in Section~\ref{subsec:proof_prop_update_b}.

\begin{prop}[Update of Subspace Coefficients]
\label{prop:update_b}
Suppose the assumptions in (A1) and (A2) hold, $\rho_x$ satisfies \eqref{eq:cond_rho}, $L \geq 3K$, and the previous estimate $\widehat{\va}$ satisfies
\begin{equation}
\label{eq:starterra}
\angle(\va,\widehat{\va}) \leq \frac{\pi}{4}.
\end{equation}
For any $\beta \in \mathbb{N}$, there exist absolute constants $C > 0, \alpha \in \mathbb{N}$ and constants $C_1, C_2$ that depend on $\beta$, for which the following holds: If \eqref{eq:updatea_condK_prop}, \eqref{eq:updatea_condM_prop}, and \eqref{eq:updatea_condL_prop} are satisfied, then the updated $\widehat{\vb}$ by Step~\ref{alg:update_b} of Algorithm~\ref{alg:alteig} satisfies
\[
\sin\angle(\vb,\widehat{\vb})
\leq \frac{1}{2} \sin\angle(\va,\widehat{\va}) + \kappa
\]
with probability $1-CK^{-\beta}$, where $\kappa$ satisfies \eqref{eq:updatea_errbnd_prop}.
\end{prop}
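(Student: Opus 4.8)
The proof parallels that of Proposition~\ref{prop:update_a} with the roles of the channel gains $\va$ and the subspace coefficients $\vb$ interchanged; indeed the hypotheses \eqref{eq:updatea_condK_prop}--\eqref{eq:updatea_condL_prop} and the residual term $\kappa$ in \eqref{eq:updatea_errbnd_prop} are stated identically, so the plan is to transport the sensitivity analysis essentially verbatim and then verify that the dimension swap $M \leftrightarrow D$ does not degrade any of the bounds. Step~\ref{alg:update_b} returns the smallest eigenvector $\widehat{\vb}$ of the $D \times D$ matrix $\mA_{\widehat{\va}} = (\widehat{\va}\otimes\mId_D)^*\mA(\widehat{\va}\otimes\mId_D)$, so the task is an eigenvector-perturbation (Davis--Kahan) argument comparing this matrix to an idealized population matrix whose smallest eigenvector is exactly $\vb$.

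First I would establish the mean structure. In the noiseless case the cross-convolution identity $\mY\mPhi(\va\otimes\vb)=\vzero$ forces $\va\otimes\vb$ into the kernel of $\mA$, and since the noise correction removes the bias of the noise-noise term, $\E[\mA_{\va}]\,\vb = (\va\otimes\mId_D)^*\E[\mA](\va\otimes\vb) = \vzero$; thus $\vb$ is the smallest eigenvector of $\E[\mA_{\va}]$. Using assumption (A1) I would then show this matrix has a spectral gap $\delta$, bounded below by a multiple of the signal energy, separating the direction $\vb$ from the rest of the spectrum. This is where the genericity of the Gaussian bases $\mPhi_m$ enters.

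Next I would split the total perturbation as
\[
\mA_{\widehat{\va}} - \E[\mA_{\va}]
= \big(\mA_{\widehat{\va}} - \mA_{\va}\big)
+ \big(\mA_{\va} - \E[\mA_{\va}]\big).
\]
The first, \emph{plug-in}, term I would bound in operator norm by a constant times $\|\mA\|\,\|\widehat{\va}-\va\|$, and since $\|\widehat{\va}-\va\| \lesssim \sin\angle(\va,\widehat{\va})$ for unit vectors, dividing by the gap $\delta$ produces the contraction factor; choosing the constants in \eqref{eq:updatea_condK_prop}--\eqref{eq:updatea_condL_prop} large enough forces this factor below $\tfrac12$. The second, \emph{statistical}, term is the substantive one: I would reuse the concentration machinery behind Proposition~\ref{prop:update_a} — splitting $\mY^*\mY$ into its signal-signal, signal-noise, and noise-noise blocks and bounding each after the $\mPhi$-projection, ultimately invoking Lemmas~\ref{lemma:rho_wx} and \ref{lemma:rho_wx_randx} to control $\rho_{x,w}$ — so that $\|\mA_{\va}-\E[\mA_{\va}]\|/\delta$ reproduces the bound $\kappa$ of \eqref{eq:updatea_errbnd_prop}. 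The hypothesis $\angle(\va,\widehat{\va}) \leq \pi/4$ in \eqref{eq:starterra} guarantees that the $\widehat{\va}\otimes\mId_D$ projection does not annihilate the $\vb$-component and that the perturbed gap stays bounded below, so the Davis--Kahan $\sin\Theta$ bound applies and yields $\sin\angle(\vb,\widehat{\vb}) \leq \tfrac12\sin\angle(\va,\widehat{\va}) + \kappa$.

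The hard part will be the statistical fluctuation bound: $\mA$ is a quadratic form in the Gaussian bases $\mPhi_m$ composed with the non-isotropic, circular-convolution structure of $\mY^*\mY$, so controlling its deviation from the mean requires careful decoupling of the inter-channel convolution terms rather than an off-the-shelf matrix concentration inequality. This analysis is shared with Proposition~\ref{prop:update_a}; the only genuinely new bookkeeping for Proposition~\ref{prop:update_b} is that the eigenvector problem now lives in dimension $D$, with the projection taken along $\widehat{\va}$ rather than $\widehat{\vb}$, and the main checkpoint is confirming that replacing $M$ by $D$ in the covering-number and dimension factors still yields the same conditions and the same $\kappa$.
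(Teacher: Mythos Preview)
Your high-level plan is right --- the paper also says the proof ``is similar to that of Proposition~\ref{prop:update_a}'' and runs a Davis--Kahan argument --- but the specific decomposition you propose has a gap that the paper's different decomposition avoids.

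You split $\mA_{\widehat{\va}} - \E[\mA_{\va}] = (\mA_{\widehat{\va}} - \mA_{\va}) + (\mA_{\va} - \E[\mA_{\va}])$ and bound the plug-in term by $\norm{\mA}\,\norm{\widehat{\va}-\va}$. The problem is that $\norm{\mA}$ and the gap $\delta$ are both on the order of $K^2\norm{\vx}_2^2\norm{\vu}_2^2$, so $\norm{\mA}/\delta = O(1)$ and the conditions \eqref{eq:updatea_condK_prop}--\eqref{eq:updatea_condL_prop} do \emph{not} drive this ratio to zero; they control the random fluctuation $\mE = \mA - \underline{\mA}$ relative to the signal energy, not $\norm{\mA}$ itself. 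Your contraction factor would therefore be stuck near $1$ rather than below $1/2$. Similarly, identifying $\kappa$ with $\norm{\mA_{\va}-\E[\mA_{\va}]}/\delta$ is too coarse: that quantity is governed by $\tnorm{\mE}_{S_1\to S_\infty}$ (Lemma~\ref{lemma:bnd_S1S8norm_E}), which carries extra ``pure signal'' terms $(1/\sqrt{M}+\sqrt{D/K})\mu^2$ that are absent from the $\kappa$ of \eqref{eq:updatea_errbnd_prop}.

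The paper instead evaluates the \emph{population} matrix $\underline{\mA}$ directly at the current estimate $\widehat{\va}$, obtaining the explicit formula
\[
(\widehat{\va}^* \otimes \mId_D)\underline{\mA}(\widehat{\va} \otimes \mId_D)
= K^2\norm{\vx}_2^2\norm{\va}_2^2(\norm{\vb}_2^2\mId_D - \cos^2\breve{\theta}\,\vb\vb^*)
- K^2\norm{\vx}_2^2\norm{\vb}_2^2\,\norm{|\va|\odot\widehat{\va}}_2^2\,\mP_{\vb^\perp},
\]
with $\breve{\theta}=\angle(\va,\widehat{\va})$. After shifting by a multiple of the identity, the reference matrix is the rank-one $K^2\norm{\vx}_2^2\norm{\va}_2^2\cos^2\breve{\theta}\,\vb\vb^*$, and the deterministic remainder $K^2\norm{\vx}_2^2\norm{\vb}_2^2\norm{|\va|\odot\widehat{\va}}_2^2\mP_{\vb^\perp}$ both (i) has norm at most $K^2\norm{\vx}_2^2\norm{\vb}_2^2\norm{\va}_\infty^2$, i.e.\ a factor $\mu^2/M$ smaller than $\lambda$, and (ii) annihilates $\vb$, so it contributes nothing to $\norm{(\mM-\underline{\mM})\vq_1}_2$. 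The random part is $(\widehat{\va}^*\otimes\mId_D)\mE(\widehat{\va}\otimes\mId_D)$, which the paper further splits via $\widehat{\va}=\mP_{\va}\widehat{\va}+\mP_{\va^\perp}\widehat{\va}$ into a piece bounded by $\sin\breve{\theta}\cdot\tnorm{\mE}_{S_1\to S_\infty}$ (this is what actually produces the $\tfrac12$ contraction under the stated conditions, via Lemma~\ref{lemma:bnd_S1S8norm_E}) and a piece bounded by $\tnorm{\mE\vu}_{S_\infty}/\norm{\vu}_2$ (this is what produces $\kappa$, via the sharper Lemma~\ref{lemma:bnd_2norm_Eu}). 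The matricized $S_1\to S_\infty$ norm is essential here; a spectral-norm bound on $\mE$ would lose exactly the factor you need. Lemmas~\ref{lemma:rho_wx} and \ref{lemma:rho_wx_randx} are not used at this stage --- they enter only when passing from Proposition~\ref{prop:conv_both} to the main theorems.
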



The next proposition shows the convergence of the rank-1 truncated power method from a good initialization. See Section~\ref{sec:conv_tpm} for the proof.

\begin{prop}[Local Convergence of Rank-1 Truncated Power Method]
\label{prop:tpm4mbd}
Suppose the assumptions in (A1) and (A2) hold, $\rho_x$ satisfies \eqref{eq:cond_rho}, and $L \geq 3K$.  Let $0 < \mu < 1$, $0 < \tau < \frac{1}{3\sqrt{2}}$, and
\[
c(\mu,\tau) = \min\Big(\frac{1}{\mu\sqrt{1-\tau^2}}, \frac{(1+\mu)\tau}{1-\mu}\Big).
\]
For any $\beta \in \mathbb{N}$, there exist absolute constants $C > 0, \alpha \in \mathbb{N}$, constants $C_1', C_2'$ that only depend on $\beta$, for which the following holds: If \eqref{eq:updatea_condK_prop}, \eqref{eq:updatea_condM_prop}, and \eqref{eq:updatea_condL_prop} are satisfied for $C_1 = c(\mu,\tau) C_1', C_2 = c(\mu,\tau) C_2'$ and $\vu_0$ satisfies
\[
\sin \angle(\vu_0, \vu) \leq \tau,
\]
then $(\vu_t)_{t\in\mathbb{N}}$ by Algorithm~\ref{alg:tpm} for $\mB = \norm{\mathbb{E}[\mA]} \mId_{MD} - \mA$ with $\vu_0$ satisfies
\begin{equation}
\sin \angle(\vu_t, \vu) \leq \mu^t \sin \angle(\vu_0, \vu)
+ \frac{(1+\mu)\kappa}{1-\mu}, \quad \forall t \in \mathbb{N}
\label{eq:conv_tpm_mbd}
\end{equation}
with probability $1-CK^{-\beta}$, where $\kappa$ satisfies \eqref{eq:updatea_errbnd_prop}.
\end{prop}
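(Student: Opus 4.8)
The plan is to treat Proposition~\ref{prop:tpm4mbd} as a deterministic convergence statement that runs on top of the same probabilistic estimates already used for Propositions~\ref{prop:update_a} and \ref{prop:update_b}. First I would condition on the high-probability event on which $\norm{\mA - \mathbb{E}[\mA]} \le \kappa$, with $\kappa$ the quantity in \eqref{eq:updatea_errbnd_prop}; under \eqref{eq:updatea_condK_prop}--\eqref{eq:updatea_condL_prop} this event holds with probability $1-CK^{-\beta}$, and since $\mB - \mathbb{E}[\mB] = -(\mA - \mathbb{E}[\mA])$ the same bound controls the perturbation of $\mB$. On this event I would invoke the structure of $\mathbb{E}[\mA]$ established for the alternating-eigenvector analysis: the normalized target $\vu = (\va\otimes\vb)/\norm{\va\otimes\vb}_2$ is the smallest eigenvector of $\mathbb{E}[\mA]$ and is separated from the remaining spectrum by a gap. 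Consequently $\mathbb{E}[\mB] = \norm{\mathbb{E}[\mA]}\mId_{MD} - \mathbb{E}[\mA]$ has $\vu$ as its \emph{largest} eigenvector, with top eigenvalue $\lambda_1$ and second eigenvalue $\lambda_2<\lambda_1$, and crucially $\operatorname{mat}(\vu)$ is rank one. Everything that follows is deterministic.

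The heart of the argument is a one-step contraction. Writing $\vu_{t-1} = (\cos\theta)\,\vu + (\sin\theta)\,\vg$ with $\vg \perp \vu$, $\norm{\vg}_2 = 1$ and $\theta = \angle(\vu_{t-1},\vu)$, I would apply $\mB$ and split $\mB\vu_{t-1} = (\cos\theta)\mathbb{E}[\mB]\vu + (\sin\theta)\mathbb{E}[\mB]\vg + (\mB-\mathbb{E}[\mB])\vu_{t-1}$. The component along $\vu$ is at least $\lambda_1\cos\theta - \kappa$; since $\vu^\perp$ is invariant under the symmetric $\mathbb{E}[\mB]$, the component orthogonal to $\vu$ is at most $\lambda_2\sin\theta + \kappa$, giving the pre-projection bound $\tan\angle(\mB\vu_{t-1},\vu) \le (\lambda_2\sin\theta + \kappa)/(\lambda_1\cos\theta - \kappa)$. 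I would then account for the rank-one projection: set $\widetilde{\vu}_t = \mB\vu_{t-1}$ and let $\vu_t$ be the normalized vectorization of the best rank-one approximation of $\operatorname{mat}(\widetilde{\vu}_t)$. Because $\operatorname{mat}(\vu)$ is itself rank one, a Wedin/Davis--Kahan estimate for the leading singular triple shows the projection inflates the angle by at most a fixed factor, so that $\sin\angle(\vu_t,\vu) \le \mu\,\sin\angle(\vu_{t-1},\vu) + \kappa$ once the gap ratio $\lambda_2/\lambda_1$ (divided by $\cos\theta$) and the projection looseness are absorbed into the target rate $\mu$.

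The constants $\tau$ and $c(\mu,\tau)$ enter precisely at this matching step. The hypothesis $\sin\angle(\vu_0,\vu)\le\tau$ keeps $\cos\theta \ge \sqrt{1-\tau^2}$, so the denominator $\lambda_1\cos\theta - \kappa$ stays bounded below and the effective rate $(\lambda_2/\lambda_1)/\cos\theta$ can be forced to $\mu$; this is the origin of the factor $1/(\mu\sqrt{1-\tau^2})$. The complementary term $(1+\mu)\tau/(1-\mu)$ guarantees the error floor $(1+\mu)\kappa/(1-\mu)$ does not exceed $\tau$, so the basin condition is preserved under the iteration. Requiring \eqref{eq:updatea_condK_prop}--\eqref{eq:updatea_condL_prop} with $C_1 = c(\mu,\tau)C_1'$ and $C_2 = c(\mu,\tau)C_2'$ makes $\kappa$ small enough relative to the gap for both to hold. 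I would then close the argument by induction: the one-step bound together with basin maintenance gives $\sin\angle(\vu_t,\vu) \le \mu\sin\angle(\vu_{t-1},\vu) + \kappa$ for every $t$, and unrolling the recursion with $\sum_{j\ge 0}\mu^j = 1/(1-\mu)$ produces \eqref{eq:conv_tpm_mbd}.

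The main obstacle is the rank-one projection, which is nonlinear and can in principle increase the distance to $\vu$; controlling it requires a stable bound on the leading singular vectors of $\operatorname{mat}(\widetilde{\vu}_t)$ and couples the two one-sided perturbations, of the left and right singular vectors, into a single angle bound. A secondary subtlety is that the sample matrix $\mB$ need not have an exactly rank-one top eigenvector, so the comparison must be made throughout against the population rank-one target $\vu$ rather than against the leading eigenvector of $\mB$; this is what forces $\kappa$ to be measured against the population gap and is the reason the sample-complexity constants must be inflated by $c(\mu,\tau)$.
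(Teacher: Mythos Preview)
Your outline has the right overall shape (power-method step plus rank-one projection plus induction), but there is a genuine gap in how you handle the perturbation, and it causes the argument to miss the point of the proposition.

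You condition on the event $\norm{\mA-\mathbb{E}[\mA]}\le\kappa$ with $\kappa$ from \eqref{eq:updatea_errbnd_prop}, and then use this single $\kappa$ both to control the contraction rate and to set the additive floor. But $\kappa$ in \eqref{eq:updatea_errbnd_prop} is \emph{not} a bound on the spectral norm of $\mE=\mA-\mathbb{E}[\mA]$; it bounds the finer quantity $\tnorm{\mE\vu}_{S_\infty}/(K^2\norm{\vx}_2^2\norm{\vu}_2^3)$ (Lemma~\ref{lemma:bnd_2norm_Eu}). The signal component $\mPhi^*\mY_{\mathrm s}^*\mY_{\mathrm s}\mPhi-\mathbb{E}[\cdot]$ annihilates $\vu$ exactly (because $\mY_{\mathrm s}\mPhi\vu=0$), which is why $\kappa$ contains only cross and noise terms; but that same signal component does \emph{not} vanish on a generic $\vg\perp\vu$, so the orthogonal-component bound $\lambda_2\sin\theta+\kappa$ in your one-step estimate is not justified. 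The paper separates these two roles: the contraction rate is driven by $\tnorm{\mE}_{S_1\to S_\infty}$ (Lemma~\ref{lemma:bnd_S1S8norm_E}), which carries the extra $(\mu^2/\sqrt{M}+\mu^2\sqrt{D/K})$ terms and is what the conditions \eqref{eq:updatea_condK_prop}--\eqref{eq:updatea_condL_prop} are designed to make small; the additive error is the strictly smaller $\tnorm{\mE\vu}_{S_\infty}$, giving exactly $\kappa$. If you use the spectral norm throughout, you would not recover the additive bound \eqref{eq:updatea_errbnd_prop}, and the paper is explicit that the $S_1\to S_\infty$ norm is what yields the improved scaling over the linear-subspace analysis.

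The second gap is the rank-one projection. A direct Wedin bound on the leading singular vectors of $\operatorname{mat}(\widetilde{\vv}_t)$ would need a singular-value gap for $\operatorname{mat}(\widetilde{\vv}_t)$ itself, which you have not produced. The paper instead runs the whole iteration inside the subspace $\breve{T}=T(\va_{t-1},\vb_{t-1})+T(\va_t,\vb_t)+T(\va,\vb)$: it compares $\widetilde{\vv}_t$ not to $\vu$ but to $\vv(\breve{T})$, the top eigenvector of $\mP_{\breve{T}}\mB\mP_{\breve{T}}$, applies a correlation-preservation lemma for the rank-one truncation (Lemma~\ref{lemma:cor_after_rank1}, which is not a Davis--Kahan statement), and only then passes back to $\vu$ via a restricted perturbation bound (Lemma~\ref{lemma:constrained_perturbation}). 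This detour through $\breve{T}$ is precisely what lets the weaker $S_1\to S_\infty$ norm control everything, via Lemma~\ref{lemma:projnorm}. Your sketch does not contain this mechanism, and without it the projection step and the norm bookkeeping do not close.
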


Finally, we derive the proof of Proposition~\ref{prop:conv_both} by combining the above propositions.

\begin{proof}[Proof of Proposition~\ref{prop:conv_both}]
Similarly to the proof of \cite[Proposition~3.3]{lee2017spectral}, we show that
\begin{equation}
\label{eq:rel_angles}
\sin \angle(\underline{\vh}_t, \underline{\vh})
\leq \frac{\sigma_{\max}(\mPhi)}{\sigma_{\min}(\mPhi)} \cdot \sqrt{2} \sin \angle(\vu_t,\vu)
\end{equation}
and
\begin{equation}
\label{eq:rel2_angles}
\sin \angle(\vu_t,\vu)
\leq \max\Big[\sin \angle(\va_t,\va), \sin \angle(\vb_t,\vb)\Big].
\end{equation}
Furtheremore, as we choose $C_1$ in \eqref{eq:condK_prop_both} sufficiently large, we can upper bound the condition number of $\mPhi$ by a constant (e.g., 3) with high probability. We proceed the proof under this event.  Then the convergence results in Propositions~\ref{prop:update_a}, \ref{prop:update_b}, and \ref{prop:tpm4mbd} imply \eqref{eq:conv_prop_both}.

Since $\mu = O(1)$, the conditions in \eqref{eq:updatea_condK_prop}, \eqref{eq:updatea_condM_prop}, \eqref{eq:updatea_condL_prop} respectively reduce \eqref{eq:condK_prop_both}, \eqref{eq:condM_prop_both}, \eqref{eq:condL_prop_both}.  Furthermore, since $\nu = \Omega(1)$, \eqref{eq:init_condL} is implied by \eqref{eq:condL_prop_both}.  By choosing $C_1$ large enough, we can make the initial error bound in \eqref{eq:init_errbnd} small so that the conditions for previous estimates in Propositions~\ref{prop:update_a}, \ref{prop:update_b}, \ref{prop:tpm4mbd} are satisfied and the assertion is obtained by these propositions.
\end{proof}

\section{Numerical Results}
\label{sec:num_res}

In this section, we provide observation on empirical performance of the alternating eigenvectors method (AltEig) in Algorithm~\ref{alg:alteig} and the rank-1 truncated power method (RTPM) in Algorithm~\ref{alg:tpm}, both initialized by the spectral initialization in Algorithm~\ref{alg:spectral_init}.  We compare the two iterative algorithms to the classical cross-convolution method (CC) by Xu et al. \cite{xu1995least}, which only imposes the time-limited model on impulse responses, and to the subspace-constrained cross-convolution method (SCCC) \cite{lee2017spectral}, which imposes a linear subspace model on impulse responses.  This comparison will demonstrate how the estimation error improves progressively as we impose a stronger prior model on impulse responses.

In our first experiment, we tested the algorithms on generic data where the basis $\mPhi$ is an i.i.d. Gaussian matrix.  The input source signal $\vx$, subspace coefficient vector $\vb$, and additive noise are i.i.d. Gaussian too.  The channel gain vector is generated by adding random perturbation to all-one vector so that $\va = \bm{1}_{M,1} + \alpha \vxi/\norm{\vxi}_\infty$, where $\vxi = [\xi_1,\dots,\xi_M]^\transpose$ and $\xi_1,\dots,\xi_M$ are independent copies of a uniform random variable on $[-1,1)$.  We use a performance metric given as the 95th percentile of the estimation error in the sine of the principal angle between the estimate and the ground truth out of 1,000 trials.  This amounts to the error for the worst-case except 5\% of the instances.  In other words, the estimation error is less than this threshold with high probability no less than 0.95.

\begin{figure}
  \centering
  \subfloat[]{\includegraphics[width=2.1in]{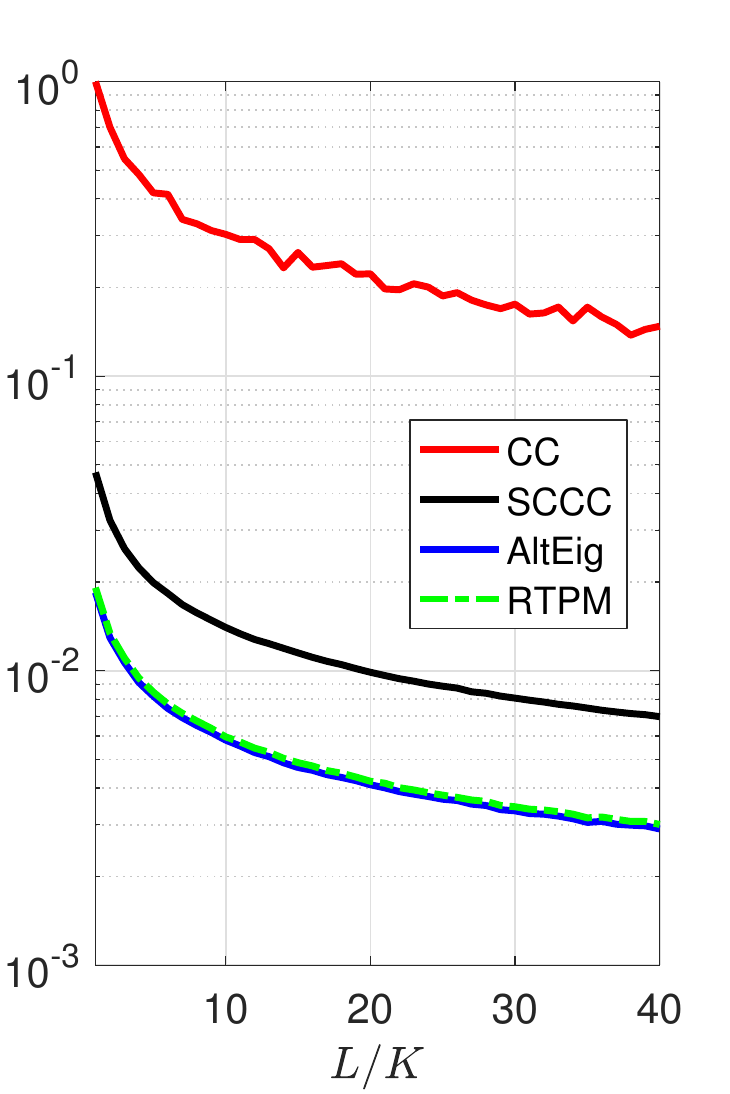}\label{fig:vary_L}}
  \hfil
  \subfloat[]{\includegraphics[width=2.1in]{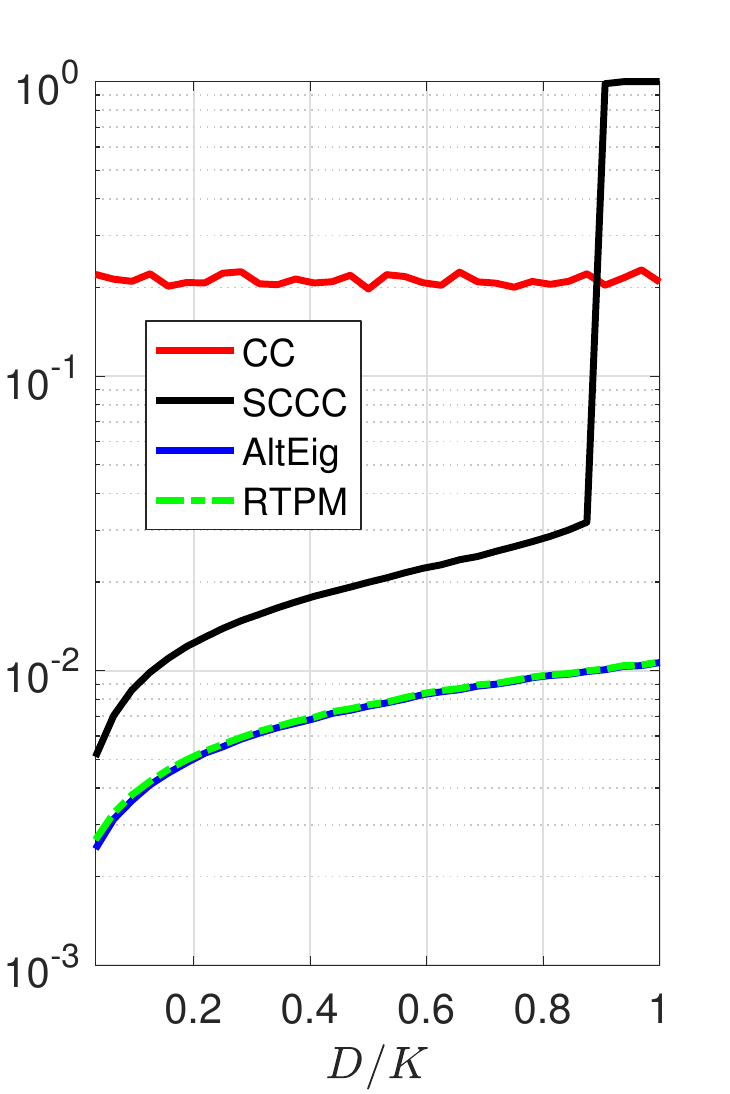}\label{fig:vary_D}}
  \\
  \subfloat[]{\includegraphics[width=2.1in]{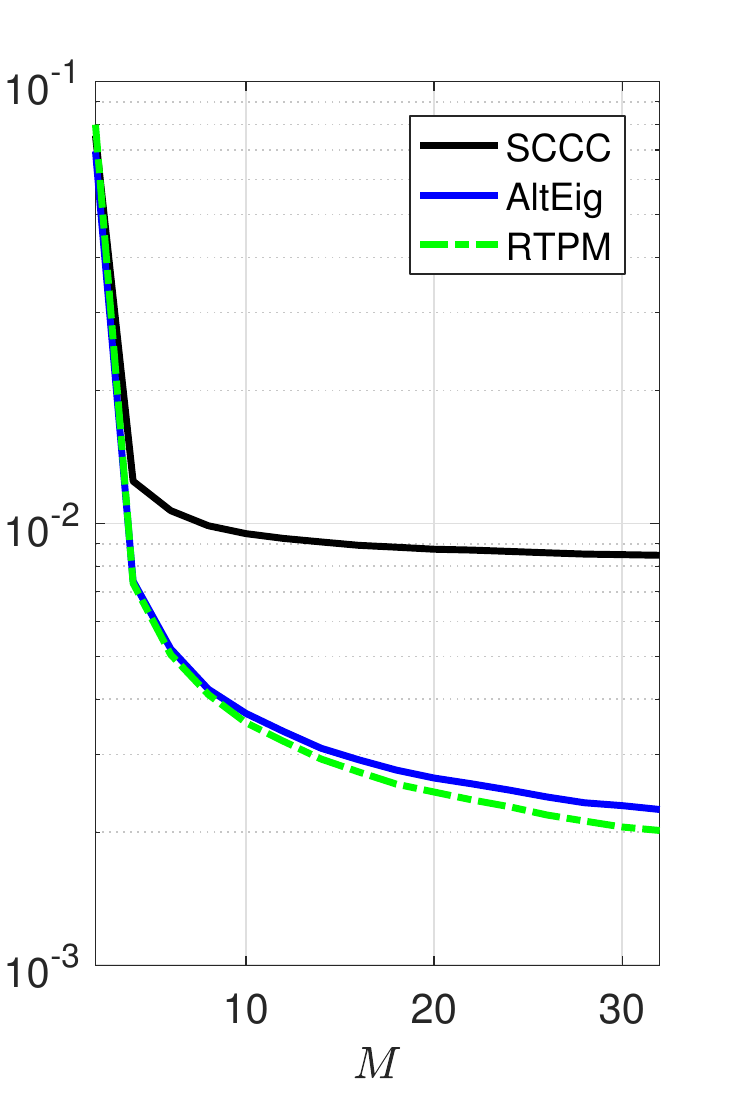}\label{fig:vary_M}}
  \hfil
  \subfloat[]{\includegraphics[width=2.1in]{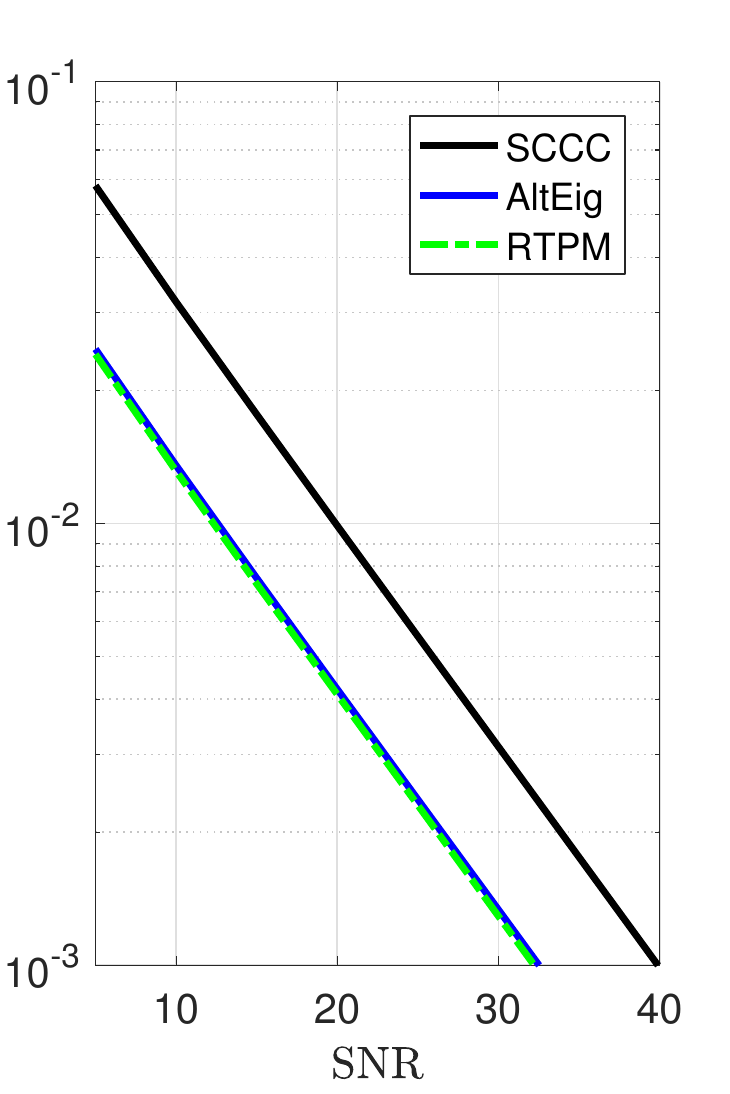}\label{fig:vary_SNR}}
  \hfil
  \subfloat[]{\includegraphics[width=2.1in]{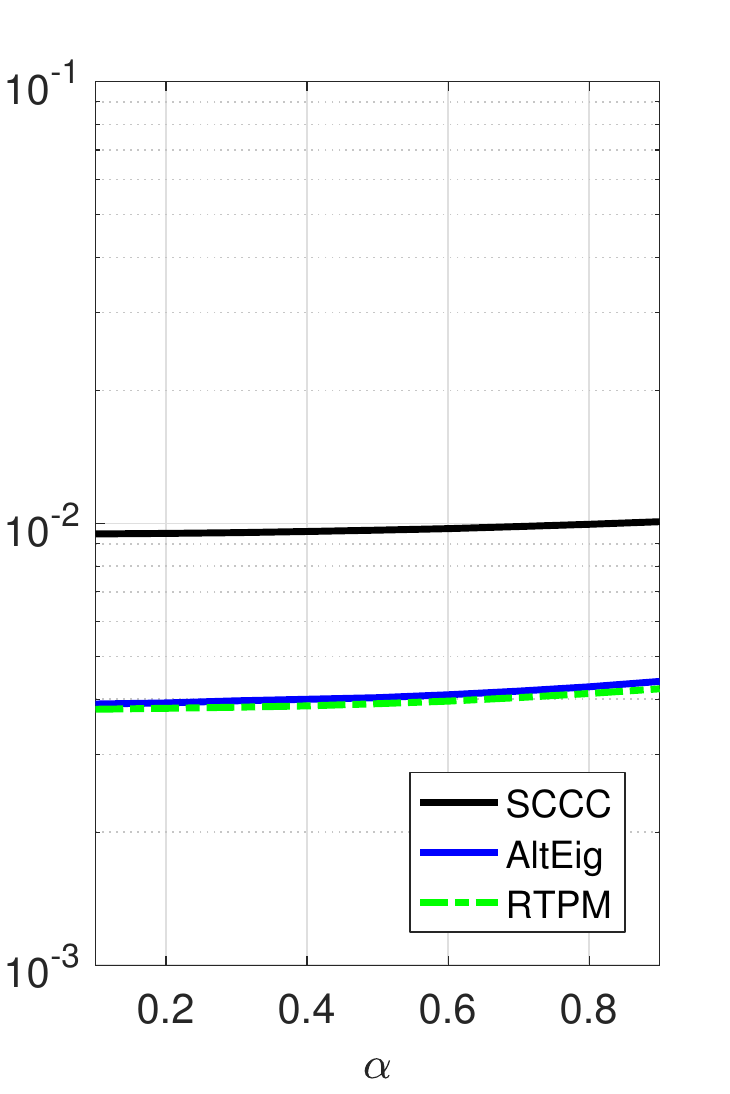}\label{fig:vary_alpha}}
  \caption{Comparison of cross-convolution (CC), subspace-constrained cross-convolution (SCCC), alternating eigenvectors method (AltEig), and rank-1 truncated power method (RTPM). Default parameter setting: $M = 8$, $K = 256$, $D = 32$, $L = 20 K$, SNR = 20 dB. The 95th percentile estimation error is plotted in a logarithmic scale as we vary each parameter as follows:
  \protect\subref{fig:vary_L} $L$;
  \protect\subref{fig:vary_D} $D$;
  \protect\subref{fig:vary_M} $M$;
  \protect\subref{fig:vary_SNR} SNR;
  \protect\subref{fig:vary_alpha} $\alpha$.
  }
  \label{fig:cgsp_line}%
\end{figure}

\begin{figure}
  \centering
  \includegraphics[width=2.1in]{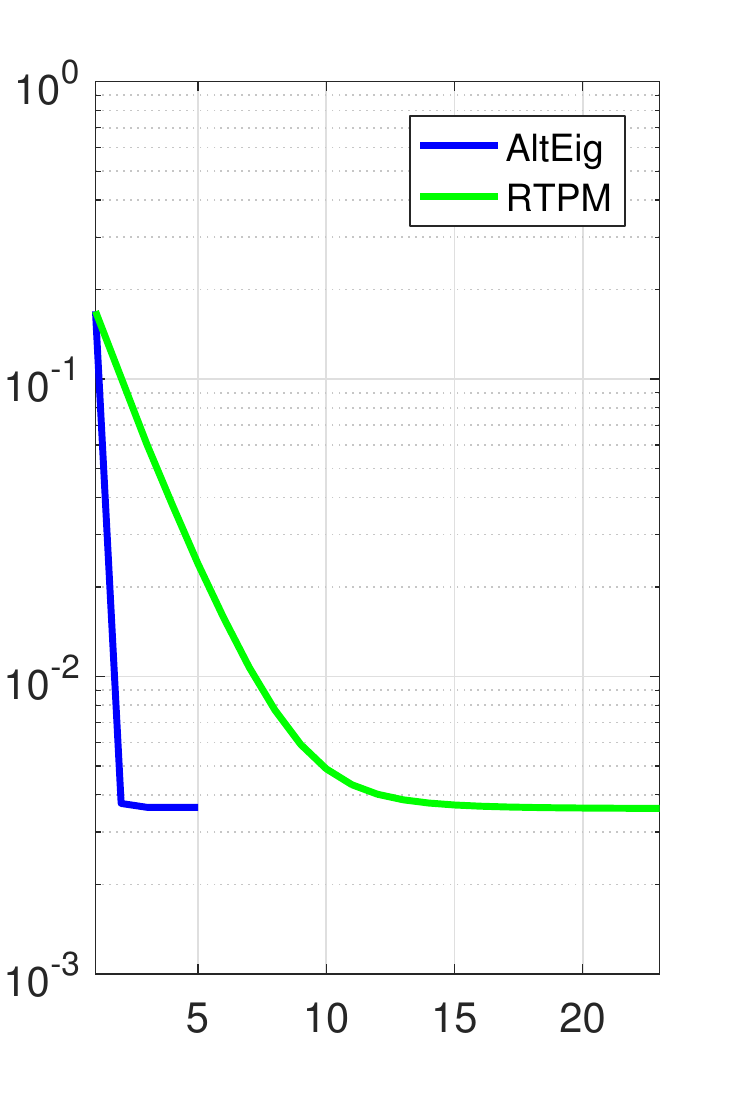}
  \caption{Convergence of alternating eigenvectors method (AltEig) and rank-1 truncated power method (RTPM) for a random instance. $x$-axis: iteration index, $y$-axis: log of the estimation error. $M = 8$, $K = 256$, $D = 32$, $L = 20 K$, SNR = 20 dB.}
  \label{fig:convergence}%
\end{figure}

Figure~\ref{fig:cgsp_line} compares the estimation error by the four algorithms as we vary the problem parameters.  Figure~\protect\subref*{fig:vary_L} shows that the error as a function of the oversampling factor $L/K$, which is the ratio of the length of observation $L$ to the number of nonzero coefficients in each impulse response.  SCCC provides smaller estimation error than CC in order of magnitude by exploiting the additional linear subspace prior.  Then AltEig and RTPM provide further reduced estimation error again in order of magnitude compared to SCCC by exploiting the bilinear prior that imposes the separability structure in addition to the linear subspace prior.  As expected, longer observation provides smaller estimation error for all methods.  Furthermore, as shown in Figure~\protect\subref*{fig:vary_D}, the estimation error increases as a function of the ratio $D/K$, which accounts for the relative dimension of the subspace.  More interestingly, as our main theorems imply, the performance difference between SCCC and AltEig/RTPM becomes more significant as we add more channels (Figure~\protect\subref*{fig:vary_M}).  The estimation error by these method scales proportionally as a function of SNR (Figure~\protect\subref*{fig:vary_SNR}).  Similarity of channel gains, that is implied by parameter $\alpha$, did not affect the estimation error significantly (Figure~\protect\subref*{fig:vary_alpha}).  Moreover, when the two iterative algorithms (AltEig and RTPM) provide stable estimate, they converge fast.  Figure~\ref{fig:convergence} illustrate the convergence of the two algorithms for a random instance.  The estimation error decays progressively for RTPM, whereas AltEig converges faster within less than 5 iterations.

\begin{figure}
  \centering
  \subfloat[]{\includegraphics[width=2.3in]{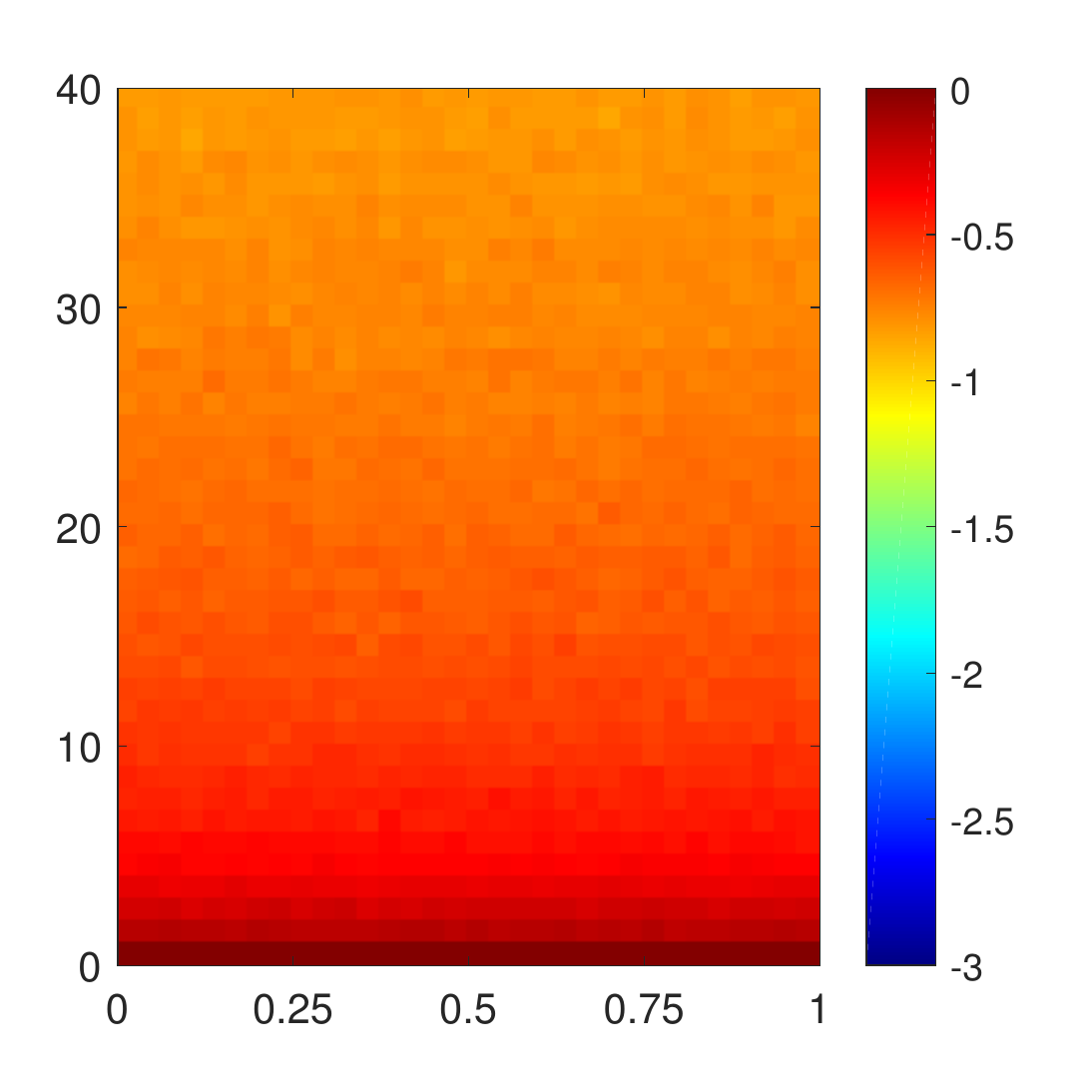}\label{fig:cgsp_v5pt_cc}}
  \hfil
  \subfloat[]{\includegraphics[width=2.3in]{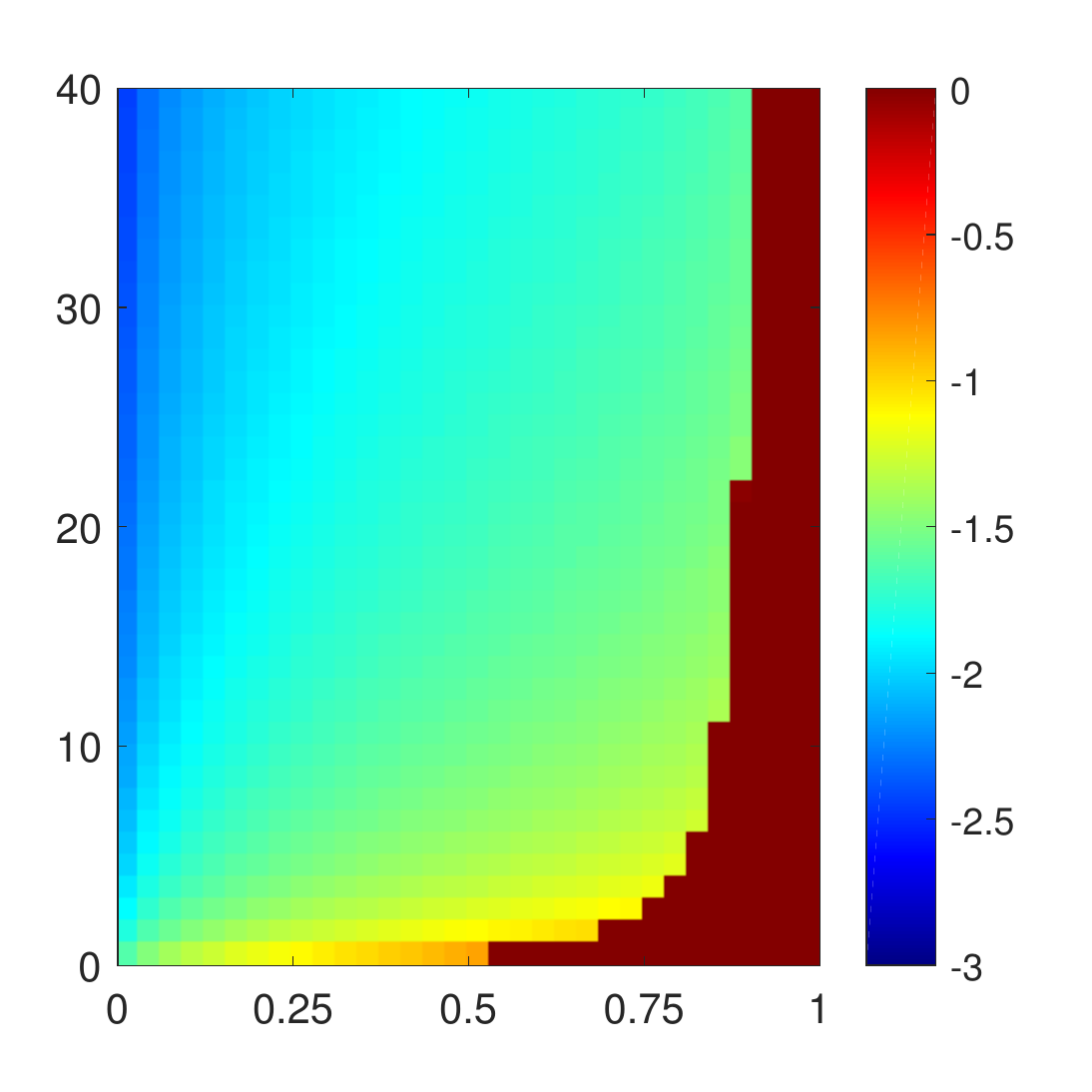}\label{fig:cgsp_v5pt_sccc}}
  \\
  \subfloat[]{\includegraphics[width=2.3in]{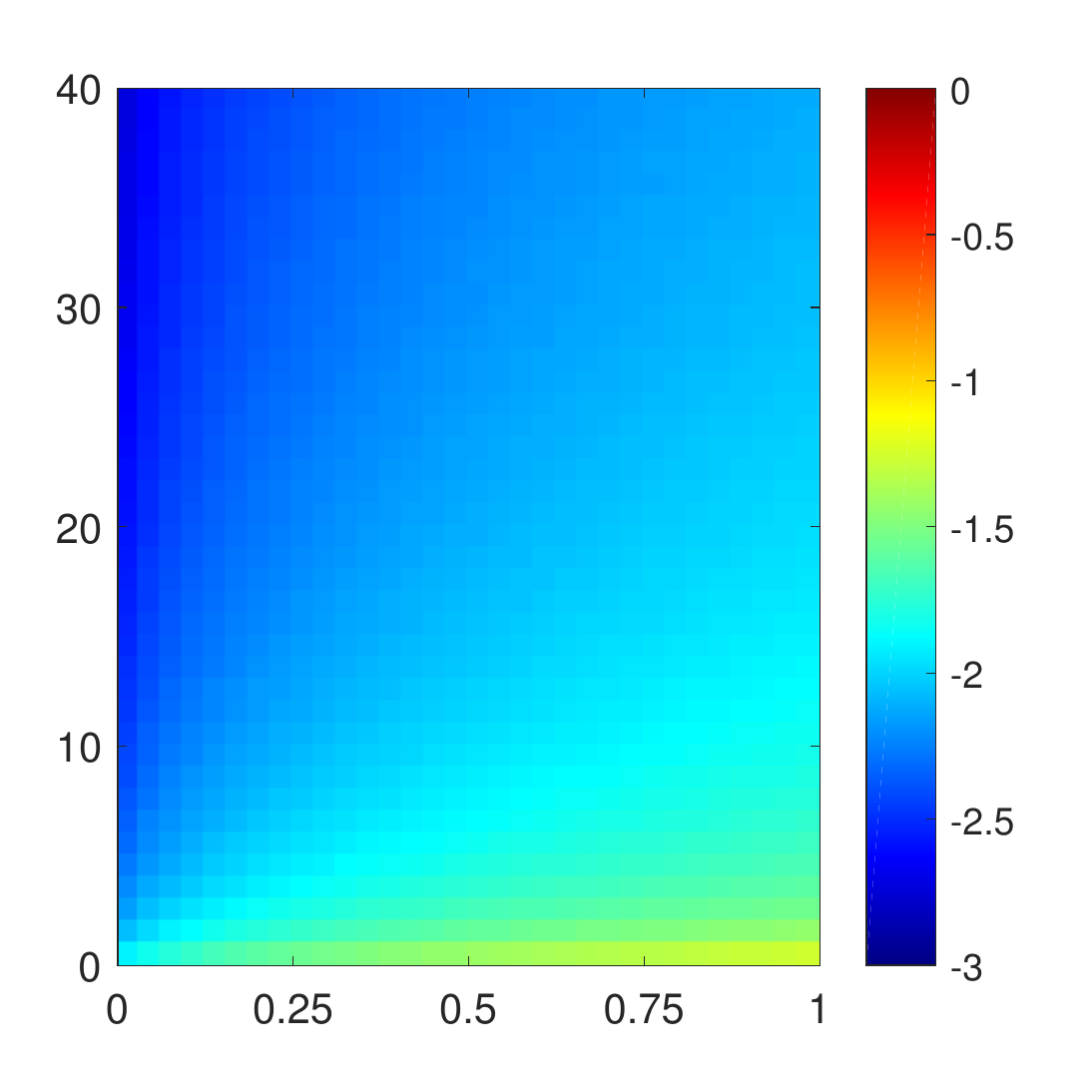}\label{fig:cgsp_v5pt_alteig_nr}}
  \hfil
  \subfloat[]{\includegraphics[width=2.3in]{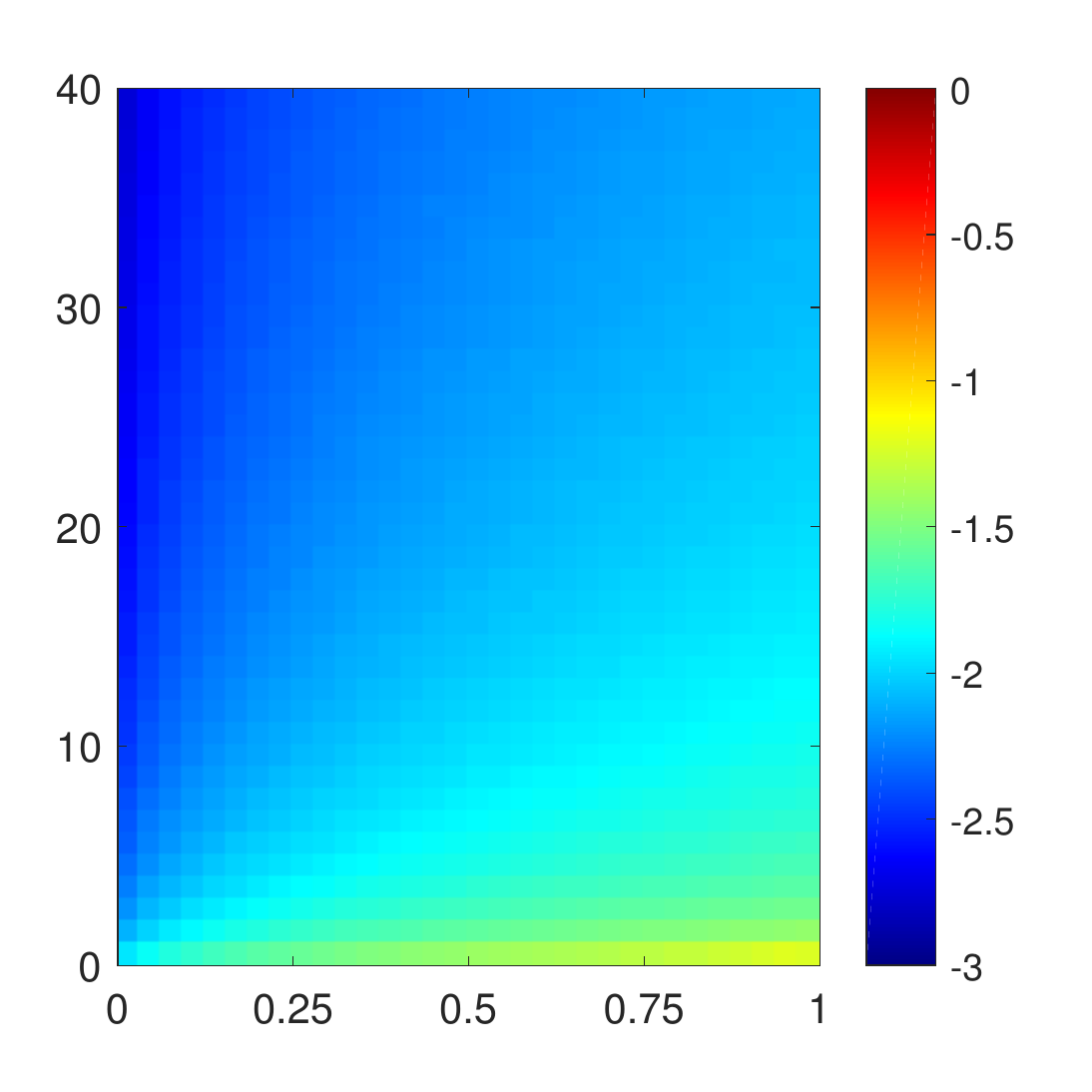}\label{fig:cgsp_v5pt_tpm_nr}}
  \\
  \subfloat[]{\includegraphics[width=2.3in]{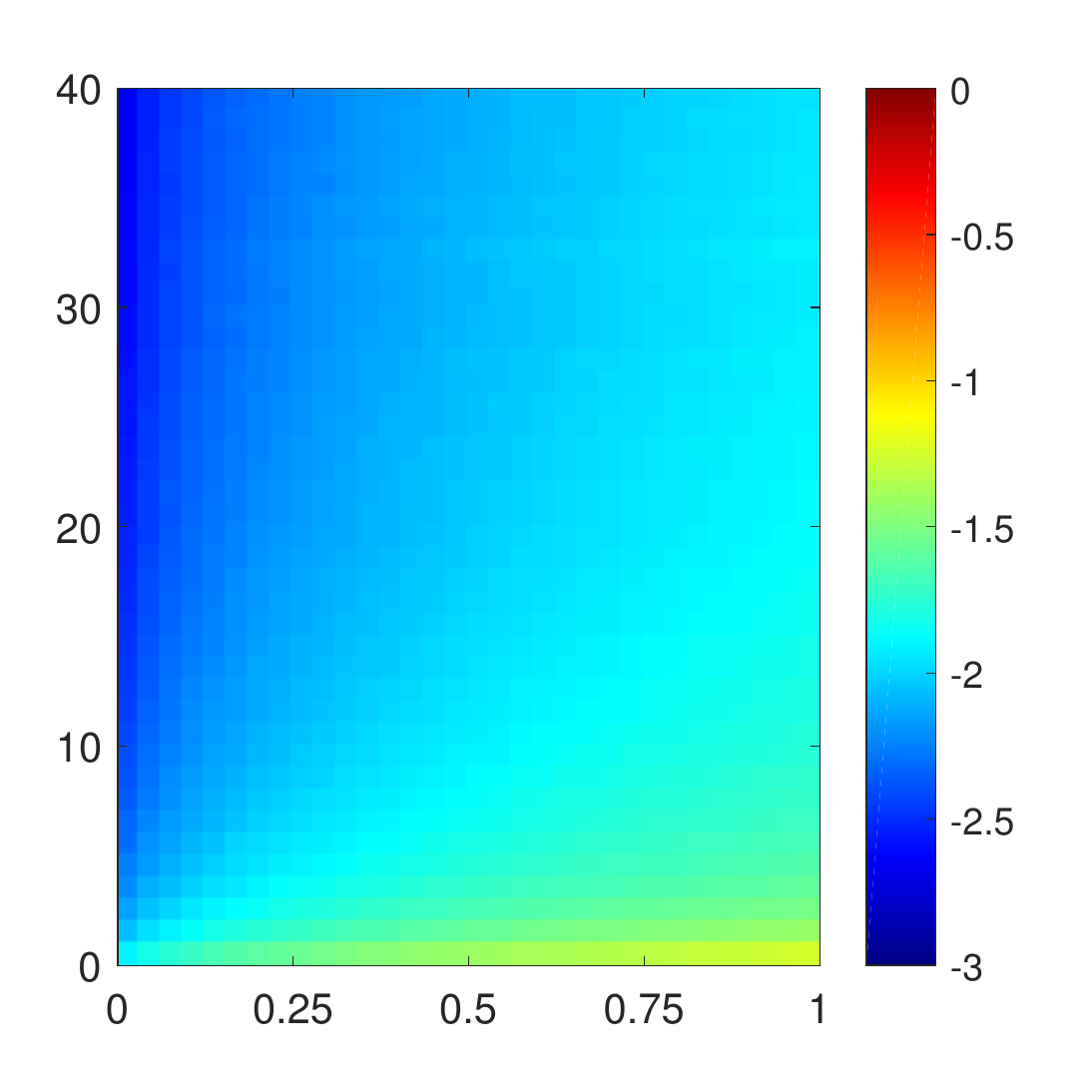}\label{fig:cgsp_v5pt_alteig}}
  \hfil
  \subfloat[]{\includegraphics[width=2.3in]{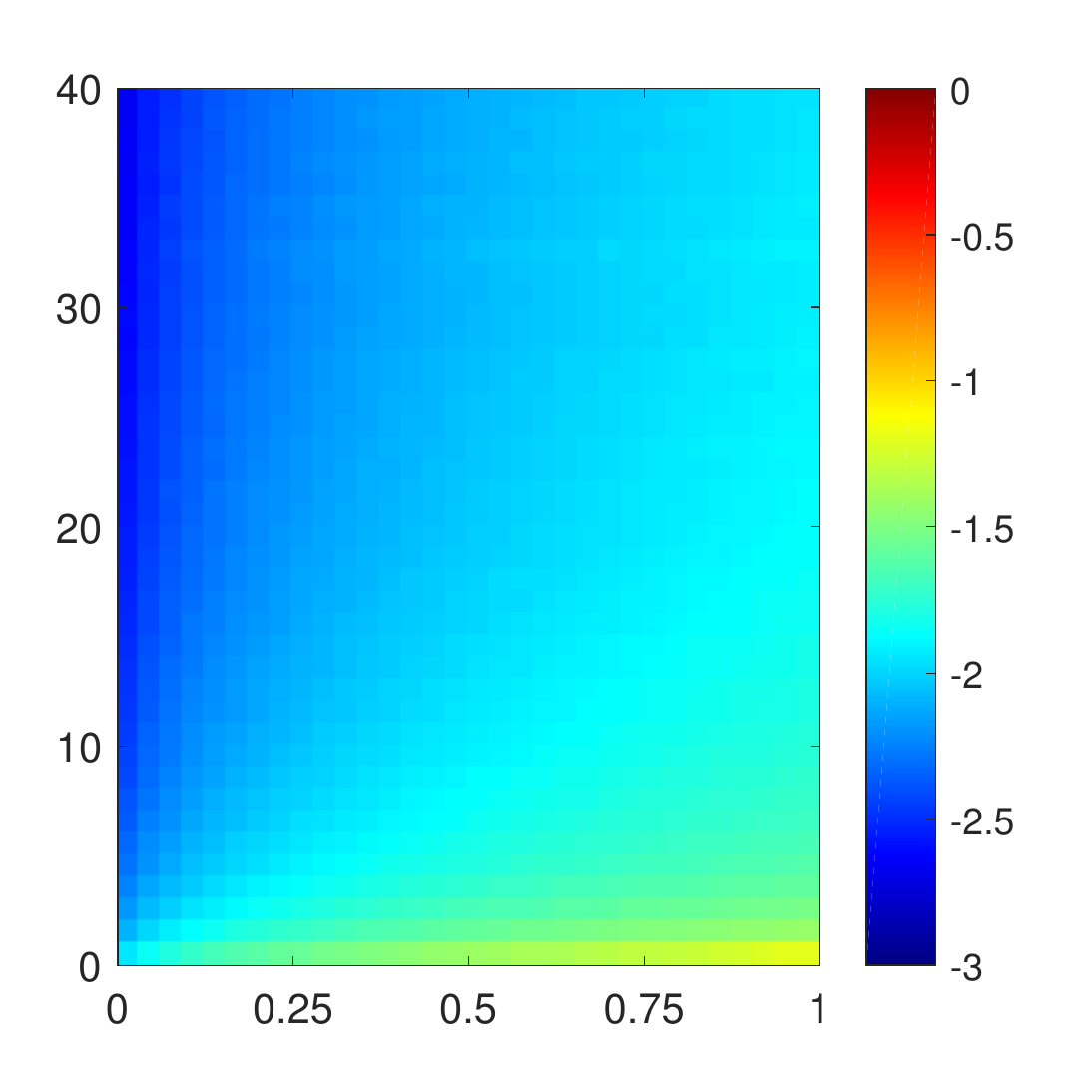}\label{fig:cgsp_v5pt_tpm}}
  \caption{Empirical phase transition in the 95th percentile of the log of the estimation error. $x$-axis: $D/K$. $y$-axis: $L/K$. $K = 256$, $M = 8$, SNR = 20 dB.
  \protect\subref{fig:cgsp_v5pt_cc} cross-convolution method \cite{xu1995least}. \protect\subref{fig:cgsp_v5pt_sccc} subspace-constrained cross-convolution method \cite{lee2017spectral}. \protect\subref{fig:cgsp_v5pt_alteig_nr} alternating eigenvectors method ($\hat{\sigma}_w^2 = \sigma_w^2$). \protect\subref{fig:cgsp_v5pt_tpm_nr} rank-1 truncated power method ($\hat{\sigma}_w^2 = \sigma_w^2$). \protect\subref{fig:cgsp_v5pt_alteig} alternating eigenvectors method ($\hat{\sigma}_w^2 = 0$). \protect\subref{fig:cgsp_v5pt_tpm} rank-1 truncated power method ($\hat{\sigma}_w^2 = 0$).
  }
  \label{fig:cgsp_pt}%
\end{figure}

To better visualize the overall trend, we performed a Monte Carlo simulation for the empirical phase transition, which is illustrated in Figure~\ref{fig:cgsp_pt} with a color coding that uses a logarithmic scale with blue denoting the smallest and red the largest error within the regime of $(D/K,L/K)$.  The error in the estimate by CC is large ($\geq 0.1$) regardless of $D/K$ for the entire regime (Figure~\protect\subref*{fig:cgsp_v5pt_cc}).  SCCC provides accurate estimates for small $D/K$ and for large enough $L/K$ (Figure~\protect\subref*{fig:cgsp_v5pt_sccc}).  On the other hand, it totally fails unless the dimension $D$ of subspace is not less than a certain threshold.  Finally, AltEig and RTPM show almost the same empirical phase transitions, which imply robust recovery for larger $D/K$ and for smaller $L/K$ (Figures~\protect\subref*{fig:cgsp_v5pt_alteig_nr} and \protect\subref*{fig:cgsp_v5pt_tpm_nr}).  Up to this point, we presented the performance of SCCC, AltEig, and RTPM for $\hat{\sigma}_w^2 = \sigma_w^2$, i.e. when the true noise variance is given.  Figures~\protect\subref*{fig:cgsp_v5pt_alteig} and \protect\subref*{fig:cgsp_v5pt_tpm} illustrate the empirical phase transitions for AltEig and RTPM when a conservative estimate of $\sigma_w^2$ given as $\hat{\sigma}_w^2 = 0$ is used instead.  These figures show that there is a nontrivial difference in the regime for accurate estimation depending on the quality of the estimate $\hat{\sigma}_w^2$.  This opens up an interesting question of how to show a guarantee for the noise variance estimation.  Nonetheless, even with $\hat{\sigma}_w^2 = 0$, both AltEig and RTPM show improvements in their empirical performances due to the extra structural constraint on impulse responses over CC and SCCC, which are (partially) blind to the bilinear prior model.

In our second experiment, we tested the algorithms on synthesized data with a realistic underwater acoustic channel model, where the impulse responses are approximated by a bilinear channel model.  In an ocean acoustic array sensing scenario, receivers of the vertical line array (VLA) with equal distance spacing listen to the same source near the ocean surface in a distance.  The channel impulse response (CIR) of each receiver on the VLA can be modeled using a pulse with a certain arrival-time and gain, which characterize the property of the propagating sound travels along the direct path.  Since sound traveling along the same path will experience almost the same media (speed of sound and loss), any environmental change or disturbance of the media will result in the same fluctuation of arrival-times of such sound pulses.  Therefore, arrival-times of receivers are linked across all channels while gains of each pulse are still independent.  A bilinear model of the channel responses are then introduced, where the basis $\mPhi$ defines a subspace for pulses that have linked arrival-times.  A detailed description on how to form the basis $\mPhi$ for a particular underwater environment can be found on the authors' incoming paper with N. Durofchalk and K. Sabra \cite{tian2017blind}.

\begin{figure}
  \centering
  \includegraphics[width=2.1in]{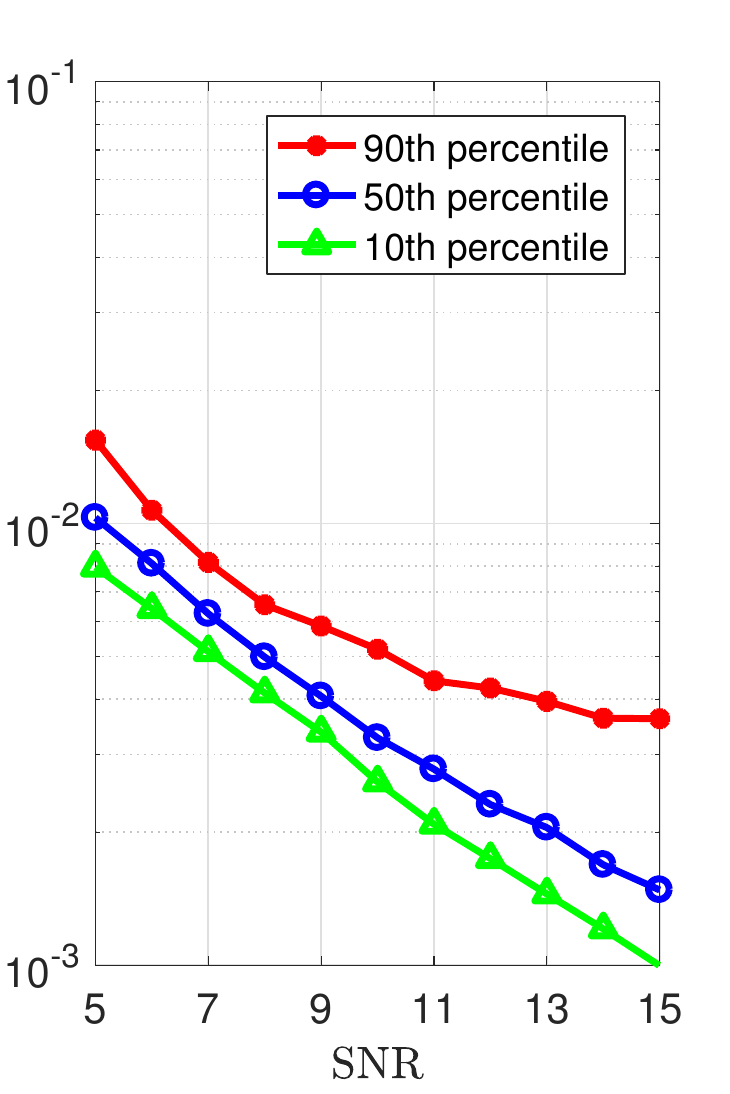}
  \caption{Order statistics for the log of estimation error in varying SNR for the underwater channel model.}
  \label{fig:underwater}%
\end{figure}

We performed Monte-Carlo simulation to demonstrate the robustness of our method on realistic acoustic channels which represent an at-sea experiment carried out at Santa Barbara Channel.  In the simulation, the common driving source signal, $\vx \in \mathbb{R}^L$, is Gaussian white noise filtered in an arbitrary bandwidth representative of shipping noise spectra (400--600 Hz) for $L = 2000$.  Each CIR is of length $K = 500$ and represents a Gaussian-windowed pulse in the band of (400--600 Hz).  The number of channels $M$ is $31$.  The basis $\mPhi \in \mathbb{R}^{K \times D}$ is of dimension $D = 8$.  
The number of trials in the Monte-Carlo simulation is $100$.  In this experiment, unlike the previous experiments with Gaussian bases, AltEig did not provide stable recovery. Therefore, we report the results only for RTPM.  As for the estimation of the noise variance, since the basis matrices are unitary, there is no need to subtract the expectation of the noise auto-correlation term.  Figure~\ref{fig:underwater} shows order statistics of the estimation error in a log scale.  The recovery is successful at a bit lower frequency.  The median of the estimation error approaches to the modeling error due to approximation with a bilinear model as we increase SNR.

\section{Analysis of Spectral Initialization}
\label{sec:proof:prop:init}

We prove Proposition~\ref{prop:init} in this section.
Recall that Algorithm~\ref{alg:spectral_init} computes an initial estimate $\widehat{\vb}$ of the true parameter vector $\vb$ as an eigenvector of $\mGamma \mGamma^* - \sum_{m=1}^M \sigma_w^2 L \mPhi_m^* \mPhi_m$ corresponding to the largest eigenvalue in magnitude.
Let us decompose the matrix $\mGamma$ in \eqref{eq:def_Gamma} as $\mGamma = \mGamma_{\mathrm{s}} + \mGamma_{\mathrm{n}}$, where $\mGamma_{\mathrm{s}}$ and $\mGamma_{\mathrm{n}}$ respectively correspond to the noise-free portion and noise portion of $\mGamma$.  In other words, $\mGamma_{\mathrm{s}}$ is obtained as we replace $\vy_m = \vh_m \circledast \vx + \vw_m$ in the expression of $\Gamma$ in \eqref{eq:def_Gamma2} by its first summand $\vh_m \circledast \vx$.  Similarly, $\mGamma_{\mathrm{n}}$ is obtained as we replace $\vy_m$ by $\vw_m$.  Then it follows that
\[
\mathbb{E}_w[\mGamma_{\mathrm{n}}\mGamma_{\mathrm{n}}^*] = \sum_{m=1}^M \sigma_w^2 L \mPhi_m^* \mPhi_m.
\]

By direct calculation, we obtain that the expectation of $\mGamma_{\mathrm{s}}$ is written as
\begin{equation}
\label{eq:exp_Gammas}
\mathbb{E}[\mGamma_{\mathrm{s}}] = \sum_{m=1}^M K a_m \vb \vx^\transpose = K \norm{\va}_1 \vb \vx^\transpose.
\end{equation}
Therefore,
\begin{equation}
\mathbb{E}[\mGamma_{\mathrm{s}}] \mathbb{E}[\mGamma_{\mathrm{s}}]^*
= K^2 \norm{\vx}_2^2 \norm{\va}_1^2 \vb \vb^*.
\label{trumat:proof:prop:init}
\end{equation}
It is straightforward to check that the rank-1 matrix $\mathbb{E}[\mGamma_{\mathrm{s}}] \mathbb{E}[\mGamma_{\mathrm{s}}]^*$ has an eigenvector, which is collinear with $\vb$.
Thus as we interpret $\mGamma \mGamma^* - \sum_{m=1}^M \sigma_w^2 L \mPhi_m^* \mPhi_m$ as a perturbed version of $\mathbb{E}[\mGamma_{\mathrm{s}}] \mathbb{E}[\mGamma_{\mathrm{s}}]^*$, the error in $\widehat{\vb}$ is upper bounded by the classical result in linear algebra known as the Davis-Kahan theorem \cite{davis1970rotation}. Among numerous variations of the original Davis-Kahan theorem available in the literature, we will use a consequence of a particular version \cite[Theorem~8.1.12]{golub2012matrix}, which is stated as the following lemma.

\begin{lemma}[A Special Case of the Davis-Kahan Theorem]
\label{lemma:daviskahan}
Let $\mM, \underline{\mM} \in \mathbb{C}^{n \times n}$ be symmetric matrices and $\lambda$ denote the largest eigenvalue of $\underline{\mM}$ in magnitude.  Suppose that $\lambda > 0$ and has multiplicity 1. Let $\mQ = [\vq_1, \mQ_2] \in \mathbb{C}^{n \times n}$ be a unitary matrix such that $\vq_1$ is an eigenvector of $\underline{\mM}$ corresponding to $\lambda$. Partition the matrix $\mQ^* \underline{\mM} \mQ$ as follows:
\[
\mQ^* \underline{\mM} \mQ = \begin{bmatrix} \lambda & \bm{0}_{1,n-1} \\ \bm{0}_{n-1,1} & \mD \end{bmatrix}.
\]
If
\begin{equation}
\label{eq:smallperturb}
\norm{\mD} + \norm{\mM-\underline{\mM}} \leq \frac{\lambda}{5},
\end{equation}
then the largest eigenvalue of $\mM$ in magnitude has multiplicity 1 and the corresponding eigenvector $\widetilde{\vq}$ satisfies
\begin{equation}
\label{eq:errbnd}
\sin\angle(\widetilde{\vq},\vq_1) \leq \frac{4\norm{(\mM - \underline{\mM}) \vq_1}_2}{\lambda}.
\end{equation}
\end{lemma}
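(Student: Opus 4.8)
The plan is to diagonalize the unperturbed problem and reduce everything to a block computation. Writing $\mE := \mM - \underline{\mM}$ for the perturbation and conjugating by the unitary $\mQ$, I set $\widetilde{\mE} := \mQ^* \mE \mQ$ and partition it conformally with the block structure of $\mQ^* \underline{\mM} \mQ = \diag(\lambda, \mD)$ as
\begin{equation}
\widetilde{\mE} = \begin{bmatrix} \epsilon & \vg^* \\ \vg & \mF \end{bmatrix}, \qquad\text{so that}\qquad \mQ^* \mM \mQ = \begin{bmatrix} \lambda + \epsilon & \vg^* \\ \vg & \mD + \mF \end{bmatrix}.
\end{equation}
Since conjugation by $\mQ$ preserves Hermitian-ness, eigenvalues, $2$-norms, and angles, it suffices to work in this basis: $\vq_1$ becomes $\ve_1$, and writing the perturbed top eigenvector as $\mQ^* \widetilde{\vq} = [c,\, \vs^\transpose]^\transpose$ with $|c|^2 + \norm{\vs}_2^2 = 1$, the quantity to bound is $\sin\angle(\widetilde{\vq}, \vq_1) = \norm{\vs}_2$. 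I will also use $\norm{\vg}_2 \le \norm{\widetilde{\mE}\ve_1}_2 = \norm{\mE\vq_1}_2$, which is exactly the numerator on the right-hand side of \eqref{eq:errbnd}. Note that hypothesis \eqref{eq:smallperturb} splits into $\norm{\mD} \le \lambda/5$ and $\norm{\mF} \le \norm{\mE} \le \lambda/5$.

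Next I would pin down the perturbed eigenvalue and settle the multiplicity claim using Weyl's inequality. The spectrum of $\underline{\mM}$ consists of $\lambda$ together with the eigenvalues of $\mD$, all of which lie in $[-\lambda/5, \lambda/5]$; hence every eigenvalue of $\mM$ lies within $\norm{\mE} \le \lambda/5$ of this set. Consequently exactly one eigenvalue $\widetilde{\lambda}$ of $\mM$ falls in $[4\lambda/5, 6\lambda/5]$, while the remaining $n-1$ eigenvalues lie in $[-2\lambda/5, 2\lambda/5]$. Because $4\lambda/5 > 2\lambda/5$, the largest-magnitude eigenvalue of $\mM$ is $\widetilde{\lambda}$, it is simple, and it is positive, which gives the stated multiplicity-$1$ conclusion.

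Finally I would extract $\vs$ from the bottom block of the eigen-equation $(\mQ^*\mM\mQ)[c,\,\vs^\transpose]^\transpose = \widetilde{\lambda}\,[c,\,\vs^\transpose]^\transpose$, which reads $(\widetilde{\lambda}\mId - \mD - \mF)\vs = c\,\vg$. Since $\mD + \mF$ is Hermitian, the separation just established gives the spectral-gap bound $\sigma_{\min}(\widetilde{\lambda}\mId - \mD - \mF) \ge \widetilde{\lambda} - \norm{\mD} - \norm{\mF} \ge 2\lambda/5 > 0$, so this block is invertible and $\norm{\vs}_2 \le |c|\,\norm{\vg}_2 / (2\lambda/5) \le (5/2)\,\norm{\mE\vq_1}_2 / \lambda$. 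As $5/2 \le 4$, this establishes \eqref{eq:errbnd} (in fact with a slightly better constant than claimed).

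The main obstacle is the middle step: one must rule out the possibility that the largest-magnitude eigenvalue of $\mM$ arises from a large perturbation of some eigenvalue of $\mD$ rather than from $\lambda$ itself. This is precisely where the asymmetric-looking hypothesis $\norm{\mD} + \norm{\mE} \le \lambda/5$ does its work: it forces $\mD$ to be uniformly small, so that Weyl's inequality confines the entire tail of the perturbed spectrum well below $\widetilde{\lambda}$, simultaneously delivering the simplicity of the top eigenvalue and the gap lower bound $2\lambda/5$ needed to invert $\widetilde{\lambda}\mId - \mD - \mF$. Everything else is routine block manipulation and unitary invariance.
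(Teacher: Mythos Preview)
Your argument is correct. The paper does not supply its own proof of this lemma but states it as a consequence of \cite[Theorem~8.1.12]{golub2012matrix}; the block-decomposition route you take (conjugate by $\mQ$, localize the perturbed spectrum via Weyl, then invert $\widetilde{\lambda}\mId-\mD-\mF$ on the bottom block of the eigen-equation) is exactly the standard argument underlying that reference, so there is nothing to contrast. One incidental remark: you invoke the hypothesis only through the separate consequences $\norm{\mD}\le\lambda/5$ and $\norm{\mE}\le\lambda/5$, but the full sum bound $\norm{\mD}+\norm{\mE}\le\lambda/5$ gives $\widetilde{\lambda}-\norm{\mD}-\norm{\mF}\ge 4\lambda/5-\lambda/5=3\lambda/5$, so your constant $5/2$ can be improved to $5/3$; either way it is below the claimed $4$.
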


\begin{rem}{\rm
In Lemma~\ref{lemma:daviskahan}, the rank-1 matrix $\lambda \vq_1 \vq_1^*$ is considered as the ground truth matrix.  Then $\mM - \underline{\mM} + \mQ_2 \mD \mQ_2^*$ corresponds to perturbation in $\mM$ relative to the ground truth matrix $\underline{\mM}$.  Also note that $\mQ_2 \mD \mQ_2^* \vq_1 = \vzero$.}
\end{rem}

In the remainder of this section, we obtain an upper bound on the error in $\widehat{\vb}$ by applying Lemma~\ref{lemma:daviskahan} to $\underline{\mM} = \mathbb{E}[\mGamma_{\mathrm{s}}] \mathbb{E}[\mGamma_{\mathrm{s}}]^*$, $\mM = \mGamma \mGamma^* - \sum_{m=1}^M \sigma_w^2 L \mPhi_m^* \mPhi_m$, $\vq_1 = \vb$, and $\widehat{\vq} = \widehat{\vb}$.

By \eqref{trumat:proof:prop:init}, we have $\mD = \vzero$ and $\lambda = K^2 \norm{\vx}_2^2 \norm{\va}_1^2 \norm{\vb}_2^2$.  Then we show that the spectral norm of the perturbation term, which is rewritten as
\begin{subequations}
\begin{align}
& \mGamma \mGamma^* - \mathbb{E}_w[\mGamma_{\mathrm{n}}\mGamma_{\mathrm{n}}^*] - \mathbb{E}[\mGamma_{\mathrm{s}}] \mathbb{E}[\mGamma_{\mathrm{s}}]^* \nonumber \\
&= \mGamma_{\mathrm{s}}\mGamma_{\mathrm{s}}^* - \mathbb{E}[\mGamma_{\mathrm{s}}] \mathbb{E}[\mGamma_{\mathrm{s}}]^*, \label{eq:decomp_init_Es} \\
&+ \mGamma_{\mathrm{s}}\mGamma_{\mathrm{n}}^*
+ \mGamma_{\mathrm{n}}\mGamma_{\mathrm{s}}^*, \label{eq:decomp_init_Ec} \\
&+ \mGamma_{\mathrm{n}}\mGamma_{\mathrm{n}}^* - \mathbb{E}_w[\mGamma_{\mathrm{n}}\mGamma_{\mathrm{n}}^*], \label{eq:decomp_init_En}
\end{align}
\end{subequations}
satisfies \eqref{eq:smallperturb}.  We will compute an upper estimate of the spectral norm of each summand, divided by $\lambda$, separately. Then we combine these estimates using the triangle inequality.

\vspace{.2in}
\noindent\textbf{Perturbation due to signal term:}
Note that the first summand $\mGamma_{\mathrm{s}}\mGamma_{\mathrm{s}}^* - \mathbb{E}[\mGamma_{\mathrm{s}}] \mathbb{E}[\mGamma_{\mathrm{s}}]^*$ in \eqref{eq:decomp_init_Es} has entries, which are fourth-order Gaussian random variables.
We decompose it using second-order random variables as
\begin{equation}
\label{eq:init_Es_decomp1}
\begin{aligned}
& \mGamma_{\mathrm{s}}\mGamma_{\mathrm{s}}^* - \mathbb{E}[\mGamma_{\mathrm{s}}] \mathbb{E}[\mGamma_{\mathrm{s}}]^* \\
&= (\mGamma_{\mathrm{s}} - \mathbb{E}[\mGamma_{\mathrm{s}}]) (\mGamma_{\mathrm{s}} - \mathbb{E}[\mGamma_{\mathrm{s}}])^*
+ \mathbb{E}[\mGamma_{\mathrm{s}}] (\mGamma_{\mathrm{s}} - \mathbb{E}[\mGamma_{\mathrm{s}}])^*
+ (\mGamma_{\mathrm{s}} - \mathbb{E}[\mGamma_{\mathrm{s}}]) \mathbb{E}[\mGamma_{\mathrm{s}}]^*.
\end{aligned}
\end{equation}

We have already computed $\mathbb{E}[\mGamma_{\mathrm{s}}]$ in \eqref{eq:exp_Gammas}. It remains to upper bound the spectral norm of $\mGamma_{\mathrm{s}} - \mathbb{E}[\mGamma_{\mathrm{s}}]$.
By the definitions of $\mGamma_{\mathrm{s}}$ and $\rho_x$, we obtain
\begin{align}
\norm{\mGamma_{\mathrm{s}} - \mathbb{E}[\mGamma_{\mathrm{s}}]}
&\leq
\Big\|
\sum_{m=1}^M a_m
(\mPhi_m^* \mS \mC_{\mS^* \mPhi_m \vb} - \mathbb{E}_{\phi}[\mPhi_m^* \mS \mC_{\mS^* \mPhi_m \vb}]) \breve{\mS}^* \breve{\mS} \mC_{\vx} \Big\| \nonumber \\
&\leq
\Big\|
\sum_{m=1}^M a_m (\mPhi_m^* \mS \mC_{\mS^* \mPhi_m \vb} - \mathbb{E}_{\phi}[\mPhi_m^* \mS \mC_{\mS^* \mPhi_m \vb}]) \breve{\mS}^* \Big\|
\norm{\breve{\mS} \mC_{\vx}} \nonumber \\
&\leq
\sqrt{\rho_x} \Big\|
\sum_{m=1}^M a_m (\mPhi_m^* \mS \mC_{\mS^* \mPhi_m \vb} - \mathbb{E}_{\phi}[\mPhi_m^* \mS \mC_{\mS^* \mPhi_m \vb}]) \breve{\mS}^* \Big\|,
\label{eq:init_Es_bnd1}
\end{align}
where $\breve{\mS} \in \mathbb{R}^{(2K-1) \times L}$ is defined by
\[
\breve{\mS} =
\begin{bmatrix}
\begin{bmatrix}
\bm{0}_{K-1,L-K+1} & \mId_{K-1}
\end{bmatrix} \\
\begin{bmatrix}
\mId_{K} & \bm{0}_{K,L-K}
\end{bmatrix}
\end{bmatrix}.
\]

The right-hand side of \eqref{eq:init_Es_bnd1} except the constant factor $\sqrt{\rho_x}$ is upper bounded by the following lemma, which is proved in Appendix~\ref{sec:proof:lemma:init_Gamma_Es}.

\begin{lemma}
\label{lemma:init_Gamma_Es}
Suppose that (A2) holds.
For any $\beta \in \mathbb{N}$, there is a constant $C(\beta)$ that depends only on $\beta$ such that
\begin{equation}
\label{eq:obj_init_Zs}
\begin{aligned}
\Big\|
\sum_{m=1}^M a_m (\mPhi_m^* \mS \mC_{\mS^* \mPhi_m \vb} - \mathbb{E}_{\phi} \mPhi_m^* \mS \mC_{\mS^* \mPhi_m \vb}) \breve{\mS}^* \Big\|
\leq C(\beta) K \sqrt{M} \norm{\va}_\infty \norm{\vb}_2 \log^\alpha (MKL)
\end{aligned}
\end{equation}
holds with probability $1-K^{-\beta}$.
\end{lemma}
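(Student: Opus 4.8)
The plan is to treat the matrix in question as a sum of $M$ independent, zero-mean random matrices and control its spectral norm by matrix concentration. Write
\[
\mZ := \sum_{m=1}^M \mX_m,
\qquad
\mX_m := a_m\big(\mPhi_m^* \mS \mC_{\mS^* \mPhi_m \vb} - \mathbb{E}_\phi[\mPhi_m^* \mS \mC_{\mS^* \mPhi_m \vb}]\big)\breve{\mS}^*,
\]
so that each $\mX_m \in \mathbb{C}^{D\times(2K-1)}$ is a \emph{centered second-order Gaussian chaos} in the entries of $\mPhi_m$: it is linear in $\mPhi_m^*$ and linear in $\mPhi_m$ through $\vg_m := \mPhi_m\vb$. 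By (A1) the $\mPhi_m$ are independent across $m$, so the $\mX_m$ are independent, and $\mathbb{E}_\phi[\mX_m]=\vzero$ by construction. Note that the signal $\vx$ does not appear here, since $\rho_x$ was already extracted in the preceding display; the randomness is purely in the Gaussian bases.

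First I would expose the combinatorial structure. Setting $\underline{\vg}_m=\mS^*\vg_m$, the factor $\mS\mC_{\underline{\vg}_m}\breve{\mS}^*$ has entries of the form $(\vg_m)_{(k-c_q)\bmod L}$, supported on at most $K$ shifts; consequently each entry of $\mPhi_m^*\mS\mC_{\underline{\vg}_m}\breve{\mS}^*$ is a sum over $K$ terms of the quadratic form $\overline{(\mPhi_m)_{kp}}(\vg_m)_{k-c_q}$, which is the source of the multiplicative factor $K$ in the bound. To handle the coupling between the two appearances of $\mPhi_m$, I would decouple and symmetrize the degree-two chaos, replacing the shared matrix by two independent copies up to absolute constants; conditioning on one copy then leaves a matrix that is \emph{linear} Gaussian in the other, which can be controlled by a Gaussian (non-commutative Khintchine) matrix bound.

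The concentration engine would be a matrix Bernstein / Rosenthal-type moment inequality for sums of independent centered matrices with subexponential summands. Two quantities govern the estimate: the matrix variance
\[
\sigma^2 := \max\big(\,\|\textstyle\sum_m \mathbb{E}[\mX_m\mX_m^*]\|,\ \|\textstyle\sum_m \mathbb{E}[\mX_m^*\mX_m]\|\,\big)
\]
and a uniform Orlicz-$\psi_1$ control on $\|\mX_m\|$. Evaluating $\sigma^2$ via Isserlis/Wick reduces to fourth-order Gaussian moments; tracking the support of $\mS\mC_{\underline{\vg}_m}$ and the selection $\breve{\mS}$ yields diagonal blocks of order $a_m^2 K^2\|\vb\|_2^2$, hence $\sigma^2 \lesssim K^2\|\vb\|_2^2\|\va\|_2^2 \le K^2 M\|\va\|_\infty^2\|\vb\|_2^2$ and $\sigma \lesssim K\sqrt{M}\,\|\va\|_\infty\|\vb\|_2$, matching the leading factor in the claim. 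The per-summand subexponential size is of order $K\|\va\|_\infty\|\vb\|_2$ up to polylog, which, under the standing condition $M \gtrsim \log^\alpha(MKL)$, contributes only a lower-order term. Combining through Markov's inequality with moment order $p\sim\log(MKL)$ (equivalently a union bound over the $D$ and $2K-1$ dimensions), and absorbing every polylogarithmic loss — including the extra logs generated by the chaos tails — into the single factor $\log^\alpha(MKL)$, gives the advertised estimate with failure probability $K^{-\beta}$.

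I expect the variance computation to be the main obstacle: the fourth-moment expansion must be carried out against the interlocking operators $\mS$, $\mC_{\mS^*\mPhi_m\vb}$, and $\breve{\mS}$, and the mean subtraction must be invoked precisely to cancel the ``diagonal'' Wick contractions, so that only fluctuation terms of order $K^2 M$ survive rather than a spurious larger term. A secondary difficulty is making the decoupling rigorous on the diagonal ($k=k'$) chaos terms, where the naive two-copy replacement does not apply directly and the centering again does the work of removing the uncancelled mean.
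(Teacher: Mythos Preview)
Your approach is plausible but takes a genuinely different route from the paper, and there are a couple of points where the two diverge in ways worth noting.

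\textbf{What the paper does.} The paper does \emph{not} exploit the independence across $m$. Instead, it writes the spectral norm as a double supremum over $\vq\in B_2^D$ and $\vz\in B_2^{2K-1}$, stacks all the Gaussian randomness into the single vector $\vphi=[\operatorname{vec}(\mPhi_1)^\transpose,\dots,\operatorname{vec}(\mPhi_M)^\transpose]^\transpose$, and recognizes the objective as a bilinear chaos $\vphi^*\big(\sum_m a_m \ve_m\ve_m^*\otimes\overline{\vq}\vb^\transpose\otimes\mS\mC_{\breve{\mS}^*\vz}\mS^*\big)\vphi$. It then applies the bilinear extension of the Krahmer--Mendelson--Rauhut chaos bound (Theorem~\ref{thm:ip}) with two index sets $\Delta_1$ (indexed by $\vq$) and $\Delta_2$ (indexed by $\vz$), computing $d_{\mathrm S},d_{\mathrm F},\gamma_2$ for each; the $\gamma_2$ for $\Delta_2$ is handled via Maurey's empirical method on $\mF\breve{\mS}^* B_1^{2K-1}$ in $\ell_\infty$. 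The result is $\sqrt{M}K^{3/4}D^{1/4}+\sqrt{M}K+\sqrt{MKD}$, simplified to $K\sqrt{M}$ by $K\ge D$.

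\textbf{Comparison.} Your matrix--Bernstein/Rosenthal route trades the chaos--suprema machinery for a more elementary sum-of-independent-matrices argument, at the cost of a trickier fourth-moment (Wick) computation for $\sigma^2$ and the need to control the $\psi_1$ size of each $\|\mX_m\|$. Two substantive remarks: (i) your per-summand bound $R\lesssim K\|\va\|_\infty\|\vb\|_2$ is correct only up to logarithms, coming from $\|\mC_{\mS^*\mPhi_m\vb}\|\lesssim\sqrt{K}\,\|\vb\|_2\sqrt{\log L}$ and $\|\mPhi_m\|\lesssim\sqrt{K}$; this is fine since you absorb logs, but it forces you to invoke $M\gtrsim\log^\alpha(MKL)$ to suppress the Bernstein ``linear'' term, a hypothesis the paper's chaos argument does not need at this step (though it is available in the main theorems). (ii) The decoupling you describe is not really needed for the Bernstein route: the variance is a straight fourth-moment computation, and the tail of each summand follows from Hanson--Wright on $\|\mX_m\|$; decoupling would only be relevant if you wanted to pass through a conditionally-linear Gaussian matrix bound, which is a separate argument from the Rosenthal one you outline. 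Either sub-route is viable, but the proposal currently blends the two.

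In short: your plan should go through and yield the same bound, but the paper's approach via Theorem~\ref{thm:ip} is cleaner here because it handles the second-order structure and the supremum simultaneously without a separate per-summand tail estimate or the auxiliary assumption on $M$.
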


By applying \eqref{eq:exp_Gammas}, \eqref{eq:init_Es_bnd1}, Lemma~\ref{lemma:init_Gamma_Es} together with the fact $\sqrt{\rho_x} \leq C_0 \norm{\vx}_2$ to \eqref{eq:init_Es_decomp1}, we obtain that
\begin{equation}
\label{eq:init_Es_res}
\frac{\norm{\mGamma_{\mathrm{s}}\mGamma_{\mathrm{s}}^* - \mathbb{E}[\mGamma_{\mathrm{s}}] \mathbb{E}[\mGamma_{\mathrm{s}}]^*}}{\lambda}
\leq \frac{C(\beta) \sqrt{M} \norm{\va}_\infty \log^\alpha (MKL)}{\norm{\va}_1}
\leq \frac{C(\beta) \mu \log^\alpha (MKL)}{\nu \sqrt{M}}
\end{equation}
holds with probability $1-K^{-\beta}$.

\vspace{.2in}
\noindent\textbf{Perturbation due to signal-noise cross term:}
Next we consider the second term in \eqref{eq:decomp_init_Ec}.
By the triangle inequality, we have
\[
\norm{\mGamma_{\mathrm{s}}\mGamma_{\mathrm{n}}^* + \mGamma_{\mathrm{n}}\mGamma_{\mathrm{s}}^*} \leq
\norm{\mGamma_{\mathrm{s}}\mGamma_{\mathrm{n}}^*} + \norm{\mGamma_{\mathrm{n}}\mGamma_{\mathrm{s}}^*}
\leq 2 \norm{\mGamma_{\mathrm{s}} \mGamma_{\mathrm{n}}^*}.
\]
Therefore, it suffices to upper estimate $\norm{\mGamma_{\mathrm{s}} \mGamma_{\mathrm{n}}^*}$.
To this end, we decompose $\mGamma_{\mathrm{s}} \mGamma_{\mathrm{n}}^*$ as
\begin{equation}
\label{eq:init_Ec_decomp1}
\mGamma_{\mathrm{s}} \mGamma_{\mathrm{n}}^*
= (\mGamma_{\mathrm{s}} - \mathbb{E}[\mGamma_{\mathrm{s}}]) \mGamma_{\mathrm{n}}^*
+ \mathbb{E}[\mGamma_{\mathrm{s}}] \mGamma_{\mathrm{n}}^*.
\end{equation}

Note that the first summand in the right-hand-side of \eqref{eq:init_Ec_decomp1} is written as \begin{equation}
\label{eq:init_Ec_decomp2}
(\mGamma_{\mathrm{s}} - \mathbb{E}[\mGamma_{\mathrm{s}}]) \mGamma_{\mathrm{n}}^*
=
\Big(
\sum_{m=1}^M a_m \mPhi_m^* \mS \mC_{\mS^* \mPhi_m \vb} \breve{\mS}^* - \mathbb{E}_{\phi}[a_m \mPhi_m^* \mS \mC_{\mS^* \mPhi_m \vb} \breve{\mS}^*]
\Big)
\Big(
\sum_{m'=1}^M
\breve{\mS} \mC_{\vx} \mC_{\vw_{m'}}^* \mS^* \mPhi_{m'}
\Big),
\end{equation}
where the first and second factors of the right-hand-side of \eqref{eq:init_Ec_decomp2}
are upper bounded in the spectral norm respectively by Lemma~\ref{lemma:init_Gamma_Es} and by the following lemma. (See Appendix~\ref{sec:proof:lemma:init_Gamma_Ec} for the proof.)

\begin{lemma}
\label{lemma:init_Gamma_Ec}
Suppose that (A1) and (A2) hold.
For any $\beta \in \mathbb{N}$, there is a constant $C(\beta)$ that depends only on $\beta$ such that
\begin{equation}
\label{eq:init_Gamma_Ec}
\Big\|
\sum_{m=1}^M
\breve{\mS} \mC_{\vx} \mC_{\vw_m}^* \mS^* \mPhi_m
\Big\|
\leq C(\beta) \rho_{x,w} \sqrt{MK} \log^\alpha(MKL)
\end{equation}
holds with probability $1-K^{-\beta}$.
\end{lemma}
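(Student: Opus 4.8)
The plan is to freeze the noise vectors $\vw_1,\dots,\vw_M$ and exploit the fact that, conditionally on the noise, the only remaining randomness is in the Gaussian bases $\mPhi_1,\dots,\mPhi_M$. Write the matrix of interest as $\mZ := \sum_{m=1}^M \mA_m \mPhi_m$, where $\mA_m := \breve{\mS} \mC_{\vx} \mC_{\vw_m}^* \mS^* \in \mathbb{C}^{(2K-1)\times K}$ is deterministic once the noise is fixed. Recall that $\rho_{x,w}$ is defined so as to dominate the per-channel cross-correlation norm uniformly, i.e. $\norm{\mA_m} \leq \rho_{x,w}$ for every $m$; the fluctuations of $\rho_{x,w}$ itself are handled separately by Lemmas~\ref{lemma:rho_wx} and \ref{lemma:rho_wx_randx}, so throughout this argument $\rho_{x,w}$ is a fixed quantity and the asserted probability $1-K^{-\beta}$ is understood conditionally on the noise.

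The key step is to observe that, conditionally on the noise, $\mZ$ is a proper complex Gaussian matrix with independent columns. Indeed, the $j$-th column of $\mZ$ equals $\sum_{m=1}^M \mA_m (\mPhi_m)_{:,j}$, a sum of independent $\mathcal{CN}(\vzero, \mA_m\mA_m^*)$ vectors under (A1), hence is itself $\mathcal{CN}(\vzero, \mSigma)$ with $\mSigma := \sum_{m=1}^M \mA_m \mA_m^*$; distinct columns are independent because the columns of each $\mPhi_m$ are. Consequently $\mZ \stackrel{d}{=} \mSigma^{1/2}\mG$, where $\mG \in \mathbb{C}^{(2K-1)\times D}$ is a standard complex Gaussian matrix, and therefore $\norm{\mZ} \leq \norm{\mSigma}^{1/2}\,\norm{\mG}$.

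The two factors are then bounded separately. For the deterministic factor the triangle inequality gives $\norm{\mSigma} \leq \sum_{m=1}^M \norm{\mA_m}^2 \leq M \rho_{x,w}^2$, so $\norm{\mSigma}^{1/2} \leq \sqrt{M}\,\rho_{x,w}$. For the Gaussian factor, a standard estimate for the operator norm of a Gaussian matrix (Gaussian concentration combined with Gordon's inequality) yields $\norm{\mG} \leq \sqrt{2K-1} + \sqrt{D} + t$ with probability at least $1 - e^{-ct^2}$; choosing $t \asymp \sqrt{\beta \log K}$ makes this at least $1 - K^{-\beta}$, and since $D \leq K$ by \eqref{eq:condK} and $t = o(\sqrt{K})$, we get $\norm{\mG} \leq C\sqrt{K}$. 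Combining gives $\norm{\mZ} \leq C \rho_{x,w}\sqrt{MK}$, which is subsumed by \eqref{eq:init_Gamma_Ec}, the extra $\log^\alpha(MKL)$ factor leaving ample room.

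I expect the only genuinely delicate point to be the covariance bookkeeping in the second step: verifying that $\sum_m \mA_m (\mPhi_m)_{:,j}$ is a proper complex Gaussian with covariance exactly $\mSigma$ and that independence across columns is preserved. Once this conditional Gaussian structure is in hand, the estimate collapses to a single Gaussian operator-norm bound plus a triangle inequality, avoiding the fourth-order and decoupling arguments that the signal term in Lemma~\ref{lemma:init_Gamma_Es} requires. The one subtlety to keep straight is that the high-probability statement is conditional on the noise, so that $\rho_{x,w}$ enters as a multiplicative factor; this is precisely how the bound is later consumed when $\rho_{x,w}$ is replaced by the tail estimates of Lemmas~\ref{lemma:rho_wx} and \ref{lemma:rho_wx_randx}.
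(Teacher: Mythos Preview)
Your proof is correct and takes a genuinely different route from the paper. The paper writes the spectral norm as the supremum of a Gaussian process indexed by $(\vz,\vq)\in B_2^{2K-1}\times B_2^D$, then applies Lemma~\ref{lemma:firstorder} together with a Dudley entropy-integral bound, arriving at $C\rho_{x,w}\sqrt{MK}$ after a standard volume argument and the relation $K\geq D$. Your argument instead exploits the fact that, conditionally on the noise, the matrix $\mZ=\sum_m \mA_m\mPhi_m$ has i.i.d.\ proper complex Gaussian columns with covariance $\mSigma=\sum_m\mA_m\mA_m^*$, so that $\mZ\stackrel{d}{=}\mSigma^{1/2}\mG$ and a single Gordon-type operator-norm bound on $\mG\in\mathbb{C}^{(2K-1)\times D}$ finishes the job. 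This is more elementary, avoids the entropy machinery entirely, and in fact yields the bound without the $\log^\alpha(MKL)$ factor. The paper's approach, on the other hand, is uniform with the chaining arguments used for the other terms (Lemmas~\ref{lemma:init_Gamma_Es}, \ref{lemma:comp_cross_rank1}) and would transfer verbatim to a subgaussian basis assumption, whereas your factorization $\mZ\stackrel{d}{=}\mSigma^{1/2}\mG$ relies specifically on the Gaussianity in (A1). Both routes use the same per-channel bound $\norm{\mA_m}\leq\rho_{x,w}$ and the same conditioning on the noise.
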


By applying Lemmas~\ref{lemma:init_Gamma_Es} and \eqref{lemma:init_Gamma_Ec} to \eqref{eq:init_Ec_decomp2}, we obtain that
\begin{equation}
\label{eq:init_Ec_res1}
\norm{(\mGamma_{\mathrm{s}} - \mathbb{E}[\mGamma_{\mathrm{s}}]) \mGamma_{\mathrm{n}}^*}
\leq C(\beta) \rho_{x,w} M K^{3/2} \norm{\va}_\infty \norm{\vb}_2 \log^\alpha (MKL)
\end{equation}
holds with probability $1-K^{-\beta}$.

Next, the second summand in the right-hand-side of \eqref{eq:init_Ec_decomp1} is written as
\begin{equation}
\label{eq:init_Ec_decomp3}
\mathbb{E}[\mGamma_{\mathrm{s}}] \mGamma_{\mathrm{n}}^*
= K \norm{\va}_1 \vb
\Big(
\sum_{m'=1}^M
\ve_1^* \mC_{\vx} \mC_{\vw_{m'}}^* \mS^* \mPhi_{m'}
\Big),
\end{equation}
whose spectral norm is upper bounded by using the following lemma.

\begin{lemma}
\label{lemma:init_Gamma_Ec:fixz}
Suppose that (A1) and (A2) hold.
For any $\beta \in \mathbb{N}$, there is a constant $C(\beta)$ that depends only on $\beta$ such that
\[
\Big\|
\sum_{m'=1}^M
\ve_1^* \mC_{\vx} \mC_{\vw_{m'}}^* \mS^* \mPhi_{m'}
\Big\|
\leq C(\beta) \rho_{x,w} \sqrt{MD} \log^\alpha(MKL)
\]
holds with probability $1-K^{-\beta}$.
\end{lemma}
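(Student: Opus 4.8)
The plan is to exploit the fact that the displayed quantity is the Euclidean norm of a single $D$-dimensional row vector which, once we condition on the noise, is \emph{exactly} Gaussian. Write $\vz_m^* := \ve_1^* \mC_{\vx} \mC_{\vw_m}^* \mS^* \in \mathbb{C}^{1 \times K}$, so that the object to be bounded is $\norm{\sum_{m=1}^M \vz_m^* \mPhi_m}_2$ (the spectral norm of a row vector is its $\ell_2$ norm). First I would condition on $\vw_1,\dots,\vw_M$, so that each $\vz_m$ is a fixed vector and the only remaining randomness comes from the independent Gaussian matrices $\mPhi_m$ in (A1).

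Conditioned on the noise, $\sum_m \vz_m^* \mPhi_m$ is a sum of independent circularly symmetric complex Gaussian row vectors. A direct second-moment computation, using $\mathbb{E}[(\mPhi_m)_{kd}\conj{(\mPhi_m)_{k'd'}}] = \delta_{kk'}\delta_{dd'}$ and $\mathbb{E}[(\mPhi_m)_{kd}(\mPhi_m)_{k'd'}] = 0$, shows that the $D$ entries of the sum are mutually independent, each distributed as $\mathcal{CN}(0, \sum_{m=1}^M \norm{\vz_m}_2^2)$. Hence $\norm{\sum_m \vz_m^* \mPhi_m}_2$ equals, in distribution, $(\sum_m \norm{\vz_m}_2^2)^{1/2}\,\norm{\vg}_2$ with $\vg \sim \mathcal{CN}(\vzero, \mId_D)$. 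A standard concentration bound for the norm of a complex Gaussian vector then gives $\norm{\vg}_2 \leq C(\beta)\sqrt{D}\log^{1/2}(MKL)$ with probability at least $1-K^{-\beta}$, the logarithmic factor absorbing the deviation term $\sqrt{\beta \log K}$ since $D \geq 1$.

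It remains to control the scalar $\sum_m \norm{\vz_m}_2^2$. The key observation is that $\ve_1^*$ is exactly the $K$-th row of the matrix $\breve{\mS}$ appearing in Lemma~\ref{lemma:init_Gamma_Ec}, namely the first row of its lower block $[\mId_K \;\; \bm{0}_{K,L-K}]$. Consequently $\vz_m^*$ is a single row of $\breve{\mS}\mC_{\vx}\mC_{\vw_m}^*\mS^*$, and extracting one row can only decrease the operator norm, so $\norm{\vz_m}_2 \leq \norm{\breve{\mS}\mC_{\vx}\mC_{\vw_m}^*\mS^*} \leq \rho_{x,w}$ for every $m$. Therefore $(\sum_m \norm{\vz_m}_2^2)^{1/2} \leq \sqrt{M}\,\rho_{x,w}$, and combining this with the bound on $\norm{\vg}_2$ yields $\norm{\sum_m \vz_m^* \mPhi_m}_2 \leq C(\beta)\,\rho_{x,w}\sqrt{MD}\,\log^\alpha(MKL)$ with probability $1-K^{-\beta}$, as claimed.

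The main thing to get right is the distributional identity in the conditioning step, namely that the per-coordinate variance is the common value $\sum_m\norm{\vz_m}_2^2$ and that the entries are independent across the $D$ coordinates, because this is precisely what collapses the whole estimate to a one-dimensional norm concentration and produces the $\sqrt{D}$ (rather than $\sqrt{K}$) scaling that distinguishes this lemma from Lemma~\ref{lemma:init_Gamma_Ec}. By contrast, the passage from $\norm{\vz_m}_2$ to $\rho_{x,w}$ is essentially bookkeeping once one recognizes $\ve_1^*$ as a row of $\breve{\mS}$; no fresh concentration argument over $\mPhi_m$ is needed beyond the Gaussian-vector norm bound.
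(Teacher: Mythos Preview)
Your argument is correct. Conditioning on the noise, the row vector $\sum_m \vz_m^* \mPhi_m$ is indeed a $D$-dimensional complex Gaussian with i.i.d.\ $\mathcal{CN}(0,\sum_m\norm{\vz_m}_2^2)$ entries, and the identification of $\ve_1^*$ with the $K$-th row of $\breve{\mS}$ is exactly what lets you invoke $\norm{\vz_m}_2 \le \norm{\breve{\mS}\mC_{\vx}\mC_{\vw_m}^*\mS^*} \le \rho_{x,w}$. The Gaussian norm concentration then closes the argument.

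The paper takes a slightly different route: it literally re-runs the proof of Lemma~\ref{lemma:init_Gamma_Ec}, which casts the spectral norm as $\sup_{\vz\in B_2^{2K-1}}\sup_{\vq\in B_2^D}\langle \vf(\vz,\vq),\vphi\rangle$ and bounds this Gaussian supremum via Lemma~\ref{lemma:firstorder} and the Dudley entropy integral, except that here $\vz$ is frozen at $\breve{\mS}\ve_1$ so the $B_2^{2K-1}$ entropy contribution disappears and only the $\sqrt{D}$ term from $B_2^D$ survives. Your approach is more elementary in that it bypasses the entropy-integral machinery entirely by recognizing that the supremum over $\vq\in B_2^D$ of a linear functional is just an $\ell_2$-norm of a Gaussian vector; the paper's approach has the virtue of being a transparent one-line specialization of an already-proved lemma. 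Both yield the same $\sqrt{MD}$ scaling for the same reason.
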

The proof of Lemma~\ref{lemma:init_Gamma_Ec:fixz} is very similar to that of Lemma~\ref{lemma:init_Gamma_Ec}.
The proof of Lemma~\ref{lemma:init_Gamma_Ec} involves the following optimization formulation:
\[
\max_{\vz \in B_2^{2K-1}} \max_{\vq \in B_2^D}
\sum_{m=1}^M
\vz^* \breve{\mS} \mC_{\vx} \mC_{\vw_m}^* \mS^* \mPhi_m \vq.
\]
Instead of maximizing over $\vz \in B_2^{2K-1}$, we fix $\vz$ to $\breve{\mS} \ve_1$.
Equivalently, we replace the unit ball $B_2^{2K-1}$ by the singleton set $\{\breve{\mS} \ve_1\}$. This replacement simply removes the entropy integral corresponding to $B_2^{2K-1}$. Except this point, the proofs for the two lemmas are identical. Thus we omit further details.

Applying Lemma~\ref{lemma:init_Gamma_Ec:fixz} to \eqref{eq:init_Ec_decomp3} implies that
\begin{equation}
\label{eq:init_Ec_res2}
\norm{\mathbb{E}[\mGamma_{\mathrm{s}}] \mGamma_{\mathrm{n}}^*}
\leq C(\beta) \rho_{x,w} \sqrt{M} K \sqrt{D} \norm{\va}_1 \norm{\vb}_2 \log^\alpha (MKL)
\end{equation}
holds with probability $1-K^{-\beta}$.

By combining \eqref{eq:init_Ec_res1} and \eqref{eq:init_Ec_res2}, after plugging in the definitions of $\eta$, $\mu$, and $\nu$, we obtain that
\begin{equation}
\label{eq:init_Ec_res}
\frac{\norm{\mGamma_{\mathrm{s}}\mGamma_{\mathrm{n}}^* + \mGamma_{\mathrm{n}}\mGamma_{\mathrm{s}}^*}}{\lambda}
\leq
\frac{C(\beta) \log^\alpha (MKL)}{\sqrt{\eta}} 
\cdot
\frac{\rho_{x,w}}{\norm{\vx}_2 \sigma_w \sqrt{L}}
\cdot
\Big(
\frac{\mu}{\nu^2 M} + \frac{\sqrt{D}}{\nu \sqrt{MK}}
\Big)
\end{equation}
holds with probability $1-2K^{-\beta}$.

\vspace{.2in}
\noindent\textbf{Perturbation due to noise term:}
Finally, we derive an upper bound on the spectral norm of the last term in \eqref{eq:decomp_init_En} using the following lemma, which is proved in Appendix~\ref{sec:proof:lemma:init_Gamma_En}.

\begin{lemma}
\label{lemma:init_Gamma_En}
Suppose that (A2) holds.
For any $\beta \in \mathbb{N}$, there is a constant $C(\beta)$ that depends only on $\beta$ such that
\[
\norm{\mGamma_{\mathrm{n}}\mGamma_{\mathrm{n}}^* - \mathbb{E}_w[\mGamma_{\mathrm{n}}\mGamma_{\mathrm{n}}^*]}
\leq C(\beta) \rho_w M^{3/2} \sqrt{KD} \log^\alpha (MKL)
\]
holds with probability $1-K^{-\beta}$.
\end{lemma}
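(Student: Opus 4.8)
The plan is to bound the operator norm of the centered Gram matrix by making its fourth-order dependence on the noise explicit and then separating per-channel contributions from cross-channel interactions. Replacing $\vy_m$ by $\vw_m$ in the closed form \eqref{eq:def_Gamma2} gives $\mGamma_{\mathrm{n}} = \sum_{m=1}^M \mPhi_m^* \mS \mC_{\vw_m} \opconj$, and since $\opconj$ is a permutation (so $\opconj\opconj^* = \mId_L$),
\[
\mGamma_{\mathrm{n}}\mGamma_{\mathrm{n}}^*
= \sum_{m=1}^M \mPhi_m^* \mS \mC_{\vw_m}\mC_{\vw_m}^* \mS^* \mPhi_m
+ \sum_{m \neq m'} \mPhi_m^* \mS \mC_{\vw_m}\mC_{\vw_{m'}}^* \mS^* \mPhi_{m'}.
\]
Because the $\vw_m$ are independent and zero-mean, and $\mathbb{E}_w[\mC_{\vw_m}\mC_{\vw_m}^*] = \sigma_w^2 L \mId_L$, the expectation $\mathbb{E}_w[\mGamma_{\mathrm{n}}\mGamma_{\mathrm{n}}^*]$ captures only the diagonal terms. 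I would therefore write the deviation as a \emph{diagonal} part $\sum_{m} \mPhi_m^* \mS (\mC_{\vw_m}\mC_{\vw_m}^* - \sigma_w^2 L \mId_L) \mS^* \mPhi_m$, whose summands are independent across $m$, plus an \emph{off-diagonal} cross-channel part $\sum_{m \neq m'} \mPhi_m^* \mS \mC_{\vw_m}\mC_{\vw_{m'}}^* \mS^* \mPhi_{m'}$, and estimate the operator norm of each piece separately, treating the $\mPhi_m$ as fixed so that all randomness is over the noise.

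For both pieces I would pass to the variational characterization
\[
\norm{\mGamma_{\mathrm{n}}\mGamma_{\mathrm{n}}^* - \mathbb{E}_w[\mGamma_{\mathrm{n}}\mGamma_{\mathrm{n}}^*]}
= \sup_{\vq \in B_2^D}
\Big| \, \vq^* \big(\mGamma_{\mathrm{n}}\mGamma_{\mathrm{n}}^* - \mathbb{E}_w[\mGamma_{\mathrm{n}}\mGamma_{\mathrm{n}}^*]\big) \vq \, \Big|
\]
and note that $\vq^* \mGamma_{\mathrm{n}}\mGamma_{\mathrm{n}}^* \vq = \norm{\sum_{m} \mC_{\vw_m}^* \mS^* \mPhi_m \vq}_2^2$ is a quadratic form (second-order chaos) in the stacked noise vector, so the centered object is a supremum of centered chaos processes indexed by the sphere $B_2^D$. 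This is exactly the structure handled for the cross term in Lemma~\ref{lemma:init_Gamma_Ec}, and I would mirror that argument. For the off-diagonal part I would first apply a standard decoupling inequality to split the two independent noise blocks, reducing to a bilinear form $\sup_{\vq,\vq'} \sum_{m,m'} \vq^* \mPhi_m^* \mS \mC_{\vw_m} \mC_{\vw_{m'}'}^* \mS^* \mPhi_{m'} \vq'$ in two independent copies, and then control this with a Dudley-type entropy integral over $B_2^D \times B_2^D$ just as in Lemma~\ref{lemma:init_Gamma_Ec}; the noise strength enters through $\rho_w$, while the metric entropy of the sphere and the width-$2K$ window imposed by $\mS$ produce the $\sqrt{KD}$ factor together with the polylogarithmic $\log^\alpha(MKL)$ term. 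For the diagonal part I would instead invoke a matrix-Bernstein / Hanson-Wright style estimate on the sum of $M$ independent centered matrices, after truncating the subgaussian chaos on a high-probability event; combining the $\sqrt{M}$ fluctuation of the diagonal sum with the cross-channel count in the off-diagonal bilinear form is what assembles the $M^{3/2}$ dependence.

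The main obstacle is that $\mGamma_{\mathrm{n}}\mGamma_{\mathrm{n}}^*$ is genuinely fourth order in the noise, so we must control the supremum of a non-Gaussian chaos process rather than a linear Gaussian process; the decoupling step and the verification that the resulting entropy integral is no larger than the one already estimated in Lemma~\ref{lemma:init_Gamma_Ec} form the technical heart. The remaining work is careful dimensional bookkeeping — tracking how the $D$-dimensional sphere contributes $\sqrt{D}$, how the filter length and window contribute $\sqrt{K}$, and how the independence structure across the $M$ channels aggregates into $M^{3/2}$ — which together yield the claimed bound $C(\beta)\rho_w M^{3/2}\sqrt{KD}\log^\alpha(MKL)$ with probability $1 - K^{-\beta}$.
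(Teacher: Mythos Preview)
Your route is plausible but it is the long way around compared with the paper. You fix the bases $\mPhi_m$ and work with the \emph{noise} randomness, which forces you into a supremum of second-order subgaussian chaos, a decoupling step, and entropy bookkeeping whose $\mPhi$-dependent radii you would still have to control afterwards (your sketch glosses over this last point). The paper reverses the conditioning: it fixes the noise and exploits the Gaussianity of the bases from (A1). Writing $\mPsi = [\mPhi_1^\transpose,\dots,\mPhi_M^\transpose]^\transpose \in \mathbb{C}^{MK\times D}$ and letting $\mA$ be the $MK\times MK$ block matrix with $(m,m')$-block $\mS\big(\mC_{\vw_m}\mC_{\vw_{m'}}^* - \mathbb{E}_w[\mC_{\vw_m}\mC_{\vw_{m'}}^*]\big)\mS^*$, one has $\mGamma_{\mathrm{n}}\mGamma_{\mathrm{n}}^* - \mathbb{E}_w[\mGamma_{\mathrm{n}}\mGamma_{\mathrm{n}}^*] = \mPsi^*\mA\mPsi$, a \emph{single} Gaussian quadratic form. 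The block Gershgorin disk theorem gives $\norm{\mA}\le M\rho_w$, and one application of the Hanson--Wright-type Lemma~\ref{lemma:gaussquadasym} (with ambient row dimension $MK$ in place of $K$) yields $C\,\norm{\mA}\sqrt{MK\cdot D}\,\log^\alpha(MKL) = C\,\rho_w\, M^{3/2}\sqrt{KD}\,\log^\alpha(MKL)$. What you set up as a multi-stage chaos-in-noise argument collapses to two lines once you swap which variable is treated as random; in particular, the $M^{3/2}$ that you have to assemble heuristically from ``$\sqrt{M}$ fluctuation'' plus ``cross-channel count'' falls out transparently as $M\cdot\sqrt{M}$ from the Gershgorin bound times the row dimension in Hanson--Wright.
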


We also a tail bound on $\rho_w$ given by the following lemma from \cite{lee2017spectral}.
\begin{lemma}[{\cite[Lemma~5.9]{lee2017spectral}}]
\label{lemma:rho_w}
Suppose that (A2) holds.
For any $\beta \in \mathbb{N}$, there is a constant $C(\beta)$ that depends only on $\beta$ such that
\begin{equation}
\label{eq:ubrhow}
\rho_w \leq C(\beta) \sigma_w^2 \sqrt{KL} \log^\alpha (MKL)
\end{equation}
holds with probability $1 - K^{-\beta}$.
\end{lemma}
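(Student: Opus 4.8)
Since this statement is quoted verbatim from \cite[Lemma~5.9]{lee2017spectral}, the most direct route is simply to invoke that reference. For completeness I outline the argument I would use to establish it from scratch. The quantity $\rho_w$ is the operator norm of a noise correlation matrix compressed to the $O(K)$-dimensional coordinate range picked out by $\widetilde{\mS}$, and as such it is the supremum of a centered second-order chaos in the subgaussian noise. Writing $\rho_w = \sup_{\vz \in B_2^{3K-2}} \vz^* \widetilde{\mS} \mN_w \widetilde{\mS}^* \vz$, where $\mN_w$ denotes the mean-subtracted circular correlation matrix built from the noise, the key observations are that (i) each matrix entry is a circular correlation of the noise at a fixed lag, hence a centered quadratic form whose standard deviation is of order $\sqrt{L}\,\sigma_w^2$ by (A2), and (ii) the compression $\widetilde{\mS}^*$ restricts attention to a $(3K-2)$-dimensional subspace, so the ambient dimension entering the spectral-norm estimate is $O(K)$ rather than $O(L)$.

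The first step is a pointwise tail bound. For a fixed direction $\vz$, the scalar $\vz^* \widetilde{\mS} \mN_w \widetilde{\mS}^* \vz$ is a centered quadratic form in the stacked subgaussian noise vector, and I would apply the Hanson--Wright inequality, whose variance proxy is governed by the Frobenius norm of the quadratic-form kernel and whose subexponential term is governed by its operator norm. Both norms can be bounded deterministically using the circulant structure of $\mC_{\vw_m}$ and the boundedness of $\widetilde{\mS}$, which yields a subexponential tail at the scale $\sqrt{L}\,\sigma_w^2$.

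The second step promotes this to a uniform bound over the sphere. I would cover $B_2^{3K-2}$ by an $\varepsilon$-net of cardinality $e^{O(K)}$, control $\rho_w$ on the net by a union bound over the net and over the $M$ channels, and convert the net estimate into a bound on the full supremum by the standard operator-norm discretization inequality. The net has $e^{O(K)}$ points, so the deviation needed to beat the union bound scales like $\sqrt{K}$ times the per-direction standard deviation $\sqrt{L}\,\sigma_w^2$, producing the advertised $\sqrt{KL}\,\sigma_w^2$ factor; the residual $\log^\alpha(MKL)$ collects the union-bound logarithms and the extra logarithmic factors needed to turn the subexponential (rather than subgaussian) tails into the confidence level $1-K^{-\beta}$.

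The main obstacle is the uniform control of the second-order chaos. A direct net argument is delicate because a quadratic form is not Lipschitz in $\vz$ with a dimension-free constant, so one must either discretize carefully, bounding the off-net oscillation through the operator norm of the kernel, or, more robustly, replace the net step by a chaining bound for chaos processes via decoupling together with a $\gamma_2$-type entropy estimate. Tracking the circulant correlations so that the large diagonal term of order $L\sigma_w^2$ is genuinely cancelled by the mean subtraction, leaving only the $\sqrt{L}\,\sigma_w^2$ fluctuations, is the place where the centered structure must be used and where the polylogarithmic exponent $\alpha$ is ultimately determined.
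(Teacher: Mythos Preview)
The paper does not prove this lemma at all; it is simply quoted from \cite[Lemma~5.9]{lee2017spectral} and used as a black box. Your opening sentence already matches the paper's treatment exactly, so on that level there is nothing further to compare.

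Your supplementary sketch is a reasonable outline and goes beyond what the present paper offers. Two small points are worth flagging. First, $\rho_w$ in this paper is implicitly a maximum over \emph{pairs} of channels $(m,m')$ (see the proof of Lemma~\ref{lemma:init_Gamma_En}, where $\norm{\mA}\le M\rho_w$ is derived from the block Gershgorin bound), and for $m\neq m'$ the compressed correlation matrix $\widetilde{\mS}(\mC_{\vw_m}\mC_{\vw_{m'}}^* - \mathbb{E}_w[\cdot])\widetilde{\mS}^*$ is not Hermitian, so the relevant variational form is bilinear rather than the single quadratic form you wrote; your union bound over channels accommodates this, but the pointwise step should use the bilinear Hanson--Wright (or decoupling) rather than the symmetric version. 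Second, given the toolkit assembled in this paper (Theorems~\ref{thm:kmr} and~\ref{thm:ip}), the cited proof in \cite{lee2017spectral} more likely proceeds via a direct suprema-of-chaos bound with the circulant/Fourier structure handled through Corollary~\ref{cor:maurey}, rather than through Hanson--Wright plus an $\varepsilon$-net; that route sidesteps the non-Lipschitz issue you correctly identify and tends to produce cleaner polylogarithmic exponents. Either route reaches the stated $\sigma_w^2\sqrt{KL}$ scaling.
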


By Lemma~\ref{lemma:rho_w} and \eqref{eq:etasimp}, the corresponding relative perturbation is upper bounded by
\begin{equation}
\label{eq:init_En_res}
\frac{\norm{\mGamma_{\mathrm{n}}\mGamma_{\mathrm{n}}^* - \mathbb{E}_w[\mGamma_{\mathrm{n}}\mGamma_{\mathrm{n}}^*]}
}{K^2 \norm{\vx}_2^2 \norm{\va}_1^2 \norm{\vb}_2^2}
\leq
\frac{C(\beta) \log^\alpha (MKL)}{\eta} \cdot \frac{\sqrt{D}}{\nu^2 \sqrt{ML}}
\end{equation}
with probability $1-K^{-\beta}$.

Then it follows from \eqref{eq:init_Es_res}, \eqref{eq:init_Ec_res}, and \eqref{eq:init_En_res} that the condition in \eqref{eq:smallperturb} is satisfied by the assumptions in \eqref{eq:init_condL} and \eqref{eq:init_condM}.
Therefore, Lemma~\ref{lemma:daviskahan} provides the upper bound on the estimation error in \eqref{eq:init_errbnd}, which is obtained by plugging \eqref{eq:init_Es_res}, \eqref{eq:init_Ec_res}, and \eqref{eq:init_En_res} to \eqref{eq:errbnd}.  This completes the proof.

\section{Convergence of Alternating Eigen Method}
\label{sec:proof:prop:alteig}

Algorithm~\ref{alg:alteig} iteratively updates the estimates of $\va$, $\vb$ from a function of the matrix $\mA = \mPhi^* (\mY^* \mY - \sigma_w^2 (M-1)L \mId_{MK}) \mPhi$ and previous estimates. Propositions~\ref{prop:update_a} and \ref{prop:update_b} show the convergence of the iterations in Algorithm~\ref{alg:alteig} that alternately update the estimates $\widehat{\va}$ and $\widehat{\vb}$ under the randomness assumptions in (A1) and (A2).  Similarly to the analysis of the spectral initialization in Section~\ref{sec:proof:prop:init}, we prove Propositions~\ref{prop:update_a} and \ref{prop:update_b} by using the Davis-Kahan Theorem in Lemma~\ref{lemma:daviskahan}.
To this end, we first compute tail estimates of norms of the deviation of the random matrix $\mA$ from its expectation $\underline{\mA} = \mathbb{E}[\mA]$ below.

\subsection{Tail estimates of deviations}

Algorithm~\ref{alg:alteig} updates the estimates $\widehat{\va}$ as the least dominant eigenvector of $(\mId_M \otimes \widehat{\vb}^*) \mA (\mId_M \otimes \widehat{\vb})$ where $\widehat{\vb}$ denotes the estimate in the previous step.  The other estimate $\widehat{\vb}$ is updated similarly from $(\widehat{\va}^* \otimes \mId_D) \mA (\widehat{\va} \otimes \mId_D)$.  The matrices involved in these updates are restricted version of $\mA$ with separable projection operators.

In order to get a tightened perturbation bound for the estimates, we introduce a new matrix norm with this separability structure.  To define the new norm, we need operators that rearrange an $M$-by-$D$ matrix into a column vector of length $MD$ and vice versa.  For $\mV = [\vv_1,\dots,\vv_M] \in \mathbb{C}^{M \times D}$, define
\[
\mathrm{vec}(\mV) = [\vv_1^\transpose,\dots,\vv_M^\transpose]^\transpose.
\]
Let $\mathrm{mat}(\cdot)$ denote the inverse of $\mathrm{vec}(\cdot)$ so that
\[
\mathrm{mat}(\mathrm{vec}(\mV)) = \mV, \quad \forall \mV \in \mathbb{C}^{M \times D}
\]
and
\[
\mathrm{vec}(\mathrm{mat}(\vv)) = \vv, \quad \forall \vv \in \mathbb{C}^{MD}.
\]
With these vectorization and matricization operators, we define the matricized $S_p$-norm of $\vv \in \mathbb{C}^{MD}$ by
\[
\tnorm{\vv}_{S_p} = \norm{\mathrm{mat}(\vv)}_{S_p}.
\]
Then the matricized operator norm of $\mM \in \mathbb{C}^{MD \times MD}$ is defined by
\[
\tnorm{\mM}_{S_p \to S_q} := \max_{\tnorm{\vv}_{S_p} \leq 1} \tnorm{\mM \vv}_{S_q}.
\]

For $p=1$ and $q=\infty$, by the Courant-Fischer minimax principle, the matricized operator norm is written as a variational form
\[
\begin{array}{rcl}
\tnorm{\mM}_{S_1 \to S_\infty} = & \displaystyle \max_{\mUpsilon, \mUpsilon' \in \mathbb{C}^{M \times D}}
& |\langle \mathrm{vec}(\mUpsilon') , \mM \mathrm{vec}(\mUpsilon) \rangle| \\
& \mathrm{subject~to} & \norm{\mUpsilon}_{S_1} \leq 1,~ \norm{\mUpsilon'}_{S_1} \leq 1.
\end{array}
\]
Since the unit ball with respect to the $S_1$-norm is given as the convex hull of all unit-$S_2$-norm matrices of rank-1, $\tnorm{\mM}_{S_1 \to S_\infty}$ is equivalently rewritten as
\begin{equation}
\label{eq:varforn_S1S8norm}
\begin{array}{rcl}
\tnorm{\mM}_{S_1 \to S_\infty} = & \displaystyle \max_{\mUpsilon, \mUpsilon' \in \mathbb{C}^{M \times D}}
& |\langle \mathrm{vec}(\mUpsilon') , \mM \mathrm{vec}(\mUpsilon) \rangle| \\
& \mathrm{subject~to} & \norm{\mUpsilon}_{S_2} \leq 1,~ \norm{\mUpsilon'}_{S_2} \leq 1 \\
& & \mathrm{rank}(\mUpsilon) = \mathrm{rank}(\mUpsilon') = 1.
\end{array}
\end{equation}
Therefore, by dropping the rank-1 constraints in \eqref{eq:varforn_S1S8norm}, we obtain
\begin{equation}
\label{eq:matSopnormlessthanSN}
\tnorm{\mM}_{S_1 \to S_\infty} \leq \norm{\mM}, \quad \forall \mM \in \mathbb{C}^{MD \times MD}.
\end{equation}

The following lemma provides a tail estimate of $\tnorm{\mE}_{S_1 \to S_\infty}$ divided by $K^2 \norm{\vx}_2^2 \norm{\vu}_2^2$, which amounts to the spectral gap between the two smallest eigenvalues of $\underline{\mA}$.  Compared to the analogous tail estimate for its spectral norm, derived in \cite[Section~3.2]{lee2017spectral}, the tail estimate for $\tnorm{\mE}_{S_1 \to S_\infty}$ is smaller in order.  This is the reason why we obtain a better sample complexity by introducing the extra rank-1 structure to the prior model on impulse responses.

\begin{lemma}
\label{lemma:bnd_S1S8norm_E}
Let $\mE = \mA - \underline{\mA}$. For any $\beta \in \mathbb{N}$, there exist a numerical constant $C$ and a constant $C(\beta)$ that depends only on $\beta$ such that
\begin{equation}
\label{eq:bnd_S1S8normE}
\begin{aligned}
\frac{\tnorm{\mE}_{S_1 \to S_\infty}}{K^2 \norm{\vx}_2^2 \norm{\vu}_2^2}
&\leq C(\beta) \log^\alpha(MKL) \Big[ \Big(\frac{1}{\sqrt{M}} + \sqrt{\frac{D}{K}} \, \Big) \mu^2 \\
& + \frac{\rho_{x,w}}{\sqrt{\eta KL}\sigma_w \norm{\vx}_2}
\Big(
\mu\Big(\frac{\sqrt{K}}{M} + \sqrt{\frac{D}{M}} + \sqrt{\frac{D}{K}} \, \Big) + 1
\Big)
+ \frac{\sqrt{D}}{\eta \sqrt{L}}
\Big]
\end{aligned}
\end{equation}
holds with probability $1-CK^{-\beta}$.
\end{lemma}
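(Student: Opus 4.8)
The plan is to follow the same three-fold split used for the spectral initialization in Section~\ref{sec:proof:prop:init}. Writing $\vy_m = \vh_m \circledast \vx + \vw_m$ and expanding the quadratic dependence of $\mY^* \mY$ on the outputs, the centered deviation $\mE = \mA - \underline{\mA}$ decomposes as $\mE = \mE_{\mathrm{s}} + \mE_{\mathrm{c}} + \mE_{\mathrm{n}}$, where $\mE_{\mathrm{s}}$ collects the signal--signal contribution centered with respect to the Gaussian $\mPhi$, $\mE_{\mathrm{c}}$ the signal--noise cross term, and $\mE_{\mathrm{n}}$ the centered noise--noise term. Each summand is mean-zero with respect to the appropriate source of randomness, so by the triangle inequality it suffices to bound $\tnorm{\mE_{\mathrm{s}}}_{S_1 \to S_\infty}$, $\tnorm{\mE_{\mathrm{c}}}_{S_1 \to S_\infty}$, and $\tnorm{\mE_{\mathrm{n}}}_{S_1 \to S_\infty}$ separately; these will produce the three bracketed groups in \eqref{eq:bnd_S1S8normE}.

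The essential device is the variational characterization \eqref{eq:varforn_S1S8norm}: since the $S_1$ unit ball is the convex hull of the rank-$1$ matrices of unit $S_2$ norm, each $\tnorm{\mE_i}_{S_1 \to S_\infty}$ equals the supremum of the bilinear form $\langle \mathrm{vec}(\mUpsilon'), \mE_i \, \mathrm{vec}(\mUpsilon)\rangle$ over \emph{rank-$1$} test matrices $\mUpsilon = \vp \vq^*$ and $\mUpsilon' = \vp' \vq'^*$ with $\norm{\vp}_2 = \norm{\vq}_2 = \norm{\vp'}_2 = \norm{\vq'}_2 = 1$. The index set is thus a product of four unit spheres, two in $\mathbb{C}^M$ and two in $\mathbb{C}^D$, rather than the full unit ball governing the ordinary spectral norm in \eqref{eq:matSopnormlessthanSN}. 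This is precisely where the gain over the spectral-norm estimate of \cite[Section~3.2]{lee2017spectral} originates: the covering numbers of the separable index set are smaller, shrinking the chaining integral and replacing a factor of $D$ (or $K$) by its square root in the leading terms.

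For the cross term $\mE_{\mathrm{c}}$ I would condition on $\mPhi$, so that $\langle \mathrm{vec}(\mUpsilon'), \mE_{\mathrm{c}} \, \mathrm{vec}(\mUpsilon)\rangle$ becomes a linear functional of the subgaussian noise $(\vw_m)$; the resulting process is subgaussian, and a generic-chaining (Dudley) bound over the rank-$1$ index set, together with the tail estimate on $\rho_{x,w}$ from Lemma~\ref{lemma:rho_wx}, yields the middle group in \eqref{eq:bnd_S1S8normE}. For the noise--noise term $\mE_{\mathrm{n}}$ the bilinear form is, given $\mPhi$, a centered quadratic chaos in $(\vw_m)$; a Hanson--Wright-type moment bound followed by the same chaining over rank-$1$ test matrices produces the $\sqrt{D}/(\eta\sqrt{L})$ term, the rank-$1$ restriction again being responsible for the $\sqrt{D}$ rather than $D$ dependence.

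The hard part will be the signal--signal term $\mE_{\mathrm{s}}$. Here $\mPhi$ enters to the fourth power, twice through the conjugation $\mA = \mPhi^*(\cdots)\mPhi$ and twice through $\vh_m = a_m \mS^* \mPhi_m \vb$ inside $\mY^* \mY$, so $\langle \mathrm{vec}(\mUpsilon'), \mE_{\mathrm{s}} \, \mathrm{vec}(\mUpsilon)\rangle$ is a centered fourth-order Gaussian chaos in the entries of a \emph{single} random matrix $\mPhi$. I would decouple the four correlated copies into independent Gaussian matrices, reduce the chaos to products of second-order forms (each handled by moment estimates in the spirit of Lemma~\ref{lemma:init_Gamma_Es}), and then chain over the separable index set $\{\vp \vq^*\} \times \{\vp' \vq'^*\}$. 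Tracking how the rank-$1$ structure interacts with the block-diagonal form of $\mPhi$ and the time-limiting operator $\mS$ is the delicate step, and it is what delivers the leading factor $\mu^2(\tfrac{1}{\sqrt{M}} + \sqrt{D/K})$. Finally, collecting the three bounds, dividing by the spectral gap $K^2\norm{\vx}_2^2\norm{\vu}_2^2$, and substituting the definitions of $\eta$, $\mu$, and $\rho_{x,w}$ gives \eqref{eq:bnd_S1S8normE}, the stated failure probability following from a union bound over the three high-probability events.
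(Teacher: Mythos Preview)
Your three-term decomposition $\mE = \mE_{\mathrm{s}} + \mE_{\mathrm{c}} + \mE_{\mathrm{n}}$ matches the paper exactly, and the variational description \eqref{eq:varforn_S1S8norm} is indeed the entry point. But you misidentify where the rank-$1$ structure does any work.

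For the signal term $\mE_{\mathrm{s}}$ and the noise term $\mE_{\mathrm{n}}$ the paper does \emph{not} exploit the separable index set at all: it simply invokes the crude inequality $\tnorm{\cdot}_{S_1\to S_\infty}\le\norm{\cdot}$ from \eqref{eq:matSopnormlessthanSN} and imports the spectral-norm bounds already proved in \cite{lee2017spectral} (their Lemmas~3.5 and~3.7, restated here as Lemmas~\ref{lemma:Es} and~\ref{lemma:En}). Those spectral-norm bounds already deliver the factors $\mu^2(1/\sqrt{M}+\sqrt{D/K})$ and $\sqrt{D}/(\eta\sqrt{L})$, so your attribution of the $\sqrt{D}$ dependence to the rank-$1$ restriction is incorrect, and the fourth-order decoupling you flag as ``the hard part'' is unnecessary. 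The paper states explicitly that it could not improve these two terms beyond their spectral-norm estimates.

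The only place the $S_1\to S_\infty$ norm genuinely pays is the cross term $\mE_{\mathrm{c}}$ (Lemma~\ref{lemma:Ec}). There the paper conditions on the noise $\vw$, not on $\mPhi$ as you propose, and works with Gaussian processes and second-order chaoses in $\mPhi$. More importantly, the argument is not a single chaining over rank-$1$ test matrices: after splitting $\mY_{\mathrm{s}}^*\mY_{\mathrm{n}}$ into its diagonal and off-diagonal block parts, the off-diagonal piece is \emph{factorized} into a product of two operators --- one linear in $\mPhi$ (Lemma~\ref{lemma:comp_cross_rank1}) and one second-order in $\mPhi$ (Lemma~\ref{lemma:comp_sum}) --- plus a residual term involving $\mathbb{E}[\mZ_{m'}]$ (Lemma~\ref{lemma:comp_cross_exp_rank1}). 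It is this factorization, rather than a direct subgaussian bound after conditioning on $\mPhi$, that tames the third-order dependence on $\mPhi$ and produces the middle bracket in \eqref{eq:bnd_S1S8normE}. Your plan to condition on $\mPhi$ and chain in $\vw$ leaves you with coefficients that are cubic in the entries of $\mPhi$, and you have not indicated how you would control those uniformly over the rank-$1$ test set.
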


\begin{proof}[Proof of Lemma~\ref{lemma:bnd_S1S8norm_E}]
The derivation of \eqref{eq:bnd_S1S8normE} is similar to that for the analogous tail estimate for $\norm{\mE}$ in \cite[Section~3.2]{lee2017spectral}.  We use the same decomposition of $\mE$, which is briefly summarized below.

We decompose $\mY$ as $\mY = \mY_{\mathrm{s}} + \mY_{\mathrm{n}}$, where the noise-free portion $\mY_{\mathrm{s}}$ (resp. the noise portion $\mY_{\mathrm{n}}$) is obtained as we replace $\vy_m=\vh_m \conv \vx +\vw_m$ in $\mY$ by its first summand $\vh_m \conv \vx$ (resp. by its second summand $\vw_m$) for all $m=1,\dots,M$.  Then $\mE$ is written as the sum of three matrices whose entries are given as polynomials of subgaussian random variables of different order as follows.
\begin{align}
\mE &=
\mPhi^* \mY_{\mathrm{s}}^* \mY_{\mathrm{s}} \mPhi - \mathbb{E}[\mPhi^* \mY_{\mathrm{s}}^* \mY_{\mathrm{s}} \mPhi] \label{eq:Es_Edecomp} \\
&+ \mPhi^* \mY_{\mathrm{s}}^* \mY_{\mathrm{n}} \mPhi
+ \mPhi^* \mY_{\mathrm{n}}^* \mY_{\mathrm{s}} \mPhi \label{eq:Ec_Edecomp} \\
&+ \mPhi^* (\mY_{\mathrm{n}}^* \mY_{\mathrm{n}}- \sigma_w^2 (M-1)L \mId_{MK}) \mPhi. \label{eq:En_Edecomp}
\end{align}

We first compute tail estimates of the components; the tail estimate in \eqref{eq:bnd_S1S8normE} is then obtained by combining these results via the triangle inequality.

For the first summand in \eqref{eq:Es_Edecomp} and the last summand in \eqref{eq:En_Edecomp}, we were not able to reduce their tail estimates in order compared to the spectral norms.  Thus we use their tail estimates on the spectral norms derived in \cite[Section~3.2]{lee2017spectral}, which are also valid tail estimates by \eqref{eq:matSopnormlessthanSN}.  For the completeness, we provide the corresponding lemmas below.

\begin{lemma}[{\hspace{1sp}\cite[Lemma~3.5]{lee2017spectral}}]
\label{lemma:Es}
Suppose that (A1) holds.
For any $\beta \in \mathbb{N}$, there exist a numerical constant $\alpha \in \mathbb{N}$ and a constant $C(\beta)$ that depends only on $\beta$ such that
\begin{equation}
\label{eq:bnd_snEs}
\frac{\norm{\mPhi^* \mY_{\mathrm{s}}^* \mY_{\mathrm{s}} \mPhi - \mathbb{E}[\mPhi^* \mY_{\mathrm{s}}^* \mY_{\mathrm{s}} \mPhi]}}{K^2 \norm{\vx}_2^2 \norm{\vu}_2^2}
\leq
C(\beta) \log^\alpha(MKL) \Big(\sqrt{\frac{1}{M}} + \sqrt{\frac{D}{K}} \, \Big) \mu^2
\end{equation}
holds with probability $1-CK^{-\beta}$.
\end{lemma}

\begin{lemma}[{\hspace{1sp}\cite[Lemma~3.7]{lee2017spectral}}]
\label{lemma:En}
Suppose that (A1) holds.
For any $\beta \in \mathbb{N}$, there is a constant $C(\beta)$ that depends only on $\beta$ such that
\begin{equation}
\label{eq:bnd_snEn}
\frac{\norm{\mPhi^* (\mY_{\mathrm{n}}^* \mY_{\mathrm{n}}- \sigma_w^2 (M-1)L \mId_{MK}) \mPhi}}{K^2 \norm{\vx}_2^2 \norm{\vu}_2^2}
\leq
\frac{C(\beta) \log^\alpha (MKL)}{\eta} \cdot \sqrt{\frac{D}{L}}
\end{equation}
with probability $1-CK^{-\beta}$.
\end{lemma}

For the second and third terms in \eqref{eq:Ec_Edecomp}, we use their tail estimates given in the following lemma, the proof of which is provided in Appendix~\ref{sec:proof:lemma:Ec}.

\begin{lemma}
\label{lemma:Ec}
Suppose that (A1) holds.
For any $\beta \in \mathbb{N}$, there exists a constant $C(\beta)$ that depends only on $\beta$ such that, conditional on the noise vector $\vw$,
\begin{equation}
\label{eq:bnd_S1S8normEc}
\frac{\tnorm{\mPhi^* \mY_{\mathrm{s}}^* \mY_{\mathrm{n}} \mPhi}_{S_1 \to S_\infty}}{K^2 \norm{\vx}_2^2 \norm{\vu}_2^2}
\leq
\frac{C(\beta) \rho_{x,w}}{\sqrt{\eta KL}\sigma_w \norm{\vx}_2}
\Big(
\mu\Big(\frac{\sqrt{K}}{M} + \sqrt{\frac{D}{M}} + \sqrt{\frac{D}{K}} \, \Big) + 1
\Big)
\end{equation}
holds with probability $1-CK^{-\beta}$.
\end{lemma}

Finally, the tail estimate in \eqref{eq:bnd_S1S8normE} is obtained by combining \eqref{eq:bnd_snEs}, \eqref{eq:bnd_snEs}, and \eqref{eq:bnd_S1S8normEc} via the triangle inequality.  This completes the proof.
\end{proof}

We will also make use of a tail estimate of $\tnorm{\mE \vu}_{S_\infty} / \norm{\vu}_2$, again normalized by factor $K^2 \norm{\vx}_2^2 \norm{\vu}_2^2$.  The following lemma, which provides a relevant tail estimate, is a direct consequence of Lemma~\ref{lemma:Ec} and \cite[Lemma~3.8]{lee2017spectral}.

\begin{lemma}
\label{lemma:bnd_2norm_Eu}
Let $\mE = \mA - \underline{\mA}$. For any $\beta \in \mathbb{N}$, there exist a numerical constant $C$ and a constant $C(\beta)$ that depends only on $\beta$ such that
\begin{equation}
\label{eq:bnd_2normEu}
\begin{aligned}
\frac{\tnorm{\mE \vu}_{S_\infty}}{K^2 \norm{\vx}_2^2 \norm{\vu}_2^3}
&\leq C(\beta) \log^\alpha(MKL) \Big[ \frac{\rho_{x,w}}{\sqrt{\eta KL}\sigma_w \norm{\vx}_2}
\Big(
\mu\Big(\frac{\sqrt{K}}{M} + \sqrt{\frac{D}{M}} + \sqrt{\frac{D}{K}} \, \Big) + 1
\Big)
+ \frac{\sqrt{D}}{\eta \sqrt{ML}}
\Big]
\end{aligned}
\end{equation}
holds with probability $1-CK^{-\beta}$.
\end{lemma}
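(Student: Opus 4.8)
The plan is to mirror the three-way decomposition of $\mE = \mA - \underline{\mA}$ already used in the proof of Lemma~\ref{lemma:bnd_S1S8norm_E}, namely $\mE = \mE_{\mathrm{s}} + \mE_{\mathrm{c}} + \mE_{\mathrm{n}}$ with signal part $\mE_{\mathrm{s}} = \mPhi^* \mY_{\mathrm{s}}^* \mY_{\mathrm{s}} \mPhi - \mathbb{E}[\mPhi^* \mY_{\mathrm{s}}^* \mY_{\mathrm{s}} \mPhi]$, cross part $\mE_{\mathrm{c}} = \mPhi^* \mY_{\mathrm{s}}^* \mY_{\mathrm{n}} \mPhi + \mPhi^* \mY_{\mathrm{n}}^* \mY_{\mathrm{s}} \mPhi$, and noise part $\mE_{\mathrm{n}} = \mPhi^* (\mY_{\mathrm{n}}^* \mY_{\mathrm{n}} - \sigma_w^2 (M-1)L \mId_{MK}) \mPhi$, and then to control the three vectors $\mE_{\mathrm{s}}\vu$, $\mE_{\mathrm{c}}\vu$, $\mE_{\mathrm{n}}\vu$ separately via the triangle inequality $\tnorm{\mE\vu}_{S_\infty} \leq \tnorm{\mE_{\mathrm{s}}\vu}_{S_\infty} + \tnorm{\mE_{\mathrm{c}}\vu}_{S_\infty} + \tnorm{\mE_{\mathrm{n}}\vu}_{S_\infty}$. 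The two features of the target bound \eqref{eq:bnd_2normEu} that must be explained are the complete absence of the signal ($\mu^2$) contribution present in \eqref{eq:bnd_S1S8normE}, and the gain of an extra factor $1/\sqrt{M}$ in the noise contribution relative to the operator-norm estimate in Lemma~\ref{lemma:En}. The first is because $\mE_{\mathrm{s}}$ annihilates $\vu$ exactly; the second is because $\mE_{\mathrm{n}}$ is applied to the single structured vector $\vu$ rather than taken in full operator norm.

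First I would dispose of the signal term. Since $\vu = \va \otimes \vb$ we have $\mPhi\vu = \underline{\vh}$, the true stacked channel response, and under $L \geq 3K$ the noiseless cross-convolution identity \eqref{eq:mcsys2} gives $\mY_{\mathrm{s}}\mPhi\vu = \mY_{\mathrm{s}}\underline{\vh} = \vzero$. Hence $\mPhi^* \mY_{\mathrm{s}}^* \mY_{\mathrm{s}} \mPhi\vu = \vzero$ holds identically for every realization of $\mPhi$ and $\vx$; taking expectations yields $\mathbb{E}[\mPhi^* \mY_{\mathrm{s}}^* \mY_{\mathrm{s}} \mPhi]\vu = \vzero$ as well, so $\mE_{\mathrm{s}}\vu = \vzero$ and this term drops out of the bound entirely.

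Next, for the cross term I would exploit the rank-one structure of $\vu$: since $\mat(\vu) = \va\vb^\transpose$ has rank one, $\tnorm{\vu}_{S_1} = \norm{\va}_2\norm{\vb}_2 = \norm{\vu}_2$. The definition of the matricized operator norm then gives $\tnorm{\mE_{\mathrm{c}}\vu}_{S_\infty} \leq \tnorm{\mE_{\mathrm{c}}}_{S_1 \to S_\infty}\tnorm{\vu}_{S_1} = \tnorm{\mE_{\mathrm{c}}}_{S_1 \to S_\infty}\norm{\vu}_2$, and since the two summands of $\mE_{\mathrm{c}}$ are adjoints (under which $\tnorm{\cdot}_{S_1 \to S_\infty}$ is invariant) the right-hand side is at most $2\tnorm{\mPhi^* \mY_{\mathrm{s}}^* \mY_{\mathrm{n}} \mPhi}_{S_1 \to S_\infty}\norm{\vu}_2$. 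Dividing by $K^2\norm{\vx}_2^2\norm{\vu}_2^3$ and invoking Lemma~\ref{lemma:Ec} reproduces exactly the $\rho_{x,w}$ bracket of \eqref{eq:bnd_2normEu}. For the noise term I would appeal to \cite[Lemma~3.8]{lee2017spectral}, which controls the $\ell_2$-norm of the noise part applied to $\vu$; using the elementary inequality $\tnorm{\cdot}_{S_\infty} \leq \tnorm{\cdot}_{S_2} = \norm{\cdot}_2$ this immediately bounds $\tnorm{\mE_{\mathrm{n}}\vu}_{S_\infty}$ by that $\ell_2$ estimate, and the presence of the fixed vector $\vu$ in place of the full operator norm is precisely what upgrades the $\sqrt{D/L}$ rate of Lemma~\ref{lemma:En} to $\sqrt{D}/(\eta\sqrt{ML})$, the second bracket of \eqref{eq:bnd_2normEu}. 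Summing the three pieces and taking a union bound over the failure events, each of probability $O(K^{-\beta})$, completes the argument.

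I expect the main obstacle to be the noise term: one must verify that applying $\mE_{\mathrm{n}}$ to the structured vector $\vu$ genuinely purchases the extra $1/\sqrt{M}$ over Lemma~\ref{lemma:En}, and that the cited estimate \cite[Lemma~3.8]{lee2017spectral} is stated with the same normalizations of $\eta$, $\mu$, and $\rho_{x,w}$ so that it transfers cleanly to the matricized $S_\infty$-norm here. By contrast, the signal and cross parts are bookkeeping once the exact annihilation $\mY_{\mathrm{s}}\mPhi\vu = \vzero$ and the rank-one identity $\tnorm{\vu}_{S_1} = \norm{\vu}_2$ are in place.
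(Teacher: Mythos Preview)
Your proposal is correct and matches the paper's approach: the paper states the lemma as ``a direct consequence of Lemma~\ref{lemma:Ec} and \cite[Lemma~3.8]{lee2017spectral},'' and you have correctly unpacked this into the three-term decomposition, supplying in particular the exact annihilation $\mE_{\mathrm{s}}\vu=\vzero$ via $\mY_{\mathrm{s}}\mPhi\vu=\mY_{\mathrm{s}}\underline{\vh}=\vzero$ that explains the absence of any $\mu^2$ contribution. The cross term is handled exactly as the paper intends (rank-one $\vu$ so $\tnorm{\vu}_{S_1}=\norm{\vu}_2$, then Lemma~\ref{lemma:Ec}), and the noise term is deferred to \cite[Lemma~3.8]{lee2017spectral} for the $\sqrt{D}/(\eta\sqrt{ML})$ rate, just as the paper does.
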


\subsection{Proof of Proposition~\ref{prop:update_a}}
\label{subsec:proof_prop_update_a}

To simplify notations, let $\theta = \angle(\vb,\widehat{\vb})$ denote the principal angle between the two subspaces spanned respectively by $\vb$ and $\widehat{\vb}$, i.e., $\theta \in [0, \pi/2]$ satisfies
\[
\sin\theta = \norm{\mP_{\vb^\perp} \widehat{\vb}}_2,
\quad \cos\theta = \norm{\mP_{\vb} \widehat{\vb}}_2,
\]
where $\mP_{\vb}$ denotes the orthogonal projection onto the span of $\vb$.
The assumption in \eqref{eq:starterrb} implies $\theta \leq \pi/4$.

Recall that Algorithm~\ref{alg:alteig} updates $\widehat{\va}$ from a given estimate $\widehat{\vb}$ in the previous step as the eigenvector of the matrix $(\mId_M \otimes \widehat{\vb}^*) \mA (\mId_M \otimes \widehat{\vb})$ corresponding to the smallest eigenvalue.  Without loss of generality, we may assume that $\norm{\widehat{\vb}}_2 = 1$.

By direct calculation, we obtain that $\underline{\mA} = \mathbb{E}[\mA]$ is rewritten as
\begin{equation}
\label{eq:mA:lemma:update_a}
\underline{\mA} = K^2 \norm{\vx}_2^2 \norm{\vb}_2^2 (\norm{\va}_2^2 \mId_M - \mathrm{diag}(|\va|^2)) \otimes \mP_{\vb^\perp}
+ K^2 \norm{\vx}_2^2 \norm{\vb}_2^2 (\norm{\va}_2^2 \mId_M - \va \va^*) \otimes \mP_{\vb}.
\end{equation}
Then
\begin{equation}
\label{eq:restrictedmA:lemma:update_a}
(\mId_M \otimes \widehat{\vb}^*) \underline{\mA} (\mId_M \otimes \widehat{\vb})
= K^2 \norm{\vx}_2^2 \norm{\vb}_2^2 (\norm{\va}_2^2 \mId_M - \cos^2\theta \, \va \va^*)
- K^2 \norm{\vx}_2^2 \norm{\vb}_2^2 \sin^2\theta \, \mathrm{diag}(|\va|^2).
\end{equation}
Here $|\va|^2$ denotes the vector whose $k$th entry is the squared magnitude of the $k$th entry of $\va$ and $\mathrm{diag}(|\va|^2)$ is a diagonal matrix whose diagonal entries are given by $|\va|^2$.

We verify that the matrix $\norm{\va}_2^2 \mId_M - \cos^2\theta \, \va \va^*$ is positive definite and its smallest eigenvalue, which has multiplicity 1, is smaller than the next smallest eigenvalue by $\norm{\va}_2^2 \cos^2\theta$.  Furthermore, $\va$ is collinear with the eigenvector corresponding to the smallest eigenvalue.

Let us consider the following matrix:
\begin{align*}
& K^2 \norm{\vx}_2^2 \norm{\vb}_2^2 \norm{\va}_2^2 \mId_M - (\mId_M \otimes \widehat{\vb}^*) \mA (\mId_M \otimes \widehat{\vb}) \\
& = K^2 \norm{\vx}_2^2 \norm{\vb}_2^2 \cos^2\theta \, \va \va^* \\
& + K^2 \norm{\vx}_2^2 \norm{\vb}_2^2 \sin^2\theta \, \mathrm{diag}(|\va|^2)
- (\mId_M \otimes \widehat{\vb}^*) \mE (\mId_M \otimes \widehat{\vb}),
\end{align*}
which we considered as a perturbed version of $K^2 \norm{\vx}_2^2 \norm{\vb}_2^2 \cos^2\theta \, \va \va^*$.  Then the perturbation, that is the difference of the two matrices, satisfies
\begin{align}
& \Bignorm{
K^2 \norm{\vx}_2^2 \norm{\vb}_2^2 \norm{\va}_2^2 \mId_M - (\mId_M \otimes \widehat{\vb}^*) \mA (\mId_M \otimes \widehat{\vb})
- K^2 \norm{\vx}_2^2 \norm{\vb}_2^2 \cos^2\theta \, \va \va^*} \nonumber \\
& \leq \Bignorm{K^2 \norm{\vx}_2^2 \norm{\vb}_2^2 \sin^2\theta \, \mathrm{diag}(|\va|^2)}
+ \norm{(\mId_M \otimes \widehat{\vb}^*) \mE (\mId_M \otimes \widehat{\vb})} \nonumber \\
&\leq K^2 \norm{\vx}_2^2 \norm{\vb}_2^2 \norm{\va}_\infty^2 \sin^2\theta
+ \tnorm{\mE}_{S_1 \to S_\infty}.
\label{eq:ub_perturb:lemma:update_a}
\end{align}

For sufficiently large $C_1(\beta)$, the conditions in \eqref{eq:starterrb},
\eqref{eq:updatea_condK_prop}, \eqref{eq:updatea_condM_prop}, \eqref{eq:updatea_condL_prop} imply
\[
K^2 \norm{\vx}_2^2 \norm{\vb}_2^2 \norm{\va}_2^2 \cos^2\theta
> 2 (K^2 \norm{\vx}_2^2 \norm{\vb}_2^2 \norm{\va}_\infty^2 \sin^2\theta
+ \tnorm{\mE}_{S_1 \to S_\infty}).
\]
Therefore, $\widehat{\va}$ is a unique dominant eigenvector of $K^2 \norm{\vx}_2^2 \norm{\vb}_2^2 \norm{\va}_2^2 \mId_M - (\mId_M \otimes \widehat{\vb}^*) \mA (\mId_M \otimes \widehat{\vb})$.

Next we apply Lemma~\ref{lemma:daviskahan} for
\begin{align*}
\underline{\mM} &= K^2 \norm{\vx}_2^2 \norm{\vb}_2^2 \cos^2\theta \, \va \va^*, \\
\mM &= K^2 \norm{\vx}_2^2 \norm{\vb}_2^2 \norm{\va}_2^2 \mId_M - (\mId_M \otimes \widehat{\vb}^*) \mA (\mId_M \otimes \widehat{\vb}), \\
\vq_1 &= \frac{\va}{\norm{\va}_2}, \qquad \widetilde{\vq} = \widehat{\va}.
\end{align*}
Then $\lambda$ and $\mD$ in Lemma~\ref{lemma:daviskahan} are given as $\lambda = K^2 \norm{\vx}_2^2 \norm{\vb}_2^2 \norm{\va}_2^2 \cos^2\theta$ and $\mD = \vzero$.

By \eqref{eq:ub_perturb:lemma:update_a}, we have
\begin{align*}
\frac{\norm{\mM - \underline{\mM}}}{\lambda}
& \leq
\frac{\norm{\va}_\infty^2 \sin^2\theta}{\norm{\va}_2^2 \cos^2\theta}
+ \frac{\tnorm{\mE}_{S_1 \to S_\infty}}{K^2 \norm{\vx}_2^2 \norm{\vb}_2^2 \norm{\va}_2^2 \cos^2\theta} \\
& \leq
\frac{\mu^2}{M}
+ \frac{2 \tnorm{\mE}_{S_1 \to S_\infty}}{K^2 \norm{\vx}_2^2 \norm{\vb}_2^2 \norm{\va}_2^2},
\end{align*}
where the last step follows from \eqref{eq:starterrb}.
Therefore, for sufficiently large $C_1(\beta)$, the conditions in \eqref{eq:updatea_condK_prop}, \eqref{eq:updatea_condM_prop}, \eqref{eq:updatea_condL_prop} combined with Lemma~\ref{lemma:bnd_S1S8norm_E} satisfy \eqref{eq:smallperturb} in Lemma~\ref{lemma:daviskahan} and we obtain the error bound in \eqref{eq:errbnd}.

It remains to compute $\norm{(\mN - \underline{\mM}) \vq_1}_2 / \lambda$. The $\ell_2$-norm of $(\mM - \underline{\mM}) \vq_1$ satisfies
\begin{align}
\norm{(\mM - \underline{\mM}) \vq_1}_2
&\leq \frac{K^2 \norm{\vx}_2^2 \norm{\vb}_2^2 \sin^2\theta \norm{\mathrm{diag}(|\va|^2) \va}_2}{\norm{\va}_2}
+ \frac{\norm{(\mId_M \otimes \widehat{\vb}^*) \mE (\va \otimes \widehat{\vb})}_2}{\norm{\va}_2} \nonumber \\
&\leq K^2 \norm{\vx}_2^2 \norm{\vb}_2^2 \norm{\va}_\infty^2 \, \sin^2\theta
+ 3 \sin\theta \, \tnorm{\mE}_{S_1 \to S_\infty}
+ \frac{\cos^2\theta \, \tnorm{\mE (\va \otimes \vb)}_{S_\infty}}{\norm{\va}_2 \norm{\vb}_2},
\label{eq:errbnd:lemma:update_a}
\end{align}
where the second step follow from the decomposition of $\widehat{\vb}$ given by
\[
\widehat{\vb} = \mP_{\vb} \widehat{\vb} + \mP_{\vb^\perp} \widehat{\vb}.
\]
which satisfies $\norm{\mP_{\vb} \widehat{\vb}}_2 = \cos\theta$ and $\norm{\mP_{\vb^\perp} \widehat{\vb}}_2 = \sin\theta$.  By dividing the right-hand side of \eqref{eq:errbnd:lemma:update_a} by $\lambda$, we obtain
\begin{align}
\frac{4 \norm{(\mM - \underline{\mM}) \vq_1}_2}{\lambda}
& \leq
\frac{4 \norm{\va}_\infty^2 \, \sin^2\theta}{\norm{\va}_2^2 \, \cos^2\theta}
+ \frac{12 \sin^2\theta \, \tnorm{\mE}_{S_1 \to S_\infty}}{K^2 \norm{\vx}_2^2 \norm{\vb}_2^2 \norm{\va}_2^2 \cos^2\theta}
+ \frac{4 \tnorm{\mE \vu}_{S_\infty}}{K^2 \norm{\vx}_2^2 \norm{\vu}_2^3} \nonumber \\
& \leq
\Big( \frac{8\mu^2}{M} + \frac{24 \tnorm{\mE}_{S_1 \to S_\infty}}{K^2 \norm{\vx}_2^2 \norm{\vu}_2^2} \Big) \sin\theta
+ \frac{4 \tnorm{\mE \vu}_{S_\infty}}{K^2 \norm{\vx}_2^2 \norm{\vu}_2^3},
\label{eq:errbnd2:lemma:update_a}
\end{align}
where the second step follows from \eqref{eq:starterrb}.

By Lemma~\ref{lemma:bnd_S1S8norm_E}, the constant factor for $\sin\theta$ in \eqref{eq:errbnd2:lemma:update_a} becomes less than $1/2$ as we choose $C_1(\beta)$ in \eqref{eq:updatea_condK_prop}, \eqref{eq:updatea_condM_prop}, \eqref{eq:updatea_condL_prop} sufficiently large.  This gives \eqref{eq:updatea_recursion}, where the expression for $\kappa$ follows from Lemma~\ref{lemma:bnd_2norm_Eu}.
This completes the proof.

\subsection{Proof of Proposition~\ref{prop:update_b}}
\label{subsec:proof_prop_update_b}

The proof of Proposition~\ref{prop:update_b} is similar to that of Proposition~\ref{prop:update_a}.  Thus we will only highlight the differences between the two proofs.

Without loss of generality, we assume that $\norm{\widehat{\va}}_2 = 1$.  Let $\breve{\theta} = \angle(\va,\widehat{\va})$.  The assumption in \eqref{eq:starterra} implies $\breve{\theta} \leq \pi/4$.  This time, we compute the least dominant eigenvector of $(\widehat{\va}^* \otimes \mId_D) \mA (\widehat{\va} \otimes \mId_D)$.  From \eqref{eq:mA:lemma:update_a}, we obtain
\begin{equation}
\label{eq:restrictedmA:lemma:update_b}
(\widehat{\va}^* \otimes \mId_D) \underline{\mA} (\widehat{\va} \otimes \mId_D)
=
K^2 \norm{\vx}_2^2 \norm{\va}_2^2
(\norm{\vb}_2^2 \mId_D - \cos^2\breve{\theta} \, \vb \vb^*)
- K^2 \norm{\vx}_2^2 \norm{\vb}_2^2 \norm{|\va| \odot \widehat{\va}}_2^2 \mP_{\vb^\perp}.
\end{equation}

We consider the matrix
\begin{align*}
& K^2 \norm{\vx}_2^2 \norm{\vb}_2^2 \norm{\va}_2^2 \mId_D - (\widehat{\va}^* \otimes \mId_D) \mA (\widehat{\va} \otimes \mId_D) \\
&= K^2 \norm{\vx}_2^2 \norm{\va}_2^2 \cos^2\breve{\theta} \, \vb \vb^* \\
&+ K^2 \norm{\vx}_2^2 \norm{\vb}_2^2 \norm{|\va| \odot \widehat{\va}}_2^2 \mP_{\vb^\perp}
- (\widehat{\va}^* \otimes \mId_D) \mE (\widehat{\va} \otimes \mId_D)
\end{align*}
as a perturbed version of $K^2 \norm{\vx}_2^2 \norm{\va}_2^2 \cos^2\breve{\theta} \, \vb \vb^*$.  The difference of the two matrices satisfies
\begin{align*}
& \Bignorm{K^2 \norm{\vx}_2^2 \norm{\vb}_2^2 \norm{\va}_2^2 \mId_D - (\widehat{\va}^* \otimes \mId_D) \underline{\mA} (\widehat{\va} \otimes \mId_D) - K^2 \norm{\vx}_2^2 \norm{\vb}_2^2 \norm{\va}_2^2 \cos^2\breve{\theta} \mP_{\vb}} \\
& \leq \Bignorm{K^2 \norm{\vx}_2^2 \norm{\vb}_2^2 \norm{|\va| \odot \widehat{\va}}_2^2 \mP_{\vb^\perp}}
+ \norm{(\widehat{\va}^* \otimes \mId_D) \mE (\widehat{\va} \otimes \mId_D)} \\
& \leq K^2 \norm{\vx}_2^2 \norm{\vb}_2^2 \norm{\va}_\infty^2
+ \tnorm{\mE}_{S_1 \to S_\infty}.
\end{align*}

For sufficiently large $C_1(\beta)$, the conditions in \eqref{eq:starterrb},
\eqref{eq:updatea_condK_prop}, \eqref{eq:updatea_condM_prop}, \eqref{eq:updatea_condL_prop} imply
\[
K^2 \norm{\vx}_2^2 \norm{\vb}_2^2 \norm{\va}_2^2 \cos^2\theta
> 2 (K^2 \norm{\vx}_2^2 \norm{\vb}_2^2 \norm{\va}_\infty^2 \sin^2\theta
+ \tnorm{\mE}_{S_1 \to S_\infty}).
\]
Therefore, $\widehat{\vb}$ is also a unique dominant eigenvector of $K^2 \norm{\vx}_2^2 \norm{\vb}_2^2 \norm{\va}_2^2 \mId_D - (\widehat{\va}^* \otimes \mId_D) \mA (\widehat{\va} \otimes \mId_D)$.

Next we apply Lemma~\ref{lemma:daviskahan} for
\begin{align*}
\underline{\mM} &= K^2 \norm{\vx}_2^2 \norm{\va}_2^2 \cos^2\breve{\theta} \, \vb \vb^*, \\
\mM &= K^2 \norm{\vx}_2^2 \norm{\vb}_2^2 \norm{\va}_2^2 \mId_D - (\widehat{\va}^* \otimes \mId_D) \mA (\widehat{\va} \otimes \mId_D), \\
\vq_1 &= \frac{\vb}{\norm{\vb}_2}, \qquad \widetilde{\vq} = \widehat{\vb}.
\end{align*}
Then $\lambda$ and $\mD$ in Lemma~\ref{lemma:daviskahan} are given as $\lambda = K^2 \norm{\vx}_2^2 \norm{\vb}_2^2 \norm{\va}_2^2 \cos^2\breve{\theta}$ and $\mD = \vzero$.

Similarly to the proof of Proposition~\ref{prop:update_a}, we show
\begin{align*}
\frac{\norm{\mM - \underline{\mM}}}{\lambda}
& \leq
\frac{2\mu^2}{M}
+ \frac{2 \tnorm{\mE}_{S_1 \to S_\infty}}{K^2 \norm{\vx}_2^2 \norm{\vb}_2^2 \norm{\va}_2^2}
\end{align*}
and
\begin{align*}
\frac{4 \norm{(\mM - \underline{\mM}) \vq_1}_2}{\lambda}
& \leq
\frac{24 \tnorm{\mE}_{S_1 \to S_\infty}}{K^2 \norm{\vx}_2^2 \norm{\vu}_2^2} \sin\theta
+ \frac{4 \tnorm{\mE \vu}_{S_\infty}}{K^2 \norm{\vx}_2^2 \norm{\vu}_2^3}.
\end{align*}
Here we used the decomposition of $\widehat{\va}$ given by
\[
\widehat{\va} = \mP_{\va} \widehat{\va} + \mP_{\va^\perp} \widehat{\va}.
\]
which satisfies $\norm{\mP_{\va} \widehat{\va}}_2 = \cos\breve{\theta}$ and $\norm{\mP_{\va^\perp} \widehat{\va}}_2 = \sin\breve{\theta}$.

The remaining steps are identical to those in the proof of Proposition~\ref{prop:update_a} and we omit further details.

\section{Convergence of Rank-1 Truncated Power Method}
\label{sec:conv_tpm}

In this section, we prove Proposition~\ref{prop:tpm4mbd}.  First we present a theorem that shows local convergence of the rank-1 truncated power method for general matrix input $\mB$.  Then we will show the proof of Proposition~\ref{prop:tpm4mbd} as its corollary.

The separability structure in \eqref{eq:const_evd} corresponds to the rank-1 structure when the eigenvector is rearranged as a matrix.  We introduce a collection of structured subspaces, where their Minkowski sum is analogous to the support in the sparsity model.  For $(\va,\vb) \in \mathbb{C}^M \times \mathbb{C}^D$, we define
\[
T(\va,\vb) := \{ \va \otimes \vxi + \vq \otimes \vb ~|~ \vxi \in \mathbb{C}^D,~ \vq \in \mathbb{C}^M \}.
\]
Then
\[
\mathrm{mat}(T(\va,\vb)) = \{ \mathrm{mat}(\vv) ~|~ \vv \in T(\va,\vb) \}
\]
is equivalent to the tangent space of the rank-1 matrix $\mU = \va \vb^\transpose$.

Now we state a local convergence result for the rank-1 truncated power method in the following theorem, the proof of which is postponed to Section~\ref{sec:proof:thm:conv_tpm}.

\begin{theorem}
\label{thm:conv_tpm}
Let $\vu = \va \otimes \vb$ be a unique dominant eigenvector of $\underline{\mB}$.
Define
\[
\widetilde{\lambda}_2(\underline{\mB}) :=
\sup_{\vv,(\widehat{\va},\widehat{\vb}),(\widetilde{\va},\widetilde{\vb})}
\Big\{
\vv^* \underline{\mB} \vv ~|~ \norm{\vv}_2 \leq 1,~ \vv \in \vu^\perp \cap [T(\va,\vb)+T(\widehat{\va},\widehat{\vb})+T(\widetilde{\va},\widetilde{\vb})]
\Big\}.
\]
Suppose that
\begin{equation}
\frac{\sqrt{5} (\widetilde{\lambda}_2(\underline{\mB}) + 6 \tnorm{\mB - \underline{\mB}}_{S_1 \to S_\infty})}{\sqrt{1-\tau^2} \, \lambda_1(\underline{\mB}) - \tau \, \widetilde{\lambda}_2(\underline{\mB}) - 6 (\sqrt{1-\tau^2} + \tau) \tnorm{\mB - \underline{\mB}}_{S_1 \to S_\infty}}
< \mu,
\label{eq:conv_tpm_cond1}
\end{equation}
\begin{equation}
\frac{4\sqrt{6} \tnorm{\mB - \underline{\mB}}_{S_1 \to S_\infty}}{\lambda_1(\underline{\mB})}
\leq \min\Big[ \frac{1}{3\sqrt{2}} , \frac{(1-\mu) \tau}{1+\mu} \Big],
\label{eq:conv_tpm_cond2}
\end{equation}
and
\begin{equation}
\widetilde{\lambda}_2(\underline{\mB}) + 6 \tnorm{\mB - \underline{\mB}}_{S_1 \to S_\infty}
\leq \frac{\lambda_1(\underline{\mB})}{5}
\label{eq:conv_tpm_cond3}
\end{equation}
hold for some $0 < \mu < 1$ and $0 < \tau < \frac{1}{3\sqrt{2}}$.
If $\sin \angle(\vu_0, \vu) \leq \tau$, then $(\vu_t)_{t\in\mathbb{N}}$ produced by Algorithm~\ref{alg:tpm} satisfies
\begin{equation}
\sin \angle(\vu_t, \vu) \leq \mu \sin \angle(\vu_{t-1}, \vu)
+ \frac{(1+\mu)4\sqrt{6} \tnorm{(\mB - \underline{\mB}) \vu}_{S_\infty}}{\lambda_1(\underline{\mB})}, \quad \forall t \in \mathbb{N}.
\label{eq:conv_tpm}
\end{equation}
\end{theorem}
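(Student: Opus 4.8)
The plan is to read Algorithm~\ref{alg:tpm} as a perturbed power iteration that the rank-$1$ truncation confines to a low-dimensional union of tangent spaces, and then to run the classical power-method contraction inside that restricted space. Write $\vu=\va\otimes\vb$ for the target and, after normalization, let $\vu_{t-1}$ and $\vu_t$ be two consecutive iterates; both are exactly rank-$1$ since they come out of the Rank1Approx step, so $\vu_{t-1}=\widehat\va\otimes\widehat\vb$ and $\vu_t=\widetilde\va\otimes\widetilde\vb$ for suitable factors. Because $\vu,\vu_{t-1},\vu_t$ are each rank-$1$, every vector in $\calT:=T(\va,\vb)+T(\widehat\va,\widehat\vb)+T(\widetilde\va,\widetilde\vb)$ matricizes to a matrix of rank at most $6$, so $\norm{\mat(\vv)}_{S_1}\le\sqrt{6}\,\norm{\mat(\vv)}_{S_2}$ for all $\vv\in\calT$. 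This single fact is what lets me replace the ambient spectral norm by the much smaller $\tnorm{\mB-\underline\mB}_{S_1\to S_\infty}$ controlled in Lemma~\ref{lemma:bnd_S1S8norm_E}: whenever $\mB-\underline\mB$ is paired against two directions drawn from $\calT$, the variational form \eqref{eq:varforn_S1S8norm} produces a prefactor $(\sqrt6)^2=6$, which is exactly where the constants $6$ and $4\sqrt6$ in \eqref{eq:conv_tpm_cond1}--\eqref{eq:conv_tpm} come from. Since $\widetilde\lambda_2(\underline\mB)$ is a supremum over \emph{all} admissible tangent-space triples, it uniformly upper bounds the restricted quadratic form on the particular $\calT$ arising at step $t$.

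For a single step, decompose the current iterate as $\vu_{t-1}=\cos\phi\,\vu+\sin\phi\,\vw$ with $\phi=\angle(\vu_{t-1},\vu)$, $\vw\in\vu^\perp\cap\calT$, and $\norm{\vw}_2=1$, and split
\[
\mB\vu_{t-1}=\underline\mB\vu_{t-1}+(\mB-\underline\mB)\vu_{t-1}.
\]
Projecting with $\mP_\vu$ and $\mP_{\vu^\perp\cap\calT}$, I bound the component of $\mB\vu_{t-1}$ along $\vu$ from below by $\cos\phi\,\lambda_1(\underline\mB)$ minus a perturbation, and its component in $\vu^\perp\cap\calT$ from above by $\sin\phi\,\widetilde\lambda_2(\underline\mB)$ plus a perturbation; here $\widetilde\lambda_2(\underline\mB)$ enters exactly because $\mP_{\vu^\perp\cap\calT}\underline\mB\vu=\vzero$ (as $\underline\mB\vu=\lambda_1(\underline\mB)\vu$) while $\vw$ ranges over $\vu^\perp$ intersected with the sum of three tangent spaces in its definition, and because $\underline\mB\succeq0$ the restricted compression is governed by its top eigenvalue $\widetilde\lambda_2(\underline\mB)$. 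The perturbation contributed by the $\cos\phi\,\vu$ part is kept separate and handled through $\tnorm{(\mB-\underline\mB)\vu}_{S_\infty}$, which seeds the additive error floor in \eqref{eq:conv_tpm}; the perturbation from the $\sin\phi\,\vw$ part is folded into the contraction factor via $6\tnorm{\mB-\underline\mB}_{S_1\to S_\infty}$.

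The step I expect to be the main obstacle is the rank-$1$ truncation itself. Since $\vu_t$ is the vectorized best rank-$1$ approximation of $\mat(\mB\vu_{t-1})$ rather than an orthogonal projection onto a fixed subspace, I cannot simply identify $\tan\angle(\vu_t,\vu)$ with the ratio of the two components above. I will therefore isolate a lemma asserting that the truncation inflates the residual angle by at most a universal constant $c$ (tracking it yields the $\sqrt5$ in \eqref{eq:conv_tpm_cond1}),
\[
\sin\angle(\vu_t,\vu)\le c\,\frac{\norm{\mP_{\vu^\perp\cap\calT}\,\mB\vu_{t-1}}_2}{\norm{\mP_\vu\,\mB\vu_{t-1}}_2},
\]
proved by comparing the optimality of $\mat(\vu_t)$, which maximizes $\langle\,\cdot\,,\mat(\mB\vu_{t-1})\rangle$ over unit-$S_2$ rank-$1$ matrices, against the competitor $\mat(\vu)/\norm{\vu}_2$, together with the identity $\langle\widehat\mV_t,\mat(\mB\vu_{t-1})-\widehat\mV_t\rangle=0$ for the (unnormalized) best rank-$1$ approximation $\widehat\mV_t$. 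Crucially, both competitors lie in $\mat(\calT)$ --- $\mat(\vu_t)$ because $T(\widetilde\va,\widetilde\vb)$ is a summand of $\calT$ by construction --- so only $\mP_\calT\mat(\mB\vu_{t-1})$ is seen by the variational problem; this is what makes the \emph{restricted} gap $\widetilde\lambda_2(\underline\mB)$, not the full spectral norm, govern the numerator, and it closes the self-referential definition of $\calT$.

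Combining the single-step estimates with the truncation bound yields a per-step inequality of the schematic form
\[
\sin\angle(\vu_t,\vu)\le \frac{\sqrt5\big(\widetilde\lambda_2(\underline\mB)+6\tnorm{\mB-\underline\mB}_{S_1\to S_\infty}\big)}{\sqrt{1-\tau^2}\,\lambda_1(\underline\mB)-\tau\,\widetilde\lambda_2(\underline\mB)-6(\sqrt{1-\tau^2}+\tau)\tnorm{\mB-\underline\mB}_{S_1\to S_\infty}}\,\sin\phi+\frac{(1+\mu)4\sqrt6\,\tnorm{(\mB-\underline\mB)\vu}_{S_\infty}}{\lambda_1(\underline\mB)},
\]
after inserting $\cos\phi\ge\sqrt{1-\tau^2}$ and $\sin\phi\le\tau$, valid since $\sin\angle(\vu_0,\vu)\le\tau$. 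Condition \eqref{eq:conv_tpm_cond1} is precisely the requirement that the multiplier of $\sin\phi$ be at most $\mu<1$, furnishing the contraction $\mu\sin\angle(\vu_{t-1},\vu)$; the $\vu$-directed perturbation furnishes the error floor, with the $(1+\mu)$ and $4\sqrt6$ arising respectively from the denominator lower bound and the rank-$\le6$ conversion. Condition \eqref{eq:conv_tpm_cond3} is the Davis--Kahan-type gap hypothesis (cf.\ Lemma~\ref{lemma:daviskahan}) that keeps the denominator positive and guarantees $\vu$ stays the dominant rank-$1$ direction of $\mB$, while \eqref{eq:conv_tpm_cond2} pins $\tau$ so that the region $\{\,\vv:\sin\angle(\vv,\vu)\le\tau\,\}$ is invariant under the iteration. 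A short induction on $t$, preserving $\sin\angle(\vu_t,\vu)\le\tau$ at every step so that the per-step bound keeps applying, then delivers \eqref{eq:conv_tpm} for all $t\in\mathbb{N}$.
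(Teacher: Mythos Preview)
Your plan captures the right ingredients --- the rank-$\le 6$ restriction to $\calT$, the $S_1\to S_\infty$ norm via Lemma~\ref{lemma:projnorm}, and the power-method contraction --- but the truncation step has a real gap. The competitor argument you sketch (optimality of $\mat(\vu_t)$ against $\mat(\vu)$) only yields
\[
|\langle\vu_t,\mP_\calT\mB\vu_{t-1}\rangle|\ \ge\ |\langle\vu,\mP_\calT\mB\vu_{t-1}\rangle|=p,
\]
hence $1-|\langle\vu_t,\vu\rangle|\lesssim q/p$, which translates to $\sin\angle(\vu_t,\vu)\lesssim\sqrt{q/p}$ rather than the linear bound $c\,q/p$ you assert. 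A square-root would wreck the linear recursion. If instead you try to obtain a linear bound via a Wedin/Davis--Kahan argument applied to $\mat(\mB\vu_{t-1})=p\,\mat(\vu)+E$, the perturbation $E$ that enters is the \emph{full} orthogonal component $\mP_{\vu^\perp}\mB\vu_{t-1}$, not its restriction to $\calT$. The piece $\mP_{\calT^\perp}(\mB-\underline\mB)\vu_{t-1}$ is not controlled by $\tnorm{\mB-\underline\mB}_{S_1\to S_\infty}$ (a small $S_\infty$ norm can coexist with a large $\ell_2$ norm outside $\calT$), and $\mP_{\calT^\perp}\underline\mB\vw$ is governed by the full $\lambda_2(\underline\mB)$, not by $\widetilde\lambda_2(\underline\mB)$. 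So the step ``only $\mP_\calT\mat(\mB\vu_{t-1})$ is seen'' is true for the variational comparison but does \emph{not} carry over to the angle bound you need.

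The paper's proof closes this gap by inserting an intermediate anchor: let $\vv(\breve T)$ be the dominant eigenvector of $\mP_{\breve T}\mB\mP_{\breve T}$, where $\breve T=\calT$. One first argues that the rank-$1$ truncation of $\mB\vu_{t-1}$ coincides (after normalization) with that of $\mP_{\breve T}\mB\mP_{\breve T}\vu_{t-1}$, precisely because $\vu_t\in\breve T$ by construction; this confines the entire step to $\breve T$. Then Lemma~\ref{lemma:cor_after_rank1} (your $\sqrt5$) and Lemma~\ref{lemma:iter_power_method} are applied \emph{relative to $\vv(\breve T)$}, giving $\sin\angle(\vu_t,\vv(\breve T))\le\mu\sin\angle(\vu_{t-1},\vv(\breve T))$, while Lemma~\ref{lemma:constrained_perturbation} separately bounds $\sin\angle(\vv(\breve T),\vu)\le 4\sqrt6\,\tnorm{(\mB-\underline\mB)\vu}_{S_\infty}/\lambda_1(\underline\mB)$. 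The triangle inequality for angles through $\vv(\breve T)$ then produces both the contraction and the additive floor, and is the actual source of the $(1+\mu)$ factor --- not a ``denominator lower bound'' as you suggest. Without this intermediate eigenvector, I do not see how to keep the analysis inside $\calT$ at the truncation step.
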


Proposition~\ref{prop:tpm4mbd} is a direct consequence of Theorem~\ref{thm:conv_tpm} for the case where the input matrix $\mB$ is given as $\mB = \norm{\mathbb{E}[\mA]}\, \mId_{MD} - \mA$.  We provide the proof below.

\begin{proof}[Proof of Proposition~\ref{prop:tpm4mbd}]
Given $\mB = \norm{\mathbb{E}[\mA]}\, \mId_{MD} - \mA$, we apply Theorem~\ref{thm:conv_tpm} for
\[
\underline{\mB} = K^2 \norm{\vx}_2^2 \vu \vu^*.
\]
Then the difference between $\mB$ and $\underline{\mB}$ is given by
\begin{equation}
\label{eq_diff:proof:prop:tpm4mbd}
\mB - \underline{\mB} =
(\norm{\mathbb{E}[\mA]} - K^2 \norm{\vx}_2^2 \norm{\vu}_2^2) \mId_{MD}
+ K^2 \norm{\vx}_2^2 \mUpsilon
- \mE.
\end{equation}

In Section~\ref{subsec:proof_prop_update_a}, we have computed $\underline{\mA} = \mathbb{E}[\mA]$ in \eqref{eq:mA:lemma:update_a}, which is rewritten as
\begin{equation}
\label{eq_decomp_A:proof:prop:tpm4mbd}
\underline{\mA} = K^2 \norm{\vx}_2^2 (\norm{\vu}_2^2 \mP_{\vu^\perp} - \mUpsilon)
\end{equation}
with
\[
\mUpsilon = \mathrm{diag}(|\va|^2) \otimes \norm{\vb}_2^2 \mP_{\vb^\perp},
\]
where $\vu = \va \otimes \vb$.

Therefore, it follows from \eqref{eq_decomp_A:proof:prop:tpm4mbd} that
\begin{equation}
\label{eq_bnd1:proof:prop:tpm4mbd}
|\norm{\underline{\mA}} - K^2 \norm{\vx}_2^2 \norm{\vu}_2^2|
\leq K^2 \norm{\vx}_2^2 \norm{\mUpsilon}
\leq K^2 \norm{\vx}_2^2 \norm{\vb}_2^2 \norm{\va}_\infty^2.
\end{equation}

Then by plugging in \eqref{eq_bnd1:proof:prop:tpm4mbd} to \eqref{eq_diff:proof:prop:tpm4mbd}, we obtain
\begin{equation}
\label{eq_bnd2:proof:prop:tpm4mbd}
\tnorm{\mB - \underline{\mB}}_{S_1 \to S_\infty}
\leq
2K^2 \norm{\vx}_2^2 \norm{\vb}_2^2 \norm{\va}_\infty^2
+ \tnorm{\mE}_{S_1 \to S_\infty}.
\end{equation}

On the other hand, $\underline{\mB}$ is a rank-1 matrix whose eigenvector is collinear with $\vu$ and the largest eigenvalue is given by
\begin{equation}
\label{eq_la1:proof:prop:tpm4mbd}
\lambda_1(\underline{\mB}) = K^2 \norm{\vx}_2^2 \norm{\vb}_2^2 \norm{\va}_2^2.
\end{equation}
Therefore, $\underline{\mB}$ also satisfies
\[
\widetilde{\lambda}_2(\underline{\mB}) = 0.
\]

Since $\widetilde{\lambda}_2(\underline{\mB}) = 0$, \eqref{eq:conv_tpm_cond1} and \eqref{eq:conv_tpm_cond2} are implied by
\begin{equation}
\label{eq_suff:proof:prop:tpm4mbd}
\frac{\tnorm{\mB - \underline{\mB}}_{S_1 \to S_\infty}}{\lambda_1(\underline{\mB})}
\leq C_0 \min\Big[ \mu\sqrt{1-\tau^2}, \frac{(1-\mu) \tau}{1+\mu} \Big],
\end{equation}
for a numerical constant $C_0$.

By applying \eqref{eq_la1:proof:prop:tpm4mbd} and the tail estimate of $\tnorm{\mE}_{S_1 \to S_\infty}$ given in Lemma~\ref{lemma:bnd_S1S8norm_E} to \eqref{eq_bnd2:proof:prop:tpm4mbd}, we verify that the sufficient condition in \eqref{eq_suff:proof:prop:tpm4mbd} is implied by \eqref{eq:updatea_condK_prop}, \eqref{eq:updatea_condM_prop}, and \eqref{eq:updatea_condL_prop} for $C_1 = c(\mu,\tau) C_1', C_2 = c(\mu,\tau) C_2'$ where $C_1'$ and $C_2'$ are constants that only depend on $\beta$.

Since the conditions in \eqref{eq:conv_tpm_cond1} and \eqref{eq:conv_tpm_cond2} are satisfied, Theorem~\ref{thm:conv_tpm} provides the error bound in \eqref{eq:conv_tpm_mbd}.  This completes the proof.
\end{proof}

\subsection{Proof of Theorem~\ref{thm:conv_tpm}}
\label{sec:proof:thm:conv_tpm}

In order to prove Theorem~\ref{thm:conv_tpm}, we first provide lemmas, which show upper bounds on the estimation error, given in terms of the principal angle, in the corresponding steps of Algorithm~\ref{alg:tpm}.

The first lemma provides upper bounds on norms of a matrix and a vector when they are restricted with a projection operator onto a subspace with the separability structure.

\begin{lemma}
\label{lemma:projnorm}
Let
\[
\breve{T} = \sum_{k=1}^r T(\va_k,\vb_k)
\]
for $\{(\va_k,\vb_k)\}_{k=1}^r \subset \mathbb{C}^M \times \mathbb{C}^D$,
$\mM \in \mathbb{C}^{MD \times MD}$, and $\vu \in \mathbb{C}^{MD}$.
Then
\[
\norm{\mP_{\breve{T}} \mM \mP_{\breve{T}}} \leq 2r \tnorm{\mM}_{S_1 \to S_\infty}
\]
and
\[
\norm{\mP_{\breve{T}} \mM \vu}_2 \leq \sqrt{2r} \tnorm{\mM \vu}_{S_\infty}.
\]
\end{lemma}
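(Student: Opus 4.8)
The plan is to reduce both inequalities to a single structural observation—every vector in $\breve{T}$ matricizes to a matrix of rank at most $2r$—combined with the Schatten $S_1$–$S_\infty$ duality that underlies the definition of $\tnorm{\cdot}_{S_1 \to S_\infty}$. Note first that each $T(\va_k,\vb_k)$ is a linear subspace and so is their Minkowski sum $\breve{T}$, making $\mP_{\breve{T}}$ a bona fide orthogonal projection.

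First I would record the rank fact. For each $k$, an arbitrary element of $T(\va_k,\vb_k)$ has the form $\va_k \otimes \vxi + \vq \otimes \vb_k$, and by the matricization convention its image under $\mathrm{mat}(\cdot)$ equals $\va_k \vxi^\transpose + \vq \vb_k^\transpose$, a matrix of rank at most $2$ (this is exactly the tangent space of the rank-$1$ matrix $\va_k \vb_k^\transpose$). Summing over $k=1,\dots,r$, every $\vv \in \breve{T}$ satisfies $\mathrm{rank}(\mathrm{mat}(\vv)) \leq 2r$. Since $\mP_{\breve{T}}\vv$ lies in $\breve{T}$, its matricization has rank at most $2r$, and $\norm{\mP_{\breve{T}}\vv}_2 \leq \norm{\vv}_2$ because the projection is non-expansive. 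For any matrix $\mZ$ of rank at most $s$, Cauchy--Schwarz on its singular values gives $\norm{\mZ}_{S_1} \leq \sqrt{s}\,\norm{\mZ}_{S_2}$; since $\norm{\vv}_2 = \norm{\mathrm{mat}(\vv)}_{S_2}$, this yields the key estimate $\tnorm{\mP_{\breve{T}}\vv}_{S_1} \leq \sqrt{2r}\,\norm{\vv}_2$.

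For the first bound I would write the operator norm variationally as
\[
\norm{\mP_{\breve{T}} \mM \mP_{\breve{T}}}
= \max_{\norm{\vv}_2 = \norm{\vw}_2 = 1} |\langle \mP_{\breve{T}} \vw,\, \mM \mP_{\breve{T}} \vv\rangle|,
\]
then set $\vv' = \mP_{\breve{T}}\vv$, $\vw' = \mP_{\breve{T}}\vw$ and apply the trace duality $|\langle \mathrm{mat}(\vw'), \mathrm{mat}(\mM\vv')\rangle_F| \leq \tnorm{\vw'}_{S_1}\,\tnorm{\mM \vv'}_{S_\infty}$ followed by $\tnorm{\mM \vv'}_{S_\infty} \leq \tnorm{\mM}_{S_1 \to S_\infty}\,\tnorm{\vv'}_{S_1}$. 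Both $\tnorm{\vv'}_{S_1}$ and $\tnorm{\vw'}_{S_1}$ are at most $\sqrt{2r}$ by the previous paragraph, producing the factor $2r$. The second bound follows the same template but keeps $\vu$ on the right untouched: writing $\norm{\mP_{\breve{T}}\mM\vu}_2 = \max_{\norm{\vw}_2 = 1} |\langle \mP_{\breve{T}}\vw,\, \mM\vu\rangle|$ and bounding $|\langle \mathrm{mat}(\mP_{\breve{T}}\vw), \mathrm{mat}(\mM\vu)\rangle_F| \leq \tnorm{\mP_{\breve{T}}\vw}_{S_1}\,\tnorm{\mM\vu}_{S_\infty} \leq \sqrt{2r}\,\tnorm{\mM\vu}_{S_\infty}$ gives the claim, now with only one projected factor and hence $\sqrt{2r}$ rather than $2r$.

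The only genuinely substantive step is the rank count and its conversion into the nuclear-norm bound $\tnorm{\mP_{\breve{T}}\vv}_{S_1} \leq \sqrt{2r}\,\norm{\vv}_2$; everything else is the routine $S_1$–$S_\infty$ Hölder pairing together with the definition of the matricized operator norm. I do not anticipate a real obstacle, though one must be careful that the rank bound applies to the \emph{projected} vectors (because they lie in $\breve{T}$), not to the original arguments $\vv,\vw,\vu$, and that the factor is $\sqrt{2r}$ per projected factor—hence $2r$ for the two-sided compression and $\sqrt{2r}$ for the one-sided vector bound.
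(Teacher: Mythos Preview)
Your proposal is correct and follows essentially the same approach as the paper: both arguments rest on the observation that any $\vv\in\breve{T}$ has $\mathrm{rank}(\mathrm{mat}(\vv))\leq 2r$, which converts the $\ell_2$ bound into an $S_1$ bound with factor $\sqrt{2r}$, and then invoke the definition of $\tnorm{\mM}_{S_1\to S_\infty}$. The only cosmetic difference is that the paper expands $\vv$ and $\vv'$ explicitly via their SVDs into sums of rank-one terms and applies the rank-one variational form \eqref{eq:varforn_S1S8norm} termwise, whereas you package the same step as a single application of Schatten $S_1$--$S_\infty$ H\"older duality; the two are equivalent.
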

\begin{proof}
Let $\vv \in \breve{T}$. Then $\mathrm{rank}(\mathrm{mat}(\vv)) \leq 2r$.
Let
\[
\mathrm{mat}(\vv) = \sum_{l=1}^{2r} \sigma_l \vq_l \vxi_l^\transpose
\]
denotes the singular value decomposition of $\mathrm{mat}(\vv)$, where $\norm{\vq_l}_2 = \norm{\vxi_l}_2 = 1$ and $\sigma_l \geq 0$ for $k=1,\dots,2r$. Then
\[
\vv = \sum_{l=1}^{2r} \sigma_l \vq_l \otimes \vxi_l.
\]
Similarly, we can represent $\vv' \in \breve{T}$ as
\[
\vv' = \sum_{j=1}^{2r} \sigma_j' \vq_j' \otimes \vxi_j'.
\]

Then
\begin{align*}
|\langle \vv', \mM \vv \rangle|
& \leq \sum_{j,l=1}^{2r} \sigma_l \sigma_j' |\langle (\vq_j' \otimes \vxi_j'), \mM (\vq_l \otimes \vxi_l) \rangle| \\
& \leq \sum_{l=1}^{2r} \sigma_l \sum_{j=1}^{2r} \sigma_j' \tnorm{\mM}_{S_1 \to S_\infty} \\
& \leq 2r \norm{\vv}_2 \norm{\vv'}_2 \tnorm{\mM}_{S_1 \to S_\infty}.
\end{align*}

Therefore,
\[
\norm{\mP_{\breve{T}} \mM \mP_{\breve{T}}}
= \sup_{\vv,\vv' \in \breve{T}}
\{
\langle \vv', \mM \vv \rangle
~|~
\norm{\vv}_2 = \norm{\vv'}_2 = 1
\}
\leq 2r \tnorm{\mM}_{S_1 \to S_\infty}.
\]
This proves the first assertion.
The second assertion is obtained in a similar way by fixing $\vv = \vu$.
\end{proof}

The following lemma is a direct consequence of the Davis-Kahan Theorem together with Lemma~\ref{lemma:projnorm}.

\begin{lemma}[Perturbation]
\label{lemma:constrained_perturbation}
Let $\{(\va_k,\vb_k)\}_{k=1}^r \subset \mathbb{C}^M \times \mathbb{C}^D$ satisfy
\[
T(\va,\vb) \subset \sum_{k=1}^r T(\va_k,\vb_k) =: \breve{T}.
\]
Let $\vv$ (resp. $\vu$) be a unique most dominant eigenvector of $\mP_{\breve{T}} \mM_1 \mP_{\breve{T}}$ (resp. $\mP_{\breve{T}} \mM_2 \mP_{\breve{T}}$).
If
\begin{equation}
\lambda_2(\mP_{\breve{T}} \mM_2 \mP_{\breve{T}}) + 2r \, \tnorm{\mM_1 - \mM_2}_{S_1 \to S_\infty} \leq \frac{\lambda_1(\mP_{\breve{T}} \mM_2 \mP_{\breve{T}})}{5},
\label{eq:cond_constrained_perturbation}
\end{equation}
then
\[
\sin \angle (\vv,\vu) \leq \frac{4 \sqrt{2r} \tnorm{(\mM_1 - \mM_2) \vu}_{S_\infty}}{\lambda_1(\mP_{\breve{T}} \mM_2 \mP_{\breve{T}})}.
\]
\end{lemma}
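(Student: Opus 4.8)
The plan is to obtain the bound as a direct application of the Davis--Kahan theorem (Lemma~\ref{lemma:daviskahan}) to the two compressed matrices $\underline{\mM} = \mP_{\breve{T}} \mM_2 \mP_{\breve{T}}$ and $\mM = \mP_{\breve{T}} \mM_1 \mP_{\breve{T}}$, viewing the latter as a perturbation of the former. In this identification, the role of $\lambda$ is played by $\lambda_1(\mP_{\breve{T}} \mM_2 \mP_{\breve{T}})$, its eigenvector $\vq_1$ by the unique dominant eigenvector $\vu$ of $\underline{\mM}$, and the block $\mD$ collects the non-dominant eigenvalues of $\underline{\mM}$, so that $\norm{\mD} = \lambda_2(\mP_{\breve{T}} \mM_2 \mP_{\breve{T}})$. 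The perturbation is $\mM - \underline{\mM} = \mP_{\breve{T}}(\mM_1 - \mM_2)\mP_{\breve{T}}$, and the output eigenvector $\widetilde{\vq}$ delivered by Lemma~\ref{lemma:daviskahan} is exactly the dominant eigenvector $\vv$ of $\mM$.

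First I would verify the hypothesis \eqref{eq:smallperturb} of Lemma~\ref{lemma:daviskahan}. Applying the first assertion of Lemma~\ref{lemma:projnorm} with its matrix taken to be $\mM_1 - \mM_2$ gives $\norm{\mM - \underline{\mM}} = \norm{\mP_{\breve{T}}(\mM_1 - \mM_2)\mP_{\breve{T}}} \leq 2r\,\tnorm{\mM_1 - \mM_2}_{S_1 \to S_\infty}$. Combined with $\norm{\mD} = \lambda_2(\mP_{\breve{T}}\mM_2\mP_{\breve{T}})$, the assumed condition \eqref{eq:cond_constrained_perturbation} then reads precisely $\norm{\mD} + \norm{\mM - \underline{\mM}} \leq \lambda_1(\mP_{\breve{T}}\mM_2\mP_{\breve{T}})/5$, which is \eqref{eq:smallperturb}. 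Since $\vu$ is assumed to be a unique dominant eigenvector, $\lambda_1$ has multiplicity one and is positive, so Lemma~\ref{lemma:daviskahan} indeed applies.

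The conclusion \eqref{eq:errbnd} of Lemma~\ref{lemma:daviskahan} then yields $\sin\angle(\vv,\vu) \leq 4\norm{(\mM - \underline{\mM})\vu}_2 / \lambda_1(\mP_{\breve{T}}\mM_2\mP_{\breve{T}})$, so it remains only to estimate the numerator. Here I would use that $\vu$, being an eigenvector of $\mP_{\breve{T}}\mM_2\mP_{\breve{T}}$ associated with a nonzero eigenvalue, lies in $\breve{T} = \Range(\mP_{\breve{T}})$, so that $\mP_{\breve{T}}\vu = \vu$ and hence $(\mM - \underline{\mM})\vu = \mP_{\breve{T}}(\mM_1 - \mM_2)\vu$. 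Applying the second assertion of Lemma~\ref{lemma:projnorm}, again with its matrix taken to be $\mM_1 - \mM_2$, gives $\norm{\mP_{\breve{T}}(\mM_1 - \mM_2)\vu}_2 \leq \sqrt{2r}\,\tnorm{(\mM_1 - \mM_2)\vu}_{S_\infty}$. Substituting this into the Davis--Kahan bound produces the claimed estimate.

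There is essentially no deep obstacle, as the lemma is engineered to be a corollary of the two preceding results; the only two points requiring care are the bookkeeping identification of $\norm{\mD}$ with $\lambda_2(\mP_{\breve{T}}\mM_2\mP_{\breve{T}})$ (with the attendant sign and multiplicity conventions ensuring the dominant eigenvalue is the largest in magnitude, which is secured by \eqref{eq:cond_constrained_perturbation}), and the observation $\mP_{\breve{T}}\vu = \vu$ that allows the $S_1\!\to\!S_\infty$-to-$S_\infty$ reduction of Lemma~\ref{lemma:projnorm} to be invoked on the unprojected vector $(\mM_1 - \mM_2)\vu$ rather than on $(\mM_1 - \mM_2)\mP_{\breve{T}}\vu$.
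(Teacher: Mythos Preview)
Your proposal is correct and follows exactly the approach the paper indicates: it applies Lemma~\ref{lemma:daviskahan} to $\underline{\mM}=\mP_{\breve{T}}\mM_2\mP_{\breve{T}}$ and $\mM=\mP_{\breve{T}}\mM_1\mP_{\breve{T}}$, using the two assertions of Lemma~\ref{lemma:projnorm} to bound $\norm{\mM-\underline{\mM}}$ and $\norm{(\mM-\underline{\mM})\vu}_2$, together with $\mP_{\breve{T}}\vu=\vu$. The paper itself states the lemma as a direct consequence of these two results without further detail, so your write-up is precisely the intended argument.
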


The following lemma shows how the conventional power method converges depending on the largest and second largest eigenvalues.

\begin{lemma}[{A Single Iteration of Power Method \cite[Theorem~1.1]{stewart2001matrix}}]
\label{lemma:iter_power_method}
Let $\mM$ have a unique dominant eigenvector $\vv$.
Then
\[
\sin \angle(\mM \widehat{\vv}, \vv)
\leq \frac{\lambda_2(\mM) \sin \angle(\widehat{\vv}, \vv)}{\lambda_1(\mM) \cos \angle(\widehat{\vv}, \vv) - \lambda_2(\mM) \sin \angle(\widehat{\vv}, \vv)}
\]
for any $\widehat{\vv}$ such that $\langle \widehat{\vv}, \vv \rangle \neq 0$.
\end{lemma}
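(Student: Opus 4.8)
The plan is to reduce the statement to an elementary orthogonal-decomposition argument, using that $\mM$ is Hermitian with a simple dominant eigenpair $(\lambda_1(\mM), \vv)$, $\lambda_1(\mM) > 0$ and $\norm{\vv}_2 = 1$. Since the angle is invariant under rescaling, I would first take $\norm{\widehat{\vv}}_2 = 1$ without loss of generality and set $\theta = \angle(\widehat{\vv}, \vv)$; the hypothesis $\langle \widehat{\vv}, \vv\rangle \neq 0$ forces $\cos\theta > 0$, so $\theta < \pi/2$. I would then split $\widehat{\vv}$ into its parts parallel and orthogonal to $\vv$, writing $\widehat{\vv} = \cos\theta\,\vv + \sin\theta\,\vw$ with $\vw \perp \vv$ and $\norm{\vw}_2 = 1$.

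The key structural observation is that $\mM$ preserves $\vv^\perp$: since $\mM$ is Hermitian and $\mM\vv = \lambda_1(\mM)\vv$, any $\vw \perp \vv$ satisfies $\langle \mM\vw, \vv\rangle = \langle \vw, \mM\vv\rangle = \lambda_1(\mM)\langle \vw, \vv\rangle = 0$. Applying $\mM$ to the decomposition then gives the orthogonal splitting $\mM\widehat{\vv} = \lambda_1(\mM)\cos\theta\,\vv + \sin\theta\,\mM\vw$, whose second summand lies in $\vv^\perp$. Because $\vw \in \vv^\perp$, the restriction of $\mM$ to this invariant subspace has spectral radius controlled by the second eigenvalue, so I would bound $\norm{\mM\vw}_2 \leq \lambda_2(\mM)$.

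The conclusion follows by reading the sine of the output angle as a ratio and estimating numerator and denominator separately. Projecting onto $\vv^\perp$ leaves only the second summand, so $\norm{\mP_{\vv^\perp}\mM\widehat{\vv}}_2 = \sin\theta\,\norm{\mM\vw}_2 \leq \lambda_2(\mM)\sin\theta$, while the reverse triangle inequality gives $\norm{\mM\widehat{\vv}}_2 \geq \lambda_1(\mM)\cos\theta - \lambda_2(\mM)\sin\theta$; dividing the former by the latter yields precisely the claimed bound. The only non-routine point, which I would flag explicitly, is the meaning of $\lambda_2(\mM)$ in the estimate $\norm{\mM\vw}_2 \leq \lambda_2(\mM)$: this requires $\lambda_2(\mM)$ to dominate the action of $\mM$ on all of $\vv^\perp$, which holds automatically when $\mM$ is positive semidefinite --- as is the unperturbed operator $\underline{\mB} = K^2\norm{\vx}_2^2\,\vu\vu^*$ to which the lemma is applied in Theorem~\ref{thm:conv_tpm} --- or more generally when $\lambda_2(\mM) \geq |\lambda_{\min}(\mM)|$. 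One must also keep to the regime $\lambda_1(\mM)\cos\theta > \lambda_2(\mM)\sin\theta$, under which the denominator is positive and the bound is informative.
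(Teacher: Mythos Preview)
Your argument is correct and is the standard orthogonal-decomposition proof of the power-method contraction estimate. The paper does not give its own proof of this lemma---it is stated with a citation to Stewart's textbook---so there is no in-paper argument to compare against.

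Two small remarks on your commentary. First, in the proof of Theorem~\ref{thm:conv_tpm} the lemma is invoked with $\mM = \mP_{\breve{T}}\mB\mP_{\breve{T}}$, not with the rank-one matrix $\underline{\mB}$ itself; this $\mM$ is Hermitian but need not be positive semidefinite, so your caveat about reading $\lambda_2(\mM)$ as the operator norm of $\mM$ restricted to $\vv^\perp$ is genuinely relevant there rather than automatically satisfied. Second, since the two summands in $\mM\widehat{\vv} = \lambda_1(\mM)\cos\theta\,\vv + \sin\theta\,\mM\vw$ are orthogonal, Pythagoras already gives $\norm{\mM\widehat{\vv}}_2 \geq \lambda_1(\mM)\cos\theta$, which is stronger than what the reverse triangle inequality yields and makes the stated bound hold a fortiori; invoking the reverse triangle inequality is unnecessary.
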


The following lemma is a modification of \cite[Lemma~12]{yuan2013truncated} and shows that the correlation is partially preserved after the rank-1 truncation. Unlike the canonical sparsity model, where the atoms are mutually orthogonal, in the low-rank atomic model, atoms in an atomic decomposition may have correlation.  Our proof addresses this general case and the argument here also applies to an abstract atomic model.

\begin{lemma}[Correlation after the Rank-1 Truncation]
\label{lemma:cor_after_rank1}
Let $\breve{\vv} \in \mathbb{C}^{MD}$ satisfy $\norm{\breve{\vv}}_2 = 1$ and $\mathrm{rank}(\mathrm{mat}(\breve{\vv})) = 1$. For $\vv \in \mathbb{C}^{MD}$ such that $\norm{\vv}_2 = 1$, let $\widehat{\mV} \in \mathbb{C}^{M \times D}$ denote the best rank-1 approximation of $\mV = \mathrm{mat}(\vv)$ and $\widehat{\vv} = \mathrm{vec}(\widehat{\mV})$. Then
\begin{equation}
|\langle \widehat{\vv}, \breve{\vv} \rangle|
\geq |\langle \vv, \breve{\vv} \rangle| - \min\Big(\sqrt{1 - |\langle \vv, \breve{\vv} \rangle|^2}, 2 (1 - |\langle \vv, \breve{\vv} \rangle|^2)\Big).
\label{eq:cor_after_rank1}
\end{equation}
\end{lemma}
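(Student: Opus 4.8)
The plan is to pass to the matrix picture and treat the two terms inside the $\min$ separately. Since $\mathrm{mat}(\cdot)$ is a linear isometry from $(\mathbb{C}^{MD},\norm{\cdot}_2)$ to $(\mathbb{C}^{M\times D},\norm{\cdot}_{\mathrm F})$, the inner product $\langle\cdot,\cdot\rangle$ becomes the Frobenius inner product; write $\mV=\mathrm{mat}(\vv)$, $\breve\mV=\mathrm{mat}(\breve\vv)=\vp\vomega^*$ (rank one, $\norm{\breve\mV}_{\mathrm F}=1$, $\norm{\vp}_2=\norm{\vomega}_2=1$), and $\widehat\mV=\sigma_1\vq_1\vxi_1^*$ for the best rank-one approximation, with $\sigma_1=\norm{\mV}$ and $\vq_1,\vxi_1$ the leading singular vectors. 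Pre- and post-multiplying $\mV$ by unitaries sending $\vp,\vomega$ to $\ve_1$ preserves $\norm{\cdot}_{\mathrm F}$, $\norm{\cdot}$, all singular values, and commutes with rank-one truncation, while an overall phase on $\breve\vv$ leaves both sides of \eqref{eq:cor_after_rank1} unchanged. Hence I may assume $\breve\mV=\ve_1\ve_1^{\transpose}$ and $c:=|\langle\vv,\breve\vv\rangle|=\langle\mV,\breve\mV\rangle=(\mV)_{1,1}\geq 0$. Set $\epsilon:=\sqrt{1-c^2}$; since $(\mV)_{1,1}=c$ and $\norm{\mV}_{\mathrm F}=1$, the matrix $\mE:=\mV-c\,\ve_1\ve_1^{\transpose}$ obeys $\norm{\mE}_{\mathrm F}=\epsilon$. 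Finally $\sigma_1=\norm{\mV}\geq|(\mV)_{1,1}|=c$, so by Eckart--Young the residual satisfies $\norm{\mV-\widehat\mV}_{\mathrm F}=\sqrt{1-\sigma_1^2}\leq\epsilon$.

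For the first term I argue to first order. Writing $\langle\widehat\vv,\breve\vv\rangle=\langle\vv,\breve\vv\rangle-\langle\vv-\widehat\vv,\breve\vv\rangle$, the Cauchy--Schwarz inequality and the residual bound give $|\langle\widehat\vv,\breve\vv\rangle|\geq c-\norm{\mV-\widehat\mV}_{\mathrm F}\,\norm{\breve\mV}_{\mathrm F}\geq c-\epsilon$, which is exactly $c-\sqrt{1-c^2}$. This already settles the claim when $c$ is small, i.e.\ when $\sqrt{1-c^2}$ is the smaller term of the $\min$.

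The refined term $2(1-c^2)$ is the substance of the lemma and the step I expect to be the main obstacle, because the crude residual bound is only first order in $\epsilon$ and must be sharpened to second order. The reduction is that $r:=\langle\vv-\widehat\vv,\breve\vv\rangle=(\mV-\widehat\mV)_{1,1}$ gives $\langle\widehat\vv,\breve\vv\rangle=c-r$, so it suffices to show $|r|\leq 2\epsilon^2$ in the regime $\epsilon\leq\tfrac12$ where $2(1-c^2)$ is the smaller term. The key structural fact is that the residual $\mR:=\mV-\widehat\mV=\sum_{l\geq 2}\sigma_l\vq_l\vxi_l^*$ is annihilated by the leading singular vectors, $\mR\vxi_1=\vzero$ and $\vq_1^*\mR=\vzero$, so $\mR=(\mId-\vq_1\vq_1^*)\mR(\mId-\vxi_1\vxi_1^*)$ and
\[
|r|=\big|\,[(\mId-\vq_1\vq_1^*)\ve_1]^*\,\mR\,[(\mId-\vxi_1\vxi_1^*)\ve_1]\,\big|\leq\norm{(\mId-\vq_1\vq_1^*)\ve_1}_2\,\sigma_2\,\norm{(\mId-\vxi_1\vxi_1^*)\ve_1}_2,
\]
using $\norm{\mR}=\sigma_2$. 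Each projector norm is a misalignment $\norm{(\mId-\vq_1\vq_1^*)\ve_1}_2=\sqrt{1-|\langle\ve_1,\vq_1\rangle|^2}$, and the point is that both misalignments and $\sigma_2$ are individually $O(\epsilon)$, whose product yields the desired second-order bound.

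To quantify the three factors I use $\mV=c\,\ve_1\ve_1^{\transpose}+\mE$. Weyl's inequality gives $\sigma_2=\sigma_2(\mV)\leq\sigma_2(c\,\ve_1\ve_1^{\transpose})+\norm{\mE}=\norm{\mE}\leq\epsilon$. For the misalignment, projecting the singular-vector identity $\sigma_1\vq_1=\mV\vxi_1=c\langle\ve_1,\vxi_1\rangle\ve_1+\mE\vxi_1$ onto $\ve_1^{\perp}$ kills the first term, so $\sigma_1\norm{(\mId-\ve_1\ve_1^*)\vq_1}_2=\norm{(\mId-\ve_1\ve_1^*)\mE\vxi_1}_2\leq\epsilon$, whence $\sqrt{1-|\langle\ve_1,\vq_1\rangle|^2}\leq\epsilon/\sigma_1\leq\epsilon/c$, and symmetrically $\sqrt{1-|\langle\ve_1,\vxi_1\rangle|^2}\leq\epsilon/c$ from $\mV^*\vq_1=\sigma_1\vxi_1$. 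Combining, $|r|\leq\sigma_2\,(\epsilon/\sigma_1)^2\leq\epsilon^3/c^2$. In the regime $\epsilon\leq\tfrac12$ we have $c^2\geq\tfrac34$, so $\epsilon^3/c^2\leq(\tfrac12\epsilon^2)/(\tfrac34)=\tfrac23\epsilon^2\leq 2\epsilon^2$, giving $|\langle\widehat\vv,\breve\vv\rangle|=|c-r|\geq c-2\epsilon^2=c-2(1-c^2)$. Together with the first-order bound this establishes \eqref{eq:cor_after_rank1}. A minor point to dispatch is the possible non-uniqueness of $\widehat\mV$ when $\sigma_1=\sigma_2$: any maximizer satisfies the same inequalities, so the argument is unaffected.
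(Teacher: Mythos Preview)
Your proof is correct and takes a genuinely different route from the paper's. The paper argues via the tangent-space formalism: with $T_1$ the tangent space at the rank-one matrix $\mathrm{mat}(\breve{\vv})$ and $T_2$ the tangent space at $\widehat{\mV}$, the optimality of the best rank-one approximation gives $\norm{\mP_{T_2}\vv}_2\geq\norm{\mP_{T_1}\vv}_2$, and a sequence of projection inequalities on $T_1+T_2$ yields $\norm{\mP_{\mP_{T_2^\perp}T_1}\vv}_2\leq\sqrt{1-|\langle\vv,\breve{\vv}\rangle|^2}$ and then $\norm{\mP_{\mP_{T_2^\perp}T_1}\breve{\vv}}_2\leq\min(1,2\sqrt{1-|\langle\vv,\breve{\vv}\rangle|^2})$; the loss $|\langle\vv,\breve{\vv}\rangle|-|\langle\widehat{\vv},\breve{\vv}\rangle|$ is bounded by the product of these two quantities. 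The paper emphasizes that this projection argument is agnostic to the specific atomic structure and extends to abstract atomic models beyond rank one.

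Your argument instead exploits concrete SVD perturbation: after a unitary reduction to $\breve{\mV}=\ve_1\ve_1^{\transpose}$, you write $\mV=c\,\ve_1\ve_1^{\transpose}+\mE$ with $\norm{\mE}_{\mathrm F}=\epsilon$, use the residual identity $\mR=(\mId-\vq_1\vq_1^*)\mR(\mId-\vxi_1\vxi_1^*)$, and control the three factors via Weyl's inequality ($\sigma_2\leq\epsilon$) and a direct singular-vector perturbation bound ($\sqrt{1-|\langle\ve_1,\vq_1\rangle|^2}\leq\epsilon/\sigma_1$). This is more elementary and in fact quantitatively sharper in the relevant regime (you obtain $|r|\leq\tfrac{2}{3}\epsilon^2$ rather than $2\epsilon^2$), but it is tied to the SVD and does not immediately generalize to other atomic norms the way the paper's tangent-space argument does. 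One cosmetic point: the phase adjustment that makes $(\mV)_{1,1}=c\geq 0$ is more naturally placed on $\vv$ than on $\breve{\vv}$, but either works since both sides of the inequality depend only on moduli of inner products.
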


\begin{proof}[Proof of Lemma~\ref{lemma:cor_after_rank1}]
There exist $\breve{\va} \in \mathbb{C}^M$ and $\breve{\vb} \in \mathbb{C}^D$ such that
\[
\mU = \mathrm{mat}(\breve{\vv}) = \breve{\va} \breve{\vb}^\transpose.
\]

Let $\widehat{\va} \in \mathbb{C}^D$ and $\widehat{\vb} \in \mathbb{C}^D$ respectively denote the left and right singular vectors of the rank-1 matrix $\widehat{\mV}$.
Define $T_1 = T(\{(\breve{\va},\breve{\vb})\})$, $T_2 = T(\{(\widehat{\va},\widehat{\vb})\})$, and $T_3 = T_1 \cap T_2$.
Then $T_1 + T_2$ is rewritten as
\begin{equation}
T_1 + T_2
= \mP_{T_2^\perp} T_1 \oplus T_2
= \mP_{T_2^\perp} T_1 \oplus T_3 \oplus \mP_{T_3^\perp} T_2.
\label{eq:decompT1T2_A}
\end{equation}
Similarly, we also have
\begin{equation}
T_1 + T_2
= T_1 \oplus \mP_{T_1^\perp} T_2
= \mP_{T_3^\perp} T_1 \oplus T_3 \oplus \mP_{T_1^\perp} T_2.
\label{eq:decompT1T2_B}
\end{equation}

By the definition of $T_2$, we have
\[
\norm{\mP_{T_2} \vv}_2 \geq \norm{\mP_{T_1} \vv}_2.
\]
Therefore,
\[
\norm{\mP_{\mP_{T_3^\perp} T_2} \vv}_2 \geq \norm{\mP_{\mP_{T_3^\perp} T_1} \vv}_2.
\]
Then by \eqref{eq:decompT1T2_A} and \eqref{eq:decompT1T2_B} it follows that
\begin{equation}
\norm{\mP_{\mP_{T_1^\perp} T_2} \vv}_2 \geq \norm{\mP_{\mP_{T_2^\perp} T_1} \vv}_2.
\label{eq:bigger_proj}
\end{equation}

By the Cauchy-Schwartz inequality and the Pythagorean identity, we have
\begin{align*}
|\langle \vv, \breve{\vv} \rangle|^2
= |\langle \mP_{T_1} \vv, \breve{\vv} \rangle|^2
\leq \norm{\mP_{T_1} \vv}_2^2
\leq 1 - \norm{\mP_{T_1^\perp} \vv}_2^2
\leq 1 - \norm{\mP_{\mP_{T_1^\perp} T_2} \vv}_2^2
\leq 1 - \norm{\mP_{\mP_{T_2^\perp} T_1} \vv}_2^2,
\end{align*}
where the last step follow from \eqref{eq:bigger_proj}.
The above inequality is rearranged as
\begin{equation}
\norm{\mP_{\mP_{T_2^\perp} T_1} \vv}_2 \leq \sqrt{1 - |\langle \vv, \breve{\vv} \rangle|^2}.
\label{eq:ub_Pvv}
\end{equation}

We may assume that $|\langle \vv, \breve{\vv} \rangle| > 2^{-1/2}$. Otherwise, the right-hand side of \eqref{eq:cor_after_rank1} becomes negative and the inequality holds trivially.
Then by \eqref{eq:ub_Pvv} we have
\[
\norm{\mP_{\mP_{T_2^\perp} T_1} \vv}_2 < |\langle \vv, \breve{\vv} \rangle|,
\]
which also implies
\begin{equation}
\norm{\mP_{\mP_{T_2^\perp} T_1} \vv}_2 \norm{\mP_{\mP_{T_2^\perp} T_1} \breve{\vv}}_2 < |\langle \vv, \breve{\vv} \rangle|.
\label{cor_after_rank1_cond1}
\end{equation}

Since $\mP_{T_1+T_2} \breve{\vv} = \breve{\vv}$, we have
\begin{align*}
|\langle \vv, \breve{\vv} \rangle|
&= |\langle \mP_{T_1+T_2} \vv, \breve{\vv} \rangle| \\
&= |\langle (\mP_{\mP_{T_2^\perp} T_1} + \mP_{T_2}) \vv, \breve{\vv} \rangle| \\
&= |\langle \mP_{\mP_{T_2^\perp} T_1} \vv, \breve{\vv} \rangle| + |\langle \mP_{T_2} \vv, \breve{\vv} \rangle| \\
&\leq \norm{\mP_{\mP_{T_2^\perp} T_1} \vv}_2 \norm{\mP_{\mP_{T_2^\perp} T_1} \breve{\vv}}_2
+ \norm{\mP_{T_2} \vv}_2 \norm{\mP_{T_2} \breve{\vv}}_2 \\
&\leq \norm{\mP_{\mP_{T_2^\perp} T_1} \vv}_2 \norm{\mP_{\mP_{T_2^\perp} T_1} \breve{\vv}}_2
+ \sqrt{1 - \norm{\mP_{\mP_{T_2^\perp} T_1} \vv}_2^2} \sqrt{1 - \norm{\mP_{\mP_{T_2^\perp} T_1} \breve{\vv}}_2^2}
\end{align*}

By solving the above inequality for $\norm{\mP_{\mP_{T_2^\perp} T_1} \breve{\vv}}_2$ under the condition in \eqref{cor_after_rank1_cond1}, we obtain
\begin{align}
\norm{\mP_{\mP_{T_2^\perp} T_1} \breve{\vv}}_2
&\leq \norm{\mP_{\mP_{T_2^\perp} T_1} \vv}_2 |\langle \vv, \breve{\vv} \rangle|
+ \sqrt{1 - \norm{\mP_{\mP_{T_2^\perp} T_1} \vv}_2^2} \sqrt{1 - |\langle \vv, \breve{\vv} \rangle|^2} \nonumber \\
&\leq \min(1, 2 \sqrt{1 - |\langle \vv, \breve{\vv} \rangle|^2}). \label{eq:ub_Pvu}
\end{align}

Since $\mP_{T_2} (\vv - \widehat{\vv}) = \vzero$, we have
\begin{align*}
|\langle \vv, \breve{\vv} \rangle| - |\langle \widehat{\vv}, \breve{\vv} \rangle|
&\leq |\langle \vv - \widehat{\vv}, \breve{\vv} \rangle| \\
&= |\langle \mP_{\mP_{T_2^\perp} T_1} (\vv - \widehat{\vv}), \breve{\vv} \rangle| \\
&= |\langle \mP_{\mP_{T_2^\perp} T_1} \vv, \breve{\vv} \rangle| \\
&\leq \norm{\mP_{\mP_{T_2^\perp} T_1} \vv}_2 \norm{\mP_{\mP_{T_2^\perp} T_1} \breve{\vv}}_2 \\
&\leq \min\Big(\sqrt{1 - |\langle \vv, \breve{\vv} \rangle|^2}, 2 (1 - |\langle \vv, \breve{\vv} \rangle|^2)\Big),
\end{align*}
where the last step follows from \eqref{eq:ub_Pvv} and \eqref{eq:ub_Pvu}.
The assertion is obtained by a rearrangement.
\end{proof}

\begin{proof}[Proof of Theorem~\ref{thm:conv_tpm}]
We use the mathematical induction and it suffices to show $\sin\angle(\vv_t,\vu) \leq \tau$ and \eqref{eq:conv_tpm} hold provided that $\sin\angle(\vv_{t-1},\vu) \leq \tau$ for fixed $t$.

Since $\mathrm{rank}(\mathrm{mat}(\vv_t)) = 1$, there exist $\va_t \in \mathbb{C}^M$ and $\vb_t \in \mathbb{C}^D$ such that $\vv_t = \va_t \otimes \vb_t$.
Similarly, there exist $\va_{t-1} \in \mathbb{C}^M$ and $\vb_{t-1} \in \mathbb{C}^D$ that satisfy $\vv_{t-1} = \va_{t-1} \otimes \vb_{t-1}$.
Let
\[
\breve{T} = T(\va_{t-1},\vb_{t-1}) + T(\va_t,\vb_t) + T(\va,\vb).
\]
Then define
\[
\widetilde{\vv}_t' = \frac{\mP_{\breve{T}} \mB \mP_{\breve{T}} \vv_{t-1}}{\norm{\mP_{\breve{T}} \mB \mP_{\breve{T}} \vv_{t-1}}_2}.
\]
Note that Algorithm~\ref{alg:tpm} produces the same result even when $\widetilde{\vv}_t$ is replaced by $\widetilde{\vv}_t'$.
Indeed, since $\mP_{\breve{T}} \vv_{t-1} = \vv_{t-1}$, it follows that $\mathrm{mat}(\mB \vv_{t-1})$ and $\mathrm{mat}(\mB \mP_{\breve{T}} \vv_{t-1})$ are collinear, so are their rank-1 approximations. Moreover, by $\vv_t$ is obtained normalizing as the normalized rearrangement of the rank-1 approximation of $\mathrm{mat}(\mB \vv_{t-1})$, by the construction of $\breve{T}$, it follows that $\mathrm{mat}(\mP_{\breve{T}} \mB \mP_{\breve{T}} \vv_{t-1})$ is also collinear with $\mathrm{mat}(\mB \vv_{t-1})$.

Let $\widehat{\mV}_t'$ denote the rank-1 approximation of $\mathrm{mat}(\widetilde{\vv}_t')$ and $\widehat{\vu}_t' = \mathrm{vec}(\widehat{\mV}_t')$. Then we have
\[
\vv_t = \widehat{\vu}_t' / \norm{\widehat{\vu}_t'}_2.
\]

Let $\vv(\breve{T})$ denote a unique most dominant eigenvector of $\mP_{\breve{T}} \mB \mP_{\breve{T}}$.
Since $\norm{\widetilde{\vv}_t'}_2 = 1$, we have $\norm{\widehat{\vu}_t'}_2 \leq 1$.
Therefore,
\[
\sin \angle(\vv_t,\vv(\breve{T}))
= \sqrt{1 - |\langle \vv_t, \vv(\breve{T}) \rangle|^2}
\leq \sqrt{1 - |\langle \widehat{\vu}_t', \vv(\breve{T}) \rangle|^2}.
\]

We apply Lemma~\ref{lemma:cor_after_rank1} with $\breve{\vv} = \vv(\breve{T})$ and $\vv = \widetilde{\vv}_t'$. By Lemma~\ref{lemma:cor_after_rank1}, we have
\[
|\langle \widehat{\vu}_t', \vv(\breve{T}) \rangle|
\geq |\langle \widetilde{\vv}_t', \vv(\breve{T}) \rangle| - \min\Big(\sqrt{1 - |\langle \widetilde{\vv}_t', \vv(\breve{T}) \rangle|^2}, 2 (1 - |\langle \widetilde{\vv}_t', \vv(\breve{T}) \rangle|^2)\Big),
\]
which implies
\begin{align*}
\sqrt{1 - |\langle \widehat{\vu}_t', \vv(\breve{T}) \rangle|^2}
\leq \sqrt{5} \sqrt{1 - |\langle \widetilde{\vv}_t', \vu (\breve{T}) \rangle|^2}
= \sqrt{5} \sin \angle(\widetilde{\vv}_t',\vv(\breve{T})).
\end{align*}

We apply Lemma~\ref{lemma:iter_power_method} with $\mM = \mP_{\breve{T}} \mB \mP_{\breve{T}}$, $\vv = \vv(\breve{T})$, and $\widehat{\vv} = \vv_{t-1}$. Then
\begin{align}
\sin \angle(\widetilde{\vv}_t', \vv(\breve{T}))
& \leq \frac{\lambda_2(\mM) \sin \angle(\vv_{t-1}, \vv(\breve{T}))}{\lambda_1(\mM) \cos \angle(\vv_{t-1}, \vv(\breve{T})) - \lambda_2(\mM) \sin \angle(\vv_{t-1}, \vv(\breve{T}))} \nonumber \\
& \leq \frac{\lambda_2(\mM)}{\lambda_1(\mM) \, \sqrt{1-\tau^2} - \lambda_2(\mM) \, \tau} \cdot \sin \angle(\vv_{t-1}, \vv(\breve{T})),
\label{eq1:proof_conv_tpm}
\end{align}
where the last step follows from $\sin \angle(\vv_{t-1}, \vv(\breve{T})) \leq \tau'$.

Next we compute the two largest eigenvalues of $\mP_{\breve{T}} \mB \mP_{\breve{T}}$. Since $\vu$ is a unique dominant eigenvector of $\underline{\mB}$ and $\mP_{\breve{T}} \vu = \vu$, we have $\lambda_1(\mP_{\breve{T}} \underline{\mB} \mP_{\breve{T}}) = \lambda_1(\underline{\mB})$. Therefore, by the triangle inequality,
\begin{align}
\lambda_1(\mP_{\breve{T}} \mB \mP_{\breve{T}})
& \geq \lambda_1(\mP_{\breve{T}} \underline{\mB} \mP_{\breve{T}}) - \norm{\mP_{\breve{T}} (\mB - \underline{\mB}) \mP_{\breve{T}}} \nonumber \\
& \geq \lambda_1(\underline{\mB}) - 6 \, \tnorm{\mB - \underline{\mB}}_{S_1 \to S_\infty}.
\label{eq2:proof_conv_tpm}
\end{align}
By the variational characterization of eigenvalues, we have
\begin{align*}
\lambda_2(\mP_{\breve{T}} \underline{\mB} \mP_{\breve{T}})
=
\sup_{\vv} \{
\vv^* \underline{\mB} \vv ~|~ \norm{\vv}_2 \leq 1,~ \vv \in \vu^\perp \cap \breve{T}
\}
\leq \widetilde{\lambda}_2(\underline{\mB}).
\end{align*}
Therefore,
\begin{align}
\lambda_2(\mP_{\breve{T}} \mB \mP_{\breve{T}})
& \leq \lambda_2(\mP_{\breve{T}} \underline{\mB} \mP_{\breve{T}}) + \norm{\mP_{\breve{T}} (\mB - \underline{\mB}) \mP_{\breve{T}}} \nonumber \\
& \leq \widetilde{\lambda}_2(\underline{\mB}) + 6 \, \tnorm{\mB - \underline{\mB}}_{S_1 \to S_\infty}.
\label{eq3:proof_conv_tpm}
\end{align}

By plugging in \eqref{eq2:proof_conv_tpm} and \eqref{eq3:proof_conv_tpm} into \eqref{eq1:proof_conv_tpm}, we obtain that \eqref{eq:conv_tpm_cond1} implies
\begin{equation}
\sin \angle(\vv_t,\vv(\breve{T})) \leq \mu \sin \angle(\vv_{t-1},\vv(\breve{T})).
\label{eq4:proof_conv_tpm}
\end{equation}

Moreover, by the transitivity of the angle function \cite{wedin1983angles}, we also have
\begin{equation}
\angle(\vv_{t-1},\vv(\breve{T}))
\leq \angle(\vv_{t-1},\vu) + \angle(\vu,\vv(\breve{T})).
\label{eq5:proof_conv_tpm}
\end{equation}

Next we apply Lemma~\ref{lemma:constrained_perturbation} for $\mM_1 = \mB$, $\mM_2 = \underline{\mB}$, and $\vv = \vv(\breve{T})$.  Since \eqref{eq:conv_tpm_cond3} implies \eqref{eq:cond_constrained_perturbation}, it follows from Lemma~\ref{lemma:constrained_perturbation} that
\[
\sin \angle (\vu,\vv(\breve{T})) \leq \frac{4 \sqrt{6} \tnorm{(\mB - \underline{\mB}) \vu}_{S_\infty}}{\lambda_1(\underline{\mB})}.
\]
Then \eqref{eq:conv_tpm_cond2} implies
\[
\sin \angle(\vv_t,\vv(\breve{T})) < \frac{1}{3\sqrt{2}}.
\]
Since $\sin \angle(\vv_{t-1},\vu) \leq \tau < \frac{1}{3\sqrt{2}}$, it follows that \eqref{eq5:proof_conv_tpm} implies
\begin{equation}
\sin \angle(\vv_{t-1},\vv(\breve{T}))
\leq \sin \angle(\vv_{t-1},\vu) + \sin \angle(\vu,\vv(\breve{T})).
\label{eq6:proof_conv_tpm}
\end{equation}

By \eqref{eq:conv_tpm_cond2}, \eqref{eq4:proof_conv_tpm}, and \eqref{eq6:proof_conv_tpm},
\[
\sin \angle(\vv_t,\vv(\breve{T})) < \frac{1}{3\sqrt{2}}.
\]
Similarly to the previous case, the transitivity of the angle function implies
\[
\angle(\vv_t,\vu)
\leq \angle(\vv_t,\vv(\breve{T})) + \angle(\vv(\breve{T}),\vu).
\]
Then it follows that
\[
\sin \angle(\vv_t,\vu)
\leq \sin \angle(\vv_t,\vv(\breve{T})) + \sin \angle(\vv(\breve{T}),\vu).
\]

By collecting the above inequalities, we obtain
\begin{equation}
\sin \angle(\vv_t,\vu)
\leq \mu \, \sin \angle(\vv_t,\vv(\breve{T})) + (1+\mu) \, \sin \angle(\vv(\breve{T}),\vu).
\label{eq7:proof_conv_tpm}
\end{equation}

Finally, we verify that \eqref{eq7:proof_conv_tpm} and \eqref{eq:conv_tpm_cond2} imply $\sin \angle(\vv_t,\vu) \leq \tau$.
This completes the proof.
\end{proof}

\section{Conclusion}
\label{sec:conclusion}

We studied two iterative algorithms and their performance guarantees for a multichannel blind deconvolution that imposes a bilinear model on channel impulse responses.  Such a bilinear model is obtained, for example, by embedding a parametric model for the shapes of the impulse responses into a low-dimensional subspace through manifold embedding, while the channel gains are treated as independent variables.  Unlike recent theoretical results on blind deconvolution in the literature, we do not impose a strong geometric constraint on the input source signal.  Under the bilinear model, we modified classical cross-convolution method based on the commutativity of the convolution to overcome its critical weakness of sensitivity to noise.  The bilinear system model imposes a strong prior on the unknown channel impulse responses, which enables us to recover the system with short observation.  The constraint by the bilinear model, on the other hand, makes the recovery no longer a simple eigenvalue decomposition problem.  Therefore, standard algorithms in numerical linear algebra do not apply to this non-convex optimization problem.  We propose two iterative algorithms along with a simple spectral initialization.  When the basis in the bilinear model is generic, we have shown that the proposed algorithms converge linearly to a stable estimate of the unknown channel parameters with provable non-asymptotic performance guarantees.

Mathematically, our analysis involves tail estimates of norms of several structured random matrices, which are written as suprema of coupled high-order subgaussian processes.  In an earlier version of our approach \cite{lee2016fast_arxiv}, we used the concentration of a polynomial in subgaussian random vector \cite{adamczak2015concentration} together with the union bound through the $\epsilon$-net argument.  In this revised analysis, we factorized high-order random processes using gaussian processes of the first or second order and computed the supremum using sharp tail estimates in the literature (e.g., \cite{krahmer2014suprema}).  This change has already provided a significant improvement in scaling laws of key parameters in the main results but the sharpened scaling law is still suboptimal compared to the degrees of freedom in the underlying model.  It seems that due to the nature of our model, where the degrees of randomness remains the same (except noise) as we increase the length of observation, the lower bound on scaling laws given by the heuristic that counts the number of unknowns and equations may not be achieved.  However, we expect that it would be possible to further sharpen the estimates for structured random matrices.  It remains as an interesting open question how to extend the sharp estimates on suprema of second-order chaos processes \cite{krahmer2014suprema} to higher orders similarly to the extension of the Hanson-Wright inequality for concentration of subgaussian quadratic forms to higher-order polynomials.

\section*{Acknowledgement}

K.L. thanks S. Bahmani, Y. Eldar, F. Krahmer, and T. Strohmer for discussions and feedback on an earlier version \cite{lee2016fast_arxiv}.  The authors appreciate anonymous referees for their constructive comments, which helped improve both the technical results and presentation.

\appendices
\section{Toolbox}

In this section, we provide a collection of lemmas, which serve as mathematical tools to derive estimates of structured random matrices.

\begin{lemma}[{Complexification of Hanson-Wright Inequality \cite[Theorem 1.1]{rudelson2013hanson}}]
\label{lemma:hansonwright_ori}
Let $\mA \in \mathbb{C}^{m \times n}$.
Let $\vg \in \mathbb{C}^n$ be a standard complex Gaussian vector.
For any $0 < \zeta < 1$, there exists an absolute constant $C$ such that
\begin{align*}
|\norm{\mA \vg}_2^2 - \mathbb{E}_{\vg}[\norm{\mA \vg}_2^2]|
\leq C (
\norm{\mA^* \mA}_{\mathrm{F}} \sqrt{\log(2\zeta^{-1})}
\vee
\norm{\mA}^2 \log(2\zeta^{-1})
)
\end{align*}
holds with probability $1-\zeta$.
\end{lemma}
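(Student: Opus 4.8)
The plan is to reduce the complex statement to the real Hanson--Wright inequality of \cite{rudelson2013hanson} by realifying both the Gaussian vector and the quadratic form. First I would write $\norm{\mA\vg}_2^2 = \vg^* \mM \vg$ with $\mM := \mA^* \mA$, a Hermitian positive semidefinite matrix, so that the quantity of interest is a Hermitian quadratic form in the standard complex Gaussian $\vg$.

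Next I would split $\vg = \vu + i\vv$ into its real and imaginary parts, where $\vu,\vv \in \mathbb{R}^n$ have independent $\mathcal{N}(0,\tfrac12)$ entries, and write $\mM = \mM_{\mathrm{R}} + i\mM_{\mathrm{I}}$ with $\mM_{\mathrm{R}}$ real symmetric and $\mM_{\mathrm{I}}$ real antisymmetric. A direct expansion --- using $\vu^\transpose \mM_{\mathrm{I}} \vu = \vv^\transpose \mM_{\mathrm{I}} \vv = 0$ and the symmetry of $\mM_{\mathrm{R}}$, so that all imaginary contributions cancel --- shows that
\[
\vg^* \mM \vg
= \widetilde{\vg}^\transpose \widetilde{\mM} \widetilde{\vg},
\quad \widetilde{\vg} := \begin{bmatrix} \vu \\ \vv \end{bmatrix},
\quad \widetilde{\mM} := \begin{bmatrix} \mM_{\mathrm{R}} & -\mM_{\mathrm{I}} \\ \mM_{\mathrm{I}} & \mM_{\mathrm{R}} \end{bmatrix}.
\]
Here $\widetilde{\mM} \in \mathbb{R}^{2n \times 2n}$ is symmetric, and $\widetilde{\vg} \in \mathbb{R}^{2n}$ has independent, mean-zero, subgaussian coordinates whose $\psi_2$-norm is bounded by an absolute constant.

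Now I would apply the real Hanson--Wright inequality to $\widetilde{\vg}^\transpose \widetilde{\mM} \widetilde{\vg}$, which yields a tail of the form $2\exp\!\big(\!-c\min(t^2/\norm{\widetilde{\mM}}_{\mathrm{F}}^2,\, t/\norm{\widetilde{\mM}})\big)$. To convert the norms of $\widetilde{\mM}$ back into norms of $\mM$, I would invoke the standard fact that realification doubles each (real) eigenvalue of the Hermitian matrix $\mM$: if $\mM$ has eigenvalues $\lambda_1,\dots,\lambda_n$, then $\widetilde{\mM}$ has eigenvalues $\lambda_1,\lambda_1,\dots,\lambda_n,\lambda_n$, each of multiplicity two. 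This follows because, whenever $\mM\vw = \lambda\vw$ with $\lambda$ real, both the realification of $\vw$ and that of $i\vw$ are orthogonal eigenvectors of $\widetilde{\mM}$ for the eigenvalue $\lambda$. Consequently $\norm{\widetilde{\mM}} = \norm{\mM} = \norm{\mA}^2$ and $\norm{\widetilde{\mM}}_{\mathrm{F}} = \sqrt{2}\,\norm{\mM}_{\mathrm{F}} = \sqrt{2}\,\norm{\mA^* \mA}_{\mathrm{F}}$.

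Finally I would invert the tail bound by setting its right-hand side equal to $\zeta$ and solving $\min\!\big(t^2/(2\norm{\mM}_{\mathrm{F}}^2),\, t/\norm{\mM}\big) = c^{-1}\log(2\zeta^{-1})$ for $t$; branching on which term attains the minimum gives $t \le C\big(\norm{\mA^* \mA}_{\mathrm{F}}\sqrt{\log(2\zeta^{-1})} \vee \norm{\mA}^2\log(2\zeta^{-1})\big)$, which is exactly the claimed bound after absorbing the factor $\sqrt{2}$, the variance $\tfrac12$, and the subgaussian constant into the absolute constant $C$. Since the result is a direct complex analogue of the cited real inequality, there is no genuine difficulty here; the only step demanding care is the norm-translation, as a mistaken realification would corrupt the relationship between $\norm{\widetilde{\mM}}_{\mathrm{F}}$ and $\norm{\mA^* \mA}_{\mathrm{F}}$, while the remainder is routine bookkeeping.
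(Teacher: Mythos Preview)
Your realification argument is correct and is precisely the standard route for deriving the complex case from \cite[Theorem~1.1]{rudelson2013hanson}. The paper does not actually give a proof of this lemma: it is listed in the toolbox appendix as a direct complexification of the cited real Hanson--Wright inequality, with no further details. Your proposal supplies exactly the omitted bookkeeping (realification of $\vg$ and $\mM=\mA^*\mA$, the eigenvalue doubling that yields $\norm{\widetilde{\mM}}=\norm{\mM}$ and $\norm{\widetilde{\mM}}_{\mathrm{F}}=\sqrt{2}\,\norm{\mM}_{\mathrm{F}}$, and the tail inversion), so there is nothing to compare---you have written out what the paper takes for granted.
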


\begin{lemma}[{Complexification of Hanson-Wright Inequality \cite[Theorem 2.1]{rudelson2013hanson}}]
\label{lemma:hansonwright}
Let $\mA \in \mathbb{C}^{m \times n}$.
Let $\vg \in \mathbb{C}^n$ be a standard complex Gaussian vector.
For any $0 < \zeta < 1$, there exists an absolute constant $C$ such that
\begin{align*}
|\norm{\mA \vg}_2 - \norm{\mA}_{\mathrm{F}}| \leq C \norm{\mA} \sqrt{\log(2\zeta^{-1})}
\end{align*}
holds with probability $1-\zeta$.
\end{lemma}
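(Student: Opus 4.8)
The plan is to prove this by \emph{complexification}: reduce the statement about a complex Gaussian vector to the real-valued Hanson--Wright norm concentration (the real analogue of \cite[Theorem~2.1]{rudelson2013hanson}), and then transfer the constants through the realification. Because $\vg$ is genuinely Gaussian rather than merely subgaussian, the sharp route is through Lipschitz concentration of measure, so I will in fact phrase the base case as the Gaussian concentration inequality, which keeps the argument self-contained.

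First I would realify. Write $\vg = \vg_R + i \vg_I$ with $\vg_R, \vg_I \in \mathbb{R}^n$, and stack $\vec{g} = [\vg_R^\transpose, \vg_I^\transpose]^\transpose \in \mathbb{R}^{2n}$; since $\vg$ is standard complex Gaussian, $\vg_R$ and $\vg_I$ are independent with $\vg_R, \vg_I \sim \mathcal{N}(0, \tfrac{1}{2}\mId_n)$, so $\vec{g} \sim \mathcal{N}(0, \tfrac{1}{2}\mId_{2n})$. Splitting $\mA = \mA_R + i\mA_I$ and forming the real block matrix
\[
\widetilde{\mA} = \begin{bmatrix} \mA_R & -\mA_I \\ \mA_I & \mA_R \end{bmatrix} \in \mathbb{R}^{2m \times 2n},
\]
one checks directly that $\norm{\mA\vg}_2 = \norm{\widetilde{\mA}\vec{g}}_2$, that $\norm{\widetilde{\mA}} = \norm{\mA}$ (the realification is an isometric embedding, hence preserves the operator norm), and that $\norm{\widetilde{\mA}}_{\mathrm{F}}^2 = 2\norm{\mA}_{\mathrm{F}}^2$. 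These three identities are the only algebra needed, and they let me work entirely over $\mathbb{R}$.

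Next I would apply concentration to $F(\vec{g}) := \norm{\widetilde{\mA}\vec{g}}_2$. As a function of $\vec{g}$, $F$ is Lipschitz with constant $\norm{\widetilde{\mA}} = \norm{\mA}$ by the triangle inequality and the definition of the operator norm; writing $\vec{g} = \tfrac{1}{\sqrt{2}}\vec{z}$ with $\vec{z} \sim \mathcal{N}(0,\mId_{2n})$, the map $\vec{z}\mapsto F$ is $\tfrac{1}{\sqrt{2}}\norm{\mA}$-Lipschitz, so Gaussian concentration yields $\mathbb{P}\{|F - \E F| > t\} \le 2\exp(-t^2/\norm{\mA}^2)$ up to an absolute constant. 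Setting the right-hand side to $\zeta$ gives a deviation of order $\norm{\mA}\sqrt{\log(2\zeta^{-1})}$ around the mean $\E F$. It remains to replace $\E F$ by $\norm{\mA}_{\mathrm{F}}$: since $\E[F^2] = \tfrac{1}{2}\norm{\widetilde{\mA}}_{\mathrm{F}}^2 = \norm{\mA}_{\mathrm{F}}^2$ and $\var(F) \le C\norm{\mA}^2$ by the same concentration bound, the factorization $\var(F) = (\norm{\mA}_{\mathrm{F}} - \E F)(\norm{\mA}_{\mathrm{F}} + \E F) \ge (\norm{\mA}_{\mathrm{F}} - \E F)\,\norm{\mA}_{\mathrm{F}}$ gives $|\E F - \norm{\mA}_{\mathrm{F}}| \le \var(F)/\norm{\mA}_{\mathrm{F}} \le C\norm{\mA}^2/\norm{\mA}_{\mathrm{F}} \le C\norm{\mA}$, using $\norm{\mA}\le\norm{\mA}_{\mathrm{F}}$. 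Combining the two estimates by the triangle inequality and absorbing the $O(\norm{\mA})$ term into the (strictly positive) $\norm{\mA}\sqrt{\log(2\zeta^{-1})}$ term finishes the bound.

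The main subtlety I anticipate is securing the \emph{square-root} logarithmic factor rather than a plain $\log(2\zeta^{-1})$. One is tempted to derive this lemma cheaply from the squared-norm version in Lemma~\ref{lemma:hansonwright_ori} via the identity $|\,\norm{\mA\vg}_2 - \norm{\mA}_{\mathrm{F}}| = |\norm{\mA\vg}_2^2 - \norm{\mA}_{\mathrm{F}}^2|/(\norm{\mA\vg}_2 + \norm{\mA}_{\mathrm{F}})$ together with $\norm{\mA^*\mA}_{\mathrm{F}} \le \norm{\mA}\,\norm{\mA}_{\mathrm{F}}$; this does reproduce the leading $\norm{\mA}\sqrt{\log(2\zeta^{-1})}$ term, but the quadratic tail term of Lemma~\ref{lemma:hansonwright_ori} only divides down to $\norm{\mA}\log(2\zeta^{-1})$, which is off by a $\sqrt{\log}$ factor in the large-deviation regime. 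This is exactly why I route the proof through Lipschitz concentration, where the norm (a $1$-Lipschitz functional) concentrates at the correct subgaussian rate from the outset; the realification step is then just the bookkeeping that makes the real Gaussian concentration inequality applicable to the complex model.
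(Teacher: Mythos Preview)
Your proof is correct and complete. The realification is set up properly, the Lipschitz constant is right, Gaussian concentration gives the subgaussian tail around $\E F$, and your variance argument for replacing $\E F$ by $\norm{\mA}_{\mathrm{F}}$ is valid (using $\E F \le \sqrt{\E F^2} = \norm{\mA}_{\mathrm{F}}$ to ensure the sign, and $\norm{\mA}\le\norm{\mA}_{\mathrm{F}}$ at the end). The absorption of the $O(\norm{\mA})$ term works because $\sqrt{\log(2\zeta^{-1})} \ge \sqrt{\log 2}$ for all $\zeta\in(0,1)$.

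As for comparison: the paper does not actually prove this lemma. It is listed in the toolbox appendix as a direct citation of \cite[Theorem~2.1]{rudelson2013hanson}, with the word ``Complexification'' in the title indicating that the extension from real to complex is taken for granted. So you have supplied a proof where the paper supplies none. Your route through Gaussian Lipschitz concentration is the natural one here and is essentially how Rudelson--Vershynin themselves argue in the Gaussian case; your observation that deducing the lemma from Lemma~\ref{lemma:hansonwright_ori} would lose a $\sqrt{\log}$ factor in the large-deviation regime is also correct and is precisely why the norm version is stated separately rather than derived as a corollary.
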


The following lemma is a direct consequence of Maurey's empirical method \cite{carl1985inequalities}.

\begin{lemma}[{Maurey's empirical method \cite[Lemma~3.1]{junge2017ripI}}]
\label{lemma:maurey}
Let $k,m,n\in \mathbb{N}$ and $T:\ell_1^k(\mathbb{R}) \to \ell_\infty^m(\ell_2^d(\mathbb{R}))$ be a linear operator.
Then
\[
\int_0^\infty \sqrt{\log N(T(B_{\ell_1^k(\mathbb{R})}), \norm{\cdot}_{\ell_\infty^m(\ell_2^d(\mathbb{R}))}, t)} dt
\leq C \sqrt{(1+\log k)(1+\log m)}(1+\log m+\log d) \norm{T}.
\]
\end{lemma}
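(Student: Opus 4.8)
The plan is to prove this covering bound by Maurey's empirical (probabilistic) method, exploiting that the image $T(B_{\ell_1^k})$ is the absolutely convex hull of finitely many points. Writing $y_j := T e_j \in \ell_\infty^m(\ell_2^d)$ for the images of the $k$ standard basis vectors, each satisfies $\norm{y_j}_{\ell_\infty^m(\ell_2^d)} \le \norm{T}$, and every element of $T(B_{\ell_1^k})$ has the form $Tx = \sum_{j=1}^k x_j y_j$ with $\sum_j |x_j| \le 1$; hence $T(B_{\ell_1^k}) = \operatorname{absconv}\{y_1,\dots,y_k\}$ and it suffices to estimate the covering numbers of this hull in the target norm. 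First I would absorb signs into the atoms and pad the coefficients so that $\lambda_j := |x_j|$ satisfy $\sum_j \lambda_j = 1$, realizing $Tx = \mathbb{E} Z$ as the mean of a random vector $Z$ that equals $\operatorname{sgn}(x_j)\,y_j$ with probability $\lambda_j$.

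The core step is the empirical approximation: draw $N$ i.i.d.\ copies $Z_1,\dots,Z_N$ of $Z$ and approximate $Tx$ by $\bar Z_N = N^{-1}\sum_{i=1}^N Z_i$. Each such average belongs to a set of cardinality at most $(2k)^N$, so $\log N\big(T(B_{\ell_1^k}),t_N\big) \le N \log(2k)$ at the scale $t_N := \mathbb{E}\,\norm{\bar Z_N - \mathbb{E} Z}_{\ell_\infty^m(\ell_2^d)}$; this counting is the source of the $\sqrt{1+\log k}$ factor. The quantitative heart of the argument is bounding $t_N$, which after symmetrization is governed by the Rademacher average $N^{-1}\,\mathbb{E}_Z\mathbb{E}_\sigma \Bignorm{\sum_{i=1}^N \sigma_i Z_i}_{\ell_\infty^m(\ell_2^d)}$. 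Since the target norm is a maximum over the $m$ blocks of $\ell_2^d$ norms, I would bound each inner $\ell_2^d$ Rademacher sum by its expectation $(\sum_i \norm{(Z_i)_l}_2^2)^{1/2}\le \sqrt N\,\norm{T}$ and control the maximum over the $m$ blocks by a bounded-differences concentration estimate. Propagating the $\ell_2^d$ geometry sharply through this maximum, rather than crudely netting the spheres (which would cost $\sqrt d$), is precisely what yields only the logarithmic dependence $\sqrt{1+\log m}$ and $1+\log m+\log d$ on the outer and inner dimensions.

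Finally I would assemble the Dudley-type entropy integral. Inverting the relation between $N$ and $t_N$ gives a $1/t^2$ law for $\log N(\cdot,t)$ carrying the factors $\log(2k)$, $(1+\log m)$ and $(1+\log m+\log d)$ collected above; summing $\sqrt{\log N(\cdot,t)}$ over dyadic scales would on its own diverge like $\int dt/t$, so near the smallest scales I would invoke that the hull lives in a subspace of dimension at most $k$ to obtain the faster, essentially exponential decay of the covering numbers (equivalently, Carl's refined entropy-number bound with its $\log(k/n)$ gain), which truncates the divergence and fixes the range of integration. Collecting the contributions then collapses to the claimed product $C\sqrt{(1+\log k)(1+\log m)}\,(1+\log m+\log d)\,\norm{T}$. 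The main obstacle is the second step: securing the correct $\sqrt{1+\log m}$ scaling together with only logarithmic dependence on $d$ forces a careful treatment of the maximum over blocks of $\ell_2^d$ Rademacher sums, and one must then arrange the multi-scale bookkeeping so that the counting factor $\sqrt{1+\log k}$ and the type-$2$-type constant of $\ell_\infty^m(\ell_2^d)$ combine into the stated sharp constants rather than accumulating spurious extra logarithms.
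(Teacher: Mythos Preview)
The paper does not supply a proof of this lemma at all: it is stated with an explicit citation to \cite[Lemma~3.1]{junge2017ripI} and prefaced only by the remark that it is ``a direct consequence of Maurey's empirical method \cite{carl1985inequalities}.'' So there is no in-paper argument to compare against; the authors defer the entire proof to the external reference.

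That said, your outline is precisely the method the paper points to. Writing $T(B_{\ell_1^k})$ as the absolutely convex hull of $\{Te_1,\dots,Te_k\}$, representing a generic point as the mean of a random atom, and approximating by an empirical average of $N$ samples to produce a net of size $(2k)^N$ is exactly Maurey's construction, and is the source of the $\sqrt{1+\log k}$ factor. Bounding $\mathbb{E}\bigl\|\sum_i \sigma_i Z_i\bigr\|_{\ell_\infty^m(\ell_2^d)}$ by first using type~2 of $\ell_2^d$ blockwise and then controlling the maximum over the $m$ blocks via subgaussian concentration is also the right mechanism for the $\sqrt{1+\log m}$ factor. Finally, your plan to switch to a volumetric/Carl-type bound at small scales to truncate the $\int dt/t$ divergence is the standard completion of the Dudley integral and is what produces the additional $(1+\log m+\log d)$ factor from the range of integration. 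The sketch is sound; the only place that would need genuine care in a full write-up is tracking the constants through the max over blocks so that you land on $(1+\log m+\log d)$ rather than a cruder power of $d$, but you have correctly identified that as the delicate step.
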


Lemma~\ref{lemma:maurey} extends to the complex field case, which is shown in the following corollary.

\begin{cor}
\label{cor:maurey}
Let $k,m,n\in \mathbb{N}$ and $T:\ell_1^k(\mathbb{C}) \to \ell_\infty^m(\ell_2^d(\mathbb{C}))$ be a linear operator.
Then
\[
\int_0^\infty \sqrt{\log N(T(B_{\ell_1^k(\mathbb{C})}), \norm{\cdot}_{\ell_\infty^m(\ell_2^d(\mathbb{C}))}, t)} dt
\leq C \sqrt{(1+\log k)(1+\log m)}(1+\log m+\log d) \norm{T}.
\]
\end{cor}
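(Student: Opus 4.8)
The plan is to reduce the complex statement to the already-established real version in Lemma~\ref{lemma:maurey} through \emph{realification}, identifying $\mathbb{C}^k$ with $\mathbb{R}^{2k}$ and $\mathbb{C}^d$ with $\mathbb{R}^{2d}$ via $z = x + iy \mapsto (x,y)$. Under this identification a complex-linear map $T:\ell_1^k(\mathbb{C}) \to \ell_\infty^m(\ell_2^d(\mathbb{C}))$ becomes a real-linear map $T_{\mathbb{R}}:\mathbb{R}^{2k} \to \ell_\infty^m(\ell_2^{2d}(\mathbb{R}))$ that agrees with $T$ as a function (only the coordinate labeling changes). The crucial point is how the two relevant norms behave under this realification.

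First I would record the two norm comparisons. The target norm is preserved exactly: for any $w \in \mathbb{C}^d$ one has $\norm{w}_{\ell_2^d(\mathbb{C})} = \norm{(\mathrm{Re}\,w,\mathrm{Im}\,w)}_{\ell_2^{2d}(\mathbb{R})}$, so $\norm{\cdot}_{\ell_\infty^m(\ell_2^d(\mathbb{C}))}$ coincides with $\norm{\cdot}_{\ell_\infty^m(\ell_2^{2d}(\mathbb{R}))}$ under the identification. The source norm, however, is \emph{not} preserved: applying $\sqrt{a^2+b^2} \le |a| + |b| \le \sqrt{2}\sqrt{a^2+b^2}$ coordinatewise gives
\[
\norm{z}_{\ell_1^k(\mathbb{C})} \le \norm{(\mathrm{Re}\,z,\mathrm{Im}\,z)}_{\ell_1^{2k}(\mathbb{R})} \le \sqrt{2}\,\norm{z}_{\ell_1^k(\mathbb{C})},
\]
which at the level of unit balls reads $B_{\ell_1^{2k}(\mathbb{R})} \subset B_{\ell_1^k(\mathbb{C})} \subset \sqrt{2}\,B_{\ell_1^{2k}(\mathbb{R})}$.

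From these inclusions the entropy integral transfers. Since $T(B_{\ell_1^k(\mathbb{C})}) \subset \sqrt{2}\,T_{\mathbb{R}}(B_{\ell_1^{2k}(\mathbb{R})})$ and the two target norms agree, monotonicity and scaling of covering numbers yield $N\big(T(B_{\ell_1^k(\mathbb{C})}),\norm{\cdot},t\big) \le N\big(T_{\mathbb{R}}(B_{\ell_1^{2k}(\mathbb{R})}),\norm{\cdot},t/\sqrt{2}\big)$; integrating and substituting $s=t/\sqrt{2}$ bounds the complex entropy integral by $\sqrt{2}$ times the real entropy integral for $T_{\mathbb{R}}$, to which Lemma~\ref{lemma:maurey} applies directly with $2k,2d$ in place of $k,d$. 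The reverse inclusion $B_{\ell_1^{2k}(\mathbb{R})} \subset B_{\ell_1^k(\mathbb{C})}$ gives $\norm{T_{\mathbb{R}}} \le \norm{T}$, and the logarithmic factors obey $1+\log 2k \le 2(1+\log k)$ and $1+\log m+\log 2d \le 2(1+\log m+\log d)$, so every stray $\sqrt{2}$ and $\log 2$ is absorbed into the absolute constant $C$, producing the stated bound.

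I do not expect a genuine obstacle here; the single point requiring attention is recognizing that the complex $\ell_1$ ball is not the realified real $\ell_1$ ball but is sandwiched between it and its $\sqrt{2}$-dilate, which is exactly what keeps the constant-tracking in the final step harmless. The remainder is routine realification bookkeeping.
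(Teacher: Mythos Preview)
Your realification argument is correct and complete: the $\ell_2$ norm is preserved under the identification $\mathbb{C}^d \cong \mathbb{R}^{2d}$, the complex $\ell_1$ ball is sandwiched between $B_{\ell_1^{2k}(\mathbb{R})}$ and $\sqrt{2}\,B_{\ell_1^{2k}(\mathbb{R})}$, and the resulting factors of $\sqrt{2}$, $\log 2k$, and $\log 2d$ are all absorbed into the absolute constant. The paper itself does not supply a proof of this corollary; it simply states that Lemma~\ref{lemma:maurey} ``extends to the complex field case'' and records the statement, so your argument is exactly the kind of routine reduction the authors are implicitly invoking.
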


The following lemma provides tail estimates of suprema of subgaussian processes.
\begin{lemma}
\label{lemma:firstorder}
Let $\vxi \in \cz^n$ be a standard Gaussian vector with $\mathbb{E} \vxi \vxi^* = \mId_n$, $\Delta \subset \cz^n$, and $0 < \zeta < e^{-1/2}$.
There is an absolute constants $C$ such that
\[
\sup_{\vf \in \Delta} |\vf^* \vxi|
\leq C \sqrt{\log(\zeta^{-1})} \int_0^\infty \sqrt{\log N(\Delta,\norm{\cdot}_2,t)} dt
\]
holds with probability $1-\zeta$.
\end{lemma}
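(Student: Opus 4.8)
The plan is to recognize $X_\vf := \vf^* \vxi$ as a centered Gaussian process indexed by $\vf \in \Delta$ and to bound $\sup_{\vf \in \Delta} |X_\vf|$ by combining a chaining (Dudley) estimate for the expected supremum with Gaussian concentration for the tail. First I would record that the process has subgaussian increments tied exactly to the metric appearing in the covering numbers: since $\mathbb{E}\vxi\vxi^* = \mId_n$, one has $\mathbb{E}|X_\vf - X_\vg|^2 = \norm{\vf - \vg}_2^2$, so $X_\vf - X_\vg$ is subgaussian with parameter $\norm{\vf-\vg}_2$ and the canonical metric of the process coincides with $\norm{\cdot}_2$. To reduce the complex modulus to real-valued suprema I would split $|X_\vf| \le |\Re{X_\vf}| + |\Im{X_\vf}|$ and handle each real Gaussian process separately; both have canonical metric bounded by $\norm{\cdot}_2$, so their covering numbers are controlled by $N(\Delta, \norm{\cdot}_2, \cdot)$.

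Writing $\gamma := \int_0^\infty \sqrt{\log N(\Delta, \norm{\cdot}_2, t)}\,dt$, Dudley's inequality (applied after anchoring the process at a fixed $\vf_0$ and adjoining the origin to the index set, so that $X_0 = 0$) yields $\mathbb{E}\sup_{\vf\in\Delta}|X_\vf| \le C_1 \gamma$. For the fluctuation above the mean I would invoke Gaussian Lipschitz concentration: the map $\vxi \mapsto \sup_{\vf\in\Delta}|\vf^*\vxi|$ is $R$-Lipschitz with $R = \sup_{\vf\in\Delta}\norm{\vf}_2$, so that $\mathbb{P}\big(\sup_\vf|X_\vf| \ge \mathbb{E}\sup_\vf|X_\vf| + s\big) \le \exp(-c\,s^2/R^2)$ for every $s>0$.

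The delicate point — and what makes the right-hand side a clean multiple of $\gamma$ rather than $\gamma$ plus a separate radius term — is to control the Lipschitz constant $R$ by the entropy integral itself. Here I would use that $\sqrt{\log N(\Delta,\norm{\cdot}_2,t)} \ge \sqrt{\log 2}$ for every $t$ below half the diameter of $\Delta$, which forces $\text{diam}(\Delta) \le C_2\gamma$; once the process is anchored at the origin this diameter dominates $R$, giving $R \le C_2\gamma$. Choosing $s$ so that $\exp(-c\,s^2/R^2) = \zeta$, i.e.\ $s \asymp R\sqrt{\log(\zeta^{-1})} \le C_2\gamma\sqrt{\log(\zeta^{-1})}$, yields $\sup_\vf|X_\vf| \le C_1\gamma + C\gamma\sqrt{\log(\zeta^{-1})}$ with probability $1-\zeta$. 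Finally I would absorb the mean term using the hypothesis $\zeta < e^{-1/2}$, which guarantees $\sqrt{\log(\zeta^{-1})} > 1/\sqrt2$ and hence $C_1\gamma \le \sqrt{2}\,C_1\gamma\sqrt{\log(\zeta^{-1})}$, collapsing both contributions into $C\sqrt{\log(\zeta^{-1})}\,\gamma$. The main obstacle is the bookkeeping around the anchor: one must ensure that the reduction to a centered process (and the attendant diameter-versus-radius bound) remains compatible with the covering numbers of $\Delta$ as written, which is precisely where the absolute value and the freedom to adjoin the origin are used.
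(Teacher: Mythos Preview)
The paper does not actually supply a proof of this lemma; it is listed in the ``Toolbox'' appendix alongside the Hanson--Wright inequality, Maurey's empirical method, and the Krahmer--Mendelson--Rauhut chaos bound, all of which are quoted without proof. So there is nothing to compare your argument against. Your route --- Dudley's entropy integral for the mean, Gaussian Lipschitz concentration for the fluctuation, then absorbing the mean into the fluctuation term via $\zeta < e^{-1/2}$ --- is the standard one and is essentially how such a statement is established.

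That said, the point you flag as ``the main obstacle'' is a genuine gap, and your sketch does not close it. The Lipschitz constant of $\vxi \mapsto \sup_{\vf\in\Delta}|\vf^*\vxi|$ is the radius $R=\sup_{\vf\in\Delta}\norm{\vf}_2$, not $\mathrm{diam}(\Delta)$. Your diameter--entropy inequality $\mathrm{diam}(\Delta)\le C_2\gamma$ is correct, but it only yields $R\le C_2\gamma$ when $0\in\Delta$. ``Adjoining the origin'' does not rescue this for free: passing from $\Delta$ to $\Delta\cup\{0\}$ can raise $N(\cdot,t)$ from $1$ to $2$ on the whole interval $[\mathrm{diam}(\Delta)/2,\,R]$, which adds a term of order $R$ to the entropy integral --- precisely the quantity you are trying to absorb. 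Indeed, the lemma as stated is vacuously false for a singleton $\Delta=\{\vf_0\}$ with $\vf_0\neq 0$: the entropy integral vanishes while $|\vf_0^*\vxi|$ does not. In every application the paper makes of this lemma (Lemmas~\ref{lemma:init_Gamma_Ec}, \ref{lemma:comp_cross_rank1}, \ref{lemma:comp_cross_exp_rank1}) the index set $\Delta$ is the image of a product of balls under a multilinear map and hence contains $0$, so $R\le\mathrm{diam}(\Delta)$ holds and your argument goes through. You should state that hypothesis explicitly rather than leave it buried in the anchoring discussion.
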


\begin{theorem}[{\hspace{1sp}\cite[Theorem~3.1]{krahmer2014suprema}}]
\label{thm:kmr}
Let $\vxi \in \cz^n$ be an $L$-subgaussian vector with $\mathbb{E} \vxi \vxi^* = \mId_n$, $\Delta \subset \cz^{m \times n}$, and $0 < \zeta < 1$.
There exists a constant $C(L)$ that only depends on $L$ such that
\[
\sup_{\mM \in \Delta} |\norm{\mM \vxi}_2^2 - \mathbb{E}[\norm{\mM \vxi}_2^2]|
\leq C(L) ( K_1 + K_2 \sqrt{\log(2\zeta^{-1})} + K_3 \log(2\zeta^{-1}) )
\]
holds with probability $1-\zeta$, where $K_1$, $K_2$, and $K_3$ are given by
\begin{align*}
K_1 {} & := \gamma_2(\Delta, \norm{\cdot})
\Big( \gamma_2(\Delta, \norm{\cdot}) + d_{\mathrm{F}}(\Delta) \Big)
+ d_{\mathrm{F}}(\Delta) d_{\mathrm{S}}(\Delta), \\
K_2 {} & := d_{\mathrm{S}}(\Delta) ( \gamma_2(\Delta, \norm{\cdot}) + d_{\mathrm{F}}(\Delta) ), \\
K_3 {} & := d_{\mathrm{S}}^2(\Delta).
\end{align*}
\end{theorem}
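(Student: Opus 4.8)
The plan is to regard $X_{\mM} := \norm{\mM \vxi}_2^2 - \mathbb{E}[\norm{\mM \vxi}_2^2] = \vxi^* \mM^* \mM \vxi - \mathbb{E}[\vxi^* \mM^* \mM \vxi]$ as a centered second-order chaos process indexed by $\mM \in \Delta$, and to bound $\sup_{\mM \in \Delta} |X_{\mM}|$ by generic chaining against an increment condition with a mixed subgaussian/subexponential tail. First I would establish the increment bound. For $\mM_1, \mM_2 \in \Delta$ put $\mN := \mM_1^* \mM_1 - \mM_2^* \mM_2$, so that $X_{\mM_1} - X_{\mM_2}$ is a centered quadratic form in $\vxi$; the complexified Hanson--Wright inequality (Lemma~\ref{lemma:hansonwright_ori}, applied in its $L$-subgaussian form with the $L$-dependence absorbed into $C(L)$) yields
\[
\Pr\bigl(|X_{\mM_1} - X_{\mM_2}| > u\bigr)
\le 2\exp\Bigl(-c \min\bigl(u^2/\norm{\mN}_{\mathrm{F}}^2,\ u/\norm{\mN}\bigr)\Bigr).
\]
Thus the subgaussian part of the increment is governed by the metric $d_2(\mM_1,\mM_2) = \norm{\mN}_{\mathrm{F}}$ and the subexponential part by $d_1(\mM_1,\mM_2) = \norm{\mN}$.

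Next I would dominate both metrics by the operator-norm metric on $\Delta$. Writing $\mN = \mM_1^*(\mM_1 - \mM_2) + (\mM_1 - \mM_2)^* \mM_2$ and using the mixed submultiplicativity $\norm{\mB \mC}_{\mathrm{F}} \le \min(\norm{\mB}_{\mathrm{F}} \norm{\mC},\ \norm{\mB} \norm{\mC}_{\mathrm{F}})$, I obtain
\[
d_2(\mM_1, \mM_2) \le \bigl(\norm{\mM_1}_{\mathrm{F}} + \norm{\mM_2}_{\mathrm{F}}\bigr)\norm{\mM_1 - \mM_2} \le 2\, d_{\mathrm{F}}(\Delta)\, \norm{\mM_1 - \mM_2},
\]
and likewise $d_1(\mM_1, \mM_2) \le 2\, d_{\mathrm{S}}(\Delta)\, \norm{\mM_1 - \mM_2}$. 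In particular the $d_2$- and $d_1$-diameters of $\Delta$ are $\lesssim d_{\mathrm{F}}(\Delta)\, d_{\mathrm{S}}(\Delta)$ and $\lesssim d_{\mathrm{S}}(\Delta)^2$, respectively, and the chaining functionals transfer as $\gamma_2(\Delta, d_2) \lesssim d_{\mathrm{F}}(\Delta)\, \gamma_2(\Delta, \norm{\cdot})$.

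I would then invoke Talagrand's two-parameter generic-chaining tail bound (in the form refined by Dirksen) for a process obeying such a mixed increment condition: with probability at least $1-\zeta$,
\[
\sup_{\mM \in \Delta} |X_{\mM} - X_{\mM_0}|
\lesssim \gamma_2(\Delta, d_2) + \gamma_1(\Delta, d_1)
+ \mathrm{diam}_{d_2}(\Delta)\sqrt{\log(2\zeta^{-1})}
+ \mathrm{diam}_{d_1}(\Delta)\log(2\zeta^{-1}).
\]
Feeding in the estimates of the previous step, the $\sqrt{\log}$ and $\log$ coefficients are dominated by $\mathrm{diam}_{d_2}(\Delta) \lesssim d_{\mathrm{F}} d_{\mathrm{S}} \le K_2$ and $\mathrm{diam}_{d_1}(\Delta) \lesssim d_{\mathrm{S}}^2 = K_3$, which already accounts for the two tail regimes; moreover $\gamma_2(\Delta, d_2) \lesssim d_{\mathrm{F}}\, \gamma_2(\Delta,\norm{\cdot}) \le K_1$ supplies the $\gamma_2 d_{\mathrm{F}}$ summand of the in-expectation term $K_1$.

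The hard part will be the remaining in-expectation mass, namely producing the quadratic term $\gamma_2(\Delta,\norm{\cdot})^2$ together with the $d_{\mathrm{F}} d_{\mathrm{S}}$ summand of $K_1$; this is where the genuine second-order structure of the chaos must be exploited, rather than crudely bounding $\gamma_1(\Delta, d_1) \le 2 d_{\mathrm{S}}\, \gamma_1(\Delta,\norm{\cdot})$, which is lossy. I would decouple $\vxi^* \mN \vxi$ against an independent copy $\vxi'$ of $\vxi$ and chain in two nested stages: conditioning on $\vxi'$ reduces $\mM \mapsto \vxi^* \mM^* \mM \vxi'$ to a first-order subgaussian process in $\mM$ whose chaining over $\Delta$ produces one factor $\gamma_2(\Delta, \norm{\cdot})$, and chaining the resulting $\vxi'$-indexed process produces the second, yielding $\gamma_2(\Delta,\norm{\cdot})^2$; the interaction of the chaining increments with the fixed base point $\mM_0$ generates the lower-order $\gamma_2 d_{\mathrm{F}}$ and $d_{\mathrm{F}} d_{\mathrm{S}}$ pieces. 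The principal technical obstacle throughout is the bookkeeping of the two metrics across chaining scales, so that the subgaussian and subexponential contributions are summed against the correct weights $2^{n/2}$ and $2^{n}$ and the tail is split accordingly; Talagrand's majorizing-measure machinery carries this out, after which collecting the three regimes and absorbing all absolute and $L$-dependent constants into $C(L)$ completes the proof.
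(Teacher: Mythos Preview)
The paper does not prove Theorem~\ref{thm:kmr}; it is quoted verbatim as a tool from Krahmer--Mendelson--Rauhut \cite{krahmer2014suprema} and used as a black box in the appendix. So there is no ``paper's own proof'' to compare against.

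That said, your sketch is a reasonable outline of the argument in \cite{krahmer2014suprema}. You correctly identify the mixed subgaussian/subexponential increment structure coming from Hanson--Wright, the domination of the two increment metrics by $d_{\mathrm{F}}(\Delta)\norm{\cdot}$ and $d_{\mathrm{S}}(\Delta)\norm{\cdot}$, and---crucially---that the nontrivial content of the theorem is obtaining $\gamma_2(\Delta,\norm{\cdot})^2$ in the in-expectation bound rather than the cruder $d_{\mathrm{S}}(\Delta)\,\gamma_1(\Delta,\norm{\cdot})$. Your proposed route to that term via decoupling and a two-stage chaining (one stage per copy of $\vxi$) is morally what \cite{krahmer2014suprema} does, though their implementation chains directly on $\mM$ rather than first passing through an increment bound on $X_{\mM_1}-X_{\mM_2}$ and then invoking an off-the-shelf Dirksen-type theorem: at each chaining level they split $\mM\vxi$ along the approximant $\pi_n(\mM)$ and control the cross terms $\langle (\mM-\pi_n(\mM))\vxi,\pi_n(\mM)\vxi\rangle$ and the square terms $\norm{(\mM-\pi_n(\mM))\vxi}_2^2$ separately. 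Your plan and theirs lead to the same estimate; theirs is somewhat more direct because it avoids having to reconcile the $\gamma_1$ functional that a naive application of two-parameter chaining would produce with the desired $\gamma_2^2$.
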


Using the polarization identity, this result on the suprema of second order chaos processes has been extended from a subgaussian quadratic form to a subgaussian bilinear form \cite{lee2015rip}.

\begin{theorem}[{A corollary of \cite[Theorem~2.3]{lee2015rip}}]
\label{thm:ip}
Let $\vxi \in \cz^n$ be an $L$-subgaussian vector with $\mathbb{E} \vxi \vxi^* = \mId_n$, $\Delta_2,\Delta_1 \subset \cz^{m \times n}$, $0 < \zeta < 1$, and $a > 0$.
There exists a constants $C(L)$ that only depends on $L$ such that
\begin{align*}
\sup_{\mM_1 \in \Delta_1, \mM_2 \in \Delta_2}
\big| \langle \mM_1 \vxi, \mM_2 \vxi \rangle
- \mathbb{E}[\langle \mM_1 \vxi, \mM_2 \vxi \rangle] \big|
\leq
C(L)( \widetilde{K}_1 + \widetilde{K}_2 \sqrt{\log(8\zeta^{-1})} + \widetilde{K}_3 \log(8\zeta^{-1}) ),
\end{align*}
holds with probability $1-\zeta$, where $\widetilde{K}_1$, $\widetilde{K}_2$, and $\widetilde{K}_3$ are given by
\begin{align*}
\widetilde{K}_1 {} & := \Big(a\gamma_2(\Delta_1, \norm{\cdot}) + a^{-1}\gamma_2(\Delta_2, \norm{\cdot})\Big) \Big(a\gamma_2(\Delta_1, \norm{\cdot}) + a^{-1}\gamma_2(\Delta_2, \norm{\cdot}) + a d_{\mathrm{F}}(\Delta_1) + a^{-1} d_{\mathrm{F}}(\Delta_2)\Big) \\
&+ \Big(a d_{\mathrm{F}}(\Delta_1) + a^{-1} d_{\mathrm{F}}(\Delta_2)\Big)
\Big(a d_{\mathrm{S}}(\Delta_1) + a^{-1} d_{\mathrm{S}}(\Delta_2)\Big), \\
\widetilde{K}_2 {} & := \Big(a d_{\mathrm{S}}(\Delta_1) + a^{-1} d_{\mathrm{S}}(\Delta_2)\Big) \Big( a \gamma_2(\Delta_1, \norm{\cdot}) + a^{-1} \gamma_2(\Delta_2, \norm{\cdot}) + a d_{\mathrm{F}}(\Delta_1) + a^{-1} d_{\mathrm{F}}(\Delta_2)\Big), \\
\widetilde{K}_3 {} & := \Big(a d_{\mathrm{S}}(\Delta_1) + a^{-1} d_{\mathrm{S}}(\Delta_2)\Big)^2.
\end{align*}
\end{theorem}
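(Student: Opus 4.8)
The plan is to deduce Theorem~\ref{thm:ip} from the quadratic-form estimate of Theorem~\ref{thm:kmr} by a complex polarization argument, exactly as announced in the remark preceding the statement. First I would exploit the scale invariance $\langle \mM_1 \vxi, \mM_2 \vxi \rangle = \langle a \mM_1 \vxi, a^{-1} \mM_2 \vxi \rangle$ (valid since $a>0$ is real) together with the complex polarization identity
\[
\langle \vu, \vv \rangle = \frac{1}{4} \sum_{k=0}^{3} \mathrm{i}^{-k} \norm{\vu + \mathrm{i}^k \vv}_2^2,
\]
applied with $\vu = a\mM_1 \vxi$ and $\vv = a^{-1}\mM_2 \vxi$. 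Setting $\mN_k := a\mM_1 + \mathrm{i}^k a^{-1}\mM_2$, this rewrites the centered bilinear form as
\[
\langle \mM_1 \vxi, \mM_2 \vxi \rangle - \mathbb{E}[\langle \mM_1 \vxi, \mM_2 \vxi \rangle]
= \frac{1}{4} \sum_{k=0}^{3} \mathrm{i}^{-k} \big( \norm{\mN_k \vxi}_2^2 - \mathbb{E}[\norm{\mN_k \vxi}_2^2] \big),
\]
so that by the triangle inequality the supremum over $(\mM_1,\mM_2) \in \Delta_1 \times \Delta_2$ is controlled by $\tfrac14$ times the sum of the four quadratic-form suprema over the Minkowski sums $\Delta_k := a\Delta_1 + \mathrm{i}^k a^{-1}\Delta_2$. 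Note that as $(\mM_1,\mM_2)$ ranges over $\Delta_1 \times \Delta_2$ the matrix $\mN_k$ ranges within $\Delta_k$, so passing to the sup over the full Minkowski sum only enlarges the bound.

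Second, I would apply Theorem~\ref{thm:kmr} to each $\Delta_k$ with confidence parameter $\zeta/4$ and take a union bound over $k=0,1,2,3$. Each invocation then carries the factor $\log(2(\zeta/4)^{-1}) = \log(8\zeta^{-1})$, which is precisely the logarithmic argument appearing in the statement, while the overall failure probability is $\zeta$. The $\tfrac14$ prefactor is absorbed into the constant $C(L)$.

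Third, I would bound the three complexity functionals of each $\Delta_k$ by those of $\Delta_1$ and $\Delta_2$ using homogeneity and subadditivity under Minkowski sums. Since $|\mathrm{i}^k| = 1$, multiplication by $\mathrm{i}^k$ leaves the Frobenius and operator metrics invariant, so $d_{\mathrm{F}}(\Delta_k) \le a\, d_{\mathrm{F}}(\Delta_1) + a^{-1} d_{\mathrm{F}}(\Delta_2)$ and $d_{\mathrm{S}}(\Delta_k) \le a\, d_{\mathrm{S}}(\Delta_1) + a^{-1} d_{\mathrm{S}}(\Delta_2)$ follow from the triangle inequality for radii, and likewise $\gamma_2(\Delta_k, \norm{\cdot}) \le a\, \gamma_2(\Delta_1, \norm{\cdot}) + a^{-1} \gamma_2(\Delta_2, \norm{\cdot})$ follows (up to a universal constant folded into $C(L)$) from the subadditivity of the $\gamma_2$-functional under Minkowski sums. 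Substituting these three bounds into the expressions $K_1 = \gamma_2(\gamma_2 + d_{\mathrm{F}}) + d_{\mathrm{F}} d_{\mathrm{S}}$, $K_2 = d_{\mathrm{S}}(\gamma_2 + d_{\mathrm{F}})$, $K_3 = d_{\mathrm{S}}^2$ of Theorem~\ref{thm:kmr} reproduces verbatim the quantities $\widetilde{K}_1, \widetilde{K}_2, \widetilde{K}_3$ of the statement, completing the argument.

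The routine part is the polarization bookkeeping and the union bound; the only genuinely delicate point is the subadditivity of Talagrand's $\gamma_2$-functional under a Minkowski sum, $\gamma_2(A+B) \lesssim \gamma_2(A) + \gamma_2(B)$. I would justify this by taking coordinatewise products of admissible sequences for $A$ and $B$ and using that the metric on $A+B$ is dominated by the sum of the metrics, which yields the inequality with an absolute constant. This constant, together with the $\tfrac14$ from polarization and the $L$-dependence inherited from Theorem~\ref{thm:kmr}, is exactly what is collected into the single factor $C(L)$.
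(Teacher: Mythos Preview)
Your argument is correct and essentially self-contained, but it does more work than the paper's own justification. The paper does not re-derive the bilinear chaos bound from Theorem~\ref{thm:kmr}; instead it simply cites the $a=1$ case as already established in \cite[Theorem~2.3]{lee2015rip} (which was itself obtained from Theorem~\ref{thm:kmr} by polarization, as the paper notes in the sentence preceding the statement), and then observes that the identity $\langle \mM_1 \vxi, \mM_2 \vxi \rangle = \langle a \mM_1 \vxi, a^{-1} \mM_2 \vxi \rangle$ together with the $1$-homogeneity of $\gamma_2$, $d_{\mathrm F}$, and $d_{\mathrm S}$ immediately yields the version with arbitrary $a>0$. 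In other words, the paper applies scaling \emph{after} invoking the cited bilinear result, whereas you apply scaling \emph{before} polarizing and then carry out the polarization and Minkowski-sum bookkeeping explicitly. Your route has the advantage of being independent of the external reference (and in fact reconstructs its proof), at the cost of needing the subadditivity of $\gamma_2$ under Minkowski sums, which the paper avoids entirely by working directly with the already-proved bilinear bound and pure homogeneity.
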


A special case of Theorem~\ref{thm:ip} where $a = 1$ was shown in \cite[Theorem~2.3]{lee2015rip}.
Note that the bilinear form satisfies
\[
\langle \mM_1 \vxi, \mM_2 \vxi \rangle
= \langle a \mM_1 \vxi, a^{-1} \mM_2 \vxi \rangle,
\quad \forall a > 0.
\]
Moreover, the $\gamma_2$ functional and the radii with respect to the Frobenius and spectral norms are all 1-homogeneous functions.
Therefore, Theorem~\ref{thm:ip} is a direct consequence of \cite[Theorem~2.3]{lee2015rip}.

Since $a > 0$ in Theorem~\ref{thm:ip} is arbitrary, one can minimize the tail estimate over $a > 0$.

\section{Expectations}

The following lemmas on the expectation of structured random matrices are derived in \cite{lee2017spectral}.  For the convenience of the readers, we include the lemmas.  Here the matrix $\widetilde{\mPhi}_m \in \mathbb{C}^{L \times D}$ denotes the zero-padded matrix of $\mPhi_m$ given by $\widetilde{\mPhi}_m \mS^\transpose \mPhi_m$ for $m=1,\dots,M$, where $\mS \in \mathbb{R}^{K \times L}$ is defined in \eqref{eq:def_FIR}.

\begin{lemma}[{\hspace{1sp}\cite[Lemma~B.1]{lee2017spectral}}]
\label{lemma:expectation1}
Under the assumption in (A1),
\[
\mathbb{E}[\mC_{\widetilde{\mPhi}_m \vu_m}^* \mC_{\widetilde{\mPhi}_m \vu_m}]
= K \norm{\vu_m}_2^2 \mId_L.
\]
\end{lemma}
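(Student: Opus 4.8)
The plan is to prove this identity by reducing everything to a second-moment calculation for the zero-padded Gaussian vector $\vv := \widetilde{\mPhi}_m \vu_m = \mS^\transpose \mPhi_m \vu_m \in \mathbb{C}^L$ and then exploiting the circulant structure of $\mC_{\vv}$. First I would pin down the law of $\vv$. Under (A1) the rows of $\mPhi_m$ are independent and every entry is $\mathcal{CN}(0,1)$, so the $k$-th entry of $\mPhi_m \vu_m$ is a linear combination $\sum_{d=1}^D (\mPhi_m)_{k,d} (\vu_m)_d$ of independent standard complex Gaussians, hence $\mathcal{CN}(0,\norm{\vu_m}_2^2)$; moreover distinct entries are independent because distinct rows of $\mPhi_m$ are independent. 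Padding with zeros through $\mS^\transpose$, the length-$L$ vector $\vv$ therefore has independent, zero-mean, circularly symmetric entries with $\mathbb{E}[\conj{v_p}\,v_q]$ equal to $\norm{\vu_m}_2^2$ when $p=q$ lies in the support $\{1,\dots,K\}$ and equal to $0$ otherwise.

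The main computation is then direct. Writing the circulant entrywise (indices modulo $L$) as $(\mC_{\vv})_{i,j} = v_{(i-j) \bmod L}$, I would expand
\[
\big(\mC_{\vv}^* \mC_{\vv}\big)_{j,k}
= \sum_{i=0}^{L-1} \conj{v_{(i-j)\bmod L}}\, v_{(i-k)\bmod L}
\]
and take expectations term by term. By the independence and zero-mean property above, $\mathbb{E}[\conj{v_{(i-j)\bmod L}}\, v_{(i-k)\bmod L}]$ vanishes whenever $(i-j)\bmod L \neq (i-k)\bmod L$, i.e.\ whenever $j \neq k$, which kills every off-diagonal entry. For $j=k$ the summand is $\mathbb{E}[|v_{(i-j)\bmod L}|^2]$, and as $i$ ranges over $0,\dots,L-1$ the shifted index $(i-j)\bmod L$ sweeps each of the $L$ coordinates exactly once, so the diagonal equals $\sum_{p=0}^{L-1}\mathbb{E}[|v_p|^2] = K\norm{\vu_m}_2^2$, independently of $j$. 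This yields $\mathbb{E}[\mC_{\vv}^* \mC_{\vv}] = K\norm{\vu_m}_2^2\,\mId_L$, as claimed.

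A slicker equivalent route, which I would mention for robustness against indexing conventions, is to diagonalize the circulant by the unitary DFT matrix $\mW$: one has $\mC_{\vv} = \mW\,\diag(\widehat{\vv})\,\mW^*$ with $\widehat{v}_\ell = \sum_p v_p\,e^{-2\pi \mathrm{i}\,\ell p/L}$, whence $\mC_{\vv}^*\mC_{\vv} = \mW\,\diag(|\widehat{v}_\ell|^2)\,\mW^*$. Because the entries of $\vv$ are independent and zero-mean, all cross terms drop and $\mathbb{E}[|\widehat{v}_\ell|^2] = \sum_p \mathbb{E}[|v_p|^2] = K\norm{\vu_m}_2^2$ for every frequency $\ell$; the expected spectrum is flat, so $\mathbb{E}[\mC_{\vv}^*\mC_{\vv}] = K\norm{\vu_m}_2^2\,\mW\mW^* = K\norm{\vu_m}_2^2\,\mId_L$. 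There is no genuine obstacle here: the statement is a clean consequence of second moments. The only points requiring care are correctly identifying the per-entry variance $\norm{\vu_m}_2^2$ of the Gaussian product $\mPhi_m \vu_m$, and the bookkeeping that a full circular shift visits every coordinate exactly once (equivalently, that the energy spectrum of a white, independent vector is flat in expectation) --- which is precisely what forces the answer to be an isotropic multiple of the identity.
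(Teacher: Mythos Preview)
Your proof is correct. Both the direct entrywise computation and the DFT diagonalization argument are valid and standard; the key observations --- that the entries of $\widetilde{\mPhi}_m\vu_m$ are independent, zero-mean, with variance $\norm{\vu_m}_2^2$ on the first $K$ coordinates and zero elsewhere, and that a cyclic shift is a bijection on $\{0,\dots,L-1\}$ --- are exactly what is needed.

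As for comparison: the present paper does not actually prove this lemma. It is quoted verbatim from \cite[Lemma~B.1]{lee2017spectral} and included only for the reader's convenience, so there is no in-paper argument to compare your approach against. Your derivation is precisely the kind of second-moment calculation one expects for such a result, and would be the natural proof in the cited reference as well.
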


\begin{lemma}[{\hspace{1sp}\cite[Lemma~B.2]{lee2017spectral}}]
\label{lemma:expectation2}
Under the assumption in (A1),
\[
\mathbb{E}[\mC_{\widetilde{\mPhi}_m \vu_m}^* \widetilde{\mPhi}_m]
= K \ve_1 \vu_m^*.
\]
\end{lemma}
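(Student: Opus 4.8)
The plan is to compute the expectation entrywise, exploiting that the Hermitian transpose in $\mC_{\widetilde{\mPhi}_m \vu_m}^*$ pairs a \emph{conjugated} entry of $\mPhi_m$ against an \emph{unconjugated} one, so that only the ordinary second moment $\mathbb{E}[|g|^2]=1$ of the $\mathcal{CN}(0,1)$ entries enters, while the pseudo-second-moment $\mathbb{E}[g^2]=0$ plays no role. I would proceed exactly as in the companion Lemma~\ref{lemma:expectation1}. Write $\vz = \widetilde{\mPhi}_m \vu_m \in \mathbb{C}^L$ and recall that the circulant matrix has entries $(\mC_{\vz})_{i,j} = z_{(i-j) \bmod L}$, so that $(\mC_{\vz}^*)_{i,j} = \overline{z_{(j-i) \bmod L}}$.

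First I would reduce to columns. The $k$th column of $\mC_{\vz}^* \widetilde{\mPhi}_m$ is the cross-correlation $\mC_{\vz}^* \widetilde{\vphi}_{m,k}$, where $\widetilde{\vphi}_{m,k}$ denotes the zero-padded $k$th column of $\mPhi_m$. Its $i$th entry is $\sum_{j=0}^{L-1} \overline{z_{(j-i)\bmod L}}\, (\widetilde{\vphi}_{m,k})_j$, and because the rows of $\widetilde{\mPhi}_m$ below index $K$ vanish (by the padding $\widetilde{\mPhi}_m = \mS^\transpose \mPhi_m$), the sum over $j$ is effectively restricted to $0 \le j < K$.

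Next I would expand $\vz = \sum_{l} (\vu_m)_l\, \widetilde{\vphi}_{m,l}$ linearly and take the expectation term by term. Since the columns of $\mPhi_m$ are independent with iid $\mathcal{CN}(0,1)$ entries, all cross terms $l \neq k$ vanish (independent, zero mean), while for $l = k$ the second moment satisfies $\mathbb{E}[\overline{(\widetilde{\vphi}_{m,k})_a}(\widetilde{\vphi}_{m,k})_j] = \delta_{a,j}$ when $a=j<K$ and $0$ otherwise. Substituting $a = (j-i)\bmod L$, this forces $(j-i)\bmod L = j$, i.e. $i \equiv 0 \pmod L$; since $0 \le i < L$, only $i=0$ survives, which is the row picked out by $\ve_1$.

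The only delicate point is the circular index bookkeeping: one must verify that the constraint $(j-i)\bmod L = j$ with $j$ ranging over $\{0,\dots,K-1\}$ isolates exactly $i=0$, and that the zero-padding indicator is automatically satisfied whenever the Kronecker delta fires (since then $a=j<K$). Granting this, for $i=0$ the surviving sum is $\sum_{j=0}^{K-1} 1 = K$, so the $(0,k)$ entry equals $K\,\overline{(\vu_m)_k}$ and all other entries vanish. Assembling the columns then yields $\mathbb{E}[\mC_{\widetilde{\mPhi}_m \vu_m}^* \widetilde{\mPhi}_m] = K \ve_1 \vu_m^*$, as claimed.
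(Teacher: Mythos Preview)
Your proof is correct. The paper does not supply its own proof of this lemma; it merely imports the statement from \cite[Lemma~B.2]{lee2017spectral}, so there is nothing in the present paper to compare against. Your entrywise computation is exactly the standard argument: expand $\vz=\widetilde{\mPhi}_m\vu_m$ in columns, use independence of the columns of $\mPhi_m$ to kill the $l\neq k$ terms, and use $\mathbb{E}[\overline{g_a}g_j]=\delta_{a,j}$ for the iid $\mathcal{CN}(0,1)$ entries to reduce to the shift condition $(j-i)\bmod L=j$, which isolates the first row. The only cosmetic mismatch is that you index from $0$ while the paper's $\ve_1$ is the first standard basis vector in $1$-indexing; the logic is unaffected.
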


\begin{lemma}[{\hspace{1sp}\cite[Lemma~B.3]{lee2017spectral}}]
\label{lemma:expectation3}
Under the assumption in (A1),
\[
\mathbb{E}[\widetilde{\mPhi}_m^* \mC_{\widetilde{\mPhi}_{m'} \vu_{m'}}^* \mC_{\vx}^* \mC_{\vx} \mC_{\widetilde{\mPhi}_{m'} \vu_{m'}} \widetilde{\mPhi}_m]
= \begin{cases}
K^2 \norm{\vx}_2^2 \norm{\vu_{m'}}_2^2 \mId_D & m\neq m', \\
K^2 \norm{\vx}_2^2 (\norm{\vu_{m'}}_2^2 \mId_D + \vu_{m'} \vu_{m'}^*) & m = m'.
\end{cases}
\]
\end{lemma}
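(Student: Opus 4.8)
The plan is to compute the $(i,j)$ entry directly, using commutativity of circular convolution to turn each column of the product into a triple convolution and then reducing everything to low-order Gaussian moments of individual columns. Write $\vphi_c^{(m)} := (\widetilde{\mPhi}_m)_{:,c} = \mS^\transpose (\mPhi_m)_{:,c}$ for the $c$-th zero-padded column, so that the vectors $\{\vphi_c^{(m)}\}_{c,m}$ are mutually independent, each with its first $K$ coordinates iid $\mathcal{CN}(0,1)$ and the remaining coordinates zero. Since circulant matrices commute, $\mC_{\vx}\mC_{\vz}\vw = \vx\circledast\vz\circledast\vw$, and $\widetilde{\mPhi}_{m'}\vu_{m'} = \sum_{c}(\vu_{m'})_c\,\vphi_c^{(m')}$, the $(i,j)$ entry of the matrix inside the expectation equals
\[
\sum_{c,c'=1}^{D} \conj{(\vu_{m'})_{c'}}\,(\vu_{m'})_{c}\, \big\langle \vx\circledast\vphi_{c'}^{(m')}\circledast\vphi_i^{(m)},\ \vx\circledast\vphi_{c}^{(m')}\circledast\vphi_j^{(m)}\big\rangle .
\]
Thus the whole computation reduces to evaluating $\mathbb{E}\,\langle \vx\circledast\vphi_a\circledast\vphi_b,\ \vx\circledast\vphi_{a'}\circledast\vphi_{b'}\rangle$ for zero-padded Gaussian columns that are either independent or coincident. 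These building blocks are cleanest in the Fourier domain: circular convolution becomes pointwise multiplication, each frequency sample of a zero-padded column is a centered circularly symmetric complex Gaussian with second moment $K$ (hence fourth absolute moment $2K^2$), and Parseval gives $\tfrac1L\sum_\omega |\widehat{\vx}_\omega|^2 = \norm{\vx}_2^2$. Consequently, for independent columns $\mathbb{E}\norm{\vx\circledast\vphi_a\circledast\vphi_b}_2^2 = K^2\norm{\vx}_2^2$, while for a single column $\mathbb{E}\norm{\vx\circledast\vphi_a\circledast\vphi_a}_2^2 = 2K^2\norm{\vx}_2^2$.

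For $m\neq m'$ the columns appearing on the two sides belong to different, independent matrices, so zero mean forces the $m'$-columns to pair ($c'=c$) and the $m$-columns to pair ($i=j$); only the diagonal survives and the surviving term is $\sum_c |(\vu_{m'})_c|^2\,K^2\norm{\vx}_2^2\,\mId_D = K^2\norm{\vx}_2^2\norm{\vu_{m'}}_2^2\,\mId_D$, which is the first case. For $m=m'$ (write $\vu=\vu_m$, $\vphi_c=\vphi_c^{(m)}$), independence and zero mean require the multiset of conjugated column indices $\{c',i\}$ to equal the multiset of unconjugated indices $\{c,j\}$. Off the diagonal ($i\neq j$) this forces $c'=j$, $c=i$, leaving a single term with weight $\conj{u_j}u_i = (\vu\vu^*)_{ij}$ and value $\mathbb{E}\norm{\vx\circledast\vphi_i\circledast\vphi_j}_2^2 = K^2\norm{\vx}_2^2$ (independent columns), giving $(\,\cdot\,)_{ij}=K^2\norm{\vx}_2^2\,(\vu\vu^*)_{ij}$. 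On the diagonal ($i=j$) the matching condition collapses to $c'=c$, so the contribution is $\sum_c |u_c|^2\,\mathbb{E}\norm{\vx\circledast\vphi_c\circledast\vphi_i}_2^2$.

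The key point, and the part requiring the most care, is the coincident term $c=i$ in this diagonal sum: there the two convolving factors are the \emph{same} Gaussian column, so its value is the genuine fourth moment $2K^2\norm{\vx}_2^2$ rather than $K^2\norm{\vx}_2^2$. Splitting accordingly,
\[
\sum_c |u_c|^2\,\mathbb{E}\norm{\vx\circledast\vphi_c\circledast\vphi_i}_2^2
= K^2\norm{\vx}_2^2\big(\norm{\vu}_2^2 - |u_i|^2\big) + 2K^2\norm{\vx}_2^2\,|u_i|^2
= K^2\norm{\vx}_2^2\big(\norm{\vu}_2^2 + |u_i|^2\big),
\]
which is exactly $K^2\norm{\vx}_2^2(\norm{\vu}_2^2\mId_D + \vu\vu^*)_{ii}$, matching the off-diagonal entries computed above. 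The main obstacle is therefore isolating this coincident (``connected'') fourth-moment term and verifying the factor $2$: it is precisely this enhancement on the diagonal, together with the swapped pairing off the diagonal, that produces the extra rank-one term $\vu\vu^*$ distinguishing the $m=m'$ case from the $m\neq m'$ case. The remaining steps — justifying the Fourier-domain moment identities and assembling the entrywise results into the stated matrix form — are routine.
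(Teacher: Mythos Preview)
Your argument is correct. The paper does not provide its own proof of this lemma; it is quoted verbatim from \cite{lee2017spectral}, so there is no in-paper derivation to compare against. Your entrywise computation via Fourier diagonalization and complex-Gaussian moment matching is the natural approach and goes through as written: the multiset condition $\{c',i\}=\{c,j\}$ is exactly the circular-symmetry constraint for a nonzero fourth moment of independent $\mathcal{CN}(0,1)$ columns, and once the surviving index configurations are identified, the per-frequency identities $\mathbb{E}|\hat\vphi_{c,\omega}|^2=K$ and $\mathbb{E}|\hat\vphi_{c,\omega}|^4=2K^2$ together with Parseval give precisely the stated values. The one place worth a sentence of extra justification is the diagonal case $i=j$: your multiset reduction to $c'=c$ is correct, but it is worth noting that this is equivalent to the two Wick pairings $\delta_{c'c}\delta_{ij}+\delta_{c'j}\delta_{ic}$ collapsing (the second pairing contributes only when $c'=c=i$), which is exactly what produces the extra factor of $2$ you isolate in the coincident term. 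With that remark, the proposal is complete.
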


\section{Proof of Lemma~\ref{lemma:matZ}}
\label{sec:proof:lemma:matZ}

Let $\vx' \in \mathbb{C}^L$ and $\vb' \in \mathbb{C}^D$.  By the definition of an adjoint operator, we have
\begin{align*}
\langle \vx' \otimes \vb' \otimes \bm{1}_{M,1}, \mathcal{A}^*(\underline{\vy}) \rangle
= \langle \mathcal{A}(\vx' \otimes \vb' \otimes \bm{1}_{M,1}), \underline{\vy} \rangle.
\end{align*}
Then by the definition of $\mathcal{A}$, we continue as
\begin{align*}
\langle \mathcal{A}(\vx' \otimes \vb' \otimes \bm{1}_{M,1}), \underline{\vy} \rangle
&= \sum_{m=1}^M \langle \mC_{\mS^*\mPhi_m \vb'} \vx', \vy_m \rangle
= \sum_{m=1}^M \vx'^* \mC_{\mS^*\mPhi_m \vb'}^* \vy_m \\
&= \sum_{m=1}^M \vx'^* (\opconj \mS^* \overline{\mPhi_m} \overline{\vb'} \circledast \vy_m)
= \sum_{m=1}^M \vx'^* \opconj(\mS^* \overline{\mPhi_m} \overline{\vb'} \circledast \opconj \vy_m) \\
&= \sum_{m=1}^M \vx'^* \opconj \mC_{\vy_m}^\transpose \mS^* \overline{\mPhi_m} \overline{\vb'}.
\end{align*}
Here we used the fact that the transpose of $\mC_{\vh}$ satisfies $\mC_{\vh}^\transpose = \mC_{\opconj \vh}$.

Finally, by tensorizing the last term, we obtain
\begin{align*}
\sum_{m=1}^M \vx'^* \opconj \mC_{\vy_m}^\transpose \mS^* \overline{\mPhi_m} \overline{\vb'}
&= \sum_{m=1}^M \vx'^* ((\vb')^* \otimes \mId_L) \mathrm{vec}(\opconj \mC_{\vy_m}^\transpose \mS^* \overline{\mPhi_m}) \\
&= \sum_{m=1}^M (\vb' \otimes \vx')^* \mathrm{vec}(\opconj \mC_{\vy_m}^\transpose \mS^* \overline{\mPhi_m})
= \sum_{m=1}^M (\vx' \otimes \vb')^* \mathrm{vec}(\mPhi_m^* \mS \mC_{\vy_m} \opconj).
\end{align*}
The assertion follows since $\vx'$ and $\vb'$ were arbitrary.

\section{Proof of Lemma~\ref{lemma:init_Gamma_Es}}
\label{sec:proof:lemma:init_Gamma_Es}

The left-hand side of \eqref{eq:obj_init_Zs} is rewritten as a variational form given by
\begin{equation}
\label{eq:varform1}
\sup_{\vz \in B_2^{2K-1}} \sup_{\vq \in B_2^D}
\sum_{m=1}^M a_m \vq^* \mPhi_m^* \mS \mC_{\mS^* \mPhi_m \vb} \breve{\mS}^* \vz
- \mathbb{E}_{\phi}[a_m \vq^* \mPhi_m^* \mS \mC_{\mS^* \mPhi_m \vb} \breve{\mS}^* \vz].
\end{equation}

We rewrite $\sum_{m=1}^M a_m \vq^* \mPhi_m^* \mS \mC_{\mS^* \mPhi_m \vb} \breve{\mS}^* \vz$ as
\begin{align*}
 \sum_{m=1}^M a_m \vq^* \mPhi_m^* \mS \mC_{\breve{\mS}^* \vz} \mS^* \mPhi_m \vb
&= \sum_{m=1}^M a_m \mathrm{vec}(\mPhi_m)^* (\overline{\vq} \otimes \mId_K) \mS \mC_{\breve{\mS}^* \vz} \mS^* (\vb^\transpose \otimes \mId_K) \mathrm{vec}(\mPhi_m) \\
&= \sum_{m=1}^M a_m \mathrm{vec}(\mPhi_m)^* (\overline{\vq} \vb^\transpose \otimes \mS \mC_{\breve{\mS}^* \vz} \mS^*) \mathrm{vec}(\mPhi_m).
\end{align*}

Let $\vphi = [\mathrm{vec}(\mPhi_1)^\transpose,\dots,\mathrm{vec}(\mPhi_M)^\transpose]^\transpose$.
Then
\begin{align*}
\sum_{m=1}^M a_m \mathrm{vec}(\mPhi_m)^* (\overline{\vq} \vb^\transpose \otimes \mS \mC_{\breve{\mS}^* \vz} \mS^*) \mathrm{vec}(\mPhi_m)
= \vphi^*
\Big( \sum_{m=1}^M a_m \ve_m \ve_m^* \otimes \overline{\vq} \vb^\transpose \otimes \mS \mC_{\breve{\mS}^* \vz} \mS^* \Big)
\vphi.
\end{align*}
Therefore, the objective function in the supremum in \eqref{eq:varform1} becomes a second-order chaos process.
We compute the tail estimate of the supremum by applying Theorem~\ref{thm:ip} with
\[
\Delta_1 = \Big\{ \sum_{m=1}^M a_m \ve_m \ve_m^* \otimes \vq^\transpose \otimes \mId_K ~|~ \vq \in B_2^D \Big\}
\]
and
\[
\Delta_2 = \Big\{ \sum_{m=1}^M \ve_m \ve_m^* \otimes \vb^\transpose \otimes \mS \mC_{\breve{\mS}^* \vz} \mS^* ~|~ \vq \in B_2^{2K-1} \Big\}.
\]

By direct calculation, the radii of $\Delta_1$ and $\Delta_2$ are given as follows:
\begin{align*}
d_{\mathrm{S}}(\Delta_1)
&\leq \norm{\va}_\infty, \\
d_{\mathrm{F}}(\Delta_1)
&\leq \norm{\va}_2 \sqrt{K}, \\
d_{\mathrm{S}}(\Delta_2)
&\leq \norm{\vb}_2 \sqrt{K}, \\
d_{\mathrm{F}}(\Delta_2)
&\leq \norm{\vb}_2 \sqrt{MK}.
\end{align*}
Here, we used the fact that
\[
\norm{\mS \mC_{\breve{\mS}^* \vz} \mS^*}
\leq \norm{\mS \mC_{\breve{\mS}^* \vz} \mS^*}_{\mathrm{F}}
\leq \sqrt{K} \norm{\vz}_2 \leq \sqrt{K}.
\]

Moreover, since
\[
d_{\mathrm{S}}(\Delta_1) \leq \norm{\va}_\infty \norm{\vq}_2,
\]
by Dudley's inequality and a standard volume argument, we have
\[
\gamma_2(\Delta_1) \leq C_1 \norm{\va}_\infty \int_0^\infty \sqrt{\log N(B_2^D,\norm{\cdot}_2,t)} dt
\leq C_2 \norm{\va}_\infty \sqrt{D}.
\]
On the other hand, since
\[
\mC_{\breve{\mS}^* \vz} = \sqrt{L} \mF^* \mathrm{diag}(\mF \breve{\mS}^* \vz) \mF,
\]
we have
\[
d_{\mathrm{S}}(\Delta_2) \leq \norm{\vb}_2 \sqrt{L} \norm{\mF \breve{\mS}^* \vz}_\infty,
\]
which implies
\begin{align*}
\gamma_2(\Delta_2)
&\leq C_1 \norm{\vb}_2 \sqrt{L} \int_0^\infty \sqrt{\log N(\mF \breve{\mS}^* B_2^{2K-1},\norm{\cdot}_\infty,t)} dt \\
&\leq C_3 \norm{\vb}_2 \sqrt{LK} \int_0^\infty \sqrt{\log N(\mF \breve{\mS}^* B_1^{2K-1},\norm{\cdot}_\infty,t)} dt \\
&\leq C_4 \norm{\vb}_2 \sqrt{K} \sqrt{\log(2K-1)} \log^{3/2}L \\
&\leq C_5 \norm{\vb}_2 \sqrt{K} \sqrt{\log K} \log^{3/2}L,
\end{align*}
where the third step follows from Corollary~\ref{cor:maurey}.

By applying these estimates to Theorem~\ref{thm:ip} with
\begin{equation}
\label{eq:a4gm}
a = \sqrt{\frac{\gamma_2(\Delta_2,\norm{\cdot}) d_{\mathrm{F}}(\Delta_2)}{\gamma_2(\Delta_1,\norm{\cdot}) d_{\mathrm{F}}(\Delta_1)}},
\end{equation}
we obtain that the supremum is less than
\[
C'(\beta) \log^\alpha(MKL) (\sqrt{M} K^{3/4} D^{1/4} + \sqrt{M} K + \sqrt{MKD})
\]
holds with probability $1-K^{-\beta}$.
By the arithmetic-geometric mean inequality,
\[
\sqrt{M} K^{3/4} D^{1/4} \leq \frac{\sqrt{M} K + \sqrt{MKD}}{2}.
\]
We also have $\sqrt{M} K \geq \sqrt{MKD}$ since $K \geq D$.
This completes the proof.

\section{Proof of Lemma~\ref{lemma:init_Gamma_Ec}}
\label{sec:proof:lemma:init_Gamma_Ec}

The spectral norm in the left-hand side of \eqref{eq:init_Gamma_Ec} is expressed as a variational form given by
\begin{equation}
\label{eq:varform2}
\sup_{\vz \in B_2^{2K-1}} \sup_{\vq \in B_2^D}
\sum_{m=1}^M
\vz^* \breve{\mS} \mC_{\vx} \mC_{\vw_m}^* \mS^* \mPhi_m \vq.
\end{equation}
The objective function in \eqref{eq:varform2} is rewritten as
\begin{align*}
\sum_{m=1}^M \vz^* \breve{\mS} \mC_{\vx} \mC_{\vw_m}^* \mS^* \mPhi_m \vq
= \sum_{m=1}^M (\vq^\transpose \otimes \vz^* \breve{\mS} \mC_{\vx} \mC_{\vw_m}^* \mS^*) \mathrm{vec}(\mPhi_m)
= \Big\langle \vf(\vq,\vz), \sum_{m=1}^M \ve_m \otimes \mathrm{vec}(\mPhi_m) \Big\rangle,
\end{align*}
where
\[
\vf(\vq,\vz) = \sum_{m=1}^M \ve_m \otimes \overline{\vq} \otimes \mS \mC_{\vw_m} \mC_{\vx}^* \breve{\mS} \vz.
\]
Since $\sum_{m=1}^M \ve_m \otimes \mathrm{vec}(\mPhi_m) = [\mathrm{vec}(\mPhi_1)^\transpose,\dots,\mathrm{vec}(\mPhi_M)^\transpose]^\transpose$ is a standard complex Gaussian vector of length $MKD$, we compute a tail estimate of the supremum in \eqref{eq:varform2} by applying Lemma~\ref{lemma:firstorder} with
\[
\Delta = \Big\{ \vf(\vq,\vz) ~|~ \vq \in B_2^D,~ \vz \in B_2^{2K-1}
\Big\}.
\]

Since
\begin{align*}
\norm{\vf(\vq,\vz) - \vf(\vq',\vz')}_2
&\leq \norm{\vf(\vq,\vz) - \vf(\vq',\vz)}_2 + \norm{\vf(\vq',\vz) - \vf(\vq',\vz')}_2 \\
&\leq \sqrt{M} \norm{\mS \mC_{\vw_m} \mC_{\vx}^* \breve{\mS}} (\norm{\vq-\vq'}_2 + \norm{\vz-\vz'}_2) \\
&\leq \sqrt{M} \rho_{x,w} (\norm{\vq-\vq'}_2 + \norm{\vz-\vz'}_2),
\end{align*}
we have
\begin{align*}
&\int_0^\infty \sqrt{\log N(\Delta,\norm{\cdot}_2,t)} dt \\
&\leq \sqrt{M} \rho_{x,w}
(\int_0^\infty \sqrt{\log N(B_2^{2K-1},\norm{\cdot}_2,t)} dt
+ \int_0^\infty \sqrt{\log N(B_2^D,\norm{\cdot}_2,t)} dt) \\
&\leq C \rho_{x,w} \sqrt{MK},
\end{align*}
where the last step follows from a standard volume argument and the fact that $K \geq D$.
The assertion then follows by applying the above estimate to Lemma~\ref{lemma:firstorder}.

\section{Proof of Lemma~\ref{lemma:init_Gamma_En}}
\label{sec:proof:lemma:init_Gamma_En}

We use the following lemma from \cite{lee2017spectral} to prove Lemma~\ref{lemma:init_Gamma_En}.

\begin{lemma}[{\hspace{1sp}\cite[Lemma~5.3]{lee2017spectral}}]
\label{lemma:gaussquadasym}
Let $\mPsi \in \mathbb{C}^{K \times D}$ satisfy that $\mathrm{vec}(\mPsi)$ follows $\mathcal{CN}(0,\mId_{KD})$, $0 < \zeta < 1$, and $\mA \in \mathbb{C}^{K \times K}$.
Then
\begin{align*}
& \norm{
\mPsi^* \mA \mPsi - \mathbb{E}_{\vphi}[\mPsi^* \mA \mPsi]
}
\leq C \norm{\mA} \sqrt{KD} \log(8 \zeta^{-1})
\end{align*}
holds with probability $1-\zeta$.
\end{lemma}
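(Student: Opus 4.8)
The plan is to recognize $\norm{\mPsi^* \mA \mPsi - \mathbb{E}[\mPsi^* \mA \mPsi]}$ as the supremum of a second-order chaos process in the Gaussian entries of $\mPsi$ and to control it with the bilinear tail bound of Theorem~\ref{thm:ip}. First I would compute the mean: since the columns of $\mPsi$ are independent standard complex Gaussian vectors, $\mathbb{E}[\mPsi^* \mA \mPsi] = \trace(\mA)\, \mId_D$. Writing the spectral norm of the $D\times D$ matrix $\mPsi^* \mA \mPsi - \trace(\mA)\mId_D$ through its variational form,
\[
\norm{\mPsi^* \mA \mPsi - \trace(\mA)\mId_D}
= \sup_{\vp,\vq \in B_2^D} \big| (\mPsi\vp)^* \mA (\mPsi\vq) - \trace(\mA)\, \vp^*\vq \big|,
\]
and setting $\vg = \vect(\mPsi) \sim \mathcal{CN}(0,\mId_{KD})$, I would use $\mPsi\vp = (\vp^\transpose \otimes \mId_K)\vg$ and $\mPsi\vq = (\vq^\transpose \otimes \mId_K)\vg$ to rewrite $(\mPsi\vp)^* \mA (\mPsi\vq) = \langle \mM_{\vp}\vg,\, \mM_{\vq}'\vg\rangle$ with $\mM_{\vp} = \vp^\transpose \otimes \mId_K$ and $\mM_{\vq}' = \mA(\vq^\transpose \otimes \mId_K)$. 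This exhibits the quantity as the supremum of a subgaussian bilinear form over the two index sets $\Delta_1 = \{\vp^\transpose \otimes \mId_K : \vp \in B_2^D\}$ and $\Delta_2 = \{\mA(\vq^\transpose \otimes \mId_K) : \vq \in B_2^D\}$, precisely the object bounded in Theorem~\ref{thm:ip}.

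Next I would compute the three geometric parameters for each set. Because the maps $\vp\mapsto \vp^\transpose\otimes\mId_K$ and $\vq\mapsto\mA(\vq^\transpose\otimes\mId_K)$ are linear and $1$- and $\norm{\mA}$-Lipschitz from $(\mathbb{C}^D,\norm{\cdot}_2)$ into the spectral norm, the radii are $d_{\mathrm S}(\Delta_1)\le 1$, $d_{\mathrm F}(\Delta_1)\le\sqrt K$, $d_{\mathrm S}(\Delta_2)\le\norm{\mA}$, $d_{\mathrm F}(\Delta_2)\le\sqrt K\,\norm{\mA}$, and, by Dudley's inequality with the volumetric covering estimate for $B_2^D$, $\gamma_2(\Delta_1,\norm{\cdot})\le C\sqrt D$ and $\gamma_2(\Delta_2,\norm{\cdot})\le C\norm{\mA}\sqrt D$. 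I would then apply Theorem~\ref{thm:ip} with the balancing parameter $a=\sqrt{\norm{\mA}}$, so that the combined quantities $a\,\gamma_2(\Delta_1)+a^{-1}\gamma_2(\Delta_2)$, $a\,d_{\mathrm F}(\Delta_1)+a^{-1}d_{\mathrm F}(\Delta_2)$, and $a\,d_{\mathrm S}(\Delta_1)+a^{-1}d_{\mathrm S}(\Delta_2)$ are of order $\norm{\mA}^{1/2}\sqrt D$, $\norm{\mA}^{1/2}\sqrt K$, and $\norm{\mA}^{1/2}$, respectively. Substituting into the definitions of $\widetilde K_1,\widetilde K_2,\widetilde K_3$ and using $D\le K$ yields $\widetilde K_1 \lesssim \norm{\mA}\sqrt{KD}$, $\widetilde K_2\lesssim\norm{\mA}\sqrt K$, and $\widetilde K_3\lesssim\norm{\mA}$.

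Finally, since a standard complex Gaussian vector is $L$-subgaussian for a universal $L$, the constant $C(L)$ in Theorem~\ref{thm:ip} becomes absolute, and the tail bound reads $C\big(\widetilde K_1 + \widetilde K_2\sqrt{\log(8\zeta^{-1})} + \widetilde K_3\log(8\zeta^{-1})\big)$. Collapsing the three terms using $\sqrt K\le\sqrt{KD}$, $1\le\sqrt{KD}$, and $\log(8\zeta^{-1})\ge 1$ (so that $\sqrt{\log(8\zeta^{-1})}\le\log(8\zeta^{-1})$) gives the claimed bound $C\norm{\mA}\sqrt{KD}\log(8\zeta^{-1})$. The main obstacle is the $\gamma_2$ estimate for $\Delta_2$: one must confirm that premultiplication by $\mA$ only rescales the Talagrand functional by $\norm{\mA}$ (via $\norm{\mA((\vq-\vq')^\transpose\otimes\mId_K)}\le\norm{\mA}\norm{\vq-\vq'}_2$), rather than introducing a dependence on $\norm{\mA}_{\mathrm F}$ or on the conditioning of $\mA$; everything else is routine Kronecker algebra and the optimization over $a$.
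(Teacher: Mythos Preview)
Your approach is correct, and in fact the paper does not prove this lemma at all: it is imported verbatim from \cite[Lemma~5.3]{lee2017spectral} and merely invoked in Appendix~\ref{sec:proof:lemma:init_Gamma_En}. So there is no ``paper's own proof'' to compare against; your argument stands on its own merits.

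Your route via Theorem~\ref{thm:ip} is sound and self-contained within the tools this paper provides. The variational rewriting, the identification $\mM_{\vp}=\vp^\transpose\otimes\mId_K$, $\mM_{\vq}'=\mA(\vq^\transpose\otimes\mId_K)=\vq^\transpose\otimes\mA$, and the resulting parameter estimates are all correct; in particular your check that premultiplication by $\mA$ only scales the spectral-norm metric by $\norm{\mA}$ (since $\norm{(\vq-\vq')^\transpose\otimes\mA}=\norm{\vq-\vq'}_2\norm{\mA}$) is exactly what is needed for the $\gamma_2$ bound. Two minor remarks: first, $d_{\mathrm F}(\Delta_2)$ is actually $\norm{\mA}_{\mathrm F}\le\sqrt K\,\norm{\mA}$, so your stated bound is an upper estimate rather than an equality, but that is all you use. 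Second, the step $D\le\sqrt{KD}$ in collapsing $\widetilde K_1$ tacitly uses $D\le K$; this is not written into the lemma statement but holds throughout the paper (via~\eqref{eq:condK}) and in every application of the lemma, so it is harmless here.
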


Note that $\mGamma_{\mathrm{n}} \mGamma_{\mathrm{n}}^*$ is expressed as
\[
\mGamma_{\mathrm{n}} \mGamma_{\mathrm{n}}^*
= \sum_{m,m'=1}^M
\mPhi_m^* \mS \mC_{\vw_m} \mC_{\vw_{m'}}^* \mS^* \mPhi_{m'}.
\]
We apply Lemma~\ref{lemma:gaussquadasym} with
\[
\mPsi = [\mPhi_1^\transpose,\dots,\mPhi_M^\transpose]^\transpose
\]
and
\[
\mA = \sum_{m,m'=1}^M
\ve_m \ve_{m'}^* \mS (\mC_{\vw_m} \mC_{\vw_{m'}}^* - \mathbb{E}_w[\mC_{\vw_m} \mC_{\vw_{m'}}^*]) \mS^*.
\]

By the block Gershgorin disk theorem \cite{feingold1962block}, it follows that
\[
\norm{\mA}
\leq \max_{1\leq m\leq M} \sum_{m'=1}^M \norm{\mS (\mC_{\vw_m} \mC_{\vw_{m'}}^* - \mathbb{E}_w[\mC_{\vw_m} \mC_{\vw_{m'}}^*]) \mS^*}
\leq M \rho_w.
\]
Then the assertion follows by Lemma~\ref{lemma:gaussquadasym}.

\section{Proof of Lemma~\ref{lemma:Ec}}
\label{sec:proof:lemma:Ec}

We decompose $\mPhi^* \mY_{\mathrm{s}}^* \mY_{\mathrm{n}} \mPhi$ into two parts respectively corresponding to the diagonal block portion and the off-diagonal block portion of $\mY_{\mathrm{s}}^* \mY_{\mathrm{n}}$:
\[
\mPhi^* \mY_{\mathrm{s}}^* \mY_{\mathrm{n}} \mPhi = \text{(g)} + \text{(h)},
\]
where
\begin{equation}
\label{eqn_dbl_xw}
\begin{aligned}
\text{(g)} &=
\sum_{m=1}^M \ve_m \ve_m^* \otimes \Big( \sum_{\begin{subarray}{c} m'=1 \\ m' \neq m \end{subarray}}^M
\overline{a_{m'}} \widetilde{\mPhi}_m^* \mC_{\widetilde{\mPhi}_{m'} \vb}^* \mC_{\vx}^* \mC_{\vw_{m'}} \widetilde{\mPhi}_m \Big), \\
\text{(h)} &= -
\sum_{m=1}^M \sum_{\begin{subarray}{c} m'=1 \\ m' \neq m \end{subarray}}^M
\ve_m \ve_{m'}^* \otimes
\overline{a_{m'}} \widetilde{\mPhi}_m^* \mC_{\widetilde{\mPhi}_{m'} \vb}^* \mC_{\vx}^* \mC_{\vw_m} \widetilde{\mPhi}_{m'}.
\end{aligned}
\end{equation}

Since $\tnorm{\cdot}_{S_1 \to S_\infty}$ is a valid norm, by the triangle inequality, we have
\[
\tnorm{\mPhi^* \mY_{\mathrm{s}}^* \mY_{\mathrm{n}} \mPhi}_{S_1 \to S_\infty}
\leq \tnorm{\text{(g)}}_{S_1 \to S_\infty} + \tnorm{\text{(h)}}_{S_1 \to S_\infty}.
\]
Furthermore, by \eqref{eq:matSopnormlessthanSN}, we also have
\[
\tnorm{\text{(g)}}_{S_1 \to S_\infty} \leq \norm{\text{(g)}}.
\]
We use a tail estimate of $\norm{\text{(g)}}$ derived in the proof of \cite[Lemma~3.6]{lee2017spectral}.  It has been shown that
\begin{equation}
\label{eq:bnd_snEc_part1}
\norm{\text{(g)}}
\leq
C(\beta) \rho_{x,w} K \sqrt{D} \norm{\va}_2 \norm{\vb}_2 \log^{\alpha}(MKL)
\end{equation}
with probability $1-CK^{-\beta}$ (See \cite[Section~5.3]{lee2017spectral}).
We will show that the tail estimate of $\norm{\text{(g)}}$ is dominated by that for $\tnorm{\text{(h)}}_{S_1 \to S_\infty}$.

For the part corresponding to the off-diagonal portion of $\mY_{\mathrm{s}}^* \mY_{\mathrm{n}}$, we add and subtract the diagonal sum in $\text{(h)}$ and obtain
\[
\text{(h)} = \text{(k)} + \text{(l)}
\]
for
\begin{equation}
\label{eq:ubXic2}
\begin{aligned}
\text{(k)} &=
\sum_{m=1}^M \ve_m \ve_m^* \otimes
\mPhi_m^* \mS \mC_{\vx}^* \mC_{\vw_m} \breve{\mS}^* \mZ_m, \\
\text{(l)}
&= -\sum_{m,m'=1}^M \ve_m \ve_{m'}^* \otimes
\mPhi_m^* \mS \mC_{\vx}^* \mC_{\vw_m} \breve{\mS}^* \mZ_{m'},
\end{aligned}
\end{equation}
where
\[
\mZ_m := \overline{a_{m}} \breve{\mS} \mC_{\widetilde{\mPhi}_m \vb}^* \widetilde{\mPhi}_m, \quad m=1,\dots,M.
\]

Again, since $\tnorm{\text{(k)}}_{S_1 \to S_\infty} \leq \norm{\text{(k)}}$, we can use a tail estimate of $\norm{\text{(k)}}$ derived in the proof of \cite[Lemma~3.6]{lee2017spectral}.  It has been shown that
\begin{align*}
\norm{\text{(k)}}
\leq \rho_{x,w} C(\beta) K^{3/2} \norm{\va}_\infty \norm{\vb}_2 \log^\alpha(MKL)
\end{align*}
holds with probability $1-CK^{-\beta}$.
We will show that the tail estimate of $\norm{\text{(k)}}$ is dominated by that for $\tnorm{\text{(l)}}_{S_1 \to S_\infty}$, which we derive below.

Through a factorization of the full 2D summation in $\text{(l)}$, we obtain
\begin{align*}
\tnorm{\text{(l)}}_{S_1 \to S_\infty}
& \leq
\Bigtnorm{
\underbrace{
\sum_{m=1}^M
\ve_m^* \otimes \breve{\mS} \mC_{\vw_m}^* \mC_{\vx} \mS^* \mPhi_m
}_{\text{(o)}}
}_{S_1 \to S_2}
\Bigtnorm{
\underbrace{
\sum_{m'=1}^M
\ve_{m'}^* \otimes (\mZ_{m'} - \mathbb{E}[\mZ_{m'}])
}_{\text{(p)}}
}_{S_1 \to S_2} \\
& +
\Bigtnorm{
\underbrace{
\sum_{m,m'=1}^M \ve_m \ve_{m'}^* \otimes
\mPhi_m^* \mS \mC_{\vx}^* \mC_{\vw_m} \breve{\mS}^* \mathbb{E}[\mZ_{m'}]
}_{\text{(q)}}
}_{S_1 \to S_\infty}.
\end{align*}

Note that $\tnorm{\text{(o)}}_{S_1 \to S_2}$ is written as the supremum of a Gaussian process and is bounded by the following lemma.
\begin{lemma}
\label{lemma:comp_cross_rank1}
Suppose that (A1) holds.
For any $\beta \in \mathbb{N}$, there exists a constant $C(\beta)$ that depends only on $\beta$ such that, conditional on the noise vector $\vw$,
\[
\Bigtnorm{
\sum_{m=1}^M
\ve_m^* \otimes \breve{\mS} \mC_{\vw_m}^* \mC_{\vx} \mS^* \mPhi_m
}_{S_1 \to S_2}
\leq C \sqrt{1+\beta} \rho_{x,w} \sqrt{M+D+K} \log K
\]
holds with probability $1-K^{-\beta}$.
\end{lemma}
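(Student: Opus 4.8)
The plan is to recognize $\tnorm{\text{(o)}}_{S_1\to S_2}$ as the supremum of a Gaussian process that is \emph{linear} in the entries of $\mPhi_1,\dots,\mPhi_M$, and then control it with the chaining estimate of Lemma~\ref{lemma:firstorder}. Since the range is one\nobreakdash-dimensional in the Schatten sense (so $\tnorm{\cdot}_{S_2}=\norm{\cdot}_2$) and, by the same extreme\nobreakdash-point argument that produced \eqref{eq:varforn_S1S8norm}, the $S_1$ unit ball is the convex hull of the rank\nobreakdash-one matrices $\vq\vp^\transpose$ with $\vq\in B_2^M$, $\vp\in B_2^D$, I would first write the matricized operator norm in the variational form
\[
\Bigtnorm{\sum_{m=1}^M \ve_m^*\otimes\breve{\mS}\mC_{\vw_m}^*\mC_{\vx}\mS^*\mPhi_m}_{S_1\to S_2}
= \sup_{\vq\in B_2^M}\sup_{\vp\in B_2^D}\sup_{\vz\in B_2^{2K-1}}
\Big|\sum_{m=1}^M q_m\,\vz^*\breve{\mS}\mC_{\vw_m}^*\mC_{\vx}\mS^*\mPhi_m\vp\Big|,
\]
where the inner $\vz$\nobreakdash-supremum dualizes the Euclidean norm of the $(2K-1)$\nobreakdash-vector $\sum_m q_m\breve{\mS}\mC_{\vw_m}^*\mC_{\vx}\mS^*\mPhi_m\vp$. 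Because $\vphi=[\mathrm{vec}(\mPhi_1)^\transpose,\dots,\mathrm{vec}(\mPhi_M)^\transpose]^\transpose$ is a standard complex Gaussian vector of length $MKD$ under (A1), the scalar inside is $\langle\vf(\vq,\vp,\vz),\vphi\rangle$ for a coefficient vector whose $(m,k,d)$ entry equals (up to conjugation) $q_m\,(\mS\mC_{\vx}^*\mC_{\vw_m}\breve{\mS}^*\vz)_k\,p_d$. Thus the quantity is a supremum of a Gaussian process indexed by $\Delta=\{\vf(\vq,\vp,\vz):\vq\in B_2^M,\ \vp\in B_2^D,\ \vz\in B_2^{2K-1}\}$, which is precisely the setting of Lemma~\ref{lemma:firstorder}.

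Next I would compute the two ingredients the chaining bound consumes. A direct calculation gives
\[
\norm{\vf(\vq,\vp,\vz)}_2^2=\norm{\vp}_2^2\sum_{m=1}^M|q_m|^2\,\norm{\mS\mC_{\vx}^*\mC_{\vw_m}\breve{\mS}^*\vz}_2^2\leq \rho_{x,w}^2,
\]
where I use $\norm{\mS\mC_{\vx}^*\mC_{\vw_m}\breve{\mS}^*}\leq\rho_{x,w}$ for each $m$ (conditionally on $\vw$, exactly as in the proof of Lemma~\ref{lemma:init_Gamma_Ec}), together with $\norm{\vq}_2,\norm{\vp}_2\leq1$; hence $\Delta$ has radius $\rho_{x,w}$. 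The same three spectral\nobreakdash-norm bounds show, by splitting an increment one argument at a time, that $(\vq,\vp,\vz)\mapsto\vf(\vq,\vp,\vz)$ is $\rho_{x,w}$\nobreakdash-Lipschitz on the product of unit balls, so $\Delta$ is a $\rho_{x,w}$\nobreakdash-Lipschitz image of $B_2^M\times B_2^D\times B_2^{2K-1}$. Covering numbers multiply over a product, so a standard volumetric estimate for Euclidean balls makes the entropy integral split additively:
\[
\int_0^\infty\sqrt{\log N(\Delta,\norm{\cdot}_2,t)}\,dt\leq C\rho_{x,w}\big(\sqrt{M}+\sqrt{D}+\sqrt{2K-1}\big)\leq C'\rho_{x,w}\sqrt{M+D+K}.
\]

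Finally, applying Lemma~\ref{lemma:firstorder} with $\zeta=K^{-\beta}$ multiplies this entropy integral by $C\sqrt{\log(\zeta^{-1})}=C\sqrt{\beta\log K}$, and bounding $\sqrt{\beta\log K}\leq\sqrt{1+\beta}\,\log K$ (valid once $\log K\geq1$) yields the claimed inequality with probability $1-K^{-\beta}$. The main obstacle is getting the first reduction exactly right: verifying that the $S_1\to S_2$ operator norm is genuinely attained on rank\nobreakdash-one $\vq\vp^\transpose$ and that dualizing the Euclidean output norm produces an honest first\nobreakdash-order Gaussian process over a product of three balls. Once that is in place, the radius, the Lipschitz/volumetric entropy bound, and the invocation of Lemma~\ref{lemma:firstorder} are routine. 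Note in particular that there is \emph{no} higher\nobreakdash-order chaos here — the process is linear in $\mPhi$ — which is exactly why Lemma~\ref{lemma:firstorder} suffices and the heavier machinery of Theorem~\ref{thm:kmr} and Theorem~\ref{thm:ip} is not needed.
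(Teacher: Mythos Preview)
Your proposal is correct and follows essentially the same approach as the paper's proof: both rewrite the $S_1\to S_2$ norm as a supremum of a first-order Gaussian process in $\vphi$ indexed by the product $B_2^M\times B_2^D\times B_2^{2K-1}$, establish the $\rho_{x,w}$-Lipschitz bound on the index map $\vf$, and then invoke Lemma~\ref{lemma:firstorder} together with the volumetric entropy estimate $\sqrt{M}+\sqrt{D}+\sqrt{2K-1}\lesssim\sqrt{M+D+K}$. Your additional remarks on the radius of $\Delta$ and on why no second-order chaos is needed are fine but not required for the argument.
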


\begin{proof}[Proof of Lemma~\ref{lemma:comp_cross_rank1}]
Let $\vphi_m = \mathrm{vec}(\mPhi_m)$ for $m=1,\dots,M$ and $\vphi = [\vphi_1^\transpose, \dots, \vphi_M^\transpose]^\transpose$.
Let $\vq = [q_1,\dots,q_M]^\transpose \in \mathbb{C}^M$.
Then it follows from \eqref{eq:varforn_S1S8norm} that
\begin{align*}
\Bigtnorm{
\sum_{m=1}^M
\ve_m^* \otimes \breve{\mS} \mC_{\vw_m}^* \mC_{\vx} \mS^* \mPhi_m
}_{S_1 \to S_2}
& = \sup_{\vz \in B_2^{2K-1}} \sup_{\vxi \in B_2^D} \sup_{\vq \in B_2^M}
\Big|
\sum_{m=1}^M q_m \vz^* \breve{\mS} \mC_{\vw_m}^* \mC_{\vx} \mS^* \mPhi_m \vxi
\Big| \\
& = \sup_{\vz \in B_2^{2K-1}} \sup_{\vxi \in B_2^D} \sup_{\vq \in B_2^M}
\Big|
\sum_{m=1}^M (q_m \vxi^\transpose \otimes \vz^* \breve{\mS} \mC_{\vw_m}^* \mC_{\vx} \mS^*) \vphi_m
\Big| \\
& = \sup_{\vz \in B_2^{2K-1}} \sup_{\vxi \in B_2^D} \sup_{\vq \in B_2^M}
\Big|
\sum_{m=1}^M
q_m \ve_m^* \otimes (\vxi^\transpose \otimes \vz^* \breve{\mS} \mC_{\vw_m}^* \mC_{\vx} \mS^*) \vphi
\Big|.
\end{align*}

Let
\[
\vf(\vz,\vxi,\vq)
= \sum_{m=1}^M
\overline{q_m} \ve_m \otimes (\overline{\vxi} \otimes \mS \mC_{\vx}^* \mC_{\vw_m} \breve{\mS}^* \vz).
\]
Then we obtain
\[
\Big\|
\sum_{m=1}^M
\ve_m^* \otimes \breve{\mS} \mC_{\vw_m}^* \mC_{\vx} \mS^* \mPhi_m
\Big\|
= \sup_{\vz \in B_2^{2K-1}} \sup_{\vxi \in B_2^D} \sup_{\vq \in B_2^M} |\vf(\vz,\vxi,\vq)^* \vphi|.
\]
Note that $\vf(\vz,\vxi)^* \vphi$, conditioned on $\vw$, is a centered Gaussian process.  We compute a tail estimate of this supremum by applying Lemma~\ref{lemma:firstorder} with
\[
\Delta = \{ \vf(\vz,\vxi,\vq) ~|~ \vz \in B_2^{2K-1},~ \vxi \in B_2^D,~ \vq \in B_2^M \}.
\]
Then we need to compute the entropy integral for $\Delta$.
Recall
\[
\rho_{x,w} = \max_{1\leq m\leq M} \norm{\widetilde{\mS} \mC_{\vx}^* \mC_{\vw_1} \widetilde{\mS}^*}
\geq \norm{\breve{\mS} \mC_{\vw_m}^* \mC_{\vx} \mS^*}, \quad \forall m=1,\dots,M.
\]

By the triangle inequality, we obtain
\begin{align*}
& \norm{\vf(\vz,\vxi,\vq) - \vf(\vz',\vxi',\vq')}_2 \\
& \leq
\norm{\vf(\vz,\vxi,\vq) - \vf(\vz,\vxi,\vq')}_2
+ \norm{\vf(\vz,\vxi,\vq') - \vf(\vz,\vxi',\vq')}_2
+ \norm{\vf(\vz,\vxi',\vq') - \vf(\vz',\vxi',\vq')}_2 \\
& \leq \rho_{x,w}
(
\norm{\vz}_2 \norm{\vxi}_2 \norm{\vq-\vq'}_2
+ \norm{\vz}_2 \norm{\vxi-\vxi'}_2 \norm{\vq'}_2
+ \norm{\vz-\vz'}_2 \norm{\vxi'}_2 \norm{\vq'}_2 \\
& \leq \rho_{x,w}
(\norm{\vq-\vq'}_2 + \norm{\vxi-\vxi'}_2 + \norm{\vz-\vz'}_2).
\end{align*}

The integral of the log-entropy number is computed as
\begin{align*}
& \sup_{\vz \in B_2^{2K-1}} \sup_{\vxi \in B_2^D} \sup_{\vq \in B_2^M} |\vf(\vz,\vxi,\vq)^* \vphi| \\
& \leq C_1 \int_0^\infty \sqrt{\log N(\Delta,\norm{\cdot}_2,t)} dt \\
& \leq C_1 \rho_{x,w}
\Big( \int_0^\infty \sqrt{\log N(B_2^M,\norm{\cdot}_2,t)} dt + \int_0^\infty \sqrt{\log N(B_2^M,\norm{\cdot}_2,t)} dt \\
& \qquad + \int_0^\infty \sqrt{\log N(B_2^{2K-1},\norm{\cdot}_2,t)} dt \Big) \\
& \leq C_2 \rho_{x,w} \sqrt{M+D+K},
\end{align*}
where the last step follows from a standard volume argument.
Then the assertion follows from Lemma~\ref{lemma:firstorder}.
\end{proof}

Next $\tnorm{\text{(p)}}_{S_1 \to S_2}$ is written as the supremum of a second-order Gaussian chaos process and its tail estimate can be derived by Theorem~\ref{thm:ip}.  However, the rank-1 constraint on the domain does not provide any gain in reducing the tail estimate in this case.  Therefore, we use a previous estimate on $\norm{\text{(p)}}_{S_1 \to S_2}$ derived in \cite[Lemma~5.6]{lee2017spectral}, which is stated in the following lemma.

\begin{lemma}
\label{lemma:comp_sum}
Suppose that (A1) holds.
For any $\beta \in \mathbb{N}$, there exist a numerical constant $\alpha \in \mathbb{N}$ and a constant $C(\beta)$ that depends only on $\beta$ such that
\[
\Bignorm{
\sum_{m=1}^M \ve_m^* \otimes (\mZ_m - \mathbb{E}[\mZ_m])
}
\leq C(\beta) \norm{\va}_\infty \norm{\vb}_2 (K + \sqrt{MKD}) \log^\alpha (MKL)
\]
holds with probability $1-K^{-\beta}$.
\end{lemma}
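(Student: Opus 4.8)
The plan is to realize the spectral norm on the left-hand side as a supremum of a centered second-order Gaussian chaos in the entries of $(\mPhi_m)_{m=1}^M$ and to bound it with Theorem~\ref{thm:ip}, reusing the entropy estimates already established for Lemma~\ref{lemma:init_Gamma_Es}. Since the matrix is $(2K-1)\times MD$, I would first write
\[
\Bignorm{\sum_{m=1}^M \ve_m^* \otimes (\mZ_m - \mathbb{E}[\mZ_m])}
= \sup_{\vz \in B_2^{2K-1}} \ \sup_{\sum_m \norm{\vw_m}_2^2 \leq 1}
\Big| \sum_{m=1}^M \vz^* (\mZ_m - \mathbb{E}[\mZ_m]) \vw_m \Big|,
\]
with $\vw_m \in \mathbb{C}^D$. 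Recalling $\mZ_m = \overline{a_m}\, \breve{\mS} \mC_{\widetilde{\mPhi}_m \vb}^* \widetilde{\mPhi}_m$, the commutativity of circular convolution gives $\mC_{\widetilde{\mPhi}_m \vb}\breve{\mS}^* \vz = \mC_{\breve{\mS}^* \vz}\widetilde{\mPhi}_m \vb$, so the conjugate of each summand equals $a_m (\widetilde{\mPhi}_m \vw_m)^* \mC_{\breve{\mS}^* \vz}(\widetilde{\mPhi}_m \vb)$, a bilinear form in the single Gaussian vector $\vphi = [\mathrm{vec}(\mPhi_1)^\transpose,\dots,\mathrm{vec}(\mPhi_M)^\transpose]^\transpose$. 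Writing $\widetilde{\mPhi}_m \vb$ and $\widetilde{\mPhi}_m \vw_m$ as fixed linear images of $\vphi$, the whole sum is $\langle \mM_2(\{\vw_m\})\vphi, \mM_1(\vz)\vphi\rangle$, and subtracting $\mathbb{E}[\mZ_m]$ (computed in closed form from Lemma~\ref{lemma:expectation2} as $\overline{a_m} K \breve{\mS}\ve_1\vb^*$) exactly centers this chaos. I would then apply Theorem~\ref{thm:ip} to
\[
\Delta_1 = \{\mM_1(\vz) : \vz \in B_2^{2K-1}\}, \qquad
\Delta_2 = \{\mM_2(\{\vw_m\}) : \textstyle\sum_m \norm{\vw_m}_2^2 \leq 1\},
\]
where $\mM_1(\vz) = \mId_M \otimes [\mC_{\breve{\mS}^* \vz}\mS^\transpose(\vb^\transpose \otimes \mId_K)]$ carries the convolution kernel $\vb$, and $\mM_2$ is block diagonal with blocks $a_m \mS^\transpose(\vw_m^\transpose \otimes \mId_K)$.

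The routine part is to compute the radii: a direct column count gives $d_{\mathrm{F}}(\Delta_1) \leq \norm{\vb}_2 \sqrt{MK}$ and $d_{\mathrm{S}}(\Delta_1) \lesssim \norm{\vb}_2 \sqrt{K}$, while $d_{\mathrm{F}}(\Delta_2) \leq \norm{\va}_\infty \sqrt{K}$ and $d_{\mathrm{S}}(\Delta_2) \leq \norm{\va}_\infty$. The delicate part is the two $\gamma_2$ functionals, and this is where I expect the main obstacle. For $\Delta_1$ the spectral-norm metric is governed by $\norm{\mF \breve{\mS}^* \vz}_\infty$ through $\mC_{\breve{\mS}^* \vz} = \sqrt{L}\,\mF^*\mathrm{diag}(\mF\breve{\mS}^* \vz)\mF$, so a naive volumetric bound would cost a full factor $K$; instead I would pass to the $\ell_\infty$ metric on the DFT coefficients and invoke Maurey's method via Corollary~\ref{cor:maurey} to obtain $\gamma_2(\Delta_1) \lesssim \norm{\vb}_2 \sqrt{K}\log^\alpha(MKL)$, verbatim as in the proof of Lemma~\ref{lemma:init_Gamma_Es}. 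For $\Delta_2$ the metric is $\max_m a_m \norm{\vw_m - \vw_m'}_2$, i.e.\ an $\ell_\infty^M(\ell_2^D)$ distance rather than the ambient $\ell_2^{MD}$ distance; treating this block-max structure carefully (again through Corollary~\ref{cor:maurey}) is what replaces the crude $\sqrt{MD}$ by $\gamma_2(\Delta_2) \lesssim \norm{\va}_\infty \sqrt{D}\log^\alpha(MKL)$, keeping the $M$-dependence logarithmic.

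Finally, I would substitute these estimates into $\widetilde{K}_1,\widetilde{K}_2,\widetilde{K}_3$ of Theorem~\ref{thm:ip} and optimize over the free scale $a>0$ as in \eqref{eq:a4gm}. The two cross terms $\gamma_2(\Delta_1)\,d_{\mathrm{F}}(\Delta_2) \sim \norm{\va}_\infty\norm{\vb}_2 K$ and $\gamma_2(\Delta_2)\,d_{\mathrm{F}}(\Delta_1) \sim \norm{\va}_\infty\norm{\vb}_2\sqrt{MKD}$ produce the two advertised terms, and an arithmetic--geometric-mean argument shows that the $a$-balanced diagonal contribution $\sqrt{\gamma_2(\Delta_1)d_{\mathrm{F}}(\Delta_1)\,\gamma_2(\Delta_2)d_{\mathrm{F}}(\Delta_2)} \sim \norm{\va}_\infty\norm{\vb}_2 K^{3/4}(MD)^{1/4}$ is dominated by $K + \sqrt{MKD}$ in every parameter regime; the $\widetilde{K}_3$ term is lower order. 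Collecting everything with $\zeta = K^{-\beta}$ and absorbing $\sqrt{\log(8\zeta^{-1})}$ into the polylogarithm yields $C(\beta)\norm{\va}_\infty\norm{\vb}_2(K + \sqrt{MKD})\log^\alpha(MKL)$. The single genuine difficulty is obtaining the two $\gamma_2$ estimates with the correct $\sqrt{K}$ and $\sqrt{D}$ scalings rather than their naive $K$ and $\sqrt{MD}$ counterparts; once those are in hand the remainder is bookkeeping.
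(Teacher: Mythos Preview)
The paper does not prove Lemma~\ref{lemma:comp_sum} here; it quotes the result verbatim from \cite[Lemma~5.6]{lee2017spectral}. The surrounding text does, however, confirm that the intended method is exactly the one you outline: write the norm as a supremum of a centered second-order Gaussian chaos in $\vphi$ and apply Theorem~\ref{thm:ip}. In that sense your plan matches the paper's (imported) argument.

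Your reduction is correct. After conjugation and commutativity you obtain
\[
\sum_{m=1}^M a_m\, \vw_m^* \mPhi_m^* \mS \mC_{\breve{\mS}^* \vz} \mS^* \mPhi_m \vb,
\]
which is exactly the chaos treated in the proof of Lemma~\ref{lemma:init_Gamma_Es} with the single $\vq \in B_2^D$ replaced by the block vector $(\vw_m)\in B_2^{MD}$. Your estimates for $d_{\mathrm{S}}(\Delta_i)$, $d_{\mathrm{F}}(\Delta_i)$ and for $\gamma_2(\Delta_1)$ (via Corollary~\ref{cor:maurey} on the DFT side) are right and reproduce the paper's computations. The balancing via \eqref{eq:a4gm} and the AM--GM step giving $K^{3/4}(MD)^{1/4}\le \tfrac12(K+\sqrt{MKD})$ are also correct.

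The one point that needs tightening is your justification of $\gamma_2(\Delta_2)\lesssim \norm{\va}_\infty\sqrt{D}\,\mathrm{polylog}$. Corollary~\ref{cor:maurey} controls entropy integrals of images of an $\ell_1$ ball, whereas here the parameter set is the $\ell_2$ ball $B_2^{MD}$ equipped with the $\ell_\infty^M(\ell_2^D)$ metric; the naive inclusion $B_2^{MD}\subset\sqrt{MD}\,B_1^{MD}$ followed by Maurey loses a factor $\sqrt{M}$ and would yield only $K+M\sqrt{KD}$. A correct route is a two-scale covering: for $t<1$, use that at most $O(t^{-2})$ blocks can have $\norm{\vw_m}_2>t/2$, zero out the rest, and net only the large blocks in $B_2^D$; combine this with a volumetric bound below a cutoff $t_0\sim (MD)^{-1/2}$. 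This yields $\int_0^\infty\sqrt{\log N}\,dt\lesssim\sqrt{D+\log M}\,\log(MD)$, which is the $\sqrt{D}$ scaling you need up to the polylog. With that fix, your argument goes through and delivers the stated bound.
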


Similarly to $\tnorm{\text{(o)}}_{S_1 \to S_2}$, one can rewrite $\tnorm{\text{(q)}}_{S_1 \to S_\infty}$ as the supremum of a Gaussian process. The following lemma provides its tail estimate.
\begin{lemma}
\label{lemma:comp_cross_exp_rank1}
Suppose that (A1) holds.
For any $\beta \in \mathbb{N}$, there exists a constant $C(\beta)$ that depends only on $\beta$ such that, conditional on the noise vector $\vw$,
\[
\Bigtnorm{
\sum_{m,m'=1}^M \ve_m \ve_{m'}^* \otimes
\mPhi_m^* \mS \mC_{\vx}^* \mC_{\vw_m} \breve{\mS}^* \mathbb{E}[\mZ_{m'}]
}_{S_1 \to S_\infty}
\leq C \sqrt{1+\beta} \rho_{x,w} \norm{\va}_2 \norm{\vb}_2 K \sqrt{M+D} \log K
\]
holds with probability $1-K^{-\beta}$.
\end{lemma}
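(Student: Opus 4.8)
The plan is to follow the same template as the proof of Lemma~\ref{lemma:comp_cross_rank1}: express the matricized $S_1\to S_\infty$ norm as the supremum of a \emph{first-order} Gaussian process in $\vphi = [\mathrm{vec}(\mPhi_1)^\transpose,\dots,\mathrm{vec}(\mPhi_M)^\transpose]^\transpose$, conditional on $\vw$, and then apply the entropy bound of Lemma~\ref{lemma:firstorder}. The one genuinely new ingredient is the evaluation of the deterministic matrix $\mathbb{E}[\mZ_{m'}]$, so I would begin there. Since $\mZ_{m'} = \overline{a_{m'}}\,\breve{\mS}\,\mC_{\widetilde{\mPhi}_{m'}\vb}^*\,\widetilde{\mPhi}_{m'}$ with $\widetilde{\mPhi}_{m'}=\mS^\transpose\mPhi_{m'}$, Lemma~\ref{lemma:expectation2} (taking $\vu_{m'}=\vb$) gives $\mathbb{E}[\mC_{\widetilde{\mPhi}_{m'}\vb}^*\widetilde{\mPhi}_{m'}] = K\ve_1\vb^*$, hence $\mathbb{E}[\mZ_{m'}] = K\,\overline{a_{m'}}\,(\breve{\mS}\ve_1)\,\vb^*$, a fixed rank-one matrix.

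Substituting this expression, the matrix in the statement becomes
\[
\text{(q)} = K\sum_{m,m'=1}^M \overline{a_{m'}}\,\ve_m\ve_{m'}^*\otimes (\mPhi_m^*\vr_m)\vb^*,
\qquad \vr_m := \mS\,\mC_{\vx}^*\,\mC_{\vw_m}\,\breve{\mS}^*(\breve{\mS}\ve_1),
\]
which is linear in the Gaussian factors $\{\mPhi_m\}$ (the only surviving randomness), and I would record the deterministic estimate $\norm{\vr_m}_2\le \norm{\mS\mC_{\vx}^*\mC_{\vw_m}\breve{\mS}^*}\,\norm{\breve{\mS}\ve_1}_2\le \rho_{x,w}$. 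Next I would expand the norm through the variational form \eqref{eq:varforn_S1S8norm}, testing against rank-one matrices $\mUpsilon=\vp\vq^\transpose$ and $\mUpsilon'=\vp'\vq'^\transpose$, for which $\mathrm{vec}(\vp\vq^\transpose)=\vp\otimes\vq$ and $\norm{\vp}_2,\norm{\vq}_2,\norm{\vp'}_2,\norm{\vq'}_2\le 1$. The crucial simplification is that the sum over $m'$ decouples: contracting $\ve_{m'}^*$ against $\vp$ produces the scalar $\va^*\vp$, and contracting $\vb^*$ against $\vq$ produces the scalar $\vb^*\vq$, so that
\[
\langle \vp'\otimes\vq',\,\text{(q)}\,(\vp\otimes\vq)\rangle
= K\,(\va^*\vp)(\vb^*\vq)\sum_{m=1}^M \overline{p'_m}\,\vq'^*\mPhi_m^*\vr_m.
\]
Bounding $|\va^*\vp|\le\norm{\va}_2$ and $|\vb^*\vq|\le\norm{\vb}_2$ pulls out the factors $\norm{\va}_2$, $\norm{\vb}_2$, and $K$.

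What remains is $\sup_{\vp'\in B_2^M,\,\vq'\in B_2^D}\big|\sum_m \overline{p'_m}\,\vq'^*\mPhi_m^*\vr_m\big|$, which I would write as $\sup|\langle\vf(\vp',\vq'),\vphi\rangle|$ for a suitable $\vf$ linear in $(\vp',\vq')$, and then invoke Lemma~\ref{lemma:firstorder}. Exactly as in Lemma~\ref{lemma:comp_cross_rank1}, the orthogonality of the $\ve_m$ together with $\norm{\vr_m}_2\le\rho_{x,w}$ yields the Lipschitz estimate $\norm{\vf(\vp',\vq')-\vf(\vp'',\vq'')}_2\le \rho_{x,w}(\norm{\vp'-\vp''}_2+\norm{\vq'-\vq''}_2)$, in which no extra $\sqrt{M}$ appears. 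A standard volume (Dudley) argument over $B_2^M\times B_2^D$ then bounds the entropy integral by $C\rho_{x,w}\sqrt{M+D}$, and choosing $\zeta=K^{-\beta}$ so that $\sqrt{\log(\zeta^{-1})}\le C\sqrt{1+\beta}\,\log K$ gives the claimed bound after reinserting the factor $K\norm{\va}_2\norm{\vb}_2$.

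I expect the only real obstacle to be the first step: pinning down $\mathbb{E}[\mZ_{m'}]$ correctly via Lemma~\ref{lemma:expectation2} and verifying that, after taking this expectation, the two occurrences of $\mPhi_{m'}$ inside $\mZ_{m'}$ collapse into a deterministic rank-one matrix, so that the residual object is genuinely a first-order Gaussian process rather than a second-order chaos. Once this reduction is in place, the estimate is strictly easier than that of Lemma~\ref{lemma:comp_cross_rank1}, since here the $m'$-summation is absorbed into the single scalar $\va^*\vp$ instead of enlarging the index set of the process; this is precisely why the proof of Lemma~\ref{lemma:comp_cross_rank1} can be reused almost verbatim and why only the brief indication of the differences is needed.
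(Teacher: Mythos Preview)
Your proposal is correct and follows essentially the same approach as the paper's own proof: compute $\mathbb{E}[\mZ_{m'}]=K\overline{a_{m'}}(\breve{\mS}\ve_1)\vb^*$ via Lemma~\ref{lemma:expectation2}, use the rank-one variational form \eqref{eq:varforn_S1S8norm} to split off the scalar factors $|\va^*\vp|\leq\norm{\va}_2$ and $|\vb^*\vq|\leq\norm{\vb}_2$, and then bound the residual first-order Gaussian process exactly as in Lemma~\ref{lemma:comp_cross_rank1} with the variable $\vz\in B_2^{2K-1}$ replaced by the fixed vector $\breve{\mS}\ve_1$ (equivalently, the paper notes that $\breve{\mS}^*\breve{\mS}\ve_1=\ve_1$ so that $\vr_m=\mS\mC_{\vx}^*\vw_m$), yielding the entropy integral of order $\rho_{x,w}\sqrt{M+D}$ rather than $\rho_{x,w}\sqrt{M+D+K}$.
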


\begin{proof}[Proof of Lemma~\ref{lemma:comp_cross_exp_rank1}]
It follows from the variational form in \eqref{eq:varforn_S1S8norm} and Lemma~\ref{lemma:expectation2} that
\begin{align}
& \Bigtnorm{
\sum_{m,m'=1}^M \ve_m \ve_{m'}^* \otimes
\mPhi_m^* \mS \mC_{\vx}^* \mC_{\vw_m} \breve{\mS}^* \mathbb{E}[\mZ_{m'}]
}_{S_1 \to S_\infty} \nonumber \\
&= K \sup_{\vq,\widetilde{\vq} \in B_2^M} \sup_{\vxi,\widetilde{\vxi} \in B_2^D}
\Big| \sum_{m,m'=1}^M q_m q_{m'} a_{m'} \widetilde{\vxi}^* \mPhi_m^* \mS \mC_{\vx}^* \vw_m \vb^* \vxi \Big| \nonumber \\
&= K \sup_{\widetilde{\vq} \in B_2^M} \Big| \sum_{m'=1}^M q_{m'} a_{m'} \Big|
\sup_{\vxi \in B_2^D} |\vb^* \vxi|
\sup_{\vq \in B_2^M} \sup_{\widetilde{\vxi} \in B_2^D}
\Big|\sum_{m=1}^M q_m \widetilde{\vxi}^* \mPhi_m^* \mS \mC_{\vx}^* \vw_m \Big| \nonumber \\
&\leq K \norm{\va}_2 \norm{\vb}_2
\sup_{\vq \in B_2^M} \sup_{\widetilde{\vxi} \in B_2^D}
\Big|\sum_{m=1}^M q_m \widetilde{\vxi}^* \mPhi_m^* \mS \mC_{\vx}^* \vw_m \Big|.
\label{eq:comp_cross_exp_bnd1}
\end{align}

Note that the objective in the supremum in \eqref{eq:comp_cross_exp_bnd1} is rewritten as
\begin{align*}
\Big|\sum_{m=1}^M q_m \widetilde{\vxi}^* \mPhi_m^* \mS \mC_{\vx}^* \vw_m \Big|
= \Big|\sum_{m=1}^M \overline{q_m} \vw_m^* \mC_{\vx} \mS^* \mPhi_m \widetilde{\vxi} \Big|
= \Big|\Big(\sum_{m=1}^M \ve_m^* \otimes \vw_m^* \mC_{\vx} \mS^* \mPhi_m\Big) (\overline{\vq} \otimes \widetilde{\vxi}) \Big|.
\end{align*}
Then it follows that
\begin{equation}
\label{eq:comp_cross_exp_bnd2}
\Bigtnorm{
\sum_{m=1}^M
\ve_m^* \otimes \vw_m^* \mC_{\vx} \mS^* \mPhi_m
}_{S_1 \to S_2}
\leq C(\beta) \rho_{x,w} \sqrt{M+D} \log K
\end{equation}
holds with probability $1-K^{-\beta}$.  The proof of \eqref{eq:comp_cross_exp_bnd2} is obtained as we replace $\vz \in B_2^{2K-1}$ in the proof of Lemma~\ref{lemma:comp_cross_rank1} by $[1, \vzero_{1,2K}]^\transpose$.

The proof completes by plugging in the tail bound in \eqref{eq:comp_cross_exp_bnd2} into \eqref{eq:comp_cross_exp_bnd1}.
\end{proof}

By collecting the estimates, we obtain that
\[
\norm{\text{(l)}}_{S_1 \to S_\infty}
\leq C(\beta) \rho_{x,w} \norm{\va}_2 \norm{\vb}_2 \log^\alpha (MKL)
\Big(
\frac{\mu \sqrt{M+D+K} (K+\sqrt{MKD})}{\sqrt{M}} + K\sqrt{M+D} \,
\Big)
\]
holds with probability $1-CK^{-\beta}$.  Then the tail estimate of $\norm{\text{(l)}}_{S_1 \to S_\infty}$ dominates those for $\norm{\text{(g)}}$ and $\norm{\text{(k)}}$.  Therefore, we may ignore $\tnorm{\text{(g)}}_{S_1 \to S_\infty}$ and $\tnorm{\text{(k)}}_{S_1 \to S_\infty}$.

Therefore, by plugging in \eqref{eq:etasimp}, we obtain that with probability $1-CK^{-\beta}$, the relative perturbation due to $\mPhi^* \mY_{\mathrm{s}}^* \mY_{\mathrm{n}} \mPhi$ is upper bounded by
\begin{align*}
\frac{\tnorm{\mPhi^* \mY_{\mathrm{s}}^* \mY_{\mathrm{n}} \mPhi}_{S_1 \to S_\infty}}{K^2 \norm{\vx}_2^2 \norm{\va}_2^2 \norm{\vb}_2^2}
& \leq
\frac{C(\beta) \rho_{x,w} \log^\alpha (MKL)}{K^2 \norm{\vx}_2^2 \norm{\va}_2 \norm{\vb}_2}
\cdot
\Big(
\frac{\mu \sqrt{M+D+K} (K+\sqrt{MKD})}{\sqrt{M}} + K\sqrt{M+D} \,
\Big) \\
& \leq
\frac{C'(\beta) \log^{\alpha}(MKL)}{\sqrt{\eta L}}
\cdot
\frac{\rho_{x,w}}{\sqrt{K} \sigma_w \norm{\vx}_2}
\cdot
\Big(
\mu\Big(\frac{\sqrt{K}}{M} + \sqrt{\frac{D}{M}} + \sqrt{\frac{D}{K}} \, \Big) + 1
\Big).
\end{align*}
This completes the proof.


\begin{thebibliography}{10}
\providecommand{\url}[1]{#1}
\csname url@samestyle\endcsname
\providecommand{\newblock}{\relax}
\providecommand{\bibinfo}[2]{#2}
\providecommand{\BIBentrySTDinterwordspacing}{\spaceskip=0pt\relax}
\providecommand{\BIBentryALTinterwordstretchfactor}{4}
\providecommand{\BIBentryALTinterwordspacing}{\spaceskip=\fontdimen2\font plus
\BIBentryALTinterwordstretchfactor\fontdimen3\font minus
  \fontdimen4\font\relax}
\providecommand{\BIBforeignlanguage}[2]{{%
\expandafter\ifx\csname l@#1\endcsname\relax
\typeout{** WARNING: IEEEtran.bst: No hyphenation pattern has been}%
\typeout{** loaded for the language `#1'. Using the pattern for}%
\typeout{** the default language instead.}%
\else
\language=\csname l@#1\endcsname
\fi
#2}}
\providecommand{\BIBdecl}{\relax}
\BIBdecl

\bibitem{lee2016fast}
K.~Lee, N.~Tian, and J.~Romberg, ``Fast and guaranteed blind multichannel
  deconvolution under a bilinear channel model,'' in \emph{Information Theory
  Workshop}, Cambridge, UK, September 2016.

\bibitem{lee2007dsp}
------, ``Performance guarantees of spectral methods for passive sensing of
  multiple channels,'' in \emph{22nd International Conference on Digital Signal
  Processing (DSP)}, August 2017.

\bibitem{choudhary2014sparse}
S.~Choudhary and U.~Mitra, ``Sparse blind deconvolution: What cannot be done,''
  in \emph{International Symposium on Information Theory}.\hskip 1em plus 0.5em
  minus 0.4em\relax IEEE, 2014, pp. 3002--3006.

\bibitem{li2016identifiability}
Y.~Li, K.~Lee, and Y.~Bresler, ``Identifiability in blind deconvolution with
  subspace or sparsity constraints,'' \emph{{IEEE} Trans. Inf. Theory},
  vol.~62, no.~7, pp. 4266--4275, 2016.

\bibitem{riegler2015information}
E.~Riegler, D.~Stotz, and H.~B{\"o}lcskei, ``Information-theoretic limits of
  matrix completion,'' in \emph{International Symposium on Information
  Theory}.\hskip 1em plus 0.5em minus 0.4em\relax IEEE, 2015, pp. 1836--1840.

\bibitem{li2016optimal}
Y.~Li, K.~Lee, and Y.~Bresler, ``Optimal sample complexity for blind gain and
  phase calibration,'' \emph{{IEEE} Trans. Signal Process.}, vol.~64, no.~21,
  pp. 5549--5556, 2016.

\bibitem{kech2017optimal}
M.~Kech and F.~Krahmer, ``Optimal injectivity conditions for bilinear inverse
  problems with applications to identifiability of deconvolution problems,''
  \emph{SIAM Journal on Applied Algebra and Geometry}, vol.~1, no.~1, pp.
  20--37, 2017.

\bibitem{li2017identifiability}
Y.~Li, K.~Lee, and Y.~Bresler, ``Identifiability in bilinear inverse problems
  with applications to subspace or sparsity-constrained blind gain and phase
  calibration,'' \emph{{IEEE} Trans. Inf. Theory}, vol.~63, no.~2, pp.
  822--842, 2017.

\bibitem{li2017identifiability_minimal}
------, ``Identifiability and stability in blind deconvolution under minimal
  assumptions,'' \emph{{IEEE} Trans. Inf. Theory}, vol.~63, no.~7, pp.
  4619--4633, 2017.

\bibitem{ahmed2014blind}
A.~Ahmed, B.~Recht, and J.~Romberg, ``Blind deconvolution using convex
  programming,'' \emph{{IEEE} Trans. Inf. Theory}, vol.~60, no.~3, pp.
  1711--1732, 2014.

\bibitem{tang14co}
G.~Tang and B.~Recht, ``Convex blind deconvolution with random masks,'' in
  \emph{Classical Optics 2014}, ser. {OSA} Technical Digest (online).\hskip 1em
  plus 0.5em minus 0.4em\relax Optical Society of America, Jun. 2014, p.
  CW4C.1.

\bibitem{bahmani15li}
S.~Bahmani and J.~Romberg, ``Lifting for blind deconvolution in random mask
  imaging: {I}dentifiability and convex relaxation,'' \emph{SIAM J. Imaging
  Sci.}, vol.~8, no.~4, pp. 2203--2238, 2015.

\bibitem{lee2017blind}
K.~Lee, Y.~Li, M.~Junge, and Y.~Bresler, ``Blind recovery of sparse signals
  from subsampled convolution,'' \emph{{IEEE} Trans. Inf. Theory}, vol.~63,
  no.~2, pp. 802--821, 2017.

\bibitem{li2016rapid}
X.~Li, S.~Ling, T.~Strohmer, and K.~Wei, ``Rapid, robust, and reliable blind
  deconvolution via nonconvex optimization,'' \emph{arXiv preprint
  arXiv:1606.04933}, 2016.

\bibitem{chi2016guaranteed}
Y.~Chi, ``Guaranteed blind sparse spikes deconvolution via lifting and convex
  optimization,'' \emph{{IEEE} J. Sel. Topics Signal Process.}, vol.~10, no.~4,
  pp. 782--794, June 2016.

\bibitem{yang2016super}
D.~Yang, G.~Tang, and M.~B. Wakin, ``Super-resolution of complex exponentials
  from modulations with unknown waveforms,'' \emph{{IEEE} Trans. Inf. Theory},
  vol.~62, no.~10, pp. 5809--5830, 2016.

\bibitem{abed1997blind}
K.~Abed-Meraim, W.~Qiu, and Y.~Hua, ``Blind system identification,''
  \emph{Proc. {IEEE}}, vol.~85, no.~8, pp. 1310--1322, 1997.

\bibitem{tong1998multichannel}
L.~Tong and S.~Perreau, ``Multichannel blind identification: From subspace to
  maximum likelihood methods,'' \emph{Proc. {IEEE}}, vol.~86, pp. 1951--1968,
  1998.

\bibitem{xu1995least}
G.~Xu, H.~Liu, L.~Tong, and T.~Kailath, ``A least-squares approach to blind
  channel identification,'' \emph{{IEEE} Trans. Signal Process.}, vol.~43,
  no.~12, pp. 2982--2993, 1995.

\bibitem{moulines1995subspace}
E.~Moulines, P.~Duhamel, J.-F. Cardoso, and S.~Mayrargue, ``Subspace methods
  for the blind identification of multichannel {FIR} filters,'' \emph{{IEEE}
  Trans. Signal Process.}, vol.~43, no.~2, pp. 516--525, 1995.

\bibitem{gurelli1995evam}
M.~I. Gurelli and C.~L. Nikias, ``{EVAM}: An eigenvector-based algorithm for
  multichannel blind deconvolution of input colored signals,'' \emph{{IEEE}
  Trans. Signal Process.}, vol.~43, no.~1, pp. 134--149, 1995.

\bibitem{lee2017spectral}
K.~Lee, F.~Krahmer, and J.~Romberg, ``Spectral methods for passive imaging:
  Non-asymptotic performance and robustness,'' \emph{arXiv preprint
  arXiv:1708.04343}, 2017.

\bibitem{ling2015self}
S.~Ling and T.~Strohmer, ``Self-calibration and biconvex compressive sensing,''
  \emph{Inverse Problems}, vol.~31, no.~11, p. 115002, 2015.

\bibitem{cambareri2016through}
V.~Cambareri and L.~Jacques, ``Through the haze: A non-convex approach to blind
  calibration for linear random sensing models,'' \emph{arXiv preprint
  arXiv:1610.09028}, 2016.

\bibitem{balzano2007blind}
L.~Balzano and R.~Nowak, ``Blind calibration of sensor networks,'' in
  \emph{Proc. 6th Int. Conf. Inform. Process. in Sensor Networks}.\hskip 1em
  plus 0.5em minus 0.4em\relax ACM, 2007, pp. 79--88.

\bibitem{nguyen2013subspace}
H.~Q. Nguyen, S.~Liu, and M.~N. Do, ``Subspace methods for computational
  relighting.'' in \emph{Computational Imaging}, 2013, p. 865703.

\bibitem{morrison2009mca}
R.~L. Morrison, M.~N. Do, and D.~C. Munson, ``{MCA}: A multichannel approach to
  {SAR} autofocus,'' \emph{{IEEE} Trans. Image Process.}, vol.~18, no.~4, pp.
  840--853, 2009.

\bibitem{ling2016self}
S.~Ling and T.~Strohmer, ``Self-calibration via linear least squares,''
  \emph{arXiv preprint arXiv:1611.04196}, 2016.

\bibitem{li2017sampta}
Y.~Li, K.~Lee, and Y.~Bresler, ``Blind gain and phase calibration for
  low-dimensional or sparse signal sensing via power iteration,'' in \emph{2017
  International Conference on Sampling Theory and Applications (SampTA)}, July
  2017, pp. 119--123.

\bibitem{keshavan2010matrix}
R.~H. Keshavan, A.~Montanari, and S.~Oh, ``Matrix completion from a few
  entries,'' \emph{{IEEE} Trans. Inf. Theory}, vol.~56, no.~6, pp. 2980--2998,
  2010.

\bibitem{keshavan2012efficient}
R.~H. Keshavan, ``Efficient algorithms for collaborative filtering,'' Ph.D.
  dissertation, Stanford University, 2012.

\bibitem{jain2013low}
P.~Jain, P.~Netrapalli, and S.~Sanghavi, ``Low-rank matrix completion using
  alternating minimization,'' in \emph{Proceedings of the forty-fifth annual
  ACM symposium on Theory of computing}.\hskip 1em plus 0.5em minus 0.4em\relax
  ACM, 2013, pp. 665--674.

\bibitem{hardt2014understanding}
M.~Hardt, ``Understanding alternating minimization for matrix completion,'' in
  \emph{Foundations of Computer Science (FOCS), 2014 IEEE 55th Annual Symposium
  on}.\hskip 1em plus 0.5em minus 0.4em\relax IEEE, 2014, pp. 651--660.

\bibitem{sun2016guaranteed}
R.~Sun and Z.-Q. Luo, ``Guaranteed matrix completion via non-convex
  factorization,'' \emph{{IEEE} Trans. Inf. Theory}, vol.~62, no.~11, pp.
  6535--6579, 2016.

\bibitem{candes2015phase}
E.~J. Candes, X.~Li, and M.~Soltanolkotabi, ``Phase retrieval via {W}irtinger
  flow: Theory and algorithms,'' \emph{{IEEE} Trans. Inf. Theory}, vol.~61,
  no.~4, pp. 1985--2007, 2015.

\bibitem{chen2015solving}
Y.~Chen and E.~Candes, ``Solving random quadratic systems of equations is
  nearly as easy as solving linear systems,'' in \emph{Advances in Neural
  Information Processing Systems}, 2015, pp. 739--747.

\bibitem{wang2016solving}
G.~Wang, G.~B. Giannakis, and Y.~C. Eldar, ``Solving systems of random
  quadratic equations via truncated amplitude flow,'' \emph{arXiv preprint
  arXiv:1605.08285}, 2016.

\bibitem{cai2016optimal}
T.~T. Cai, X.~Li, and Z.~Ma, ``Optimal rates of convergence for noisy sparse
  phase retrieval via thresholded {W}irtinger flow,'' \emph{The Annals of
  Statistics}, vol.~44, no.~5, pp. 2221--2251, 2016.

\bibitem{netrapalli2013phase}
P.~Netrapalli, P.~Jain, and S.~Sanghavi, ``Phase retrieval using alternating
  minimization,'' in \emph{Advances in Neural Information Processing Systems},
  2013, pp. 2796--2804.

\bibitem{waldspurger2016phase}
I.~Waldspurger, ``Phase retrieval with random {G}aussian sensing vectors by
  alternating projections,'' \emph{arXiv preprint arXiv:1609.03088}, 2016.

\bibitem{sun2017completeI}
J.~Sun, Q.~Qu, and J.~Wright, ``Complete dictionary recovery over the sphere
  {I}: Overview and the geometric picture,'' \emph{{IEEE} Trans. Inf. Theory},
  vol.~63, no.~2, pp. 853--884, 2017.

\bibitem{sun2017completeII}
------, ``Complete dictionary recovery over the sphere {II}: Recovery by
  {R}iemannian trust-region method,'' \emph{{IEEE} Trans. Inf. Theory},
  vol.~63, no.~2, pp. 885--914, 2017.

\bibitem{hesse2015proximal}
R.~Hesse, D.~R. Luke, S.~Sabach, and M.~K. Tam, ``Proximal heterogeneous block
  implicit-explicit method and application to blind ptychographic diffraction
  imaging,'' \emph{SIAM Journal on Imaging Sciences}, vol.~8, no.~1, pp.
  426--457, 2015.

\bibitem{mantzel14ro}
W.~Mantzel, J.~Romberg, and K.~Sabra, ``Round-robin multiple source
  localization,'' \emph{J. Acoust. Soc. Am.}, vol. 135, no.~1, pp. 134--147,
  Januray 2014.

\bibitem{tian2017multichannel}
N.~Tian, S.-H. Byun, K.~Sabra, and J.~Romberg, ``Multichannel myopic
  deconvolution in underwater acoustic channels via low-rank recovery,''
  \emph{J. Acoust. Soc. Am.}, vol. 141, no.~5, pp. 3337--3348, 2017.

\bibitem{mantzel2015compressed}
W.~Mantzel and J.~Romberg, ``Compressed subspace matching on the continuum,''
  \emph{Information and Inference}, p. iav008, 2015.

\bibitem{yuan2013truncated}
X.-T. Yuan and T.~Zhang, ``Truncated power method for sparse eigenvalue
  problems,'' \emph{‎J. Mach. Learn. Res}, vol.~14, no. Apr, pp. 899--925,
  2013.

\bibitem{lee2010admira}
K.~Lee and Y.~Bresler, ``{ADMiRA}: Atomic decomposition for minimum rank
  approximation,'' \emph{{IEEE} Trans. Inf. Theory}, vol.~56, no.~9, pp.
  4402--4416, 2010.

\bibitem{jain2010guaranteed}
P.~Jain, R.~Meka, and I.~S. Dhillon, ``Guaranteed rank minimization via
  singular value projection,'' in \emph{Advances in Neural Information
  Processing Systems}, 2010, pp. 937--945.

\bibitem{donoho2014minimax}
D.~Donoho and M.~Gavish, ``Minimax risk of matrix denoising by singular value
  thresholding,'' \emph{Ann. Stat.}, vol.~42, no.~6, pp. 2413--2440, 2014.

\bibitem{lee2016fast_arxiv}
K.~Lee, N.~Tian, and J.~Romberg, ``Fast and guaranteed blind multichannel
  deconvolution under a bilinear system model,'' \emph{arXiv preprint
  arXiv:1610.06469v1}, 2016.

\bibitem{adamczak2015concentration}
R.~Adamczak and P.~Wolff, ``Concentration inequalities for non-{L}ipschitz
  functions with bounded derivatives of higher order,'' \emph{Probability
  Theory and Related Fields}, vol. 162, no. 3-4, pp. 531--586, 2015.

\bibitem{tian2017blind}
N.~Tian, K.~Lee, N.~Durofchalk, K.~Sabra, and J.~Romberg, ``Blind deconvolution
  of sources of opportunity in ocean waveguides using bilinear channel
  models,'' \emph{in preparation}.

\bibitem{davis1970rotation}
C.~Davis and W.~M. Kahan, ``The rotation of eigenvectors by a perturbation.
  {III},'' \emph{SIAM Journal on Numerical Analysis}, vol.~7, no.~1, pp. 1--46,
  1970.

\bibitem{golub2012matrix}
G.~H. Golub and C.~F. Van~Loan, \emph{Matrix computations}.\hskip 1em plus
  0.5em minus 0.4em\relax JHU Press, 2012.

\bibitem{stewart2001matrix}
G.~W. Stewart, \emph{Matrix Algorithms: Volume II: Eigensystems}.\hskip 1em
  plus 0.5em minus 0.4em\relax Society for Industrial and Applied Mathematics,
  2001.

\bibitem{wedin1983angles}
P.~{\AA}. Wedin, ``On angles between subspaces of a finite dimensional inner
  product space,'' in \emph{Matrix Pencils}.\hskip 1em plus 0.5em minus
  0.4em\relax Springer, 1983, pp. 263--285.

\bibitem{krahmer2014suprema}
F.~Krahmer, S.~Mendelson, and H.~Rauhut, ``Suprema of chaos processes and the
  restricted isometry property,'' \emph{Comm. Pure Appl. Math.}, vol.~67,
  no.~11, pp. 1877--1904, 2014.

\bibitem{rudelson2013hanson}
M.~Rudelson and R.~Vershynin, ``{H}anson-{W}right inequality and sub-gaussian
  concentration,'' \emph{Electron. Commun. Probab}, vol.~18, no.~82, pp. 1--9,
  2013.

\bibitem{carl1985inequalities}
B.~Carl, ``Inequalities of {B}ernstein-{J}ackson-type and the degree of
  compactness of operators in {B}anach spaces,'' in \emph{Annales de l'institut
  Fourier}, vol.~35, no.~3, 1985, pp. 79--118.

\bibitem{junge2017ripI}
M.~Junge and K.~Lee, ``Generalized notions of sparsity and restricted isometry
  property. {P}art {I}: {A} unified framework,'' \emph{preprint}, 2017.

\bibitem{lee2015rip}
K.~Lee and M.~Junge, ``{RIP}-like properties in subsampled blind
  deconvolution,'' \emph{arXiv preprint arXiv:1511.06146}, 2015.

\bibitem{feingold1962block}
D.~G. Feingold and R.~S. Varga, ``Block diagonally dominant matrices and
  generalizations of the {G}erschgorin circle theorem,'' \emph{Pacific J.
  Math}, vol.~12, no.~4, pp. 1241--1250, 1962.

\end{thebibliography}


\end{document}